\numberwithin{equation}{section} \DeclareMathSizes{2}{10}{12}{13}
\newcommand{\bigperp}{%
  \mathop{\mathpalette\bigp@rp\relax}%
  \displaylimits
}
\newcommand{\bigp@rp}[2]{%
  \vcenter{
    \m@th\hbox{\scalebox{\ifx#1\displaystyle1.2\else1.5\fi}{$#1\perp$}}
  }%
}
\newtheorem{thm}{Proposition}[section]
\newtheorem{Thm}[thm]{Theorem}
\newtheorem{rem}[thm]{Remark}
\newtheorem{cor}[thm]{Corollary}
\newtheorem{lem}[thm]{Lemma}
\newtheorem{defn}[thm]{Definition}
\title{Homological Algebra of Heyting modules}
\author{Abhishek Banerjee  \footnote{AB was partially supported by SERB Matrics fellowship MTR/2017/000112}}
\date{ }
\begin{document}

\maketitle

\centerline{\emph{Dept. of Mathematics, Indian Institute of Science, Bangalore-560012, India.}}
\centerline{\emph{Email: abhishekbanerjee1313@gmail.com}}

\smallskip

\begin{abstract} The collection of open sets of a topological space forms a Heyting algebra, which leads to the idea of a Heyting algebra as a generalized topological space. In fact, a sober topological space may be reconstructed from its locale of open sets. This has given rise to a good theory of presheaves and sheaves over locales. At the same time, several ring like properties of Heyting algebras have also been studied. The purpose of this paper is to study a non-abelian homological theory for modules over Heyting algebras. 
\end{abstract}

\smallskip

\smallskip
\emph{MSC (2010) Subject Classification: 18G50}  

\smallskip
\emph{Keywords: Heyting algebras, Heyting modules, non-abelian homological algebra}  

\section{Introduction}

The essence of an abelian category is the fact that every morphism may be factored uniquely as a cokernel followed by a kernel. However, several difficulties arise when one tries
to adapt this axiom to non-abelian or even non-additive settings. This is the reason the study of homological algebra has been dominated by that of abelian categories. 

\smallskip
Over the years, many efforts have been made to develop homological algebra with categories that are not additive. One of the earliest appears to be a series of papers by Fr$\ddot{\mbox{o}}$hlich \cite{Fro1,Fro2,Fro3} which gave a theory of derived functors for non-abelian groups.  The work of Eckmann and Hilton \cite{EH1}, \cite{EH2}, \cite{EH3} was motivated by the category of topological spaces while Gerstenhaber gave a theory of Baer extensions in \cite{MG}. We also refer the reader to the study of Barr-exact categories (see \cite{Barr}), modular categories (see Carboni \cite{Carboni}), protomodular categories (see Bourn \cite{Bourn}), Mal'cev categories (see Carboni, Lambek and Pedicchio \cite{CLP}) semi-abelian categories (see Janelidze, M\'{a}rki and Tholen\cite{GJ}) and homological categories (see Grandis \cite{Grandis}) for other instances of work in this area. 

\smallskip
More recently, Connes and Consani \cite{one} have presented a comprehensive theory of ``Homological Algebra in characteristic one'' over the Boolean semifield $\mathbb B=\{0,1\}$. Their motivation was to develop a theory that would be  suitable for treating certain non-additive categories of   sheaves over a topos. In this paper, our objective is to develop a homological
theory similar to \cite{one} for modules over a Heyting algebra.

\smallskip
We recall that a  Heyting algebra $H$ is a lattice such that for each $x\in H$, the functor $\_\_\wedge x: H\longrightarrow H$ has a right adjoint. In other words, for each $x,y\in H$, there
is an element $(x\rightarrow y)_H\in H$ such that $z\leq (x\rightarrow y)_H$ if and only if $z\wedge x\leq y$. Here the partially ordered set underlying the lattice $H$ is treated as a category. For each element $x\in H$, we may consider $\righthalfcap x:=(x\rightarrow 0)_H$. Then, if $\righthalfcap \righthalfcap x=x$ for every $x\in H$, then $H$ becomes a Boolean algebra. In general, the collection of Heyting algebras contains the collection of Boolean algebras. 

\smallskip
The significance of Heyting algebras lies in the fact that they may be treated as generalized topological spaces, a deep idea that was first advanced by Ehresmann \cite{Ehres} and B\'{e}nabou \cite{Ben}. If $X$ is a topological space,  the collection $\Omega X$ of open sets of $X$ is a Heyting algebra : the joins are given by unions, the finite meets are given by interesection and 
\begin{equation*}
(U\rightarrow V)_{\Omega X}:=int(U^c\cup V)
\end{equation*} for any open subsets $U$, $V\in \Omega X$. We observe that $\Omega X$ need not be a Boolean algebra. For instance, if $X=\mathbb R$ with the usual topology, the open set $U=\mathbb R\backslash \{0\}$ satisfies $\righthalfcap U=int(U^c\cup \phi)=\phi$ and hence $\righthalfcap\righthalfcap U\ne U$. For any topological space $X$, the Heyting algebra
$\Omega X$ forms a locale  and there is an equivalence of categories between spatial locales and sober topological spaces (see \cite[Chapter II]{PJT}).  We recall that a topological space is said to be sober if every irreducible closed subset is the closure of a unique generic point. This applies in particular to the Zariski spectrum of a commutative ring. 

\smallskip
This treatment of Heyting algebras as generalized topological spaces has led to a well-developed theory of sheaves over locales, for which we again refer the reader to Johnstone's book 
\cite[Chapter V]{PJT}. At the same time, several ``ring like properties'' have also been studied for Heyting algebras, such as prime ideals, filters and maximal ideal theorem (see \cite[Chapter I]{PJT}). Therefore, it is only natural to study a theory of modules over Heyting algebras, which is the purpose of this paper.

\smallskip
We now describe the paper in more detail. We begin by introducing Boolean modules in Section 2. A Boolean module over a Boolean algebra $B$ is a join semilattice $M$ equipped with
an action $B\times M\longrightarrow M$ that is distributive and an operator $\righthalfcap_M:M\longrightarrow B$ satisfying certain conditions described in Definition \ref{Bmodule}. A key fact proved in this section is that when $B$ is a finite Boolean algebra, any distributive module over the lattice underlying $B$ is canonically equipped with such an operator $\righthalfcap_M:M\longrightarrow B$. This is in preparation for the notion of a Heyting module, which we introduce in Section 3. 

\smallskip
Let $H$ be a Heyting algebra. A Heyting module $M$ over $H$ is a join semilattice $M$ with an action $H\times M\longrightarrow M$ such that for each $m\in M$, the functor
$\_\_ \wedge m:H\longrightarrow M$ has a right adjoint (see Definition \ref{DD3.2}). In other words, for every $m$, $n\in M$, there is an element $(m\rightarrow n)_M\in H$ such that 
\begin{equation*}
x\wedge m\leq n \qquad\Leftrightarrow\qquad x\leq (m\rightarrow n)_M
\end{equation*} for any $x\in H$. We denote by $Heymod_H$ the category of Heyting modules over a Heyting algebra $H$. 
The basic properties of the operation $(\_\_\rightarrow \_\_)_M$ as well as those of morphisms of Heyting modules are established in Section 3.  In particular, if $H$ is a finite Heyting algebra, we show that any module $M$ over the underlying lattice $H$ is canonically equipped with the structure of a Heyting module.  We recall that for any Heyting algebra, the collection $H_{\righthalfcap\righthalfcap}$ of elements $x\in H$ satisfying $\righthalfcap \righthalfcap x=x$ forms a Boolean algebra. If $H_{\righthalfcap\righthalfcap}$ is a sublattice of $H$ and
$M\in Heymod_H$, we show that the collection $M_{\righthalfcap\righthalfcap}$ of all elements of $M$ such that $(m\rightarrow 0)_M\in H_{\righthalfcap\righthalfcap}$ forms a Boolean module over the Boolean algebra $H_{\righthalfcap\righthalfcap}$. 

\smallskip
In Section 4, we begin studying the dual $M^\bigstar=Heymod_H(M,H)$ of a Heyting module $M\in Heymod_H$. In order to understand a morphism $\phi : M\longrightarrow H$
in $Heymod_H$, we need to consider the submodules $K_c:=\{\mbox{$m\in M$ $\vert$ $\phi(m)\leq c$ }\}$ as $c$ varies over all elements of $H$. The relations between the objects
$\{K_c\}_{c\in H}$ are then formalized to define ``hereditary systems of submodules'' (see Definition \ref{D4.3}). If $H$ is a finite Heyting algebra, we obtain a one-one correspondence between
elements $\phi\in M^\bigstar$ and hereditary systems of submodules of $M$. Further, if $H$ is a  Boolean algebra,  hereditary systems of submodules correspond to ordinary
hereditary submodules of the join semilattice $M$. In the latter case, we are able to obtain an analogue of Hahn-Banach Theorem, which shows that the morphisms
in the dual $M^\bigstar$ are able to separate elements of $M$. 

\smallskip
It is easy to show that $Heymod_H$ is a semiadditive category, i.e., finite products and finite coproducts coincide. However, we would like to know the extent to which $Heymod_H$ satisfies a non-additive version of the (AB2) axiom
for abelian categories. In other words, we would like to compare coimages and images in $Heymod_H$. This is done in Section 5 by considering ``kernel pairs'' and ``cokernel pairs'' in a manner
analogous to \cite{one}. Additionally, for a submodule $K\subseteq N$, we set
\begin{equation*}
\widetilde{K}=\{\mbox{$n\in N$ $\vert$ $t_1(n)=t_2(n)$ $\forall$ $Q\in Heymod_H$ $\forall$  $t_1,t_2:N\longrightarrow Q$ such that
$t_1(k)=t_2(k)$ $\forall$ $k\in K$ }\}
\end{equation*} Then, if $H$ is a finite Heyting algebra and $f:M\longrightarrow N$ is a morphism in $Heymod_H$, we show that $\widetilde{Coim(f)}=Im(f)$. If $H$ is finite and Boolean,
this reduces to give $Coim(f)=Im(f)$. 

\smallskip
We continue in Section 6 with $H$ being a finite Heyting algebra. We show that $Heymod_H$ is a tensor category, with the functor $\_\_\otimes_HN:Heymod_H
\longrightarrow Heymod_H$ being left adjoint to $Heymod_H(N,\_\_):Heymod_H\longrightarrow Heymod_H$. This gives rise to a good theory of `extension and restriction of scalars'
for Heyting modules.

\smallskip
We now consider the endofunctor $\bigperp : Heymod_H\longrightarrow Heymod_H$ given by taking each $M\in Heymod_H$ to $M^2$ and each morphism $f:M\longrightarrow N$
to $(f,f):M^2\longrightarrow N^2$. This defines a comonad on $Heymod_H$ and we consider the corresponding Kleisli category 
$Heymod_H^2$ in Section 7 whereas the corresponding Eilenberg-Moore category $Heymod_H^{\mathfrak s}$ is studied in Section 8. For modules over the semifield $\{0,1\}$, the 
Kleisli and Eilenberg-Moore categories of the squaring endofunctor have been studied by Connes and Consani \cite{one}. The category $Heymod_H^2$ has the same objects as 
$Heymod_H$. A morphism $M\longrightarrow N$ in $Heymod_H^2$ consists of a pair $(f,g):M\longrightarrow N$ of morphisms in $Heymod_H$, with composition given by (see 
\eqref{composition})
\begin{equation*}
(f',g')\circ (f,g)=(f'\circ f + g'\circ g,f'\circ g+g'\circ f)
\end{equation*} On the other hand, an object of $Heymod_H^{\mathfrak s}$ is a pair $(M,\sigma_M)$ consisting of $M\in Heymod_H$ and $\sigma_M$ an $H$-linear involution of $M$. The conditions describing monomorphisms, epimorphisms and strict exactness for sequences in $Heymod_H^2$ and $Heymod_H^{\mathfrak s}$ are studied respectively in Sections 7 and 8. Further, we show in Section 9 that $Heymod_H^{\mathfrak s}$ is a semiexact category in the sense of Grandis \cite{Grandis} (see Theorem \ref{Tt9.6}). Finally, when $H$ is a finite Boolean algebra, we show
in Section 10 that $Heymod_H^{\mathfrak s}$ also becomes a homological category in the sense of Grandis \cite{Grandis} (see Theorem \ref{Tq10.10}). 

\smallskip
We return to the case of an arbitrary (not necessarily finite) Heyting algebra $H$ in Section 11. Using an ultrafilter criterion of Finocchiaro \cite{Fino} and some recent methods
of Finocchiaro, Fontana and Spirito \cite{FFS}, we show that the collection $Sub(M)$ of Heyting submodules of a Heyting module $M$ carries the structure of a spectral space. In other words, $Sub(M)$ is homeomorphic to the Zariski spectrum of a commutative ring. We show more generally that any $Sub^{\mathbf c}(M)$ is a spectral space, where  $Sub^{\mathbf c}(M)$  is the collection of submodules fixed by a closure operator $\mathbf c$ of finite type on $Sub(M)$ (see Definition \ref{D10.2} and Proposition \ref{P10.4}). In particular, it follows that the collection of hereditary submodules of a Heyting module $M$ is equipped with the structure of a spectral space. 

\section{Boolean modules}

\smallskip
By definition, a lattice $L$ is a partially ordered set $(L,\leq)$ such that every finite subset $S\subseteq L$ of elements 
of $L$ has a supremum (called the join $\vee S:=\underset{s\in S}{\vee}s$) and an infimum (called the meet $\wedge S:=\underset{s\in S}{\wedge}s$). Considering respectively the join and the meet of the empty subset, it follows that $L$ contains two distinguished elements $0$ and $1$ such that every $x\in L$ satisfies $0\leq x\leq 1$. 

\smallskip
A lattice is said to be distributive if it satisfies the following two conditions:

(1) For any $x$, $y$, $z\in L$, we have $x\wedge (y\vee z)=(x\wedge y)\vee (x\wedge z)$. 

(2) For any $x$, $y$, $z\in L$, we have $x\vee (y\wedge z)=(x\vee y)\wedge (x\vee z)$. 

\smallskip
In fact, it may be shown (see \cite[Lemma I.1.5]{PJT}) that a lattice $L$ satisfies  condition (1) if and only if it satisfies condition (2). Further, in a distributive lattice, it may be verified that given an element $x\in L$, there exists at most one element 
$y\in L$ such that $x\vee y=1$ and $x\wedge y=0$. Such an element (if it exists) is referred to as a complement of $x$. 

\begin{defn}\label{D2.0} Let $L$ be a distributive lattice. A module over $L$ consists of the following data:

\smallskip
(1) A join semilattice $M$, i.e., a partially ordered set $M$ such that every finite subset of $M$ has a join in $M$. 

\smallskip
(2) An action $f:L\times M\longrightarrow M$ that is order preserving in both variables and such that 
\begin{equation*}
\begin{array}{c}
f(1_L,m)=m\quad f(0_L,m)=0_M \quad f(x\wedge y,m)=f(x,f(y,m))\\
\end{array}
\end{equation*} for any $x$, $y\in L$ and $m\in M$. Here $0_L$ and $1_L$ are respectively the least element and largest element of 
$L$, while $0_M$ is the least element of the join semilattice $M$. For $x\in L$ and $m\in M$, we will typically denote 
by $x\wedge m$ the element $f(x,m)\in M$. 

\smallskip
Further, we will say that $M$ is a distributive module over $L$ if it satisfies
\begin{equation*}
f(x\vee y,m)=f(x,m)\vee f(y,m)\quad f(x,m\vee n)=f(x,m)\vee f(x,n)
\end{equation*}  for any $x$, $y\in L$ and $m$, $n\in M$.

\smallskip
A morphism $g:M\longrightarrow M'$ of modules over $L$ is a morphism of join semilattices such that $g(x\wedge m)=x\wedge g(m)$
for each $x\in L$, $m\in M$.
\end{defn}

\smallskip
\begin{defn}\label{D2.1}  (see \cite[$\S$ I.1.6]{PJT}) A Boolean algebra $B$ is a distributive lattice such that every element
$x\in B$ has a complement in $B$. This gives a unary operation on $B$,  which we will denote by $\righthalfcap_B:B
\longrightarrow B$.
\end{defn} 

It is clear that $\righthalfcap_B\circ\righthalfcap_B=id$ and $\righthalfcap_B(0)=1$. It can also be checked easily that $\righthalfcap_B:B
\longrightarrow B$ is order-reversing and satisfies De Morgan's laws:
\begin{equation}
\righthalfcap_B(x\vee y)=\righthalfcap_B(x)\wedge \righthalfcap_B(y)\qquad 
\righthalfcap_B(x\wedge y)=\righthalfcap_B(x)\vee \righthalfcap_B(y)
\end{equation} If $(B,\righthalfcap_B)$ and $(B',\righthalfcap_{B'})$ are Boolean algebras, the uniqueness of complements shows that a lattice morphism
$f:B\longrightarrow B'$ automatically satisfies $f(\righthalfcap_{B}(x))=\righthalfcap_{B'}(f(x))$ for each $x\in B$. We should also mention (see \cite[$\S$ 1.9]{PJT}) that the category of Boolean algebras is isomorphic to the category of Boolean rings. 

\smallskip
We will now introduce the concept of a Boolean module over a Boolean algebra. 

\begin{defn}\label{Bmodule} Let $(B,\righthalfcap_B)$ be a Boolean algebra. A Boolean module over  $(B,\righthalfcap_B)$ consists of the following data:

\smallskip
(1) A distributive module $M$ over the distributive lattice underlying $B$. 

\smallskip
(2) An operation $\righthalfcap_M:M\longrightarrow B$ satisfying the following conditions:
\begin{equation}\label{eq2.2}
\righthalfcap_M(0_M)=1_B \quad \righthalfcap_M(x\wedge m)=\righthalfcap_B(x)\vee \righthalfcap_M(m)\quad
\righthalfcap_M(m\vee n)=\righthalfcap_M(m)\wedge \righthalfcap_M(n)
\end{equation} for any $x\in B$ and $m$, $n\in M$. 

\smallskip
A morphism $g:(M,\righthalfcap_M)\longrightarrow (M',\righthalfcap_{M'})$ of $(B,\righthalfcap_B)$-modules is simply a morphism of modules over the distributive lattice underlying $B$.
\end{defn}

In Section 3, we will give one way of constructing Boolean modules by considering Heyting algebras that satisfy certain conditions. 

\smallskip
Throughout this section and the rest of this paper, we  try to minimize the use of subscripts whenever the meaning is clear from context. As such, we will   write $0$ for the least elements, $1$
for the largest elements and $\righthalfcap$ for  the unary operators  wherever applicable. 

\begin{lem}\label{L2.4} Let $M$ be a Boolean module over a Boolean algebra $B$. Then, the operation $\righthalfcap_M:
M\longrightarrow B$ is order reversing.
\end{lem}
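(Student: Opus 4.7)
The plan is to exploit the identity $\righthalfcap_M(m\vee n)=\righthalfcap_M(m)\wedge\righthalfcap_M(n)$ from condition \eqref{eq2.2}, which is the only one among the three defining relations that directly compares values of $\righthalfcap_M$ at different inputs of $M$. The key observation is the usual lattice-theoretic translation: in any join semilattice, $m\leq n$ is equivalent to $m\vee n=n$.

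So suppose $m,n\in M$ with $m\leq n$. Then $m\vee n=n$, so applying $\righthalfcap_M$ gives
\begin{equation*}
\righthalfcap_M(n)=\righthalfcap_M(m\vee n)=\righthalfcap_M(m)\wedge\righthalfcap_M(n).
\end{equation*}
Since $a\wedge b\leq a$ in any lattice, we conclude $\righthalfcap_M(n)\leq\righthalfcap_M(m)$, which is exactly the order-reversing property.

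There is no real obstacle here; the statement is essentially immediate from the third clause of \eqref{eq2.2} together with the standard characterization of the order via the join. The only thing to be careful about is invoking the correct defining identity (the middle identity involving $\righthalfcap_B$ is not needed, and neither is distributivity of the $B$-action), so the proof uses only the join-compatibility of $\righthalfcap_M$.
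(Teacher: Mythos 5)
Your proof is correct and is essentially identical to the paper's: both use $m\leq n\Leftrightarrow m\vee n=n$, apply the identity $\righthalfcap_M(m\vee n)=\righthalfcap_M(m)\wedge\righthalfcap_M(n)$ from \eqref{eq2.2}, and conclude via $a\wedge b\leq a$. Nothing further to add.
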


\begin{proof}
We consider $m$, $n\in M$ such that $m\leq n$, i.e., $m\vee n=n$. It follows that:
\begin{equation}
\righthalfcap(n)=\righthalfcap(m\vee n)=\righthalfcap(m)\wedge \righthalfcap(n)\leq \righthalfcap(m)
\end{equation} This proves the result. 
\end{proof} 

The next result shows that when we restrict to finite Boolean algebras, every distributive module is already a Boolean module.  

\begin{thm}\label{P2.5} Suppose that $(B,\righthalfcap_B)$ is a finite Boolean algebra and let $M$ be a distributive module over  $B$. Then, there is a canonical map $\righthalfcap_M:M\longrightarrow B$ making $(M,\righthalfcap_M)$ a Boolean module over $(B,\righthalfcap_B)$. 
\end{thm}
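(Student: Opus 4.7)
The plan is to define $\righthalfcap_M\colon M\to B$ as the pointwise maximal annihilator, namely
\begin{equation*}
\righthalfcap_M(m):=\bigvee\{\,x\in B\mid x\wedge m=0_M\,\},
\end{equation*}
which makes sense because $B$ is finite. The first step is to check that this supremum is actually attained, i.e.\ that $\righthalfcap_M(m)\wedge m=0_M$. This uses distributivity of the module action in the first variable: if $x_1,\dots,x_k$ each annihilate $m$, then $(x_1\vee\cdots\vee x_k)\wedge m=\bigvee_i(x_i\wedge m)=0_M$. Thus $\righthalfcap_M(m)$ is the largest element of $B$ whose action sends $m$ to $0_M$, which makes it the natural candidate for the negation operator.

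Next I would verify the three axioms of Definition \ref{Bmodule} in turn. For $\righthalfcap_M(0_M)=1_B$, the associativity axiom gives $f(x,0_M)=f(x,f(0_L,m))=f(x\wedge 0_L,m)=f(0_L,m)=0_M$ for every $x\in B$, so every $x$ annihilates $0_M$ and $\righthalfcap_M(0_M)=\bigvee B=1_B$. For $\righthalfcap_M(m\vee n)=\righthalfcap_M(m)\wedge\righthalfcap_M(n)$, distributivity of the action in the second variable gives $x\wedge(m\vee n)=(x\wedge m)\vee(x\wedge n)$, so $x$ annihilates $m\vee n$ iff it annihilates each of $m$ and $n$. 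The inequality $\leq$ follows because $\righthalfcap_M(m\vee n)\wedge m\leq\righthalfcap_M(m\vee n)\wedge(m\vee n)=0_M$ and similarly for $n$, while $\geq$ follows because $\righthalfcap_M(m)\wedge\righthalfcap_M(n)$ annihilates $m$ and $n$ separately, hence their join.

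The main step, where the Boolean hypothesis genuinely enters, is the identity $\righthalfcap_M(x\wedge m)=\righthalfcap_B(x)\vee\righthalfcap_M(m)$. For $\geq$, observe that $\righthalfcap_B(x)$ annihilates $x\wedge m$ since $\righthalfcap_B(x)\wedge x=0_B$, and $\righthalfcap_M(m)$ annihilates $x\wedge m$ because it already annihilates $m$. For the reverse inequality, given any $y\in B$ with $y\wedge(x\wedge m)=(y\wedge x)\wedge m=0_M$, the definition of $\righthalfcap_M(m)$ forces $y\wedge x\leq\righthalfcap_M(m)$; the Boolean identity $y=(y\wedge x)\vee(y\wedge\righthalfcap_B(x))$ then gives $y\leq\righthalfcap_M(m)\vee\righthalfcap_B(x)$.

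The main potential obstacle is purely notational: one has to carefully distinguish meets in $B$ from the action of $B$ on $M$, and to invoke the associativity axiom $f(x\wedge y,m)=f(x,f(y,m))$ at exactly the right moments. Finiteness of $B$ is used only to guarantee that the defining join exists, and the Boolean hypothesis is used only through the complementation step of the last axiom; otherwise the argument is formal.
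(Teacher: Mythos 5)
Your proposal is correct and follows essentially the same route as the paper: the same definition $\righthalfcap_M(m)=\bigvee\{x\in B\mid x\wedge m=0_M\}$, the same use of distributivity to see the join is attained, and the same Boolean complementation step for the axiom $\righthalfcap_M(x\wedge m)=\righthalfcap_B(x)\vee\righthalfcap_M(m)$ (the paper cites the lattice identity $p\wedge q\leq r\Leftrightarrow p\leq\righthalfcap(q)\vee r$ where you prove it via $y=(y\wedge x)\vee(y\wedge\righthalfcap_B(x))$, which is the same fact). The only cosmetic difference is that you argue by pairs of inequalities where the paper writes chains of equivalences.
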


\begin{proof} Since $B$ is finite, for any $m\in M$, we can set $\righthalfcap_M(m)$ to be the finite join:
\begin{equation}
\righthalfcap_M(m):=\bigvee \{\mbox{$x\in B$ $\vert$ $x\wedge m=0$ }\}\in B
\end{equation} Since $\_\_\wedge m: B\longrightarrow M$ preserves finite joins, it is clear that $\righthalfcap(m)\wedge m=0$. 
As such, we see that
\begin{equation}
x\wedge m =0 \qquad\Leftrightarrow \qquad x\leq \righthalfcap(m)
\end{equation} for any $x\in B$. It now follows that for any $y\in B$ we have:
\begin{equation}\label{eq2.5}
y\leq \righthalfcap (x\wedge m)\quad\Leftrightarrow \quad y\wedge x\wedge m=0 \quad\Leftrightarrow \quad y\wedge x\leq \righthalfcap(m)
\end{equation} For any elements $p$, $q$, $r$ in a Boolean algebra, it may be verified easily that $p\wedge q\leq r$ 
is equivalent to $p\leq \righthalfcap(q)\vee r$. Applying this in \eqref{eq2.5} we get
\begin{equation}
y\leq \righthalfcap (x\wedge m) \quad\Leftrightarrow\quad y\leq \righthalfcap(x)\vee \righthalfcap(m)
\end{equation} for any $y\in B$. This gives $ \righthalfcap (x\wedge m) = \righthalfcap(x)\vee \righthalfcap(m)$. On the other hand, for $m$, $n\in M$, we have:
\begin{equation}
\begin{array}{ll}
y\leq \righthalfcap (m\vee n) & \Leftrightarrow\quad y\wedge (m\vee n)=0\\
&\Leftrightarrow\quad \mbox{$(y\wedge m)=0$ and $(y\wedge n=0)$}\\
&\Leftrightarrow\quad \mbox{$(y\leq \righthalfcap(m))$ and $(y\leq \righthalfcap(n))$}\\
&\Leftrightarrow\quad y\leq \righthalfcap(m)\wedge \righthalfcap(n)\\
\end{array}
\end{equation} Hence, we have $ \righthalfcap (m\vee n) =\righthalfcap(m)\wedge \righthalfcap(n)$. This proves the 
result. 

\end{proof}

\section{Heyting Modules}

\smallskip
We begin by recalling the notion of a Heyting algebra, which generalizes the concept of a Boolean algebra. A partially ordered set $(L,\leq )$ may be viewed as a category $L$ whose objects are the elements of $L$ and such that there is a single morphism
$x\longrightarrow y$ in $L$ whenever $x\leq y\in L$. When $L$ is a lattice, for each fixed $y\in L$, the association 
$x\mapsto x\wedge y$, $\forall$ $x\in L$ defines a functor from $L$ to $L$. 

\begin{defn}\label{D3.1} (see \cite[$\S$ I.1.10]{PJT})  A Heyting algebra $H$ is a lattice such that for each fixed $y\in H$, the functor defined by the association $x\mapsto x\wedge y$,  $\forall$ $x\in H$ has a right adjoint. In other words, for any elements $y$, $z\in H$, there is an element $(y\rightarrow z)_H\in H$ such that 
\begin{equation*}
 x\wedge y\leq z\quad\Leftrightarrow\quad  x\leq (y\rightarrow z)_H
\end{equation*} for any $x\in H$. 
\end{defn}

In general, every Heyting algebra is distributive. For $y\in H$, the element $\righthalfcap_H(y):=(y\rightarrow 0)_H\in H$ is referred to as the negation of $y$. If the operation $\righthalfcap_H$ satisfies
$\righthalfcap_H \righthalfcap_H(y)=y$ for every $y\in H$, then $H$ becomes a Boolean algebra (see \cite[Lemma I.1.11]{PJT}). 

\smallskip
We are now ready to introduce the concept of a Heyting module. 

\begin{defn}\label{DD3.2}
Let $H$ be a Heyting algebra. A Heyting module over $H$ consists of the following data:

\smallskip
(1) A distributive module $M$ over the distributive lattice underlying $H$. 

\smallskip
(2) For each $m\in M$, the functor $H\longrightarrow M$ defined by the association $x\mapsto x\wedge m$ has a right adjoint. In other words, for any $m$, $n\in M$, there is an element $(m\rightarrow n)_M\in H$ such that 
\begin{equation*}
x\wedge m\leq n\quad\Leftrightarrow \quad x\leq (m\rightarrow n)_M
\end{equation*} for any $x\in H$. 

\smallskip
A morphism of Heyting modules is simply a morphism of modules over the distributive lattice underlying $H$. We denote by $Heymod_H$ the category of Heyting modules over a Heyting algebra $H$. 
\end{defn}

Given a Heyting module $M$ and an element $m\in M$, we define the negation $\righthalfcap_M(m):=(m\rightarrow 0)_M$. Again, we will generally omit the subscripts whenever the meaning is clear from context. 

\begin{thm}\label{P3.25}
Let $f:M\longrightarrow N$ be a morphism of Heyting modules over a Heyting algebra $H$. Suppose that $f:M\longrightarrow N$
has a right adjoint $g:N\longrightarrow M$. Then, for any $m\in M$ and $n\in N$, we have $(f(m)\rightarrow n)_{N}=(m
\rightarrow g(n))_M$ in $H$.
\end{thm}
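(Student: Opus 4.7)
Both sides of the claimed identity are elements of $H$, each pinned down by a universal property, so the plan is to show that they represent the same functor on $H$, i.e.\ that $x\leq (f(m)\rightarrow n)_N \Leftrightarrow x\leq (m\rightarrow g(n))_M$ for every $x\in H$, and then conclude by antisymmetry of $\leq$ on $H$ (taking $x$ to be either side in turn).

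The main step is the chain of equivalences
\begin{equation*}
x \leq (f(m)\rightarrow n)_N \;\Leftrightarrow\; x\wedge f(m) \leq n \;\Leftrightarrow\; f(x\wedge m) \leq n \;\Leftrightarrow\; x\wedge m \leq g(n) \;\Leftrightarrow\; x \leq (m\rightarrow g(n))_M.
\end{equation*}
The first and last equivalences are just the defining adjunctions for the Heyting module operations on $N$ and $M$ respectively, as in Definition \ref{DD3.2}. The second equivalence uses that $f$, being a morphism of modules over the underlying lattice of $H$, commutes with the $H$-action: $f(x\wedge m)=x\wedge f(m)$. The third, central equivalence is precisely the hypothesis that $g$ is right adjoint to $f$, interpreted between the posets $M$ and $N$ viewed as categories, which unpacks to $f(u)\leq n \Leftrightarrow u\leq g(n)$ for all $u\in M$ and $n\in N$.

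There is no substantive obstacle here; the statement is really a Yoneda-type assembly of three separate adjunctions (the Heyting structures on $M$ and $N$ and the hypothesized $f\dashv g$) glued together by the $H$-linearity of $f$. I would present the proof as exactly the displayed chain above, with one short sentence attributing each equivalence to its source, and close with the antisymmetry argument in a single line.
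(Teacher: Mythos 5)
Your proof is correct and is essentially the paper's argument: the paper observes that $\_\_\wedge f(m)=f\circ(\_\_\wedge m)$ and invokes the fact that the right adjoint of a composite is the composite of the right adjoints, which is exactly your chain of equivalences written in abstract form. Your elementwise unpacking (plus the closing antisymmetry step) is just the poset-level verification of that same composition of adjunctions, so there is nothing substantively different to compare.
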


\begin{proof}
Since $f:M\longrightarrow N$ is a morphism of  Heyting modules, we know that for any $m\in M$, the functor $\_\_\wedge f(m):H
\longrightarrow N$ can be expressed as the composition
\begin{equation}
\_\_\wedge f(m) = f\circ (\_\_\wedge m):H\longrightarrow N
\end{equation} As such, the right adjoint of $\_\_\wedge f(m)$ is equal to the right adjoint $g:N\longrightarrow M$
composed with the right adjoint $(m\rightarrow \_\_)_M:M\longrightarrow H$. 
\end{proof}

\begin{lem}\label{L3.3} Let $M$ be a Heyting module over a Heyting algebra $H$. Then:

\smallskip
(a) For $m$, $m'$, $n\in M$ we have
\begin{equation}
(m\rightarrow n)\wedge (m'\rightarrow n)=((m\vee m')\rightarrow n)\qquad 
\end{equation} In particular, we know that $\righthalfcap(m\vee m')=\righthalfcap(m)\wedge \righthalfcap(m')$. 

\smallskip
(b) For $x\in H$ and $m$, $n\in M$ we have
\begin{equation}
((x\wedge m)\rightarrow n) = (x\rightarrow (m\rightarrow n))
\end{equation} In particular, we know that $\righthalfcap(x\wedge m)=(x\rightarrow \righthalfcap(m))$. 

\smallskip
(c) For each $n\in M$, the association  $(\_\_\rightarrow n): M\longrightarrow H$ is order reversing. 

\smallskip
(d) $\righthalfcap_M(0_M)=1_H$. 
\end{lem}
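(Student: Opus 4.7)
The plan is to exploit the defining adjunction $x\wedge m\leq n \Leftrightarrow x\leq (m\rightarrow n)_M$ in conjunction with the Yoneda principle for posets: two elements of $H$ coincide iff they have the same down-set, so to establish an equality $\alpha=\beta$ in $H$ it suffices to verify $x\leq \alpha \Leftrightarrow x\leq \beta$ for every $x\in H$. This reduces each assertion to a chain of biconditionals involving only the module axioms (distributivity, unit, and associativity of the $H$-action) and the Heyting algebra adjunction on $H$ itself.

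For (a), I would test both sides against an arbitrary $x\in H$. Unfolding $x\leq ((m\vee m')\rightarrow n)$ through the module adjunction gives $x\wedge (m\vee m')\leq n$; the distributivity axiom $f(x,m\vee m')=f(x,m)\vee f(x,m')$ rewrites this as the conjunction of $x\wedge m\leq n$ and $x\wedge m'\leq n$; reapplying the module adjunction on each conjunct and invoking the universal property of the meet in $H$ lands at $x\leq (m\rightarrow n)\wedge (m'\rightarrow n)$. Specializing $n=0_M$ recovers $\righthalfcap(m\vee m')=\righthalfcap(m)\wedge \righthalfcap(m')$. For (b), the analogous chain uses the associativity axiom $f(x\wedge y,m)=f(x,f(y,m))$ to regroup $y\wedge (x\wedge m)\leq n$ as $(y\wedge x)\wedge m\leq n$, then applies the module adjunction to get $y\wedge x\leq (m\rightarrow n)$, and finally the Heyting algebra adjunction on $H$ to land at $y\leq (x\rightarrow (m\rightarrow n))_H$. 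Setting $n=0_M$ yields the stated formula for $\righthalfcap(x\wedge m)$.

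Part (c) is then immediate from (a): if $m\leq m'$ then $m\vee m'=m'$, so (a) gives $(m'\rightarrow n)=(m\rightarrow n)\wedge (m'\rightarrow n)\leq (m\rightarrow n)$. For (d), I would again pass through the adjunction: $(0_M\rightarrow 0_M)=1_H$ is equivalent to the claim that $x\wedge 0_M\leq 0_M$ for every $x\in H$. Since $x\leq 1_H$ and the action is order-preserving in the first variable, $x\wedge 0_M\leq 1_H\wedge 0_M$, and the unit axiom $f(1_H,0_M)=0_M$ closes the inequality.

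The main obstacle, such as it is, lies in (d): the identity $f(x,0_M)=0_M$ is not stated as a module axiom and must be extracted from the combination of order-preservation in the first slot and the unit axiom, rather than from the distributivity axiom (which only governs binary joins). Once this is noticed, the rest of the lemma is routine adjunction calculus streamlined by the Yoneda principle.
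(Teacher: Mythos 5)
Your proposal is correct and follows essentially the same route as the paper: parts (a) and (b) are proved by exactly the same chains of biconditionals testing against an arbitrary element of $H$, (c) is deduced from (a), and (d) reduces to $1_H\wedge 0_M=0_M$. Your extra care in (d) about $f(x,0_M)=0_M$ not being an axiom is a fair observation, but the paper sidesteps it by noting that only the single inequality $1_H\wedge 0_M\leq 0_M$ (the unit axiom) is needed to force $(0_M\rightarrow 0_M)=1_H$.
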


\begin{proof} (a) For any $x\in H$, we see that
\begin{equation}
\begin{array}{ll}
x\leq ((m\vee m')\rightarrow n)& \Leftrightarrow\quad (x\wedge m)\vee (x\wedge m')\leq n \\
&\Leftrightarrow\quad  (x\wedge m)\leq n \quad \&\quad (x\wedge m')\leq n\\
&\Leftrightarrow\quad  (x\leq (m\rightarrow n))\quad\&\quad (x\leq (m'\rightarrow n))\\
&\Leftrightarrow\quad  x\leq (m\rightarrow n)\wedge (m'\rightarrow n)\\
\end{array}
\end{equation}

\smallskip
(b) For any $y\in H$, we see that
\begin{equation}
y\leq ((x\wedge m)\rightarrow n) \quad\Leftrightarrow\quad  (y\wedge x\wedge m)\leq n  \quad\Leftrightarrow\quad
y\wedge x\leq (m\rightarrow n) \quad\Leftrightarrow\quad y\leq (x\rightarrow (m\rightarrow n))
\end{equation} The result of (c) is an immediate consequence of (a). The result of (d) is clear from the fact that $1_H\wedge 0_M=0_M$. 

\end{proof}

\begin{thm}\label{P3.4} Let $M$ be a Heyting module over a Heyting algebra $H$. Then, if $(H,\righthalfcap_H)$ is a Boolean algebra, $(M,\righthalfcap_M)$ is a Boolean module over $H$. 
\end{thm}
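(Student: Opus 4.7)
The plan is to verify the three defining conditions of a Boolean module (Definition \ref{Bmodule}) using the structural results already collected in Lemma \ref{L3.3} together with the fact that in a Boolean algebra the Heyting implication takes a simple closed form.

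First I would recall that for any Heyting module $M$ over $H$, the negation $\righthalfcap_M(m) := (m \rightarrow 0)_M$ is an element of $H$, so $\righthalfcap_M : M \longrightarrow H$ is already a well-defined map of the required type. Since the action of $H$ on $M$ is distributive by hypothesis, condition (1) of Definition \ref{Bmodule} is immediate. Of the three relations in \eqref{eq2.2}, two are already in hand: $\righthalfcap_M(0_M) = 1_H$ is Lemma \ref{L3.3}(d), and $\righthalfcap_M(m \vee n) = \righthalfcap_M(m) \wedge \righthalfcap_M(n)$ is Lemma \ref{L3.3}(a).

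The only identity that needs new input is $\righthalfcap_M(x \wedge m) = \righthalfcap_H(x) \vee \righthalfcap_M(m)$. By Lemma \ref{L3.3}(b), $\righthalfcap_M(x \wedge m) = (x \rightarrow \righthalfcap_M(m))_H$, so it suffices to prove the purely Heyting-algebra identity
\begin{equation*}
(y \rightarrow z)_H = \righthalfcap_H(y) \vee z \qquad \text{for every } y,z \in H,
\end{equation*}
under the assumption that $H$ is Boolean. For this I would verify both inequalities by the adjunction $w \wedge y \leq z \Leftrightarrow w \leq (y \rightarrow z)_H$: in one direction, $(\righthalfcap_H(y) \vee z) \wedge y = (\righthalfcap_H(y) \wedge y) \vee (z \wedge y) = z \wedge y \leq z$ using $\righthalfcap_H(y) \wedge y = 0$; in the other, $w = w \wedge (y \vee \righthalfcap_H(y)) = (w \wedge y) \vee (w \wedge \righthalfcap_H(y)) \leq z \vee \righthalfcap_H(y)$ using $y \vee \righthalfcap_H(y) = 1$. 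Both steps are exactly where the Booleanness of $H$ is used, and neither is available for an arbitrary Heyting algebra.

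Once this Boolean identity is established, applying it with $y = x$ and $z = \righthalfcap_M(m)$ converts Lemma \ref{L3.3}(b) into the remaining relation of \eqref{eq2.2}, finishing the verification. There is no real obstacle here: the work was already done in Lemma \ref{L3.3}, and the only new ingredient is the standard Boolean collapse of Heyting implication into a join with the complement.
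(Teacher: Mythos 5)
Your proposal is correct and follows essentially the same route as the paper: both reduce everything to Lemma \ref{L3.3} and the Boolean identity $(y\rightarrow z)_H=\righthalfcap_H(y)\vee z$, the only difference being that you verify this identity directly from the adjunction while the paper simply cites it from Johnstone. No gaps.
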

\begin{proof}
Since the Heyting algebra $H$ is actually Boolean, we know (see \cite[$\S$ I.1.10]{PJT}) that $(x\rightarrow y)_H=\righthalfcap_H(x)\vee y$ for any elements $x$, $y\in H$. From Lemma \ref{L3.3}(b) it now follows that for $x\in H$, $m\in M$, we have:
\begin{equation}
\righthalfcap_M(x\wedge m)=(x\wedge m\rightarrow 0)=(x\rightarrow \righthalfcap_M(m))=\righthalfcap_H(x)\vee \righthalfcap_M(m)
\end{equation} The other conditions mentioned in \eqref{eq2.2} that $(M,\righthalfcap_M)$ must satisfy in order to be a Boolean module are also clear from Lemma \ref{L3.3}. 
\end{proof}

We will now show that when $H$ is a finite Heyting algebra, every distributive module is already a Heyting module. 

\begin{thm}\label{P3.5}  Suppose that $H$ is a finite Heyting algebra and let $M$ be a distributive module over  $H$. Then, there is a canonical map $(\_\_\rightarrow\_\_)_M:M\times M\longrightarrow H$ making $M$ a Heyting module over $H$. 
\end{thm}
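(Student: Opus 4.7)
The plan is to imitate the construction used in the proof of Proposition \ref{P2.5}. Since $H$ is finite, for any $m,n \in M$ I would define
\begin{equation*}
(m \rightarrow n)_M \;:=\; \bigvee\{x \in H \mid x \wedge m \leq n\} \;\in\; H,
\end{equation*}
which exists because the set on the right is a finite subset of $H$. The task is then to verify that this formula realizes the right adjoint of the functor $\_\_\wedge m : H \longrightarrow M$, i.e.\ that it satisfies the adjunction property demanded by Definition \ref{DD3.2}.

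For the forward direction of the adjunction, if $x \wedge m \leq n$ then $x$ belongs to the indexing set defining the join, so trivially $x \leq (m \rightarrow n)_M$. For the converse direction, I would invoke the distributive module axioms from Definition \ref{D2.0}: the conditions $x \wedge (m) = f(x,m)$ satisfies $f(x \vee y, m) = f(x,m) \vee f(y,m)$ and $f(0_H, m) = 0_M$ together imply that $\_\_\wedge m : H \longrightarrow M$ preserves all finite joins. Applied to the join defining $(m \rightarrow n)_M$, this yields
\begin{equation*}
(m \rightarrow n)_M \wedge m \;=\; \bigvee_{\substack{x \in H \\ x \wedge m \leq n}} (x \wedge m) \;\leq\; n.
\end{equation*}
Since the action is also order-preserving in the $H$-variable, any $x \leq (m \rightarrow n)_M$ then satisfies $x \wedge m \leq (m \rightarrow n)_M \wedge m \leq n$, which gives the required equivalence.

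I do not anticipate any serious obstacle. Conceptually the argument is a special case of the adjoint functor theorem for posets: a join-preserving functor between posets automatically admits a right adjoint whenever the relevant joins exist, and the finiteness of $H$ guarantees exactly this. The only mild point of care is verifying that the distributive module axioms yield preservation of \emph{all} finite joins (including the empty one) of the functor $\_\_\wedge m$, but this is immediate from combining binary-join preservation with $0_H \wedge m = 0_M$. The canonicity of the construction is clear from the fact that the right adjoint, if it exists, is uniquely determined by the left adjoint up to equality in the poset $H$.
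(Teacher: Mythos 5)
Your proposal is correct and takes exactly the same route as the paper: the paper defines $(m\rightarrow n)_M$ by the same finite join $\bigvee\{x\in H \mid x\wedge m\leq n\}$ and appeals to the adjoint functor theorem for join semilattices (cf.\ Remark \ref{XR3.7}(1)), which is precisely the verification you spell out. Your explicit check that $\_\_\wedge m$ preserves finite joins, so that $(m\rightarrow n)_M\wedge m\leq n$, is the content the paper leaves implicit.
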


\begin{proof} We fix some $m\in M$. Then, the  right adjoint of the functor 
$\_\_\wedge m : H\longrightarrow M$ is given by setting
\begin{equation}\label{e3.7pg}
(m\rightarrow n)_M:=\bigvee\{\mbox{$x\in H$ $\vert$ $x\wedge m\leq n$}\}
\end{equation} for each $n\in M$. This gives $M$ the canonical structure of a Heyting module.

\end{proof}

\begin{cor}\label{C3.6ty} Suppose that $H$ is a finite Heyting algebra and $g:M\longrightarrow N$ is a morphism in $Heymod_H$. Then, for any
$m$, $m'\in M$, we have $(m\rightarrow m')_M\leq (g(m)\rightarrow g(m'))_{M'}\in H$.
\end{cor}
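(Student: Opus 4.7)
The plan is to use the defining adjunction of the Heyting module structure rather than the explicit formula (\ref{e3.7pg}), since the argument is cleaner and does not require unpacking joins. First note that by Proposition \ref{P3.5}, since $H$ is finite, both $(m\rightarrow m')_M$ and $(g(m)\rightarrow g(m'))_N$ are well-defined, and satisfy the adjunction property of Definition \ref{DD3.2}. (The statement as written has a typo: $(g(m)\to g(m'))$ lives in $H$ and is associated to $N$, not $M'$.)

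I would proceed as follows. Taking $x=(m\rightarrow m')_M\in H$ in the adjunction on $M$ yields
\begin{equation*}
(m\rightarrow m')_M \wedge m \leq m'
\end{equation*}
in $M$. Apply $g$ to both sides: since $g$ is a morphism of modules over the underlying lattice of $H$, it is order-preserving and $H$-linear, so
\begin{equation*}
(m\rightarrow m')_M \wedge g(m) = g\bigl((m\rightarrow m')_M \wedge m\bigr) \leq g(m')
\end{equation*}
in $N$. Now invoke the adjunction on $N$ in the other direction: this inequality is exactly the condition that $(m\rightarrow m')_M \leq (g(m)\rightarrow g(m'))_N$ in $H$.

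There is essentially no obstacle here; the only subtlety is keeping straight which Heyting module supplies each adjunction (first $M$ to evaluate, then $N$ to conclude) and using that morphisms in $Heymod_H$ preserve the $H$-action on the nose, so that $g$ commutes with $\_\_\wedge x$ for every $x\in H$. Finiteness of $H$ enters only via Proposition \ref{P3.5}, which guarantees that $N$ carries a canonical Heyting module structure so that the target expression makes sense.
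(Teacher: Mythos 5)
Your proof is correct. The paper argues instead from the explicit formula \eqref{e3.7pg}: it writes $(m\rightarrow m')_M$ as the join of all $x\in H$ with $x\wedge m\leq m'$, observes that each such $x$ satisfies $x\wedge g(m)=g(x\wedge m)\leq g(m')$ and hence $x\leq (g(m)\rightarrow g(m'))_N$, and concludes by taking the join. Your version replaces this element-by-element unfolding with a single application of the counit ($(m\rightarrow m')_M\wedge m\leq m'$) followed by the adjunction on $N$; the two arguments hinge on the same facts (that $g$ is $H$-linear and order-preserving), but yours is cleaner and, more significantly, does not use finiteness of $H$ at all: since $M$ and $N$ are assumed to be objects of $Heymod_H$, both implication operations already exist by Definition \ref{DD3.2}, so your argument proves the inequality for any Heyting algebra $H$ and any morphism of Heyting modules. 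The paper's proof, by contrast, is tied to the finite case because it invokes the join formula \eqref{e3.7pg} from Proposition \ref{P3.5}. You are also right that the subscript $M'$ in the statement should read $N$.
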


\begin{proof} From \eqref{e3.7pg}, we know that $(m\rightarrow m')_M$ is the join of all the elements $x\in H$ such that
$x\wedge m\leq m'$. Since $g$ is a morphism in $Heymod_H$, $x\wedge m\leq m'$ $\Rightarrow$ $x\wedge g(m)=g(x\wedge m)\leq g(m')$. 
The result is now clear.  

\end{proof}

\begin{rem}\label{XR3.7}\emph{(1) The proof of Proposition \ref{P3.5} is a special case of the more general Adjoint Functor Theorem 
for join semilattices (see, for instance, \cite[$\S$ I.4.2]{PJT}).}

\smallskip
\emph{(2) It is known (see, for instance, \cite[Exercise I.1.12(ii)]{PJT}) that every finite distributive lattice is a Heyting
algebra. Conversely, we know that every Heyting algebra is distributive and a lattice. As such, a finite Heyting algebra is simply
a finite distributive lattice.}

\smallskip
\emph{(3) It is easy to give examples of finite Heyting algebras which are not finite Boolean algebras. For instance, we may consider any finite totally
ordered set $H=\{0=x_1<x_2<x_3 .... <x_k=1 \}$. Then, $H$ becomes a Heyting algebra (see \cite[$\S$ I.1.12]{PJT}) by setting}
\begin{equation}
(x_m\rightarrow x_n)_H:=\left\{\begin{array}{ll} 1 & \mbox{if $m\leq n$} \\ x_n & \mbox{otherwise}\\
\end{array}\right.
\end{equation}
\emph{Then, we see that every $x_m\ne 0$ satisfies $\righthalfcap\righthalfcap x_m=1$. As such, $H$ cannot be a Boolean algebra for any $k\geq 3$. }

\end{rem}

We record here
the following fact. 

\begin{thm}\label{P4.5} (a) In the category $Heymod_H$ of Heyting modules over a Heyting algebra $H$, a morphism $g:M\longrightarrow N$ is a monomorphism $\Leftrightarrow$ the underlying map is injective. 

\smallskip
(b) If $g:M\longrightarrow N$ is a monomorphism in $Heymod_H$, then  for any
$m$, $m'\in M$, we have $(m\rightarrow m')_M= (g(m)\rightarrow g(m'))_{N}\in H$. 
\end{thm}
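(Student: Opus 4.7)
The plan is to handle (a) first, and then to deduce (b) from the injectivity established in (a).

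\textbf{Part (a).} The implication injective $\Rightarrow$ monomorphism is the usual set-theoretic argument: if $g\circ t_1=g\circ t_2$ for $t_1,t_2:X\longrightarrow M$ in $Heymod_H$, then $g(t_1(x))=g(t_2(x))$ for each $x$, so $t_1(x)=t_2(x)$ by injectivity. For the converse, I would introduce, for each $m\in M$, the map $\phi_m:H\longrightarrow M$ defined by $\phi_m(x):=x\wedge m$. First I would verify that $\phi_m$ is a morphism in $Heymod_H$: distributivity of the action gives preservation of finite joins, the axiom $0_H\wedge m=0_M$ gives preservation of the bottom element, and the associativity axiom $(x\wedge y)\wedge m=x\wedge(y\wedge m)$ gives compatibility with the $H$-action. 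Now, assuming $g$ is a monomorphism and $g(m_1)=g(m_2)$, one has $g\circ\phi_{m_1}=g\circ\phi_{m_2}$ since at any $x\in H$ both sides equal $x\wedge g(m_1)=x\wedge g(m_2)$. The monomorphism hypothesis forces $\phi_{m_1}=\phi_{m_2}$, and evaluating at $1_H$ yields $m_1=1_H\wedge m_1=1_H\wedge m_2=m_2$.

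\textbf{Part (b).} Set $a=(m\rightarrow m')_M$ and $b=(g(m)\rightarrow g(m'))_N$. By the defining adjunction, $a\wedge m\leq m'$ in $M$, and applying $g$ gives $a\wedge g(m)=g(a\wedge m)\leq g(m')$, so $a\leq b$ in $H$ by the adjunction in $N$. For the reverse inequality, the adjunction in $N$ gives $b\wedge g(m)\leq g(m')$, that is, $g(b\wedge m)\leq g(m')$. Since $g$ preserves finite joins, $g((b\wedge m)\vee m')=g(b\wedge m)\vee g(m')=g(m')$, and injectivity from (a) then forces $(b\wedge m)\vee m'=m'$, whence $b\wedge m\leq m'$ and therefore $b\leq a$.

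\textbf{Main obstacle.} The substantive step is the construction of the probe morphisms $\phi_m:H\longrightarrow M$ in part (a); these play the role of ``free module on one generator'' and are what reduce categorical monicity to pointwise injectivity. Once (a) is in hand, part (b) is essentially the observation that an injective join-semilattice morphism reflects the order, combined with the naturality of the Heyting implication under the adjunction; no finiteness hypothesis on $H$ is needed, so this strengthens the inequality of Corollary \ref{C3.6ty} to an equality exactly when $g$ is monic.
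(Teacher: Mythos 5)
Your proof is correct and follows essentially the same route as the paper: part (a) uses the probe morphisms $c\mapsto c\wedge m$ from $H$ (the paper's $g_m$) and evaluation at $1_H$, and part (b) is the paper's observation that an injective morphism identifies $M$ with a submodule of $N$ so that $x\wedge m\leq m'$ holds iff $x\wedge g(m)\leq g(m')$, which you simply spell out via the order-reflection argument. No gaps.
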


\begin{proof} (a) The $\Leftarrow$ implication is obvious. On the other hand, for any element $m\in M$, we can consider
the morphism $g_m:H\longrightarrow M$ in $Heymod_H$ taking each $c\in H$ to $c\wedge m\in M$. Then, if $m$, $m'\in M$ are
such that $g(m)=g(m')\in N$, then $g\circ g_m=g\circ g_{m'}$. If $g$ is a monomorphism, this implies $g_m=g_{m'}$ and hence
$m=g_m(1_H)=g_{m'}(1_H)=m'$. 

\smallskip
(b) From part (a), it follows that $M\subseteq N$. As such, for $m$, $m'\in M$, we have $x\wedge m\leq m'$ if and only if 
$x\wedge g(m)\leq g(m')$. The result now follows from the definitions. 
\end{proof}

\begin{thm}\label{Pp3.65} Let $M$ be a Heyting module over a Heyting algebra $H$. Let $N\subseteq M$ be a distributive submodule of
$M$ over the lattice $H$. Then, $N$ is a Heyting module over $H$ and $(n\rightarrow n')_N=(n\rightarrow n')_M$ for $n$, $n'\in N$. 
\end{thm}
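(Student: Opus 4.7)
The plan is to verify directly that the element $(n\rightarrow n')_M\in H$, already provided by the ambient Heyting module structure on $M$, serves as the right-adjoint value $(n\rightarrow n')_N$ inside $N$. Since $N$ is by assumption a distributive submodule of $M$ over $H$, the $H$-action restricts to an action on $N$, and for each $n\in N$ the functor $\_\_\wedge n:H\longrightarrow N$ is simply the co-restriction of $\_\_\wedge n:H\longrightarrow M$ to $N$; this is well-defined precisely because $x\wedge n\in N$ for every $x\in H$. Moreover, the partial order on $N$ is the restriction of the partial order on $M$, so for $n,n'\in N$ and $x\in H$ we have $x\wedge n\leq_N n'$ if and only if $x\wedge n\leq_M n'$.

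With this in place, the adjointness characterization of $(n\rightarrow n')_M$ in $H$ gives, for every $x\in H$,
\begin{equation*}
x\wedge n\leq_N n'\quad\Longleftrightarrow\quad x\wedge n\leq_M n'\quad\Longleftrightarrow\quad x\leq (n\rightarrow n')_M.
\end{equation*}
This is exactly the universal property required of $(n\rightarrow n')_N$, so the right adjoint of $\_\_\wedge n:H\longrightarrow N$ exists at $n'$ and its value is $(n\rightarrow n')_M$. Thus $N$ satisfies condition (2) of Definition \ref{DD3.2}, and condition (1) holds by hypothesis. We conclude that $N$ is itself a Heyting module over $H$ and that $(n\rightarrow n')_N=(n\rightarrow n')_M$.

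There is essentially no obstacle beyond the bookkeeping above; the only point that deserves a brief mention is the well-definedness $x\wedge n\in N$, which is built into the definition of a submodule, together with the fact that $N$ inherits its order from $M$ (so joins and the relation $\leq$ computed in $N$ coincide with those computed in $M$ whenever both sides lie in $N$). Once these are noted, the adjunction transfers tautologically, and the identification of the internal hom elements follows from the uniqueness of right adjoints evaluated at a fixed object.
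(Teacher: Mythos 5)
Your proposal is correct and follows essentially the same argument as the paper: both define $(n\rightarrow n')_N:=(n\rightarrow n')_M$ and observe that, since the partial order on $N$ is induced from $M$, the adjunction $x\wedge n\leq n'\Leftrightarrow x\leq (n\rightarrow n')_M$ transfers verbatim to $N$. The extra remarks on well-definedness of the restricted action are harmless elaborations of the same idea.
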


\begin{proof}
We set $(n\rightarrow n')_N:=(n\rightarrow n')_M$ for $n$, $n'\in N$. Since the partial ordering on $N\subseteq M$ is induced by the partial 
ordering on $M$, we get $c\wedge n\leq n'$ $\Leftrightarrow$ $c\leq (n\rightarrow n')_M=(n\rightarrow n')_N$. This proves the result. 
\end{proof}

\begin{defn}\label{D3.7} (see \cite[$\S$ 1.13]{PJT}) An element $x$ in a Heyting algebra $H$ is said to be regular if it satisfies
$\righthalfcap \righthalfcap (x)=x$. The collection  of regular elements of $H$ is denoted by $H_{\righthalfcap\righthalfcap}$.

\end{defn}

The regular elements $H_{\righthalfcap\righthalfcap}$ of a Heyting algebra $H$ form a Boolean algebra. The complementation 
in $H_{\righthalfcap\righthalfcap}$ coincides with  the negation on $H$. While meets in $H_{\righthalfcap\righthalfcap}$  always coincide with meets in  $H$, the same is not necessarily true for joins in $H_{\righthalfcap\righthalfcap}$. In fact, $H_{\righthalfcap\righthalfcap}$ is a sublattice of $H$ if and only if the negation operator
 satisfies $\righthalfcap (x\wedge y)=\righthalfcap(x)\vee \righthalfcap(y)$ for every $x$, $y\in H$. We recall that the dual  
 of this relation, i.e., 
$\righthalfcap (x\vee y)=\righthalfcap(x)\wedge \righthalfcap(y)$ for $x$, $y\in H$  holds in every Heyting algebra. 

\smallskip
We now suppose that $H_{\righthalfcap\righthalfcap}$ is a sublattice of $H$. Given a Heyting module $M$ over $H$, we consider
\begin{equation}\label{eq3.8}
M_{\righthalfcap\righthalfcap}:=\{\mbox{$m\in M$ $\vert$ $\righthalfcap(m)\in H$ is regular, i.e., $\righthalfcap(m)\in 
H_{\righthalfcap\righthalfcap}$}\}
\end{equation}
We will say that the elements of $M_{\righthalfcap\righthalfcap}$ are the regular elements of $M$ over $H$. We will show that $M_{\righthalfcap\righthalfcap}$ is actually a Boolean module over $H_{\righthalfcap\righthalfcap}$. For this, we need the following simple result.

\begin{lem}\label{L3.9} Let $H$ be a Heyting algebra such that the Boolean algebra $H_{\righthalfcap\righthalfcap}$ is a sublattice of $H$. Then, given any regular elements $a$, $b\in H_{\righthalfcap\righthalfcap}$, we have $(a\rightarrow b)_H=\righthalfcap(a)\vee b$. In particular, $(a\rightarrow b)_H$ is regular. 
\end{lem}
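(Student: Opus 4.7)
The plan is to establish the identity $(a\to b)_H = \righthalfcap(a)\vee b$ by verifying the universal property of the Heyting implication directly. Recall (as noted in the paragraph preceding the lemma) that the sublattice hypothesis on $H_{\righthalfcap\righthalfcap}$ is equivalent to the identity $\righthalfcap(x\wedge y)=\righthalfcap(x)\vee \righthalfcap(y)$ holding for all $x,y\in H$; this de Morgan type identity, combined with the regularity of $a$ and $b$, is the main extra tool beyond what is available in a general Heyting algebra.

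The inequality $\righthalfcap(a)\vee b \leq (a\to b)_H$ is the easy half and requires no regularity. I would just compute in $H$, using distributivity, that $(\righthalfcap(a)\vee b)\wedge a = (\righthalfcap(a)\wedge a)\vee (b\wedge a) = b\wedge a\leq b$; the defining adjunction of the Heyting arrow then gives $\righthalfcap(a)\vee b \leq (a\to b)_H$.

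The reverse inequality is the heart of the argument. I would take an arbitrary $x\in H$ with $x\wedge a\leq b$ and show $x\leq \righthalfcap(a)\vee b$; specializing $x=(a\to b)_H$ then finishes. Applying the order-reversing operator $\righthalfcap$ and invoking the sublattice identity, $x\wedge a\leq b$ gives $\righthalfcap(b)\leq \righthalfcap(x\wedge a)=\righthalfcap(x)\vee \righthalfcap(a)$. Applying $\righthalfcap$ once more and using the always-true de Morgan law $\righthalfcap(u\vee v)=\righthalfcap(u)\wedge \righthalfcap(v)$, together with the regularity of $a$ and $b$, yields $b=\righthalfcap\righthalfcap(b)\geq \righthalfcap\righthalfcap(x)\wedge \righthalfcap\righthalfcap(a)=\righthalfcap\righthalfcap(x)\wedge a$. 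Now $\righthalfcap\righthalfcap(x)$, $a$, $b$ all lie in the Boolean algebra $H_{\righthalfcap\righthalfcap}$; its meet agrees with the meet in $H$, and by the sublattice hypothesis so does its join. In this Boolean algebra the classical equivalence $u\wedge a \leq b \Leftrightarrow u\leq \righthalfcap(a)\vee b$ applies to give $\righthalfcap\righthalfcap(x)\leq \righthalfcap(a)\vee b$, and since $x\leq \righthalfcap\righthalfcap(x)$ in every Heyting algebra, we conclude $x\leq \righthalfcap(a)\vee b$, as required.

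The regularity of $(a\to b)_H$ is then immediate: $\righthalfcap(a)\vee b$ is a join of two elements of $H_{\righthalfcap\righthalfcap}$, and the sublattice hypothesis ensures this join lies in $H_{\righthalfcap\righthalfcap}$. The main obstacle is the reverse inequality; the key idea is that double negation reflects an arbitrary $x\in H$ into the Boolean sublattice $H_{\righthalfcap\righthalfcap}$, where classical complementation becomes available and converts the meet inequality into the desired join inequality.
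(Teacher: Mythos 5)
Your proof is correct, and for the harder inequality it takes a genuinely different (and more self-contained) route than the paper. The easy half, $\righthalfcap(a)\vee b\leq (a\to b)_H$, is the same computation in both: $a\wedge(\righthalfcap(a)\vee b)=a\wedge b\leq b$ plus the adjunction. For the reverse inequality the paper starts from the identity $\righthalfcap(a\to b)=\righthalfcap\righthalfcap(a)\wedge\righthalfcap(b)$, which it cites from the literature without proof, then computes $\righthalfcap\righthalfcap(a\to b)=\righthalfcap(a)\vee b$ using the sublattice hypothesis and concludes via $(a\to b)\leq \righthalfcap\righthalfcap(a\to b)$. You instead verify the adjunction directly: from $x\wedge a\leq b$ you apply negation twice --- first using the De Morgan identity $\righthalfcap(x\wedge a)=\righthalfcap(x)\vee\righthalfcap(a)$ that is equivalent to the sublattice hypothesis, then the always-valid law for joins --- to get $\righthalfcap\righthalfcap(x)\wedge a\leq b$, transfer into the Boolean algebra $H_{\righthalfcap\righthalfcap}$ where complementation yields $\righthalfcap\righthalfcap(x)\leq \righthalfcap(a)\vee b$, and finish with $x\leq\righthalfcap\righthalfcap(x)$. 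Both arguments run on the same engine ($x\leq\righthalfcap\righthalfcap(x)$ plus the meet De Morgan law), but yours avoids the unproved external identity, at the cost of being slightly longer; the paper's is shorter because it evaluates $\righthalfcap\righthalfcap(a\to b)$ in one stroke. The only step you leave implicit is that $\righthalfcap\righthalfcap(x)$ is itself regular, which is needed before you may apply the Boolean equivalence to $u=\righthalfcap\righthalfcap(x)$; this follows in one line from $\righthalfcap\righthalfcap\righthalfcap(x)=\righthalfcap(x)$ (proved in the paper's Corollary \ref{C3.11}), so it is a cosmetic omission rather than a gap.
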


\begin{proof} For any elements $a$, $b$  in a Heyting algebra $H$, we know (see, for instance, \cite{Hey}) that
$\righthalfcap (a\rightarrow b)=(\righthalfcap\righthalfcap(a))\wedge (\righthalfcap(b))$. If $a$ and $b$ are regular, this implies
that 
\begin{equation}
\righthalfcap (a\rightarrow b)=a\wedge \righthalfcap(b)
\end{equation} Since  $H_{\righthalfcap\righthalfcap}$ is a sublattice of $H$, it follows that
 $\righthalfcap\righthalfcap(a\rightarrow b)=\righthalfcap(a\wedge \righthalfcap(b))=\righthalfcap(a)\vee \righthalfcap\righthalfcap(b)=\righthalfcap(a)\vee b$. For any element $x$ in a Heyting algebra, it is clear that $x\leq \righthalfcap\righthalfcap(x)$ (since $\righthalfcap(x)\leq \righthalfcap(x)$ which gives $x\wedge\righthalfcap(x) =0$). It follows that
 \begin{equation}\label{eq3.10}
 \righthalfcap\righthalfcap(a\rightarrow b)\leq \righthalfcap(a)\vee b\quad \Rightarrow \quad(a\rightarrow b)\leq \righthalfcap(a)\vee b
 \end{equation} Conversely, we have
 \begin{equation}\label{eq3.11}
 a\wedge b\leq b \quad\Rightarrow\quad a\wedge (\righthalfcap(a)\vee b)\leq b \quad\Rightarrow\quad (\righthalfcap(a)\vee b)\leq (a\rightarrow b)
 \end{equation} From \eqref{eq3.10} and \eqref{eq3.11} the result follows. 
\end{proof}

\begin{thm}\label{P3.10}  Let $H$ be a Heyting algebra such that the Boolean algebra $H_{\righthalfcap\righthalfcap}$ is a sublattice of $H$. Suppose that $M$ is a Heyting module over $H$. Then, $M_{\righthalfcap\righthalfcap}$ is a Boolean module
over the Boolean algebra $H_{\righthalfcap\righthalfcap}$.
\end{thm}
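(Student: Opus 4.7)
The plan is to verify three things in order: (a) $M_{\righthalfcap\righthalfcap}$ is a sub-join-semilattice of $M$ containing $0_M$; (b) the $H$-action on $M$ restricts to an action of $H_{\righthalfcap\righthalfcap}$ on $M_{\righthalfcap\righthalfcap}$ making the latter a distributive module over the lattice underlying $H_{\righthalfcap\righthalfcap}$ in the sense of Definition \ref{D2.0}; and (c) the negation $\righthalfcap_M$ restricts to a well-defined map $M_{\righthalfcap\righthalfcap}\longrightarrow H_{\righthalfcap\righthalfcap}$ satisfying the three axioms \eqref{eq2.2}.

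For (a): since $\righthalfcap_M(0_M)=1_H\in H_{\righthalfcap\righthalfcap}$ by Lemma \ref{L3.3}(d), we have $0_M\in M_{\righthalfcap\righthalfcap}$; and for $m,m'\in M_{\righthalfcap\righthalfcap}$, Lemma \ref{L3.3}(a) yields $\righthalfcap_M(m\vee m')=\righthalfcap_M(m)\wedge \righthalfcap_M(m')$, which lies in $H_{\righthalfcap\righthalfcap}$ because the hypothesis that $H_{\righthalfcap\righthalfcap}$ is a sublattice of $H$ means meets (and joins) computed in $H$ stay inside $H_{\righthalfcap\righthalfcap}$. For (b), the crucial closure statement is: for $x\in H_{\righthalfcap\righthalfcap}$ and $m\in M_{\righthalfcap\righthalfcap}$, one has $x\wedge m\in M_{\righthalfcap\righthalfcap}$. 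Here I would invoke Lemma \ref{L3.3}(b) to rewrite $\righthalfcap_M(x\wedge m)=(x\rightarrow \righthalfcap_M(m))_H$, and then apply Lemma \ref{L3.9} (both $x$ and $\righthalfcap_M(m)$ being regular) to conclude that this element equals $\righthalfcap_H(x)\vee \righthalfcap_M(m)$ and in particular belongs to $H_{\righthalfcap\righthalfcap}$. The remaining axioms of a distributive module (unitality, the identity $f(x\wedge y,m)=f(x,f(y,m))$, the vanishing at $0$, and both distributivity laws) are inherited verbatim from the $H$-module structure on $M$, since meets, joins, and the distinguished elements $0,1$ of $H_{\righthalfcap\righthalfcap}$ all agree with those of $H$ under the sublattice hypothesis.

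For (c), the first axiom $\righthalfcap_M(0_M)=1_{H_{\righthalfcap\righthalfcap}}$ was recorded in (a) via Lemma \ref{L3.3}(d). The second axiom $\righthalfcap_M(x\wedge m)=\righthalfcap_{H_{\righthalfcap\righthalfcap}}(x)\vee \righthalfcap_M(m)$ is precisely the formula $\righthalfcap_M(x\wedge m)=\righthalfcap_H(x)\vee \righthalfcap_M(m)$ obtained in (b), once one observes that the complement operation on the Boolean algebra $H_{\righthalfcap\righthalfcap}$ is just the restriction of $\righthalfcap_H$. The third axiom $\righthalfcap_M(m\vee n)=\righthalfcap_M(m)\wedge \righthalfcap_M(n)$ is already built into the verification in (a).

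I expect the main obstacle to be the closure check in (b), where one must show that acting by a regular scalar on an element of $M_{\righthalfcap\righthalfcap}$ keeps us inside $M_{\righthalfcap\righthalfcap}$. This is exactly where the sublattice hypothesis on $H_{\righthalfcap\righthalfcap}$ is used in an essential way, since without Lemma \ref{L3.9} there would be no reason for $(x\rightarrow \righthalfcap_M(m))_H$ to be a regular element of $H$. Everything else is a bookkeeping translation of the corresponding identities already established for $M$ in Lemma \ref{L3.3}.
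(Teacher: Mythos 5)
Your proposal is correct and follows essentially the same route as the paper's proof: closure of $M_{\righthalfcap\righthalfcap}$ under joins via Lemma \ref{L3.3}(a), closure under the action of regular scalars via Lemma \ref{L3.3}(b) combined with Lemma \ref{L3.9} (which is exactly where the sublattice hypothesis enters), and the Boolean module axioms read off from these identities by restricting $\righthalfcap_M$. The only difference is that you spell out the routine verifications (such as $0_M\in M_{\righthalfcap\righthalfcap}$ and the inherited distributive module axioms) that the paper leaves implicit.
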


\begin{proof}
If $m$, $n\in M_{\righthalfcap\righthalfcap}$, we know from Lemma \ref{L3.3}(a) that $\righthalfcap(m\vee n)=\righthalfcap(m)\wedge \righthalfcap(n)$. It follows that $m\vee n\in M_{\righthalfcap\righthalfcap}$. For $x\in H_{\righthalfcap\righthalfcap}$ and $m\in M_{\righthalfcap
\righthalfcap}$, it follows from Lemma \ref{L3.3}(b) that $\righthalfcap(x\wedge m)=(x\rightarrow \righthalfcap(m))$. Since 
$x\in H_{\righthalfcap\righthalfcap}$ and $m\in M_{\righthalfcap\righthalfcap}$, we can apply Lemma \ref{L3.9} to see that
\begin{equation}\righthalfcap(x\wedge m)=(x\rightarrow \righthalfcap(m))=\righthalfcap(x)\vee \righthalfcap(m) \in H_{\righthalfcap\righthalfcap}
\end{equation}  Hence, $x\wedge m\in M_{\righthalfcap\righthalfcap}$.  The map $M_{\righthalfcap\righthalfcap}
\longrightarrow H_{\righthalfcap\righthalfcap}$ is obtained by restricting the map $\righthalfcap : M\longrightarrow H$. From the above, it is clear that this map satisfies the conditions for making $M$ into a Boolean module over the Boolean algebra $H_{\righthalfcap\righthalfcap}$. 
\end{proof} 

An application of Proposition \ref{P3.10} gives us a simple way of constructing examples of Boolean modules.

\begin{cor}\label{C3.11}
Let $H$ be a Heyting algebra such that the Boolean algebra $H_{\righthalfcap\righthalfcap}$ is a sublattice of $H$.  Then, 
$H$ is canonically equipped with the structure of  a Boolean module over the Boolean algebra $H_{\righthalfcap\righthalfcap}$. 
\end{cor}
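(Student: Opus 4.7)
The plan is to verify directly that $H$ satisfies Definition \ref{Bmodule} as a Boolean module over the Boolean algebra $B := H_{\righthalfcap\righthalfcap}$. Proposition \ref{P3.10} itself only endows the subset $H_{\righthalfcap\righthalfcap} \subseteq H$ with a Boolean module structure, so I need to upgrade that argument to the whole of $H$; the proof will essentially replay the key observations of Proposition \ref{P3.10} but without restricting the ambient set.

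First I would endow $H$ with a distributive $B$-module structure by pulling back the Heyting meet along the sublattice inclusion $B \hookrightarrow H$. The module axioms follow since $0_B=0_H$ and $1_B=1_H$, meets are associative in $H$, and the distributivity condition
\begin{equation*}
x\wedge(m\vee n)=(x\wedge m)\vee (x\wedge n), \qquad (x\vee y)\wedge m = (x\wedge m)\vee (y\wedge m)
\end{equation*}
for $x,y\in B$ and $m,n\in H$ is just the distributivity of the lattice $H$ (which holds because every Heyting algebra is distributive). Next, I would define $\righthalfcap_H : H \to B$ as the usual negation $h \mapsto \righthalfcap h$; this has its image in $B$ because $\righthalfcap\righthalfcap\righthalfcap h=\righthalfcap h$ in any Heyting algebra, so every negation is automatically regular.

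It remains to check the three conditions of \eqref{eq2.2}. The identity $\righthalfcap_H(0_H)=1_H=1_B$ is Lemma \ref{L3.3}(d), and $\righthalfcap_H(m\vee n)=\righthalfcap_H(m)\wedge \righthalfcap_H(n)$ is Lemma \ref{L3.3}(a) applied to $H$ viewed as a Heyting module over itself. For the mixed identity, given $x\in B$ and $m\in H$, Lemma \ref{L3.3}(b) gives
\begin{equation*}
\righthalfcap_H(x\wedge m) = (x\wedge m\to 0)_H = (x\to \righthalfcap m)_H.
\end{equation*}
Since both $x$ and $\righthalfcap m$ lie in $B$, Lemma \ref{L3.9} applies and identifies this with $\righthalfcap x \vee \righthalfcap m = \righthalfcap_B(x)\vee \righthalfcap_H(m)$, which is exactly the second relation of \eqref{eq2.2}.

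The only subtle point, and the main obstacle, is confirming that $\righthalfcap_H$ actually takes values in $B$ and that Lemma \ref{L3.9} can be invoked at the right step; both are immediate from the standard identity $\righthalfcap\righthalfcap\righthalfcap = \righthalfcap$ together with the sublattice hypothesis on $H_{\righthalfcap\righthalfcap}$. Everything else is a bookkeeping exercise that transcribes the Heyting algebra identities into the Boolean module axioms.
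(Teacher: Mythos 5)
Your proof is correct and rests on the same key fact as the paper's, namely that $\righthalfcap\righthalfcap\righthalfcap(x)=\righthalfcap(x)$ for every $x\in H$, so that every negation is automatically regular; the verifications via Lemma \ref{L3.3} and Lemma \ref{L3.9} are exactly the computations underlying Proposition \ref{P3.10}. Note, however, that your stated reason for re-verifying the axioms is a misreading: Proposition \ref{P3.10} applied to $M=H$ endows $M_{\righthalfcap\righthalfcap}=\{x\in H\ \vert\ \righthalfcap(x)\in H_{\righthalfcap\righthalfcap}\}$ --- not the subset $H_{\righthalfcap\righthalfcap}$ --- with the Boolean module structure, and this set is all of $H$ by the very identity you invoke, which is precisely the paper's shorter proof.
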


\begin{proof} It is clear that $H$ is a Heyting module over itself. To prove the result, it suffices to show that every element 
$x\in H$ is regular over $H$ in the sense of 
\eqref{eq3.8}, i.e., $\righthalfcap(x)\in H_{\righthalfcap\righthalfcap}$ for any $x\in H$. For this, we notice that $x\leq \righthalfcap\righthalfcap(x)$, $\forall$ $x\in H$ as mentioned in the proof of Lemma \ref{L3.9}.  This gives
$\righthalfcap(x)\leq \righthalfcap\righthalfcap\righthalfcap(x)$. However, since $\righthalfcap$ is order reversing, the relation
$x\leq \righthalfcap\righthalfcap(x)$ gives $\righthalfcap(x)\geq \righthalfcap\righthalfcap\righthalfcap(x)$. Hence, $\righthalfcap(x)=\righthalfcap\righthalfcap\righthalfcap(x)$ for every $x\in H$, i.e., $\righthalfcap(x)\in H_{\righthalfcap\righthalfcap}$.
\end{proof}

\section{Duals of Heyting modules and hereditary systems}

\smallskip
We continue with $H$ being a Heyting algebra and $M$ being a Heyting module over it. By a Heyting submodule of $M$, we will
mean a distributive submodule $N$ of $M$ over the lattice underlying $H$. From Proposition \ref{Pp3.65}, it is clear that $N$ is canonically equipped with the structure of a Heyting module, with $(n_1\rightarrow n_2)_{N}=(n_1\rightarrow n_2)_M$ for any $n_1$, $n_2\in N$. 

\smallskip
The following simple observation will be very useful to us: by definition, the element $1_H$ in the lattice $H$ is the largest
element in $H$, i.e., every $c\in H$ satisfies $c\leq 1$. Hence, for any $m\in M$, we have
\begin{equation}\label{e4.0}
c\wedge m \leq 1_H\wedge m = m
\end{equation}

\smallskip
For a Heyting module $M$, we will now consider two different  ``dual objects'' : the first is the collection $Heymod_H(M,H)$ of Heyting module
morphisms from $M$ to $H$ which we will denote by $M^\bigstar$. The second dual object, which we shall denote by
$M^\star$, is the  collection of join semilattice homomorphisms $\phi:M\longrightarrow 
\{0,1\}$. Since any $\phi\in M^\star$ is order preserving, we notice that 
\begin{equation}\label{e4.1}
\phi(m)=0 \quad\Rightarrow\quad \phi(c\wedge m)\leq \phi(m)=0 \quad\Rightarrow \quad\phi(c\wedge m)=0 \textrm{ }\forall\textrm{ }c\in H
\end{equation} It is clear that a morphism $M_1\longrightarrow M_2$ of Heyting modules induces  maps
$M_2^\bigstar\longrightarrow M_1^\bigstar$ and $M_2^\star\longrightarrow M_1^\star$ between their respective duals.

\begin{defn}\label{D4.1} Let $M$ be a Heyting module over a Heyting algebra $H$. A subjoin semilattice $N\subseteq M$ will be called a hereditary Heyting submodule if it satisfies the following condition
\begin{equation*}
n\in N \quad\&\quad n'\leq n \quad\Rightarrow n'\in N
\end{equation*}
\end{defn} 
From the observation in \eqref{e4.0}, it is immediate that any hereditary Heyting submodule of $M$ is automatically a Heyting submodule. As such, a hereditary Heyting submodule of $M\in Heymod_H$ in the sense of Definition \ref{D4.1} is simply a  hereditary submodule of the underlying join semilattice $M$ 
 in the sense of \cite[Definition 2.2]{one}. 
 
 \smallskip 
For each element $m\in M$, we now consider:
\begin{equation}\label{eq4.1}
I_m^M=I_m:=\{\mbox{$n\in M$ $\vert$ $n\leq  m$ }\}
\end{equation} It is easily seen that each $I_m$ is a hereditary Heyting submodule. 

\begin{thm}\label{P4.2} Let $M$ be a Heyting module over a Heyting algebra $H$. Then,

\smallskip
(a) There is a one-one correspondence between elements of $M^\star$ and hereditary Heyting submodules of $M$. 

\smallskip
(b) A distributive submodule of $M$ over $H$ is hereditary if and only if it is a filtering union of hereditary Heyting submodules
of the form $\{I_m\}_{m\in M}$. 

\smallskip
(c) If $m$, $m'\in M$ are such that $\phi(m)=\phi(m')$ for every $\phi\in M^\star$, then $m=m'$. 
\end{thm}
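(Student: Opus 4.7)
The plan is to handle the three parts of the proposition in sequence, with (c) following from (a) essentially by construction.

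For (a), I would set up mutually inverse maps between $M^\star$ and the collection of hereditary Heyting submodules of $M$. In one direction, given $\phi\in M^\star$, take $N_\phi:=\phi^{-1}(0)\subseteq M$. Preservation of the empty join gives $0_M\in N_\phi$; preservation of binary joins (together with $0\vee 0=0$ in $\{0,1\}$) gives closure under $\vee$; and since any join semilattice morphism is automatically order preserving, $N_\phi$ is downward closed. In the other direction, given a hereditary Heyting submodule $N$, define $\phi_N:M\rightarrow\{0,1\}$ by $\phi_N(m)=0$ when $m\in N$ and $\phi_N(m)=1$ otherwise. The delicate point is checking $\phi_N(m_1\vee m_2)=\phi_N(m_1)\vee \phi_N(m_2)$: if both $m_i\in N$, closure of $N$ under $\vee$ gives $m_1\vee m_2\in N$; if at least one of $m_1,m_2$ fails to be in $N$, then $m_1\vee m_2\notin N$, for otherwise hereditarity would force the offending $m_i$ into $N$. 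The identities $\phi_{N_\phi}=\phi$ and $N_{\phi_N}=N$ are immediate from the definitions.

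For (b), the forward direction is a direct description: a hereditary distributive submodule $N$ equals $\bigcup_{n\in N}I_n$, and the family $\{I_n\}_{n\in N}$ is filtering because $N$ is closed under $\vee$ and $I_{n_1}\cup I_{n_2}\subseteq I_{n_1\vee n_2}$. Each $I_n$ was already observed to be a hereditary Heyting submodule. For the converse, suppose $N=\bigcup_\alpha I_{m_\alpha}$ is a filtering union. Closure under $\vee$ uses the filtering property to push two elements into a common $I_{m_\alpha}$; closure under the $H$-action follows from the fact that $\_\_\wedge m_\alpha$ is order preserving (so $c\wedge n\leq m_\alpha$ whenever $n\leq m_\alpha$); and if $n'\leq n\in I_{m_\alpha}$ then $n'\leq m_\alpha$, so $n'\in I_{m_\alpha}\subseteq N$, which gives hereditarity.

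Finally, (c) is a separation statement that reduces to (a) by displaying enough hereditary Heyting submodules. Assume $m\neq m'$, so without loss of generality $m\not\leq m'$. The downset $I_{m'}$ is a hereditary Heyting submodule containing $m'$ but not $m$, so the associated $\phi_{I_{m'}}\in M^\star$ from part (a) satisfies $\phi_{I_{m'}}(m')=0\neq 1=\phi_{I_{m'}}(m)$. The only step requiring genuine care is the binary join verification in the backward direction of (a); every other step is routine bookkeeping with downward closed subsets of $M$.
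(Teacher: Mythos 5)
Your argument is correct. The paper does not actually carry out these verifications: it observes that a hereditary Heyting submodule of $M$ is the same thing as a hereditary submodule of the underlying join semilattice and then cites \cite[Proposition 2.3]{one} for all three parts, recording only the explicit form of the correspondence $\phi\mapsto\phi^{-1}(0)$. What you have written is precisely the content of that citation, unpacked in the present setting: the same bijection $\phi\leftrightarrow\phi^{-1}(0)$ (with the join-preservation check for $\phi_N$ correctly identified as the only nontrivial point), the same description of hereditary submodules as filtering unions of the downsets $I_m$, and the same separation argument via $\phi_{I_{m'}}$ applied to whichever of $m\not\leq m'$, $m'\not\leq m$ holds. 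The one step genuinely specific to Heyting modules --- that a filtering union of the $I_{m_\alpha}$ is closed under the $H$-action --- rests on the observation at the start of Section 4 that $c\wedge n\leq 1_H\wedge n=n$; you attribute this to order-preservation of $\_\_\wedge m_\alpha$, but what is used is order-preservation of the action in the $H$-variable, giving $c\wedge n\leq n\leq m_\alpha$. That is a wording slip, not a gap. Your self-contained treatment buys independence from the external reference at the cost of length; the paper's citation buys brevity at the cost of leaving the reader to translate the semilattice statement into the module language, which is exactly the translation you have performed.
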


\begin{proof} As mentioned before, $N\subseteq M$ is a hereditary Heyting submodule if and only if it is a hereditary submodule
of the join semilattice $M$ in the sense of \cite[Definition 2.2]{one}. As such, all three  parts (a), (b) and (c) follow directly
from \cite[Proposition 2.3]{one}. Explicitly, an element $\phi\in M^\star$ corresponds to the hereditary submodule 
$\phi^{-1}(0)\subseteq M$. 
\end{proof}

We now introduce the notion of a ``hereditary system of submodules'' which will be used to describe elements of the dual
$M^\bigstar$. First of all, for a given hereditary submodule $K\subseteq M$ we set 
\begin{equation}
(K:c):=\{\mbox{$m\in M$  $\vert$ $c\wedge m\in K$}\}
\end{equation} for each $c\in K$. It is clear that $(K:c)\subseteq M$ is also a hereditary Heyting submodule.

\begin{defn}\label{D4.3} Let $M$ be a Heyting module over a Heyting algebra $H$. A hereditary system $\mathcal K=\{K_c\}_{c\in H}$ of submodules of $M$ consists of the following data:

\smallskip
(a) For each $c\in H$, $K_c$ is a hereditary submodule of the Heyting module $M$.

\smallskip
(b) Given any elements $c$, $d\in H$, we have $K_c\cap K_d=K_{c\wedge d}$.

\smallskip
(c) Given any elements $c$, $d\in H$, we have $(K_d:c)=K_{(c\rightarrow d)}$. 

\smallskip In particular, if 
$c\leq d$ in $H$, we have $K_c\subseteq K_d$. 
\end{defn}

\begin{lem}\label{L4.31}
Let $M$ be a Heyting module and let $\phi:M\longrightarrow H$ be an element of $M^\bigstar$. Then, 
setting $K_c:=\{\mbox{$m\in M$ $\vert$ $\phi(m)\leq c$}\}$ for each $c\in H$ gives a hereditary system $\mathcal K_\phi$ on $M$. 
\end{lem}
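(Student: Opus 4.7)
The plan is to verify the three defining conditions of a hereditary system (Definition \ref{D4.3}) for the family $\mathcal K_\phi=\{K_c\}_{c\in H}$, relying on three basic facts about $\phi\in M^\bigstar$: it preserves finite joins, it is order preserving, and it satisfies $\phi(c\wedge m)=c\wedge \phi(m)$ for $c\in H$, $m\in M$ (being a morphism of modules over the lattice underlying $H$). Each verification is a short direct unpacking, so the proof should be routine once these properties are in hand.

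First I would check that each $K_c$ is a hereditary Heyting submodule of $M$. Closure under finite joins follows because $\phi(m)\vee \phi(m')=\phi(m\vee m')\leq c$ whenever both $\phi(m),\phi(m')\leq c$, and $\phi(0_M)=0_H\leq c$. The hereditary condition is immediate: if $m\in K_c$ and $m'\leq m$, then $\phi(m')\leq \phi(m)\leq c$ since $\phi$ is order preserving, giving $m'\in K_c$. (By the remark after Definition \ref{D4.1}, this already forces $K_c$ to be a Heyting submodule, so no further distributivity check is required.)

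Next I would verify condition (b). For any $m\in M$, the equivalence
\begin{equation*}
\phi(m)\leq c\wedge d \quad\Leftrightarrow\quad \phi(m)\leq c\ \text{ and }\ \phi(m)\leq d
\end{equation*}
is just the universal property of the meet in $H$, which gives $K_{c\wedge d}=K_c\cap K_d$.

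The one step requiring the Heyting structure in an essential way is condition (c). Here I would compute, for $m\in M$,
\begin{equation*}
m\in (K_d:c)\ \Leftrightarrow\ c\wedge m\in K_d\ \Leftrightarrow\ \phi(c\wedge m)\leq d\ \Leftrightarrow\ c\wedge \phi(m)\leq d,
\end{equation*}
where the last equivalence uses that $\phi$ commutes with the $H$-action. By the defining adjunction of the Heyting algebra $H$, the inequality $c\wedge \phi(m)\leq d$ is equivalent to $\phi(m)\leq (c\rightarrow d)_H$, that is, $m\in K_{(c\rightarrow d)}$. This yields $(K_d:c)=K_{(c\rightarrow d)}$, and the final sentence of Definition \ref{D4.3} ($c\leq d\Rightarrow K_c\subseteq K_d$) is automatic. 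The main (and only) subtlety is making sure that the $H$-linearity of $\phi$ is invoked at the right moment in condition (c); everything else is formal.
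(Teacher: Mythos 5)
Your proof is correct and follows essentially the same route as the paper: checking that each $K_c$ is hereditary from $\phi$ being join-preserving and order preserving, deriving condition (b) from the universal property of the meet, and deriving condition (c) from the $H$-linearity of $\phi$ together with the adjunction defining $(c\rightarrow d)_H$. The only difference is that you spell out the hereditary check in slightly more detail, which the paper leaves implicit.
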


\begin{proof}
Since $\phi\in M^\bigstar$ is order preserving and preserves joins, we see that each $K_c=\{\mbox{$m\in M$ $\vert$ $\phi(m)\leq c$}\}$ is hereditary. Since $\phi:M\longrightarrow H$ is $H$-linear, for $c$, $d\in H$ and any $m\in M$, we have:
\begin{equation*}
\begin{array}{c}
m\in K_c\cap K_d  \textrm{ }\Leftrightarrow\textrm{ }  \phi(m)\leq c \textrm{ }\&\textrm{ }\phi(m)\leq d \textrm{ }\Leftrightarrow\textrm{ } \phi(m)\leq c\wedge d \textrm{ }\Leftrightarrow\textrm{ }  m\in K_{c\wedge d}\\
m\in (K_d:c) \textrm{ }\Leftrightarrow\textrm{ } c\wedge m\in K_d \textrm{ }\Leftrightarrow\textrm{ } \phi(c\wedge m)\leq d \textrm{ }\Leftrightarrow\textrm{ } c\wedge \phi(m)\leq d
\textrm{ }\Leftrightarrow\textrm{ } \phi(m)\leq (c\rightarrow d)\textrm{ }\Leftrightarrow\textrm{ }m\in K_{(c\rightarrow d)}\\
\end{array}
\end{equation*} This proves the result. 
\end{proof}

\begin{thm}\label{P4.32}
Let $H$ be a finite Heyting algebra and let $M$ be a Heyting module over $H$. Then, there is a one-one correspondence between
$M^\bigstar$ and hereditary systems of submodules of $M$. 
\end{thm}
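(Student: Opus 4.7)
My plan is to produce an inverse to the assignment $\phi\mapsto \mathcal K_\phi$ of Lemma \ref{L4.31}. Given a hereditary system $\mathcal K=\{K_c\}_{c\in H}$, I will define
\begin{equation*}
\phi_{\mathcal K}(m):=\bigwedge\{c\in H \mid m\in K_c\}\in H,
\end{equation*}
which makes sense because $H$ is finite (so the meet is over a finite set) and the set is nonempty: indeed, taking $c=0$ in Definition \ref{D4.3}(c) gives $K_{1_H}=K_{(0\to 0)}=(K_0:0)=\{m\in M\mid 0\wedge m\in K_0\}=\{m\in M\mid 0_M\in K_0\}=M$, since every hereditary submodule contains the least element $0_M$. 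Using Definition \ref{D4.3}(b) inductively on the finite set $\{c\mid m\in K_c\}$, I get $m\in K_{\phi_{\mathcal K}(m)}$, so the meet is attained.

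The key technical lemma to extract is that for all $m\in M$ and $c\in H$,
\begin{equation*}
m\in K_c \quad\Longleftrightarrow\quad \phi_{\mathcal K}(m)\leq c.
\end{equation*}
The $(\Rightarrow)$ direction is immediate, while $(\Leftarrow)$ uses Definition \ref{D4.3}(b) to deduce $K_{\phi_{\mathcal K}(m)}\subseteq K_c$ whenever $\phi_{\mathcal K}(m)\leq c$, since then $K_{\phi_{\mathcal K}(m)}\cap K_c=K_{\phi_{\mathcal K}(m)\wedge c}=K_{\phi_{\mathcal K}(m)}$. With this biconditional in hand, checking that $\phi_{\mathcal K}\in M^\bigstar$ becomes formal. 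Join preservation follows from $\phi(m\vee n)\leq c\Leftrightarrow m\vee n\in K_c\Leftrightarrow m,n\in K_c$ (the last step using that $K_c$ is closed under $\vee$ and downward closed) $\Leftrightarrow\phi(m)\vee\phi(n)\leq c$. For $H$-linearity, Definition \ref{D4.3}(c) gives $\phi(c\wedge m)\leq d \Leftrightarrow c\wedge m\in K_d\Leftrightarrow m\in K_{(c\to d)}\Leftrightarrow \phi(m)\leq(c\to d)\Leftrightarrow c\wedge\phi(m)\leq d$, so $\phi(c\wedge m)=c\wedge\phi(m)$; and $\phi(0_M)=0_H$ since $0_M\in K_0$.

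It remains to verify that the assignments $\phi\mapsto\mathcal K_\phi$ and $\mathcal K\mapsto\phi_{\mathcal K}$ are mutually inverse. In one direction, $\phi_{\mathcal K_\phi}(m)=\bigwedge\{c\mid \phi(m)\leq c\}=\phi(m)$ trivially. In the other, for $\mathcal K=\{K_c\}$ the hereditary system $\mathcal K_{\phi_{\mathcal K}}$ has $c$-th member $\{m\mid \phi_{\mathcal K}(m)\leq c\}$, which equals $K_c$ precisely by the key biconditional extracted above.

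The main obstacle, and the only nontrivial step, is establishing well-definedness of $\phi_{\mathcal K}$ together with the biconditional $m\in K_c\Leftrightarrow \phi_{\mathcal K}(m)\leq c$; once that is in hand, both the module axioms for $\phi_{\mathcal K}$ and the inverse verifications unwind directly from Definition \ref{D4.3}. The finiteness hypothesis on $H$ enters exactly in guaranteeing that the defining meet is realised as a finite intersection of the $K_c$, and thereby transforms Definition \ref{D4.3}(b) from a condition about pairwise intersections into the statement that $m\in K_{\phi_{\mathcal K}(m)}$.
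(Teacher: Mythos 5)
Your proposal is correct and follows essentially the same route as the paper: the inverse map is the same meet formula $\phi_{\mathcal K}(m)=\bigwedge\{c\mid m\in K_c\}$, finiteness of $H$ together with condition (b) of Definition \ref{D4.3} is used in the same way to see that the meet is attained (i.e., $m\in K_{\phi_{\mathcal K}(m)}$), and the two assignments are checked to be mutually inverse exactly as in the paper. Your organization around the single biconditional $m\in K_c\Leftrightarrow\phi_{\mathcal K}(m)\leq c$ is a slightly cleaner packaging of the paper's direct meet computations for join-preservation and $H$-linearity, but it is the same argument.
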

\begin{proof}
Given $\phi\in M^\bigstar$, we have already constructed a hereditary system $\mathcal K_\phi$ of submodules of $M$ in Lemma \ref{L4.31}. Conversely, given a hereditary system $\mathcal K=\{K_c\}_{c\in H}$, we define $\phi_{\mathcal K}:M
\longrightarrow H$ by setting
\begin{equation}\label{eq4.5r}
\phi_{\mathcal K}(m):=\underset{m\in K_d}{\bigwedge} d
\end{equation} We begin by showing that $\phi_{\mathcal K}:M\longrightarrow H$ preserves joins. We pick $m_1$, $m_2\in M$ and 
set $d_1=\phi_{\mathcal K}(m_1)$, $d_2=\phi_{\mathcal K}(m_2)$. Since $H$ is finite, it follows from condition (b) in Definition \ref{D4.3} that $d_1$ (resp. $d_2$) 
is the smallest element of $H$ such that $m_1\in K_{d_1}$ 
(resp. $m_2\in K_{d_2}$).  

\smallskip
Since $K_{d_1}$, $K_{d_2}\subseteq K_{d_1\vee d_2}$, we have $m_1\vee m_2\in K_{d_1\vee d_2}$. On the other hand, 
if $e\in H$ is such that $m_1\vee m_2\in K_e$, then $m_1,m_2\leq m_1\vee m_2$ $\Rightarrow$ $m_1$, $m_2\in K_e$ (since
$K_e$ is hereditary). Then, $e\geq d_1$ and $e\geq d_2$ which gives $e\geq d_1\vee d_2$. From \eqref{eq4.5r}, it now follows
that $\phi_{\mathcal K}(m_1\vee m_2)=d_1\vee d_2=\phi_{\mathcal K}(m_1)\vee \phi_{\mathcal K}(m_2)$. 

\smallskip
For $c\in H$ and $m\in M$, we know that
\begin{equation}\label{eq4.6r}
c\wedge \phi_{\mathcal K}(m)=c\wedge \left(\underset{m\in K_d}{\bigwedge} d\right)=\underset{m\in K_d}{\bigwedge}(c\wedge d)\qquad \phi_{\mathcal K}(c\wedge m)=\underset{c\wedge m\in K_d}{\bigwedge} d = \underset{m\in K_{(c\rightarrow d)}}{\bigwedge} d
\end{equation} Since $d\leq (c\rightarrow c\wedge d)$,  we notice that if $m\in K_d$, then $m\in K_{ (c\rightarrow c\wedge d)}=
(K_{c\wedge d}:c)$. Hence $c\wedge m\in K_{c\wedge d}$. From \eqref{eq4.6r}, it now follows that
$c\wedge \phi_{\mathcal K}(m)\geq \phi_{\mathcal K}(c\wedge m)$. Conversely, for any $d\in H$ such that $m\in K_{(c\rightarrow d)}$ it follows from the definition in \eqref{eq4.5r}
that $\phi_{\mathcal K}(m)\leq (c\rightarrow d)$. Then
\begin{equation}\label{eq4.7r}
\phi_{\mathcal K}(m)\leq \underset{m\in K_{(c\rightarrow d)}}{\bigwedge} (c\rightarrow d)=\left(c\rightarrow \left(\underset{m\in K_{(c\rightarrow d)}}{\bigwedge} d\right)\right)=(c\rightarrow \phi_{\mathcal K}(c\wedge m))\quad \Rightarrow \quad 
c\wedge \phi_{\mathcal K}(m)\leq \phi_{\mathcal K}(c\wedge m)
\end{equation} Hence, we have $c\wedge \phi_{\mathcal K}(m)=\phi_{\mathcal K}(c\wedge m)$ and it follows that
$\phi_{\mathcal K}\in M^\bigstar$. 

\smallskip
It remains to show that the two associations are inverse to each other. First of all, it is clear that  $\phi=\phi_{\mathcal K_\phi}$
for any $\phi\in M^\bigstar$. On the other hand, it follows from the above that $m\in K_{\phi_{\mathcal K}(m)}$ for each
$m\in M$ and hence $\mathcal K_{\phi_{\mathcal K}}=\mathcal K$.  
\end{proof}

The next result shows that when the Heyting algebra $H$ is actually Boolean, the hereditary systems take a particularly simple form.

\begin{thm}\label{P4.33} Let $M$ be a Heyting module over a Heyting algebra $H$. If $H$ is a Boolean algebra, then 
there is a one-one correspondence between hereditary systems of submodules of $M$ and hereditary submodules of $M$.
\end{thm}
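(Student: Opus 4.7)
The plan is to construct explicit maps in both directions and show they are mutually inverse, exploiting the fact that in a Boolean algebra every implication $(c\rightarrow d)_H$ collapses to $\righthalfcap(c)\vee d$ and the double negation is the identity. The intuition is that in the Boolean case, a hereditary system is determined entirely by its value $K_0$, so the whole system is redundant data.

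First I would define the forward map: given a hereditary submodule $K\subseteq M$ in the sense of Definition \ref{D4.1}, set
\begin{equation*}
K_c:=(K:\righthalfcap c)=\{\mbox{$m\in M$ $\vert$ $\righthalfcap(c)\wedge m\in K$}\}
\end{equation*}
for each $c\in H$. I would first check that each $K_c$ is a hereditary submodule of $M$: hereditariness is immediate since $m'\leq m$ implies $\righthalfcap(c)\wedge m'\leq \righthalfcap(c)\wedge m\in K$, and closure under joins follows because $K$ is a subjoin semilattice and distributivity gives $\righthalfcap(c)\wedge(m_1\vee m_2)=(\righthalfcap(c)\wedge m_1)\vee(\righthalfcap(c)\wedge m_2)$.

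Next I would verify conditions (b) and (c) of Definition \ref{D4.3}. For (b), the Boolean De Morgan law gives $\righthalfcap(c\wedge d)=\righthalfcap(c)\vee\righthalfcap(d)$, so $m\in K_{c\wedge d}$ iff $(\righthalfcap(c)\wedge m)\vee(\righthalfcap(d)\wedge m)\in K$; by hereditariness of $K$ together with closure under joins, this is equivalent to both summands lying in $K$, that is, $m\in K_c\cap K_d$. For (c), use that in a Boolean algebra $(c\rightarrow d)_H=\righthalfcap(c)\vee d$, so $\righthalfcap(c\rightarrow d)=c\wedge\righthalfcap(d)$; then $m\in K_{(c\rightarrow d)}$ iff $c\wedge\righthalfcap(d)\wedge m\in K$ iff $\righthalfcap(d)\wedge(c\wedge m)\in K$ iff $c\wedge m\in K_d$ iff $m\in(K_d:c)$.

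The reverse map sends a hereditary system $\mathcal K=\{K_c\}_{c\in H}$ to the hereditary submodule $K_0$. To check mutual inversion, starting from a hereditary submodule $K$ I would compute $K_0=(K:\righthalfcap 0)=(K:1_H)$, which equals $K$ since $1_H\wedge m=m$ for all $m\in M$. Starting from a hereditary system $\mathcal K$, I would apply condition (c) of Definition \ref{D4.3} to get $(K_0:\righthalfcap c)=K_{(\righthalfcap c\rightarrow 0)_H}=K_{\righthalfcap\righthalfcap c}$, and since $H$ is Boolean we have $\righthalfcap\righthalfcap c=c$, so the reconstructed system agrees with $\mathcal K$ in every degree.

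The only potentially subtle step is condition (b) in the forward direction, where one must observe that hereditariness of $K$ is essential: from $(\righthalfcap(c)\vee\righthalfcap(d))\wedge m\in K$ one needs to extract $\righthalfcap(c)\wedge m\in K$ and $\righthalfcap(d)\wedge m\in K$ separately, which works precisely because each is bounded above by the element in $K$. Everything else is a short manipulation with the Boolean identities, so the argument is elementary once the correspondence $K\mapsto(K:\righthalfcap c)$ is in hand.
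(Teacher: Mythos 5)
Your proposal is correct and follows essentially the same route as the paper: both use the correspondence $K\mapsto\{(K:\righthalfcap c)\}_{c\in H}$ in one direction and $\mathcal K\mapsto K_0$ in the other, verify conditions (b) and (c) of Definition \ref{D4.3} via the Boolean identities $(c\rightarrow d)=\righthalfcap(c)\vee d$ and De Morgan, and use hereditariness of $K$ at exactly the same point for condition (b). Your explicit check of mutual inversion (computing $(K:\righthalfcap 0)=K$ and $(K_0:\righthalfcap c)=K_{\righthalfcap\righthalfcap c}=K_c$) is slightly more detailed than the paper's, which only asserts that the system is determined by $K_0$, but the substance is identical.
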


\begin{proof}
Let $\mathcal K=\{K_c\}_{c\in H}$ be a hereditary system of submodules of $M$. If $H$ is Boolean, we claim that $\mathcal K$
is determined completely by the hereditary submodule $K_0$. This is because condition (c) in Definition \ref{D4.3} reduces to 
\begin{equation}
K_c=K_{(\righthalfcap c\rightarrow 0)}=(K_0:\righthalfcap c)
\end{equation} To complete the proof, it remains to show that for any hereditary submodule $K\subseteq M$, 
the collection $K_c:=(K:\righthalfcap c)$, $c\in H$ gives a hereditary system of submodules of 
$M$. We have noted before that each $(K:\righthalfcap c)$ is hereditary. Further, for any $m\in M$ and $c$, $d\in H$, we have:
\begin{equation*}
m\in (K_d:c) \textrm{ }\Leftrightarrow \textrm{ }c\wedge m\in K_d=(K:\righthalfcap d)  \textrm{ }\Leftrightarrow \textrm{ }m\in  (K:c\wedge \righthalfcap d)=(K:\righthalfcap (\righthalfcap c \vee d))=(K:\righthalfcap (c\rightarrow d))=K_{(c\rightarrow d)}
\end{equation*} Here we have used the fact that since $H$ is Boolean, we must have $(c\rightarrow d)=\righthalfcap c\vee d$. Finally, since $K$ is hereditary, we know that for $c$, $d\in H$ and $m\in M$, $(\righthalfcap c \vee \righthalfcap d)\wedge m
\in K$ if and only if both $\righthalfcap c\wedge m$, $\righthalfcap d\wedge m\in K$. It follows that
\begin{equation*}
m\in K_c\cap K_d \textrm{ }\Leftrightarrow \textrm{ } \righthalfcap c\wedge m\textrm{ }\&\textrm{ }\righthalfcap d\wedge m\in K
\textrm{ }\Leftrightarrow \textrm{ } (\righthalfcap c \vee \righthalfcap d)\wedge m
\in K \textrm{ }\Leftrightarrow \textrm{ }m\in (K:\righthalfcap (c\wedge d))=K_{c\wedge d}
\end{equation*} This proves the result. 
\end{proof}

\begin{thm}\label{xP4.34} Let $M$ be a Heyting module over $H$. If $H$ is a finite Boolean algebra, then there is a one-one correspondence between the duals $M^\bigstar$ and $M^*$. 
\end{thm}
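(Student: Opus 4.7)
The plan is to chain together the three bijections already established in this section; since the hypothesis ``finite and Boolean'' is precisely what is needed to invoke each, there is no new content to prove beyond the composition.

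First I would invoke Proposition \ref{P4.32}: since $H$ is a finite Heyting algebra, we have a bijection between $M^\bigstar$ and the set of hereditary systems of submodules of $M$, sending $\phi$ to $\mathcal K_\phi=\{K_c\}_{c\in H}$ with $K_c=\{m\in M\mid \phi(m)\leq c\}$. Next, because $H$ is Boolean, Proposition \ref{P4.33} identifies hereditary systems of submodules of $M$ with plain hereditary submodules of $M$, the hereditary system $\{K_c\}_{c\in H}$ corresponding to its ``base'' $K_0$ (and conversely $K\mapsto \{(K:\righthalfcap c)\}_{c\in H}$). Finally, Proposition \ref{P4.2}(a) identifies hereditary submodules of $M$ with the elements of $M^\star$, a submodule $K\subseteq M$ corresponding to the unique $\psi\in M^\star$ with $\psi^{-1}(0)=K$.

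Composing these three bijections yields the desired correspondence $M^\bigstar\leftrightarrow M^\star$. I would then describe it explicitly for clarity: an element $\phi\in M^\bigstar$ corresponds to the element $\psi\in M^\star$ characterized by
\begin{equation*}
\psi(m)=0 \quad\Leftrightarrow\quad \phi(m)=0,
\end{equation*}
so the hereditary submodule mediating the correspondence is $\phi^{-1}(0)=\psi^{-1}(0)$. Conversely, starting from $\psi\in M^\star$ one sets $K=\psi^{-1}(0)$, then forms $K_c=(K:\righthalfcap c)$, and recovers $\phi\in M^\bigstar$ via the formula $\phi(m)=\bigwedge_{m\in K_d}d$ of \eqref{eq4.5r}.

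There is no real obstacle; the only point to keep in mind is that both hypotheses on $H$ are genuinely used—finiteness to apply Proposition \ref{P4.32} (so that the meet defining $\phi_{\mathcal K}$ exists and the one-one correspondence with hereditary systems holds), and the Boolean hypothesis to apply Proposition \ref{P4.33} (so that a hereditary system is determined by the single submodule $K_0$ via the identity $(c\to d)=\righthalfcap c\vee d$).
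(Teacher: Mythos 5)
Your proposal is correct and follows essentially the same route as the paper: the paper also obtains the bijection by composing Propositions \ref{P4.2}, \ref{P4.32} and \ref{P4.33}, and records the same explicit formulas (note that your $\phi(m)=\bigwedge_{m\in K_d}d$ with $K_d=(K:\righthalfcap d)$ unwinds to the paper's $\phi(m)=\bigwedge\{c\in H\mid \psi(\righthalfcap c\wedge m)=0\}$). No gaps.
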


\begin{proof}
This result follows directly as a consequence of Propositions \ref{P4.2}, \ref{P4.32} and \ref{P4.33}. This correspondence may
be made explicit as follows : given $\phi:M\longrightarrow H$ in $M^\bigstar$, we can define $\psi:M\longrightarrow \{0,1\}$
in $M^*$ by setting $\psi(m)=0$ if $\phi(m)=0$ and $\psi(m)=1$ otherwise. 

\smallskip
Conversely, given $\psi:M\longrightarrow \{0,1\}$
in $M^*$ , we can obtain a corresponding  $\phi:M\longrightarrow H$ in $M^\bigstar$ by setting
\begin{equation}\label{x4.9e}
\phi(m)=\bigwedge \{\mbox{$c\in H$ $\vert$ $\psi(\righthalfcap c\wedge m)=0$}\}
\end{equation}
\end{proof}

\begin{thm}\label{P4.3}
Let $H$ be a finite Boolean algebra, $M$ be a Heyting module over $H$ and let $N\subseteq M$ be a Heyting submodule. Then, the induced map $M^\bigstar\longrightarrow N^\bigstar$ is surjective. 
\end{thm}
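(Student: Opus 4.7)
The strategy is a Hahn--Banach-type extension: reduce the statement about $H$-valued duals to the analogous statement about $\{0,1\}$-valued duals, and then realize that as an extension problem for hereditary submodules.

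First I would invoke Proposition \ref{xP4.34}, which (since $H$ is finite Boolean) gives explicit bijections $M^\bigstar \cong M^\star$ and $N^\bigstar \cong N^\star$. A brief check using the formulas in that proposition, together with the fact that $N$ is closed under the $H$-action so $\righthalfcap c \wedge n \in N$ for every $n \in N$, shows that these bijections commute with restriction along $N \hookrightarrow M$: the forward assignment $\phi \mapsto \psi_\phi$ (where $\psi_\phi(m) = 0$ iff $\phi(m) = 0$) is obviously restriction-compatible, and the inverse $\psi \mapsto \phi_\psi$ with $\phi_\psi(m) = \bigwedge \{c : \psi(\righthalfcap c \wedge m) = 0\}$ is too, precisely because $\righthalfcap c \wedge n \in N$. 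So the problem reduces to showing surjectivity of $M^\star \to N^\star$.

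Next, by Proposition \ref{P4.2}(a), elements of $M^\star$ (resp.\ $N^\star$) correspond bijectively to hereditary Heyting submodules of $M$ (resp.\ of $N$) via $\psi \mapsto \psi^{-1}(0)$, and under this correspondence the restriction map becomes intersection $\tilde K \mapsto \tilde K \cap N$. So it suffices to show that every hereditary Heyting submodule $K \subseteq N$ is of the form $\tilde K \cap N$ for some hereditary Heyting submodule $\tilde K \subseteq M$.

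For this, I would simply take the downward closure $\tilde K := \{m \in M : m \leq k \text{ for some } k \in K\}$. Routine verifications show $\tilde K$ is closed under finite joins ($m_1 \vee m_2 \leq k_1 \vee k_2 \in K$), closed under the $H$-action ($c \wedge m \leq c \wedge k \in K$, using that $K$ is an $H$-submodule), and downward closed in $M$. Clearly $K \subseteq \tilde K \cap N$; conversely, if $n \in N$ and $n \leq k$ for some $k \in K$, then since $K$ is hereditary in $N$ we conclude $n \in K$. I expect the main obstacle to be the naturality check in the first paragraph: the paper's bijection between $M^\bigstar$ and $M^\star$ is not explicitly stated to be functorial in $M$, so one has to verify by hand that both directions intertwine the two restriction maps before the problem can be recast as an extension problem on hereditary submodules; once that compatibility is in place, the construction of $\tilde K$ is essentially immediate.
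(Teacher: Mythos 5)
Your proof is correct and follows essentially the same route as the paper: reduce via the bijections of Proposition \ref{xP4.34} to the surjectivity of the restriction map $M^\star\longrightarrow N^\star$ on $\{0,1\}$-valued duals. The only difference is that the paper simply cites \cite[Proposition 2.3]{one} for that last step, whereas you prove it in-line by extending a hereditary submodule of $N$ to its downward closure in $M$ (which is precisely the argument behind the cited result); your explicit verification that the two bijections commute with restriction, using that $\righthalfcap c\wedge n\in N$, is a detail the paper leaves implicit but is indeed needed.
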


\begin{proof}
From Proposition \ref{xP4.34}, we know that there are bijections $M^\bigstar\simeq M^*$ and $N^\bigstar\simeq N^*$. From 
\cite[Proposition 2.3]{one}, we know that the induced morphism $M^*\longrightarrow N^*$ on the duals of the join semilattices
is surjective. The result is now clear. 
\end{proof}

\begin{thm}\label{P4.31}
Let $H$ be a finite Boolean algebra and let $M$ be a Heyting module over $H$. Then, the duality between $M$ and $M^\bigstar$ is separating, i.e., 
if $m_1$, $m_2\in M$ are such that $\phi(m_1)=\phi(m_2)$ for every $\phi\in M^\bigstar$, then $m_1=m_2$. 
\end{thm}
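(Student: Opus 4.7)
The plan is to reduce this to the corresponding statement for $M^\star$, namely Proposition \ref{P4.2}(c), by means of the bijection $M^\bigstar \simeq M^\star$ established in Proposition \ref{xP4.34}.

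Suppose $m_1, m_2 \in M$ satisfy $\phi(m_1) = \phi(m_2)$ for every $\phi \in M^\bigstar$. The first step is to translate this hypothesis into the language of $M^\star$. Recall from Proposition \ref{xP4.34} that the bijection $M^\bigstar \to M^\star$ sends $\phi$ to the $\psi$ defined by $\psi(m) = 0$ if $\phi(m) = 0$ and $\psi(m) = 1$ otherwise. Hence, given any $\phi \in M^\bigstar$ with image $\psi \in M^\star$, the equality $\phi(m_1) = \phi(m_2)$ implies in particular that $\phi(m_1) = 0 \Leftrightarrow \phi(m_2) = 0$, so $\psi(m_1) = \psi(m_2)$ by the definition of $\psi$.

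Next I would observe that since the association $\phi \mapsto \psi$ is a bijection $M^\bigstar \simeq M^\star$, every element of $M^\star$ is obtained this way. Consequently we get $\psi(m_1) = \psi(m_2)$ for every $\psi \in M^\star$. Applying Proposition \ref{P4.2}(c), which says precisely that the duality between $M$ and $M^\star$ separates points, we conclude that $m_1 = m_2$.

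I do not expect any real obstacle here; the work has been done in setting up the bijection of Proposition \ref{xP4.34} and in the separating property for $M^\star$ coming from \cite[Proposition 2.3]{one} via Proposition \ref{P4.2}(c). The only point that requires a moment of care is confirming that the binary $\{0,1\}$-valued information carried by $\psi$ suffices to recover the separation statement: this is automatic here because equal values in $H$ force simultaneous vanishing or non-vanishing, which is exactly what $\psi$ records.
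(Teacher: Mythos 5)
Your proof is correct, but it packages the argument differently from the paper. The paper argues directly: it takes the hereditary submodule $I_{m_1}$ of \eqref{eq4.1}, forms the hereditary system $\{(I_{m_1}:\righthalfcap c)\}_{c\in H}$, and lets $\phi\in M^\bigstar$ be the corresponding functional, so that $\phi^{-1}(0)=I_{m_1}$; from $\phi(m_2)=\phi(m_1)=0$ it gets $m_2\leq m_1$, and then symmetrizes. You instead transfer the entire hypothesis from $M^\bigstar$ to $M^\star$ through the explicit bijection of Proposition \ref{xP4.34} and then quote the separation statement of Proposition \ref{P4.2}(c). Both routes rest on the same foundations --- your bijection is assembled from exactly the correspondences (Propositions \ref{P4.2}, \ref{P4.32}, \ref{P4.33}) that produce the paper's separating $\phi$ --- so the mathematical content is the same; what differs is that the paper exhibits the separating functional concretely, while you use only the surjectivity of the map $\phi\mapsto\psi$ and let the semilattice-level result do the work. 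Your two key steps check out: equal values of $\phi$ in $H$ force simultaneous vanishing, hence equal values of the associated $\psi$; and since $\phi\mapsto\psi$ is onto $M^\star$, the hypothesis of Proposition \ref{P4.2}(c) is met. Your version is arguably the cleaner deduction given what has already been established; the paper's version has the minor advantage of showing explicitly which element of $M^\bigstar$ separates a given pair.
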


\begin{proof}
We consider the hereditary submodule $I_{m_1}\subseteq M$ as defined in \eqref{eq4.1}. Then, $\{(I_{m_1}:\righthalfcap c)\}_{c\in H}$
is a hereditary system of submodules of $M$ and let $\phi$ be the corresponding element in $M^\bigstar$. Then, $\phi(m)=0$
for some $m\in M$ if and only if $m\in I_{m_1}$. Then, $\phi(m_2)=\phi(m_1)=0$ and hence $m_2\in I_{m_1}$, i.e., 
$m_2\leq m_1$. Similarly, we can show that $m_1\leq m_2$. Hence, $m_1=m_2$. 
\end{proof}

 We now obtain the following
version of Hahn-Banach Theorem for modules over a finite Boolean algebra  (compare \cite[Lemma 2.6]{one} and also \cite{CGQ}). 

\begin{thm}\label{P4.4}
Let $H$ be a finite Boolean algebra, $M$ be a Heyting module over $H$ and $i:N\hookrightarrow M$ be a Heyting submodule. Then, if $m\in M$ is such that
$\phi_1(m)=\phi_2(m)$ for every pair  $(\phi_1,\phi_2)\in M^\bigstar\times M^\bigstar$ such that $\phi_1\circ i=\phi_2\circ i$, then $m\in N$.
\end{thm}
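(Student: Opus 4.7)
The plan is to proceed by contrapositive: assuming $m \notin N$, I will exhibit two elements $\phi_1, \phi_2 \in M^\bigstar$ that restrict to the same element of $N^\bigstar$ but differ at $m$. The construction will use the chain of correspondences in Propositions \ref{P4.32} and \ref{P4.33} which identifies $M^\bigstar$ (for $H$ a finite Boolean algebra) with the collection of hereditary Heyting submodules of $M$, via $\phi \mapsto K_\phi := \phi^{-1}(0)$. Under this identification, $\phi(m)=0$ is equivalent to $m \in K_\phi$.

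Concretely, I would take $K_1 := I_m = \{x \in M : x \leq m\}$, which is hereditary (as noted after \eqref{eq4.1}), and $K_2 := \{x \in M : x \leq n \text{ for some } n \in I_m \cap N\}$, the down-closure in $M$ of $I_m \cap N$. The first routine step is to verify that $K_2$ is a hereditary Heyting submodule; being a down-set it is hereditary, and it is closed under joins because $I_m \cap N$ is itself closed under joins (both $I_m$ and $N$ are). The three key properties to check are then: (i) $K_1 \cap N = K_2 \cap N = I_m \cap N$, which is straightforward; (ii) $m \in K_1$, obvious; and (iii) $m \notin K_2$, which holds precisely because $m \in K_2$ would force $m \leq n$ for some $n \in I_m \cap N$, forcing $m = n \in N$, contradicting the hypothesis.

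Let $\phi_1, \phi_2 \in M^\bigstar$ be the functionals corresponding to $K_1, K_2$ under the bijection of Proposition \ref{P4.32}. By the explicit formula (or equivalently the expression \eqref{x4.9e}), for any $x \in M$ we have $\phi_i(x) = \bigwedge\{c \in H : \righthalfcap c \wedge x \in K_i\}$. Consequently $\phi_i(x) = 0$ if and only if $x \in K_i$, so (ii) and (iii) immediately give $\phi_1(m) = 0$ while $\phi_2(m) \neq 0$. To check $\phi_1 \circ i = \phi_2 \circ i$, fix $n \in N$. Since $N$ is a Heyting submodule, it is closed under the $H$-action, hence $\righthalfcap c \wedge n \in N$ for every $c \in H$; the condition $\righthalfcap c \wedge n \in K_i$ is therefore equivalent to $\righthalfcap c \wedge n \in K_i \cap N$, and these two sets coincide by (i). Thus the meets defining $\phi_1(n)$ and $\phi_2(n)$ are taken over the same set of $c$, and $\phi_1(n) = \phi_2(n)$.

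I expect the only mildly subtle points to be the closure verifications for $K_2$ and the use of the Heyting submodule hypothesis on $N$ (its closure under $H$-action) to push the equality $K_1 \cap N = K_2 \cap N$ through the formula \eqref{x4.9e} to the stronger statement $\phi_1|_N = \phi_2|_N$. The finiteness of $H$ and the Boolean hypothesis enter only through the appeal to Propositions \ref{P4.32} and \ref{P4.33}; once those are available, the rest of the argument is essentially the natural separation argument for hereditary submodules.
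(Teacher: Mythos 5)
Your argument is correct, but it takes a genuinely different route from the paper. The paper's proof works with the two-valued dual $M^*$: it takes an arbitrary pair $\psi_1,\psi_2\in M^*$ agreeing on $N$, passes to the corresponding $\phi_1,\phi_2\in M^\bigstar$ via the bijection of Proposition \ref{xP4.34}, uses \eqref{x4.9e} to show these agree on $N$ and hence at $m$ by hypothesis, deduces $\psi_1(m)=\psi_2(m)$, and then invokes the Hahn--Banach lemma of Connes--Consani (\cite[Lemma 2.6]{one}) as a black box to conclude $m\in N$. You instead argue by contrapositive and construct the separating pair explicitly: the hereditary submodules $K_1=I_m$ and $K_2$ (the down-closure of $I_m\cap N$) yield, via Propositions \ref{P4.32} and \ref{P4.33}, functionals $\phi_1,\phi_2\in M^\bigstar$ with $\phi_1|_N=\phi_2|_N$ but $\phi_1(m)=0\neq\phi_2(m)$. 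In effect you have inlined the proof of the cited Connes--Consani lemma and lifted it directly to the $H$-valued dual; your verifications (that $K_2$ is a hereditary Heyting submodule closed under joins, that $K_1\cap N=K_2\cap N=I_m\cap N$, that $m\notin K_2$ forces $m\in N$ otherwise, and that closure of $N$ under the $H$-action pushes the equality $K_1\cap N=K_2\cap N$ through the meet formula to $\phi_1\circ i=\phi_2\circ i$) are all sound. What your approach buys is self-containedness --- the only external inputs are the paper's own Propositions \ref{P4.32} and \ref{P4.33} --- at the cost of a somewhat longer argument; the paper's version is shorter but depends on the external reference for the crucial separation step.
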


\begin{proof}
We consider a pair $(\psi_1,\psi_2)\in M^*\times M^*$ such that $\psi_1\circ i=\psi_2\circ i$. Using the bijection $M^\bigstar
\simeq M^*$, we take $\phi_1$, $\phi_2\in M^\bigstar$ corresponding respectively to $\psi_1$, $\psi_2\in M^*$. For any fixed
$n\in N$ and any $c\in H$, we know that $\righthalfcap c\wedge n\in N$. Since $\psi_1\circ i=\psi_2\circ i$, it follows that
\begin{equation}\label{x4.10e}
\psi_1(\righthalfcap c\wedge n)=0\quad\Leftrightarrow\quad \psi_2(\righthalfcap c\wedge n)=0
\end{equation} From \eqref{x4.9e} and \eqref{x4.10e}, it follows that $\phi_1(n)=\phi_2(n)$ for every $n\in N$, i.e.,
$\phi_1\circ i=\phi_2\circ i$. By assumption, we must therefore have $\phi_1(m)=\phi_2(m)$. Again, from the proof of Proposition 
\ref{xP4.34}, we obtain
\begin{equation}
\psi_1(m)=0 \quad\Leftrightarrow\quad \phi_1(m)=0 \quad\Leftrightarrow\quad \phi_2(m)=0 \quad\Leftrightarrow\quad  \psi_2(m)=0
\end{equation} Hence, any maps $\psi_1,\psi_2:M\longrightarrow \{0,1\}$ in $M^*$ such that
$\psi_1\circ i=\psi_2\circ i$ must satisfy $\psi_1(m)=\psi_2(m)$. Applying \cite[Lemma 2.6]{one}, we get $m\in N$. 
\end{proof}

In Proposition \ref{P4.5}, we showed that a morphism $f:M\longrightarrow N$ in $Heymod_H$ is a monomorphism if and only
if it is injective on underlying sets. For epimorphisms, we have a somewhat less general result. 

\begin{thm}\label{P4.51} Let $H$ be a finite Boolean algebra and let $f:M\longrightarrow N$ be a morphism of Heyting modules
over $H$. Then, $f$ is an epimorphism if and only if $f$ is surjective on underlying sets. 
\end{thm}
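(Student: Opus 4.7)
The reverse implication is routine: if $f$ is surjective on underlying sets, and $g_1,g_2:N\longrightarrow Q$ are morphisms in $Heymod_H$ with $g_1\circ f=g_2\circ f$, then for any $n\in N$ we can write $n=f(m)$ and obtain $g_1(n)=g_1(f(m))=g_2(f(m))=g_2(n)$, so $g_1=g_2$. The substance of the statement is the forward direction, and the plan is to reduce it to the Hahn-Banach-type separation result already established in Proposition \ref{P4.4}.

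Suppose then that $f:M\longrightarrow N$ is an epimorphism in $Heymod_H$. I would first verify that the image $N':=f(M)\subseteq N$ is a Heyting submodule of $N$: it contains $0_N=f(0_M)$, is closed under binary joins because $f(m_1)\vee f(m_2)=f(m_1\vee m_2)$, and is closed under the $H$-action because $c\wedge f(m)=f(c\wedge m)$. Since $N'\subseteq N$ inherits distributivity, Proposition \ref{Pp3.65} makes $N'$ a Heyting submodule. Let $i:N'\hookrightarrow N$ denote the inclusion.

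The main step is to argue by contradiction: assume there exists $n\in N$ with $n\notin N'$. By Proposition \ref{P4.4} applied to the submodule $i:N'\hookrightarrow N$, there is a pair $(\phi_1,\phi_2)\in N^\bigstar\times N^\bigstar$ with $\phi_1\circ i=\phi_2\circ i$ but $\phi_1(n)\neq \phi_2(n)$. The identity $\phi_1\circ i=\phi_2\circ i$ says exactly that $\phi_1$ and $\phi_2$ agree on the image $f(M)$, which is to say $\phi_1\circ f=\phi_2\circ f$ as morphisms $M\longrightarrow H$ in $Heymod_H$. Since $f$ is assumed to be an epimorphism, this forces $\phi_1=\phi_2$, contradicting $\phi_1(n)\neq\phi_2(n)$. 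Hence every $n\in N$ lies in $f(M)$, and $f$ is surjective.

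The main obstacle, such as it is, is really conceptual rather than technical: one must notice that the Hahn-Banach statement of Proposition \ref{P4.4} is precisely dual to the epimorphism condition once one takes $H$ itself as the target object and exploits that $M^\bigstar$ separates points of $M$ (Proposition \ref{P4.31}). The finiteness of $H$ and the Boolean hypothesis enter only through the invocation of Proposition \ref{P4.4}; everything else is formal, including the verification that $f(M)$ is a Heyting submodule.
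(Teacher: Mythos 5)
Your proposal is correct and follows essentially the same route as the paper: identify the image $f(M)$ as a Heyting submodule, observe that the epimorphism hypothesis forces any two functionals in $N^\bigstar$ agreeing on $f(M)$ to coincide, and then invoke the Hahn--Banach-type Proposition \ref{P4.4} to conclude surjectivity. The only cosmetic difference is that you phrase the final step as a contradiction via the contrapositive of Proposition \ref{P4.4}, while the paper applies it directly.
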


\begin{proof}
The ``if part'' is obvious. On the other hand, consider a morphism $f:M\longrightarrow N$ in $Heymod_H$. It is clear that the image of
$f$ (as a subset of $N$) is already a Heyting submodule, which we will denote by $E$. 

\smallskip
Suppose that $f$ is an epimorphism in $Heymod_H$. We now
consider a pair $(\phi_1,\phi_2)\in N^\bigstar\times N^\bigstar$ such that $\phi_1|_E=\phi_2|_E$. Then, $\phi_1\circ f=\phi_2\circ f$. Since $\phi_1,\phi_2:N\longrightarrow H$ are morphisms of Heyting modules and $f$ is an epimorphism, we obtain
$\phi_1=\phi_2$. In particular, $\phi_1(n)=\phi_2(n)$ for every $n\in N$. Using Proposition \ref{P4.4}, we get
$n\in E$ for every $n\in N$, i.e., $f$ is surjective.  
\end{proof}

We conclude this section with the following result.

\begin{thm}\label{P4.61}
Let $M$ be a Heyting module over a finite Heyting algebra $H$. Then, the dual $M^\bigstar$ is a Heyting module over $H$ and 
there is a canonical morphism of Heyting modules $M\longrightarrow M^{\bigstar\bigstar}$. 
\end{thm}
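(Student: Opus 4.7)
The plan is to put pointwise operations on $M^\bigstar=Heymod_H(M,H)$, apply Proposition \ref{P3.5} to obtain a Heyting module structure on it, and then define the canonical map $M\longrightarrow M^{\bigstar\bigstar}$ as evaluation. First I would equip $M^\bigstar$ with the join $(\phi_1\vee\phi_2)(m):=\phi_1(m)\vee\phi_2(m)$, zero element the constant morphism $m\mapsto 0_H$, and $H$-action $(c\wedge\phi)(m):=c\wedge\phi(m)$. The substance of this step is to verify that $\phi_1\vee\phi_2$ and $c\wedge\phi$ still lie in $M^\bigstar$: for instance, $(\phi_1\vee\phi_2)(d\wedge m)=d\wedge(\phi_1(m)\vee\phi_2(m))$ by the distributivity of the lattice $H$, and $(c\wedge\phi)(d\wedge m)=c\wedge d\wedge\phi(m)=d\wedge(c\wedge\phi)(m)$ by commutativity and associativity of meets. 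The axioms of Definition \ref{D2.0}, together with the two distributive module identities, then all follow pointwise from the corresponding identities in $H$.

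Having done this, $M^\bigstar$ is a distributive module over the finite Heyting algebra $H$, so Proposition \ref{P3.5} immediately upgrades it to a Heyting module with $(\phi\rightarrow\phi')_{M^\bigstar}=\bigvee\{x\in H\mid x\wedge\phi\leq\phi'\}$, settling the first half of the statement.

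For the canonical morphism, I would take $\mathrm{ev}:M\longrightarrow M^{\bigstar\bigstar}$ defined by $\mathrm{ev}_m(\phi):=\phi(m)$. That each $\mathrm{ev}_m:M^\bigstar\longrightarrow H$ belongs to $M^{\bigstar\bigstar}$ is tautological from the pointwise definitions just imposed on $M^\bigstar$: $\mathrm{ev}_m(\phi_1\vee\phi_2)=\phi_1(m)\vee\phi_2(m)=\mathrm{ev}_m(\phi_1)\vee\mathrm{ev}_m(\phi_2)$ and $\mathrm{ev}_m(c\wedge\phi)=c\wedge\phi(m)=c\wedge\mathrm{ev}_m(\phi)$. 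Conversely, $m\mapsto\mathrm{ev}_m$ preserves joins and the $H$-action on $M$ precisely because each individual $\phi\in M^\bigstar$ does; e.g.\ $\mathrm{ev}_{c\wedge m}(\phi)=\phi(c\wedge m)=c\wedge\phi(m)=(c\wedge\mathrm{ev}_m)(\phi)$, and similarly for joins in $M$. There is no serious obstacle in this argument: the finiteness of $H$ is only invoked once, to supply the implication operation on $M^\bigstar$ via Proposition \ref{P3.5}, and the rest reduces to the formal observation that Heyting-module morphisms into $H$ can be combined pointwise. The only care required is in confirming that these pointwise operations do not exit $M^\bigstar$, which is exactly where the distributivity of $H$ enters.
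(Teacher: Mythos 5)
Your proposal is correct and follows essentially the same route as the paper: equip $M^\bigstar$ with pointwise operations, invoke Proposition \ref{P3.5} (using finiteness of $H$) to upgrade it to a Heyting module, and take evaluation as the canonical map. The only cosmetic difference is that the paper defines the action by $(c\wedge\phi)(m):=\phi(c\wedge m)$ rather than $c\wedge\phi(m)$, but these agree since $\phi$ is $H$-linear.
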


\begin{proof} For $c\in H$ and $\phi\in M^\bigstar$, we set $(c\wedge \phi)(m):=\phi(c\wedge m)$ for any $m\in M$. It may be easily verified that this makes $M^\bigstar$ into a distributive module over the lattice underlying $H$. Similarly, we see that $M^{\bigstar\bigstar}$
is a distributive module over $H$ and there is a canonical map $M\longrightarrow M^{\bigstar\bigstar}$ of distributive modules given by
the association $m\mapsto \langle \_\_,m\rangle : M^\bigstar\longrightarrow H$ for each $m\in M$. Finally, since $H$ is a finite Heyting algebra, it follows from Proposition \ref{P3.5} that $M^\bigstar$ becomes a Heyting module over $H$ and the canonical map
$M\longrightarrow M^{\bigstar\bigstar}$ becomes a morphism of Heyting modules. 

\end{proof}

\section{Coequalizers and Equalizers in $Heymod_H$}

Throughout this section, we let $H$ be a finite Heyting algebra. In other words (see Remark \ref{XR3.7}), this means that $H$
is a finite distributive lattice. We also recall from Proposition \ref{P3.5} that any distributive module over a finite Heyting algebra $H$
is canonically equipped with the structure of a Heyting module. In this section, we will study products, coproducts, coequalizers and equalizers in $Heymod_H$ in a manner analogous to \cite[$\S$ 3]{one}. In other words, we study elements of homological algebra of distributive modules over a finite distributive lattice, extending the results from \cite{one} for modules over the Boolean semifield $\mathbb B=\{0,1\}$.

\smallskip
It is clear that the object $0$ is both initial and final in $Heymod_H$. Our first aim is to show that $Heymod_H$ is a semiadditive
category, i.e., $Heymod_H$ has all finite biproducts (see \cite[VII.2]{ML}).

\begin{lem}\label{uL5.1}
(a) The category $Heymod_H$ contains all finite products.

\smallskip
(b) In the category $Heymod_H$, finite products are isomorphic to finite coproducts.
\end{lem}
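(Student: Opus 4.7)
For part (a), the plan is to show that the set-theoretic Cartesian product $M_1 \times \dots \times M_k$ of finitely many Heyting modules, equipped with coordinate-wise operations, furnishes the categorical product in $Heymod_H$. First I would verify that coordinate-wise join and the coordinate-wise action $c \wedge (m_1, \dots, m_k) := (c \wedge m_1, \dots, c \wedge m_k)$ make $M_1 \times \dots \times M_k$ into a distributive module over the underlying lattice of $H$ in the sense of Definition \ref{D2.0}; this is routine since each required identity decomposes coordinate-wise into an identity holding in one of the factors, and the least element $(0, \dots, 0)$ is fixed by the action of $0_H$. Since $H$ is finite, Proposition \ref{P3.5} then automatically promotes this to the structure of a Heyting module. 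The projections $\pi_i$ are visibly join-preserving and $H$-linear, and given any family $f_i : N \to M_i$ of morphisms in $Heymod_H$, the factoring map $n \mapsto (f_1(n), \dots, f_k(n))$ is join-preserving, $H$-linear, and evidently the unique morphism making each $\pi_i$-composite equal $f_i$. By induction it is enough to treat $k=2$.

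For part (b), the key point is that $0$ is a zero object in $Heymod_H$, so for each $i$ we have canonical morphisms $\iota_i : M_i \to M_1 \times \dots \times M_k$ sending $m$ to the tuple with $m$ in the $i$-th slot and $0$ elsewhere; these are visibly morphisms in $Heymod_H$. To exhibit the universal property of a coproduct, given morphisms $g_i : M_i \to N$, I would define
\begin{equation*}
\nabla(m_1, \dots, m_k) := g_1(m_1) \vee g_2(m_2) \vee \dots \vee g_k(m_k).
\end{equation*}
Preservation of joins follows from commutativity and associativity of $\vee$, and $H$-linearity follows from distributivity of $\wedge$ over $\vee$ in $M$ (recorded in Definition \ref{D2.0}) combined with $H$-linearity of each $g_i$. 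For uniqueness, one uses the decomposition $(m_1, \dots, m_k) = \iota_1(m_1) \vee \dots \vee \iota_k(m_k)$ inside the product to force any $g$ compatible with the $\iota_i$ to agree with $\nabla$; since $g$ preserves joins, it must coincide with $\nabla$ on each tuple.

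The main (and fairly mild) obstacle is to ensure that the identifications made in (b) respect the full Heyting-module structure and not merely the underlying join-semilattice structure, which is why the coordinate-wise description of the $H$-action in (a) is so convenient: it guarantees that $\iota_i$ is $H$-linear and that $\nabla$ is too. Together with the observation that $\pi_i \circ \iota_j$ equals the identity when $i=j$ and the zero morphism otherwise, one obtains the standard biproduct diagram, so finite products and finite coproducts coincide canonically and $Heymod_H$ is semiadditive.
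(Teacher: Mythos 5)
Your proposal is correct and follows essentially the same route as the paper: coordinate-wise operations on the Cartesian product for (a), and the map $(m_1,\dots,m_k)\mapsto g_1(m_1)\vee\dots\vee g_k(m_k)$ together with the decomposition of a tuple as a join of the $\iota_i(m_i)$ for (b). Your explicit appeal to Proposition \ref{P3.5} to supply the Heyting-module structure on the product is a small point of extra care that the paper leaves implicit, but it does not change the argument.
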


\begin{proof}
(a) We consider Heyting modules $M$, $N$. Then, $M\times N=\{\mbox{$(m,n)$ $\vert$ $m\in M$, $n\in N$}\}$ becomes
a Heyting module with the operations
\begin{equation}
(m,n)\vee (m',n') =(m\vee m',n\vee n') \qquad c\wedge (m,n)=(c\wedge m,c\wedge n)
\end{equation} for $(m,n)$, $(m',n')\in M\times N$ and $c\in H$. We also notice that the canonical projections
$p_1:M\times N\longrightarrow M$, $p_2:M\times N\longrightarrow N$ as well as the inclusions $e_1:M\longrightarrow 
M\times N$, $e_1(m)=(m,0)$ and $e_2:N\longrightarrow M\times N$, $e_2(n)=(0,n)$ are morphisms of Heyting modules. 

\smallskip
Given morphisms $f:X\longrightarrow M$, $g:X\longrightarrow N$ in
$Heymod_H$, it is clear that $(f,g):X\longrightarrow M\times N$ given by $(f,g)(x)=(f(x),g(x))$ for each $x\in X$ 
is the unique morphism in $Heymod_H$ such that $p_1\circ (f,g)=f$ and $p_2\circ (f,g)=g$. Hence, $Heymod_H$
contains all finite products.

\smallskip
(b) We now suppose that we are given morphisms $f:M\longrightarrow Y$ and $g:N\longrightarrow Y$ in $Heymod_H$. Then, we define $h:M\times N\longrightarrow Y$ by setting $h(m,n)=f(m)\vee g(n)$. Since $(m,n)=(m,0)\vee (0,n)$, it is clear that $h$ is the unique morphism from
$M\times N$ to $Y$ such that $h\circ e_1=f$ and $h\circ e_2=g$. Hence, $M\times N$ is also the coproduct in
$Heymod_H$.
\end{proof}

\begin{thm}\label{uP5.2} Let $H$ be a finite Heyting algebra. Then, $Heymod_H$ is a semiadditive category. 
\end{thm}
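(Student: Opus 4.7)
The plan is to invoke Lemma \ref{uL5.1} together with the basic characterization of semiadditive categories recalled in \cite[VII.2]{ML}. Recall that a category is semiadditive precisely when it has a zero object and finite biproducts, i.e., an object $M\oplus N$ equipped with maps $e_i$ and $p_i$ satisfying $p_i\circ e_i=id$ and $p_i\circ e_j=0$ for $i\ne j$, which is simultaneously a product (via the $p_i$) and a coproduct (via the $e_i$). Equivalently, the canonical map from the coproduct to the product is an isomorphism.

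First I would observe that the trivial Heyting module $0$ is both initial and final in $Heymod_H$, as noted in the paragraph preceding Lemma \ref{uL5.1}. Hence $Heymod_H$ has a zero object, and the zero morphism between any two objects is the one factoring through $0$. Explicitly, for Heyting modules $X$, $Y$, the zero morphism sends every $x\in X$ to $0_Y\in Y$; since distributive modules satisfy $c\wedge 0_Y=0_Y$, this is indeed a morphism in $Heymod_H$.

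Next I would appeal to Lemma \ref{uL5.1}: part (a) constructs $M\times N$ as a product with projections $p_1$, $p_2$, while part (b) shows that the same object, together with the inclusions $e_1(m)=(m,0)$ and $e_2(n)=(0,n)$, serves as the coproduct. The biproduct identities are then immediate from the explicit formulas in the proof of Lemma \ref{uL5.1}: $p_1\circ e_1(m)=p_1(m,0)=m$ and $p_2\circ e_1(m)=p_2(m,0)=0$, so $p_1\circ e_1=id_M$ and $p_2\circ e_1=0$, and symmetrically $p_2\circ e_2=id_N$, $p_1\circ e_2=0$. This is exactly the biproduct data, so $M\times N$ is a biproduct of $M$ and $N$ in $Heymod_H$. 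Iterating, we obtain all finite biproducts.

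There is essentially no main obstacle here: once Lemma \ref{uL5.1} is in hand, the only thing to verify are the bookkeeping identities between $p_i$ and $e_j$, and these are trivialities of the construction. One could additionally make explicit the induced commutative-monoid enrichment, namely that $Heymod_H(X,Y)$ acquires the addition $(f+g)(x):=f(x)\vee g(x)$ (which is well defined because $Y$ is a join semilattice on which $H$ acts distributively) with neutral element the zero morphism, and that composition is bilinear with respect to this addition; but in the standard formulation of semiadditivity via biproducts, this enrichment is automatic and needs no separate argument.
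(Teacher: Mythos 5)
Your proposal is correct and follows the same route as the paper, which likewise deduces the result immediately from Lemma \ref{uL5.1} and the definition of a semiadditive category; you merely spell out the biproduct identities $p_i\circ e_j$ that the paper leaves implicit.
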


\begin{proof}
This follows immediately from Lemma \ref{uL5.1} and the definition of a semi-additive category.
\end{proof}

\begin{rem}
\emph{If $\mathcal C$ is any category with zero objects and finite products, given objects $c_1$, $c_2\in \mathcal C$, there are canonical morphisms $(id,0):c_1\longrightarrow c_1\times c_2$ and 
$(0,id):c_2\longrightarrow c_1\times c_2$. If $\mathcal C$ also has finite coproducts, these morphisms together induce
a canonical morphism $\gamma_{12}:c_1\coprod c_2\longrightarrow c_1\times c_2$. Classically, a category is said
to be semiadditive if every such canonical morphism $\gamma_{12}:c_1\coprod c_2\longrightarrow c_1\times c_2$ is an isomorphism. However, a result of Lack \cite[Theorem 5]{Lack} shows that giving any family of  isomorphisms $c_1\coprod c_2\overset{\cong}{
\longrightarrow}c_1\times c_2$ that is natural in $c_1$, $c_2$ is enough to show that $\mathcal C$ is semiadditive. }
\end{rem} 

\smallskip
Our next aim is to construct the coequalizer and equalizer of morphisms in $Heymod_H$. By definition, a congruence relation  on some $M\in Heymod_H$
will be a Heyting submodule $R\subseteq M\times M$ which satisfies the following three conditions:

\smallskip
(1) For each $m\in M$, we have $(m,m)\in R$.

\smallskip
(2) For $m$, $m'\in M$ such that $(m,m')\in R$, we must have $(m',m)\in R$. 

\smallskip
(3) If $m$, $m'$ and $m''\in M$ are such that $(m,m')$, $(m',m'')\in M$, then $(m,m'')\in M$. 

\begin{lem}\label{vL5.4} If $R\subseteq M\times M$ is a congruence relation on $M$, then $M/R$ is a Heyting module.
\end{lem}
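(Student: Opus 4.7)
The plan is to construct $M/R$ as the quotient of $M$ by the equivalence relation $R$, transfer the join and $H$-action through representatives, verify well-definedness using the fact that $R$ is itself a Heyting submodule of $M\times M$, and then invoke Proposition \ref{P3.5} to upgrade the resulting distributive module structure to a Heyting module structure (using that $H$ is finite, which is the standing assumption in this section).

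First, conditions (1), (2), (3) on $R$ say precisely that $R$ is an equivalence relation, so we may form the quotient set $M/R$ whose elements are the classes $[m]$ for $m\in M$. I would define the join and the action of $H$ by
\begin{equation*}
[m]\vee [m']:=[m\vee m'],\qquad c\wedge [m]:=[c\wedge m]
\end{equation*}
for $m,m'\in M$ and $c\in H$. The well-definedness of each of these is where the hypothesis that $R$ is a Heyting submodule of $M\times M$ enters. Indeed, if $(m_1,\widetilde m_1),(m_2,\widetilde m_2)\in R$, then since $R\subseteq M\times M$ is closed under joins, we get $(m_1\vee m_2,\widetilde m_1\vee \widetilde m_2)=(m_1,\widetilde m_1)\vee (m_2,\widetilde m_2)\in R$, so $[m_1\vee m_2]=[\widetilde m_1\vee \widetilde m_2]$. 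Similarly, since $R$ is stable under the $H$-action on $M\times M$, if $(m,\widetilde m)\in R$ then $(c\wedge m, c\wedge \widetilde m)=c\wedge (m,\widetilde m)\in R$, giving $[c\wedge m]=[c\wedge \widetilde m]$.

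Next, I would check that these operations make $M/R$ into a distributive module over the lattice underlying $H$ in the sense of Definition \ref{D2.0}. The partial order on $M/R$ is the one induced by the join, namely $[m]\leq [m']$ iff $[m\vee m']=[m']$, and antisymmetry is immediate from conditions (2) and (3) on $R$. The element $[0_M]$ is the least element; the identities $1_H\wedge [m]=[m]$, $0_H\wedge [m]=[0_M]$, $(x\wedge y)\wedge [m]=x\wedge (y\wedge [m])$, $(x\vee y)\wedge [m]=(x\wedge [m])\vee (y\wedge [m])$, and $x\wedge ([m]\vee [n])=(x\wedge [m])\vee (x\wedge [n])$ for $x,y\in H$ and $m,n\in M$ are all immediate from the corresponding identities for $M$ by choosing representatives, and by the well-definedness established above none of them depend on the choice of representative.

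Having shown that $M/R$ is a distributive module over the lattice $H$, the last step is to apply Proposition \ref{P3.5}: since $H$ is a finite Heyting algebra, $M/R$ is canonically equipped with the structure of a Heyting module, with $([m]\rightarrow [m'])_{M/R}=\bigvee\{x\in H\mid x\wedge [m]\leq [m']\}$. Since each of the constructions is essentially forced on us by the submodule conditions on $R$, the main (and only nontrivial) obstacle is checking well-definedness of the join and the action on representatives, which as indicated above reduces to closure of $R$ under joins and under the $H$-action in $M\times M$.
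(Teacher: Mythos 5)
Your proposal is correct and follows essentially the same route as the paper: define the join and $H$-action on equivalence classes via representatives and use the fact that $R$ is a submodule of $M\times M$ (closed under joins and the $H$-action) to establish well-definedness. You are somewhat more explicit than the paper in spelling out the distributive module axioms and in invoking Proposition \ref{P3.5} (with the standing finiteness assumption on $H$) to upgrade the quotient to a Heyting module, but this is a welcome clarification rather than a different argument.
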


\begin{proof}
By definition, $M/R$ is the set of equivalence classes in $M$, where $m\sim m'$ if and only if $(m,m')\in R$. Given elements
$[m]$, $[n]\in M/R$ corresponding respectively to elements $m$, $n\in M$, we set
\begin{equation}\label{e5.2re}
[m]\vee [n]:=[m\vee n]\qquad c\wedge [m]:=[c\wedge m]
\end{equation} for each $c\in H$. Say $m\sim m'$ and $n\sim n'$ in $M$. Then, $(m,m')$, $(n,n')\in R$ and since $R$ is a submodule,
we must have $(m\vee n,m'\vee n')=(m,m')\vee (n,n')\in R$. It follows that $[m\vee n]=[m'\vee n']$. Since $R$ is a submodule, it also
follows that $(c\wedge m,c\wedge m')\in R$ for any $c\in H$ and hence $[c\wedge m]=[c\wedge m']$. Hence, the Heyting module structure on 
$M/R$ given in \eqref{e5.2re} is well-defined. 
\end{proof}

Given morphisms $f,g:L\longrightarrow M$ in $Heymod_H$, we now set $R_{(f,g)}$ to be the intersection of all congruence relations on $M$
containing the collection $\{(f(x),g(x))\}_{x\in L}$. 

\begin{thm}\label{vP5.5} (a) The canonical morphism $r:M\longrightarrow M/R_{(f,g)}$ is the coequalizer of the morphisms $f,g:L\longrightarrow M$
in $Heymod_H$.

\smallskip
(b) The coequalizer of $f,g:L\longrightarrow M$ is the quotient of $M$ over the congruence relation given by
\begin{equation}\label{5.3'}
R'_{(f,g)}=\{\mbox{$(m,n)\in M\times M$ $\vert$ $t(m)=t(n)$ for every $t:M\longrightarrow Q$ in $Heymod_H$ with $t\circ f=t\circ g$}\}
\end{equation}

\end{thm}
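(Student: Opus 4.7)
The plan is to use the standard strategy of constructing coequalizers as quotients by congruence relations, adapted to the Heyting module setting, and then identify the two descriptions in part (b) via the universal property from part (a). For part (a), I would first verify that an arbitrary intersection of congruence relations on $M$ is again a congruence relation: reflexivity, symmetry, and transitivity are each closed under intersection, and an intersection of distributive submodules of $M\times M$ is again a distributive submodule. Thus $R_{(f,g)}$ is a bona fide congruence relation, and Lemma \ref{vL5.4} supplies the Heyting module structure on $M/R_{(f,g)}$ together with a quotient morphism $r:M\longrightarrow M/R_{(f,g)}$ in $Heymod_H$. Since $\{(f(x),g(x))\}_{x\in L}\subseteq R_{(f,g)}$ by construction, we immediately get $r\circ f=r\circ g$.

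For the universal property, I would fix a morphism $t:M\longrightarrow Q$ in $Heymod_H$ with $t\circ f=t\circ g$ and consider the kernel relation $\ker(t):=\{(m,n)\in M\times M\mid t(m)=t(n)\}$. The crucial verification is that $\ker(t)$ is itself a congruence relation on $M$: equivalence-relation axioms hold tautologically, and $\ker(t)$ is a Heyting submodule of $M\times M$ because $t$ preserves joins and the $H$-action (so $t(m_i)=t(n_i)$ for $i=1,2$ yields $t(m_1\vee m_2)=t(n_1\vee n_2)$ and $t(c\wedge m_i)=t(c\wedge n_i)$). Since $(f(x),g(x))\in\ker(t)$ for every $x\in L$, the minimality of $R_{(f,g)}$ as an intersection forces $R_{(f,g)}\subseteq\ker(t)$. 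This allows me to define $\bar{t}:M/R_{(f,g)}\longrightarrow Q$ by $\bar{t}([m]):=t(m)$; well-definedness, preservation of joins and of the $H$-action, and the relation $\bar{t}\circ r=t$ are then formal. Uniqueness of $\bar{t}$ is clear from surjectivity of $r$.

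For part (b), I would first observe that $R'_{(f,g)}$ is reflexive, symmetric, and transitive by elementary properties of equality, and is a Heyting submodule of $M\times M$ by the same argument as above applied uniformly across the family of witnesses $t$. Moreover, $(f(x),g(x))\in R'_{(f,g)}$ for every $x\in L$ because every admissible $t$ satisfies $t\circ f=t\circ g$. By minimality of $R_{(f,g)}$ we obtain $R_{(f,g)}\subseteq R'_{(f,g)}$. The reverse inclusion is the punchline: specializing the definition of $R'_{(f,g)}$ to the single morphism $t=r:M\longrightarrow M/R_{(f,g)}$, which equalizes $f$ and $g$ by part (a), we see that any $(m,n)\in R'_{(f,g)}$ must satisfy $r(m)=r(n)$, i.e.\ $(m,n)\in R_{(f,g)}$. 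Hence $R_{(f,g)}=R'_{(f,g)}$ and both descriptions of the coequalizer coincide.

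No single step is a serious obstacle; the proof is essentially bookkeeping. The only point that merits real attention is checking that the kernel of an arbitrary morphism and the intersection defining $R'_{(f,g)}$ really are Heyting submodules of $M\times M$, which relies on each candidate $t$ being a morphism in $Heymod_H$ (and so preserving both joins and the action of $H$).
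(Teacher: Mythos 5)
Your proposal is correct and follows essentially the same route as the paper: the kernel relation $\ker(t)$ you introduce is exactly the paper's $R_s$, and your part (b) argument (minimality in one direction, specializing to $t=r$ in the other) is the paper's argument verbatim. The only difference is that you spell out the routine verifications (intersections of congruences are congruences, kernel relations are Heyting submodules) that the paper leaves implicit.
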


\begin{proof} (a) From the definition of $R_{(f,g)}$, it is clear that $r\circ f=r\circ g$. Further, if $s:M\longrightarrow P$ is a morphism such that 
$s\circ f=s\circ g$, we set 
\begin{equation}
R_s:=\{\mbox{$(m,n) \in  M\times M$ $\vert$ $s(m)=s(n)$}\}
\end{equation} It is clear that $R_s\subseteq M\times M$ gives a congruence relation on $M$ and that $(f(x),g(x))\in R_s$ for every $x\in L$. It follows that
$R_{(f,g)}\subseteq R_s$. Then, for any $m$, $n\in M$:
\begin{equation}\label{5.4eg}
r(m)=r(n)\quad\Rightarrow \quad (m,n)\in R_{(f,g)}\quad \Rightarrow \quad (m,n)\in R_s \quad \Rightarrow \quad s(m)=s(n)
\end{equation} As such, \eqref{5.4eg} shows that the morphism $s:M\longrightarrow P$ factors uniquely through $M/R_{(f,g)}$. 

\smallskip
(b) Since $R'_{(f,g)}$ is a congruence relation containing all the elements $\{(f(x),g(x))\}_{x\in L}$, it follows from the definition of
$R_{(f,g)}$ that $R_{(f,g)}\subseteq R'_{(f,g)}$. Conversely, consider some $(m,n)\in R'_{(f,g)}$. From part (a), we know that the canonical
morphism $r:M\longrightarrow M/R_{(f,g)}$ satisfies $r\circ f=r\circ g$. From \eqref{5.3'} it follows that $r(m)=r(n)$, i.e., $(m,n)\in R_{(f,g)}$. 
Hence, $R_{(f,g)}=R'_{(f,g)}$ and the result follows. 

\end{proof}

We also see that the equalizer $L'\hookrightarrow L$ of morphisms $f,g:L\longrightarrow M$ in $Heymod_H$ is given by
\begin{equation}\label{eqlz}
L':=\{\mbox{$x\in L$ $\vert$ $f(x)=g(x)$}\}
\end{equation} It is easily verified that $L'$ is a Heyting submodule of $L$. 

\smallskip
The next step is to consider coimages and images in the category $Heymod_H$. This will be done in a manner analogous to \cite[$\S$ 3]{one}, by considering 
``kernel pairs'' and ``cokernel pairs.'' 

\smallskip
We begin with a morphism $f:M\longrightarrow N$ in $Heymod_H$. Considering $M^2:=M\times M$ along with the canonical projections $p_1,p_2:M^2
\longrightarrow M$, we have morphisms $f\circ p_1,f\circ p_2:M^2\longrightarrow N$ in $Heymod_H$. The kernel pair $Ker_p(f)$ is now defined to be the equalizer
\begin{equation}\label{kerp}
Ker_p(f):=Eq\left(\xymatrix{
 M^2 \ar@<-.5ex>[r]_{f\circ p_2} \ar@<.5ex>[r]^{f\circ p_1} & N
}\right)
\end{equation} From the definition in \eqref{eqlz}, we know that $Ker_p(f)\subseteq M^2$. The coimage $Coim(f)$ is taken to be the coequalizer
\begin{equation}\label{coim}
Coim(f):=Coeq\left(Ker_p(f)\hookrightarrow \xymatrix{
 M^2 \ar@<-.5ex>[r]_{p_2} \ar@<.5ex>[r]^{p_1} & M
}\right)
\end{equation} The next result describes the coimage more explicitly.

\begin{thm}\label{usP5.6} (a) The submodule $Ker_p(f)\subseteq M$ defines a congruence relation on $M$.

\smallskip
(b) The coimage $Coim(f)$ is the quotient of $M$ over the equivalence relation $m\sim m'$ $\Leftrightarrow$ $f(m)=f(m')$. 

\smallskip
(c) The coimage $Coim(f)$ is isomorphic to the Heyting submodule $I_f:=\{\mbox{$f(m)$ $\vert$ $m\in M$}\}$ of $N$. 
\end{thm}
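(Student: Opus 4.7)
The plan is to unwind the constructions in \eqref{kerp} and \eqref{coim} and then apply the characterization of coequalizers from Proposition \ref{vP5.5}. From the description of equalizers in \eqref{eqlz}, the kernel pair is explicitly
\begin{equation*}
Ker_p(f)=\{(m,m')\in M^2\mid f(m)=f(m')\}.
\end{equation*}

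For part (a), I would verify the three axioms of a congruence relation directly from this description. Reflexivity and symmetry are immediate from $f(m)=f(m)$ and the symmetry of equality; transitivity follows from transitivity of equality in $N$. To see that $Ker_p(f)\subseteq M^2$ is a Heyting submodule, I would use that $f$ preserves joins and the $H$-action: if $f(m)=f(m')$ and $f(n)=f(n')$, then $f(m\vee n)=f(m')\vee f(n')$, and $f(c\wedge m)=c\wedge f(m)=c\wedge f(m')=f(c\wedge m')$ for any $c\in H$. Thus $Ker_p(f)$ is closed under the operations inherited from $M^2$ described in Lemma \ref{uL5.1}.

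For part (b), the coequalizer in \eqref{coim} is of the two morphisms $p_1\circ i, p_2\circ i : Ker_p(f) \longrightarrow M$, where $i:Ker_p(f)\hookrightarrow M^2$ is the inclusion. By Proposition \ref{vP5.5}(a), this coequalizer is $M/R$ where $R$ is the smallest congruence relation on $M$ containing the set $\{(p_1\circ i(x),p_2\circ i(x))\mid x\in Ker_p(f)\}=Ker_p(f)$ itself. By part (a), $Ker_p(f)$ is already a congruence relation, so $R=Ker_p(f)$, and the corresponding equivalence is precisely $m\sim m'\Leftrightarrow f(m)=f(m')$.

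For part (c), I would define $\bar{f}:Coim(f)=M/\sim\;\longrightarrow I_f$ by $\bar{f}([m]):=f(m)$, well-defined by part (b). It is surjective by construction of $I_f$, and injective because $\bar{f}([m])=\bar{f}([m'])$ gives $f(m)=f(m')$, hence $[m]=[m']$. To see $I_f$ is a Heyting submodule of $N$, note that the image of $f$ is closed under joins and the $H$-action since $f$ preserves both, so $I_f$ is a distributive submodule of $N$ and hence a Heyting submodule by Proposition \ref{Pp3.65}. Finally, $\bar{f}$ preserves joins and the $H$-action because on representatives $\bar{f}([m]\vee[n])=\bar{f}([m\vee n])=f(m\vee n)=f(m)\vee f(n)=\bar{f}([m])\vee \bar{f}([n])$ and similarly for meets with elements of $H$. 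No step presents a genuine obstacle; the only subtlety is recognizing that because the kernel pair is itself already a congruence relation, one does not need to pass to a larger generated congruence when computing the coequalizer.
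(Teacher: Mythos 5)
Your proposal is correct and follows essentially the same route as the paper: identify $Ker_p(f)$ explicitly as $\{(m,m')\mid f(m)=f(m')\}$ via \eqref{eqlz}, note it is already a congruence relation so the coequalizer in \eqref{coim} is the quotient by exactly that relation, and then read off the isomorphism with $I_f$. The paper states these steps more tersely (leaving the submodule and congruence verifications as ``clear''), while you spell them out; the key observation you flag --- that no larger generated congruence is needed --- is precisely the pivot of the paper's argument.
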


\begin{proof} Using \eqref{eqlz}, 
we see that an element $(m,m')\in M\times M$ lies in $Ker_p(f)$ if and only if $f(m)=f\circ p_1(m,m')=f\circ p_2(m,m')=f(m')$. It is clear
that this gives a congruence relation on $M$. This proves (a). By definition, the coequalizer $Coim(f)$ in \eqref{coim} is the quotient of $M$ over
the smallest congruence relation containing $Ker_p(f)$. Since $Ker_p(f)$ is already a congruence relation, it follows that $Coim(f)$ is 
the quotient of $M$ over the equivalence relation
\begin{equation}\label{5.9eqp}
m\sim m' \quad\Leftrightarrow\quad  (m,m')\in Ker_p(f)\quad\Leftrightarrow\quad f(m)=f(m')
\end{equation} This proves (b). The result of (c) is clear from \eqref{5.9eqp}. 

\end{proof} 

We now consider $N^2=N\times N$ along with the canonical inclusions $e_1,e_2:N\longrightarrow N^2$. Proceeding in a dual manner, the cokernel pair $Coker_p(f)$ is taken to be the coequalizer
\begin{equation}\label{cokerp}
Coker_p(f):=Coeq\left( \xymatrix{
 M\ar@<-.5ex>[r]_{e_2\circ f} \ar@<.5ex>[r]^{e_1\circ f} & N^2
}\right)
\end{equation} Further, the image $Im(f)$ is taken to be the equalizer
\begin{equation}\label{im}
Im(f):=Eq\left( \xymatrix{
 N\ar@<-.5ex>[r]_{e_2} \ar@<.5ex>[r]^{e_1} & N^2
}\longrightarrow Coker_p(f)\right)
\end{equation} The next result gives an explicit description of the image of a morphism in $Heymod_H$.

\begin{thm}\label{P5.7b}
Let $H$ be a finite Heyting algebra and $f:M\longrightarrow N$ be a morphism in $Heymod_H$. Then, the image of $f$ is given by
\begin{equation*}
Im(f)=\{\mbox{$n\in N$ $\vert$ $t_1(n)=t_2(n)$ $\forall$ $Q\in Heymod_H$, $\forall$ $t_1,t_2:N\longrightarrow Q$ such that
$t_1(f(m))=t_2(f(m))$ $\forall$ $m\in M$ }\}
\end{equation*}
In particular, if $i:K\hookrightarrow N$ is a monomorphism in $Heymod_H$, the image $\widetilde{K}=Im(i)$ is given by
the submodule
\begin{equation*}
\widetilde{K}=\{\mbox{$n\in N$ $\vert$ $t_1(n)=t_2(n)$ $\forall$ $Q\in Heymod_H$ $\forall$  $t_1,t_2:N\longrightarrow Q$ such that
$t_1(k)=t_2(k)$ $\forall$ $k\in K$ }\}
\end{equation*}
\end{thm}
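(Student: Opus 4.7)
\textbf{Proof plan for Proposition \ref{P5.7b}.} The plan is to unwind the definition of $Im(f)$ through the cokernel pair, use Proposition \ref{vP5.5}(b) to describe the coequalizer explicitly, and then exploit semi-additivity (Lemma \ref{uL5.1}) to repackage morphisms out of $N^2$ as pairs of morphisms out of $N$.

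First, by \eqref{cokerp} and Proposition \ref{vP5.5}(b), the cokernel pair is $Coker_p(f) = N^2/R'$, where
\begin{equation*}
R' = \{(x,y) \in N^2 \times N^2 \mid t(x) = t(y) \textrm{ for every } t: N^2 \to Q \textrm{ in } Heymod_H \textrm{ with } t \circ e_1 \circ f = t \circ e_2 \circ f\}.
\end{equation*}
Now by definition \eqref{im}, $Im(f)$ is the equalizer of the two compositions $N \rightrightarrows N^2 \to Coker_p(f)$, so $n \in Im(f)$ if and only if $e_1(n)$ and $e_2(n)$ become equal in $Coker_p(f)$, i.e., $(e_1(n), e_2(n)) \in R'$. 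Thus $n \in Im(f)$ exactly when, for every $t: N^2 \to Q$ satisfying $t \circ e_1 \circ f = t \circ e_2 \circ f$, one has $t(e_1(n)) = t(e_2(n))$.

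Next I would use that by Lemma \ref{uL5.1}(b), $N^2$ is simultaneously the product and the coproduct of $N$ with itself in $Heymod_H$, with coproduct inclusions $e_1, e_2$. Therefore morphisms $t: N^2 \longrightarrow Q$ correspond bijectively to pairs $(t_1, t_2)$ of morphisms $N \longrightarrow Q$ via $t_i = t \circ e_i$, and the bijection is realized by $t(x,y) = t_1(x) \vee t_2(y)$. Under this correspondence, the condition $t \circ e_1 \circ f = t \circ e_2 \circ f$ becomes $t_1 \circ f = t_2 \circ f$, i.e., $t_1(f(m)) = t_2(f(m))$ for all $m \in M$. Moreover,
\begin{equation*}
t(e_1(n)) = t(n, 0) = t_1(n) \vee t_2(0) = t_1(n), \qquad t(e_2(n)) = t(0, n) = t_1(0) \vee t_2(n) = t_2(n),
\end{equation*}
so the equality $t(e_1(n)) = t(e_2(n))$ reads exactly $t_1(n) = t_2(n)$. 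Putting these together gives precisely the claimed description of $Im(f)$.

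The second statement about a monomorphism $i: K \hookrightarrow N$ follows immediately by specializing the first: the condition that $t_1$ and $t_2$ agree on $i(M) = K$ is the condition $t_1(k) = t_2(k)$ for all $k \in K$, yielding the stated formula for $\widetilde{K} = Im(i)$. The only mildly delicate step is the identification of morphisms $t: N^2 \to Q$ with pairs $(t_1, t_2)$, which I expect to be the main (but routine) obstacle; it uses essentially that the biproduct structure of $N^2$ guarantees that $t$ is reconstructed from its components as $t(x,y) = t_1(x) \vee t_2(y)$, a fact already implicit in the proof of Lemma \ref{uL5.1}(b).
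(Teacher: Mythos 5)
Your proof is correct and follows essentially the same route as the paper: describe $Coker_p(f)$ via Proposition \ref{vP5.5}(b), read off $Im(f)$ as the set of $n$ with $(n,0)\sim(0,n)$, and use the biproduct structure of $N^2$ to convert morphisms $t:N^2\longrightarrow Q$ into pairs $(t_1,t_2)$ with $t(x,y)=t_1(x)\vee t_2(y)$. The only blemish is the typo $i(M)$ for $i(K)$ in the final paragraph; otherwise the argument matches the paper's.
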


\begin{proof}
From Proposition \ref{vP5.5} and the definition in \eqref{cokerp}, we see that $Coker_p(f)$ is the quotient of $N^2$ over the equivalence relation 
$(n_1,n_2)\sim (n_1',n_2')$ if $t(n_1,n_2)=t(n_1',n_2')$ for every $t:N^2\longrightarrow Q$ in $Heymod_H$ such that $t(f(m),0)=t(0,f(m))$ $\forall$ 
$m\in M$. Then, \eqref{im} shows that $Im(f)$ consists of all $n\in N$ such that $(n,0)\sim (0,n)\in N^2$. 

\smallskip
Unpacking this definition, we see that $Im(f)$ consists of all $n\in N$ such that $t(n,0)=t(0,n)$ for every $t:N^2\longrightarrow Q$ 
such that $t(f(m),0)=t(0,f(m))$ for every $m\in M$. We see that a  morphism $t:N^2\longrightarrow Q$ corresponds to two separate morphisms $t_1,t_2:N\longrightarrow
Q$ such that $t_1(n)=t(n,0)$ and $t_2(n)=t(0,n)$. The result is now clear. 
\end{proof}

For modules over a finite Heyting algebra, the following result replaces the usual (AB2) property (isomorphism of coimage and image) for abelian categories.

\begin{thm}\label{sP5.71}
Let $H$ be a finite Heyting algebra and let $f:M\longrightarrow N$ be a morphism in $Heymod_H$. Then,  we have $\widetilde{Coim(f)}=Im(f)$.
\end{thm}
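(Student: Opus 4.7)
The plan is to unpack the definitions of $\widetilde{Coim(f)}$ and $Im(f)$ and observe that after using Proposition \ref{usP5.6}(c) to identify $Coim(f)$ with its canonical image inside $N$, the two descriptions become tautologically the same.

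First I would invoke Proposition \ref{usP5.6}(c) to identify $Coim(f)$ with the Heyting submodule
\begin{equation*}
I_f = \{f(m) \mid m \in M\} \subseteq N.
\end{equation*}
Under this identification, the quotient map $M \twoheadrightarrow Coim(f)$ factors $f$ as $M \twoheadrightarrow I_f \hookrightarrow N$, so $Coim(f)$ really is a Heyting submodule of $N$ and the notation $\widetilde{Coim(f)}$ of Proposition \ref{P5.7b} applies literally, with $i: Coim(f) = I_f \hookrightarrow N$ the canonical inclusion.

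Next I would apply the formula for $\widetilde{K}$ given in Proposition \ref{P5.7b}, taking $K = I_f$. This yields
\begin{equation*}
\widetilde{Coim(f)} = \{n \in N \mid t_1(n) = t_2(n) \;\forall\; Q \in Heymod_H,\; \forall\; t_1,t_2: N \to Q \text{ with } t_1(k) = t_2(k)\;\forall\; k \in I_f\}.
\end{equation*}
The condition ``$t_1(k)=t_2(k)$ for every $k \in I_f$'' is, by the very definition of $I_f$, equivalent to ``$t_1(f(m))=t_2(f(m))$ for every $m \in M$.'' Substituting this equivalence into the display above gives exactly the description of $Im(f)$ furnished by Proposition \ref{P5.7b}. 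Hence $\widetilde{Coim(f)} = Im(f)$.

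There is no real obstacle here: the content of the statement is entirely absorbed into Propositions \ref{usP5.6} and \ref{P5.7b}. The only point that requires a moment of care is to make sure that we are allowed to treat $Coim(f)$ as a submodule of $N$ (so that $\widetilde{Coim(f)}$ is well-defined as a subset of $N$); this is precisely what part (c) of Proposition \ref{usP5.6} guarantees by exhibiting the canonical isomorphism $Coim(f) \cong I_f \subseteq N$.
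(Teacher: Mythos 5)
Your proof is correct and follows essentially the same route as the paper's: both identify $Coim(f)$ with $I_f$ via Proposition \ref{usP5.6} and then observe that Proposition \ref{P5.7b} gives $Im(f)=\widetilde{I_f}$. The only difference is that you spell out the tautological equivalence of the two defining conditions, which the paper leaves implicit.
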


\begin{proof} From Proposition \ref{P5.7b}, we see that $Im(f)=\widetilde{I_f}$, where $I_f=\{\mbox{$f(m)$ $\vert$ $m\in M$}\}$. From Proposition \ref{usP5.6}, we know that $Coim(f)=I_f$ and hence the result follows. 

\end{proof} 

In the case where $H$ is a finite Boolean algebra, we have an isomorphism between the coimage and the image. 

\begin{thm}\label{sP5.8}
Suppose that $H$ is a finite Boolean algebra and let $f:M\longrightarrow N$ be a morphism in $Heymod_H$. Then, we have ${Coim(f)}= Im(f)$.
\end{thm}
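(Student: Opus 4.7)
The strategy is to combine Proposition \ref{sP5.71}, which for any finite Heyting algebra gives $\widetilde{Coim(f)} = Im(f)$, with the Hahn--Banach-type separation result of Proposition \ref{P4.4}, which requires $H$ to be a finite Boolean algebra. The Boolean hypothesis is used only to upgrade the closure $\widetilde{Coim(f)}$ to $Coim(f)$ itself.

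First I would identify the coimage explicitly. By Proposition \ref{usP5.6}(c), $Coim(f)$ is isomorphic to the Heyting submodule $I_f = \{f(m) \mid m \in M\} \subseteq N$, and by Proposition \ref{sP5.71} we have $Im(f) = \widetilde{I_f}$. Thus it suffices to prove the equality $\widetilde{I_f} = I_f$ inside $N$. The inclusion $I_f \subseteq \widetilde{I_f}$ is immediate from the definition of $\widetilde{(-)}$ in Proposition \ref{P5.7b}: if $n \in I_f$ and $t_1, t_2 : N \to Q$ agree on every element of $I_f$, then in particular $t_1(n) = t_2(n)$.

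For the reverse inclusion, I would take $n \in \widetilde{I_f}$ and apply Proposition \ref{P4.4} to the inclusion $i : I_f \hookrightarrow N$. To do so, I need to verify the hypothesis of Hahn--Banach: that $\phi_1(n) = \phi_2(n)$ for every pair $(\phi_1, \phi_2) \in N^\bigstar \times N^\bigstar$ with $\phi_1 \circ i = \phi_2 \circ i$. But $H$ itself is a Heyting module over $H$, so morphisms $\phi_1, \phi_2 : N \to H$ are a special case of the test morphisms $t_1, t_2 : N \to Q$ appearing in the definition of $\widetilde{I_f}$; the condition $\phi_1 \circ i = \phi_2 \circ i$ says exactly that $\phi_1$ and $\phi_2$ agree on $I_f$. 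Hence by the defining property of $\widetilde{I_f}$, we have $\phi_1(n) = \phi_2(n)$, and Proposition \ref{P4.4} then yields $n \in I_f$.

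The only non-routine step is this last observation: the universal separation condition that defines $\widetilde{K}$, once restricted to the target $Q = H$, becomes precisely the hypothesis required by the Hahn--Banach Theorem for Heyting modules over a finite Boolean algebra. This is exactly where the Boolean hypothesis on $H$ enters — Proposition \ref{P4.4} fails without it — and every other step is a direct unpacking of the definitions established in Sections 4 and 5.
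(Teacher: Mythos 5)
Your proposal is correct and follows essentially the same route as the paper: the paper's proof likewise invokes Proposition \ref{P4.4} to conclude $\widetilde{K}=K$ for any submodule $K\subseteq N$, and then combines $Coim(f)=I_f$ with $Im(f)=\widetilde{I_f}$. You have merely unpacked the one-line deduction ``$\widetilde{K}=K$ follows from Proposition \ref{P4.4}'' by observing that the test morphisms with target $Q=H$ reduce the universal condition defining $\widetilde{K}$ to the hypothesis of the Hahn--Banach result, which is exactly the intended argument.
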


\begin{proof}
From Proposition \ref{P4.4}, it follows that $\widetilde{K}=K$ for any submodule $K\subseteq N$. Hence, $\widetilde{I_f}=I_f\subseteq N$
and we get $Im(f)=\widetilde{I_f}=I_f=Coim(f)$.
\end{proof} 

\section{Tensor products of Heyting modules and change of base}

We continue with $H$ being a finite Heyting algebra. In this section, we construct the tensor product $M\otimes_HN$ of Heyting modules
over $H$. For a study of tensor products of lattices and semilattices in the literature, we refer the reader
to \cite{AK}, \cite{Fraser}, \cite{Gra} and \cite{Shm}. 

\smallskip
For any set $S$, we define $Free_H(S)$ to be the collection of all functions $f:S\longrightarrow H$ of finite support, i.e., there are only
finitely many elements in $S$ such that $f(s)\ne 0$. It is easily seen that $Free_H(S)$ is a distributive module over $H$:
\begin{equation}\label{6xeq6.1}
(f\vee g)(s):=f(s)\vee g(s)\qquad (c\wedge f)(s):=c\wedge f(s)\qquad \forall\textrm{ }s\
\in S, \textrm{ }\forall\textrm{ }f,g\in Free_H(S)
\end{equation} Since $H$ is finite, this makes $Free_H(S)$ a Heyting module. For the sake of convenience, an element of
$Free_H(S)$ will be denoted by a formal sum $\sum c_i\wedge s_i$, where $s_i\in S$ and $c_i=f(s_i)\in H$.  

\smallskip
In particular, if $M$, $N\in Heymod_H$, then $Free_H(M\times N)$ consists of sums of the form 
$\sum c_i\wedge (m_i,n_i)$, where each $(m_i,n_i)\in M\times N$ and each $c_i\in H$.

\smallskip
As a set, we now define $M\otimes_HN$ to be the quotient of $Free_H(M\times N)$ over the equivalence relation generated by the following:
\begin{equation}\label{6xeq6.2}
\begin{array}{c}
\sum c_i\wedge (m_i,n_i) + c\wedge (0,n)=\sum c_i\wedge (m_i,n_i) = \sum c_i\wedge (m_i,n_i) + c\wedge (m,0)\\
\sum c_i\wedge (m_i,n_i) + c\wedge (m,n)+c\wedge (m',n)=\sum c_i\wedge (m_i,n_i) + c\wedge (m\vee m',n)\\ \sum c_i\wedge (m_i,n_i) +c\wedge (m,n)+c\wedge (m,n')=
\sum c_i\wedge (m_i,n_i) +c\wedge (m,n\vee n')\\
\sum c_i\wedge (m_i,n_i) +c\wedge (d\wedge m,n)=\sum c_i\wedge (m_i,n_i) +(c\wedge d)\wedge (m,n)=\sum c_i\wedge (m_i,n_i) +c\wedge (m,d\wedge n)\\
\end{array}
\end{equation} where $c$, $d$, $c_i\in H$, $m$, $m_i\in M$ and $n$, $n_i\in N$. From the relations in \eqref{6xeq6.2}, it is evident
that the $\vee$ operation on $Free_H(M\times N)$ as well as $c\wedge \_\_$ operation for each $c\in H$ descends
to $M\otimes_HN$, making it a distributive module over $H$ and hence a Heyting module. Given $m\in M$, $n\in N$, we will denote
by $m\otimes n$ the equivalence class of the element $(m,n)\in Free_H(M\times N)$ in $M\otimes_HN$.

\begin{defn}\label{6xD6.1}
Let $H$ be a finite Heyting algebra and let $M$, $N$, $P$ be Heyting modules. A bimorphism $f:M\times N\longrightarrow P$
is a map such that for fixed $m\in M$ and $n\in N$, the maps
\begin{equation*}
g_m:=f(m,\_\_):N\longrightarrow P\qquad h_n:=f(\_\_,n):M\longrightarrow P
\end{equation*} are morphisms of Heyting modules. 
\end{defn}

Analogous to  ordinary tensor products of modules, we will now see that $M\otimes_HN$ represents bimorphisms from $M\times N$
to $P$.  For the similar notion of bimorphisms of join semilattices, see \cite{Gra}. 

\begin{thm}\label{6xP6.2}
Let $M$, $N$ and $P$ be Heyting modules. Then,  for each bimorphism
$f:M\times N\longrightarrow P$, there is a unique morphism $f':M\otimes_HN\longrightarrow P$ in $Heymod_H$ such that
$f'(m\otimes n)=f(m,n)$. 
\end{thm}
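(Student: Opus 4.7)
The plan is to establish the universal property by first constructing a morphism out of the free module $Free_H(M\times N)$ and then checking that this morphism descends through the relations used in the construction of $M\otimes_H N$.

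First, I would define $\tilde{f}:Free_H(M\times N)\longrightarrow P$ on a finite sum by setting
\begin{equation*}
\tilde{f}\left(\sum c_i\wedge (m_i,n_i)\right):=\bigvee_i c_i\wedge f(m_i,n_i).
\end{equation*}
The fact that $\tilde{f}$ is a morphism in $Heymod_H$ is immediate from the definition of the Heyting module structure on $Free_H(M\times N)$ given in \eqref{6xeq6.1}: preservation of joins is built into the definition, and $H$-linearity follows from the distributivity law $c\wedge(\bigvee c_i\wedge p_i)=\bigvee(c\wedge c_i)\wedge p_i$ that holds in the distributive module $P$.

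Next, I would verify that $\tilde{f}$ respects each of the generating relations in \eqref{6xeq6.2}. For the two trivial relations, the key fact is that the partial maps $g_m=f(m,\_\_)$ and $h_n=f(\_\_,n)$ are morphisms of Heyting modules, so in particular $f(0,n)=h_n(0)=0$ and $f(m,0)=g_m(0)=0$, making the contribution of the extra term vanish. For the two distributivity-type relations, I would use that $h_n$ preserves joins to get
\begin{equation*}
c\wedge f(m,n)\,\vee\, c\wedge f(m',n)=c\wedge(h_n(m)\vee h_n(m'))=c\wedge f(m\vee m',n),
\end{equation*}
and symmetrically for the second coordinate using $g_m$. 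For the final relation involving the $H$-action, I would use the $H$-linearity of $h_n$ and $g_m$ together with associativity of the meet in $H$ to rewrite $c\wedge f(d\wedge m,n)=c\wedge d\wedge h_n(m)=(c\wedge d)\wedge f(m,n)$, and likewise $(c\wedge d)\wedge f(m,n)=c\wedge g_m(d\wedge n)=c\wedge f(m,d\wedge n)$. Since $\tilde{f}$ thus identifies all elements of $Free_H(M\times N)$ forced to be equal in $M\otimes_H N$, it descends to a well-defined morphism $f':M\otimes_H N\longrightarrow P$ in $Heymod_H$ satisfying $f'(m\otimes n)=f(m,n)$.

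Finally, for uniqueness, I would observe that every element of $M\otimes_H N$ is, by construction, the image under the quotient map of a finite formal sum $\sum c_i\wedge(m_i,n_i)$, and hence can be written as $\bigvee_i c_i\wedge(m_i\otimes n_i)$. Any morphism $g:M\otimes_H N\longrightarrow P$ in $Heymod_H$ with $g(m\otimes n)=f(m,n)$ is then forced by preservation of joins and the $H$-action to agree with $f'$ on this expression, so $g=f'$. The main obstacle in this proof is purely bookkeeping: systematically matching each relation in \eqref{6xeq6.2} to the correct use of the bimorphism axioms, and confirming that the generating relations are the only relations one must check (which is automatic, since an equivalence relation generated by a set of identities is respected by any map respecting those identities).
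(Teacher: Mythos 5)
Your proof is correct and takes essentially the same approach as the paper: define the map on $Free_H(M\times N)$ by $\sum c_i\wedge (m_i,n_i)\mapsto \bigvee_i c_i\wedge f(m_i,n_i)$, check it respects the generating relations of \eqref{6xeq6.2} via the bimorphism axioms, and deduce uniqueness from the fact that every element of $M\otimes_HN$ is a finite join of elements $c_i\wedge(m_i\otimes n_i)$. The paper's proof is simply a terser version of this, leaving the relation-by-relation verification implicit.
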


\begin{proof}
We consider a bimorphism $f:M\times N\longrightarrow P$. The morphism $f':M\otimes_HN\longrightarrow P$ is defined by
taking the equivalence class of the element $\sum c_i\wedge (m_i,n_i)$ to $\sum c_i\wedge f(m_i,n_i)\in P$. From the relations 
in \eqref{6xeq6.2} and the fact that $f$ is a bimorphism, we see that $f'$ is well-defined. It is also clear that $f'$ is a morphism of
Heyting modules. 

\smallskip
Since $m\otimes n$ is the equivalence class of the element $(m,n)\in Free_H(M\times N)$ in $M\otimes_HN$,  the definition
gives $f'(m\otimes n)=f(m,n)$. In general, if $g:M\otimes_HN\longrightarrow P$ is a morphism of Heyting modules such that
$g(m\otimes n)=f(m,n)$, then $g$ must take the equivalence
class of  $\sum c_i\wedge (m_i,n_i)$ to $\sum c_i\wedge f(m_i,n_i)$. Hence, $f'$ must be unique. 
\end{proof}

\begin{cor}\label{6xC6.3}
Given Heyting modules $M$, $N$ and $P$, we have isomorphisms:
\begin{equation}
\begin{array}{c}
(M\otimes_HN)\cong (N\otimes_HM)\\
(M\otimes_HN)\otimes_HP\cong M\otimes_H(N\otimes_HP)\\
\end{array}
\end{equation}
\end{cor}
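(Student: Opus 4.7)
The plan is to derive both isomorphisms from the universal property established in Proposition \ref{6xP6.2}, which characterizes $M\otimes_HN$ as the initial target of bimorphisms out of $M\times N$.

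For the commutativity isomorphism, I would first observe that the flip map $\tau:M\times N\longrightarrow N\otimes_HM$ defined by $(m,n)\mapsto n\otimes m$ is a bimorphism in the sense of Definition \ref{6xD6.1}: indeed, fixing $m\in M$, the map $n\mapsto n\otimes m$ is a morphism of Heyting modules because of the relations in \eqref{6xeq6.2} applied to the second slot of $Free_H(N\times M)$, and symmetrically for fixing $n\in N$. By Proposition \ref{6xP6.2}, $\tau$ factors uniquely through a morphism $\tau':M\otimes_HN\longrightarrow N\otimes_HM$ with $\tau'(m\otimes n)=n\otimes m$. Constructing the symmetric map $\sigma':N\otimes_HM\longrightarrow M\otimes_HN$ in the same way, the uniqueness clause of Proposition \ref{6xP6.2} (applied to the identity bimorphism) forces $\sigma'\circ\tau'=\mathrm{id}$ and $\tau'\circ\sigma'=\mathrm{id}$.

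For the associativity isomorphism, I would proceed in two stages. First, for each fixed $p\in P$, the assignment $(m,n)\mapsto m\otimes(n\otimes p)$ is a bimorphism $M\times N\longrightarrow M\otimes_H(N\otimes_HP)$, so by Proposition \ref{6xP6.2} it induces a morphism $\alpha_p:M\otimes_HN\longrightarrow M\otimes_H(N\otimes_HP)$ with $\alpha_p(m\otimes n)=m\otimes(n\otimes p)$. Next I would check that the assignment $(x,p)\mapsto \alpha_p(x)$ is itself a bimorphism $(M\otimes_HN)\times P\longrightarrow M\otimes_H(N\otimes_HP)$: linearity in $x$ holds because each $\alpha_p$ is a morphism in $Heymod_H$, while linearity in $p$ is verified on generators $x=m\otimes n$ using the relations \eqref{6xeq6.2} in the $N\otimes_HP$ factor. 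Applying Proposition \ref{6xP6.2} once more yields a morphism $\Phi:(M\otimes_HN)\otimes_HP\longrightarrow M\otimes_H(N\otimes_HP)$ sending $(m\otimes n)\otimes p$ to $m\otimes(n\otimes p)$. The inverse $\Psi$ is constructed by the mirror procedure starting from the trilinear expression $m\otimes((n\otimes p)\mapsto (m\otimes n)\otimes p)$.

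Finally I would verify that $\Phi$ and $\Psi$ are mutually inverse. This is done by checking equality on the generating elements $(m\otimes n)\otimes p$ and $m\otimes(n\otimes p)$ respectively, and then invoking the uniqueness statement of Proposition \ref{6xP6.2} to conclude that $\Psi\circ\Phi$ and $\Phi\circ\Psi$ agree with the identity morphisms everywhere.

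The main obstacle, though routine, is the bookkeeping required to show that the tentative map $(x,p)\mapsto \alpha_p(x)$ is well-defined and linear in $p$: one must chase the defining relations \eqref{6xeq6.2} for $Free_H(M\times N)$ across the outer tensor into relations for $Free_H(N\times P)$, and confirm that each clause descends consistently. Everything else is a formal consequence of the universal property, so once this compatibility is established both isomorphisms follow by standard abstract nonsense.
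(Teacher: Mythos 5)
Your argument is correct, but it is packaged differently from the paper's. The paper disposes of both isomorphisms in one line: by Proposition \ref{6xP6.2} the functor $Heymod_H(M\otimes_HN,\_\_)$ is naturally isomorphic to the functor of bimorphisms out of $M\times N$, and since bimorphisms out of $M\times N$ and out of $N\times M$ (respectively, ``trimorphisms'' out of $M\times N\times P$ for either bracketing) visibly coincide, the Yoneda lemma applied to $Heymod_H$ yields the isomorphisms without ever writing down a map. You instead construct the isomorphisms explicitly, using the universal property twice (once per tensor factor) and checking on generators that the two candidates are mutually inverse. The two routes have essentially the same mathematical content: the currying step you carry out --- verifying that $(x,p)\mapsto\alpha_p(x)$ is a bimorphism by checking linearity in $p$ on generators $x=m\otimes n$ and extending by joins and the $H$-action --- is exactly the computation hidden inside the paper's identification of $Heymod_H((M\otimes_HN)\otimes_HP,\,Q)$ with trilinear maps naturally in $Q$. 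What the Yoneda phrasing buys is brevity and the automatic naturality of the isomorphisms; what your explicit version buys is the concrete formulas $m\otimes n\mapsto n\otimes m$ and $(m\otimes n)\otimes p\mapsto m\otimes(n\otimes p)$, which are needed anyway if one wants to verify coherence later. One small point worth making explicit in your write-up: the ``check on generators, then invoke uniqueness'' step relies on the fact that every element of $M\otimes_HN$ is a join of elements of the form $c\wedge(m\otimes n)$ (immediate from the construction via $Free_H(M\times N)$ and the relations \eqref{6xeq6.2}), so that two morphisms in $Heymod_H$ agreeing on the pure tensors agree everywhere; with that observed, your appeal to the uniqueness clause of Proposition \ref{6xP6.2} is airtight.
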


\begin{proof}
Using  the description of morphisms from the tensor product in Proposition \ref{6xP6.2} and applying 
Yoneda lemma to the category $Heymod_H$, the result follows.  
\end{proof}

\smallskip
 Given morphisms $f$, $g:M\longrightarrow N$
in $Heymod_H$ and any $c\in H$, we define
\begin{equation}\label{6ins}
(f\vee g)(m):=f(m)\vee g(m)\qquad (c\wedge f)(m):=c\wedge f(m)\qquad \forall\textrm{ }m\
\in M
\end{equation} It is easily verified that this makes $Heymod_H(M,N)$ into a distributive module over $H$. Since $H$ is finite,
we get $Heymod_H(M,N)\in Heymod_H$. We will often write $f\vee g$ as $f+g$. 

\begin{thm}\label{6xP6.4}
For any $N\in Heymod_H$, the functor $\_\_\otimes_HN:Heymod_H\longrightarrow Heymod_H$ is left adjoint to the functor $
Heymod_H(N,\_\_):Heymod_H\longrightarrow Heymod_H$. 
\end{thm}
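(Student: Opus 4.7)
The plan is to exhibit a natural bijection
\[
\Phi_{M,P}:Heymod_H(M\otimes_HN,P)\xrightarrow{\;\cong\;} Heymod_H(M,Heymod_H(N,P))
\]
for all $M,P\in Heymod_H$, via the standard currying construction, and then to verify that the constructions are mutually inverse, natural in $M$ and $P$, and genuinely land in the category of Heyting modules. The starting point is Proposition \ref{6xP6.2}, which identifies $Heymod_H(M\otimes_HN,P)$ with the set of bimorphisms $M\times N\longrightarrow P$ in the sense of Definition \ref{6xD6.1}.

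First I would define $\Phi_{M,P}$ on bimorphisms: given $f:M\times N\longrightarrow P$, set $\Phi(f)(m):=f(m,-)\in Heymod_H(N,P)$. Because $f$ is a bimorphism, each $f(m,-)$ is indeed a morphism of Heyting modules, so $\Phi(f)(m)$ lies in the target. To see that $\Phi(f):M\longrightarrow Heymod_H(N,P)$ is itself a morphism of Heyting modules, I would use the pointwise module structure on the internal hom from \eqref{6ins}: for $m,m'\in M$ and $c\in H$, evaluating at any $n\in N$ reduces preservation of $\vee$ and $c\wedge -$ to the statement that $f(-,n):M\longrightarrow P$ is a morphism of Heyting modules, which is again built into the definition of a bimorphism. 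In the inverse direction, given $g:M\longrightarrow Heymod_H(N,P)$, define $\Psi(g):M\times N\longrightarrow P$ by $\Psi(g)(m,n):=g(m)(n)$; then $\Psi(g)(m,-)=g(m)$ is a morphism in $N$, and $\Psi(g)(-,n)$ is a morphism in $M$ because $g$ is $H$-linear and join-preserving while evaluation at $n$ is a morphism $Heymod_H(N,P)\longrightarrow P$ (again by \eqref{6ins}). So $\Psi(g)$ is a bimorphism.

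The identities $\Phi\circ\Psi=\mathrm{id}$ and $\Psi\circ\Phi=\mathrm{id}$ are immediate from the formulas $\Phi(f)(m)(n)=f(m,n)$ and $\Psi(g)(m,n)=g(m)(n)$. Naturality in $P$ amounts to checking that post-composition by $h:P\longrightarrow P'$ commutes with currying, which is a line of calculation; naturality in $M$ is analogous, using that precomposition along $k:M'\longrightarrow M$ sends a bimorphism $M\times N\longrightarrow P$ to the bimorphism $(m',n)\mapsto f(k(m'),n)$, whose currying is $\Phi(f)\circ k$. Composing $\Phi_{M,P}$ with the bijection of Proposition \ref{6xP6.2} produces the required adjunction isomorphism
\[
Heymod_H(M\otimes_HN,P)\;\cong\;Heymod_H(M,Heymod_H(N,P))
\]
natural in $M$ and $P$, establishing the adjunction.

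The only mildly delicate point is the verification that $\Phi(f)$ and $\Psi(g)$ land in the correct category rather than being mere set-theoretic maps, i.e., that $\Phi(f)$ is $H$-linear and join-preserving as a map into the internal hom, and that $\Psi(g)(-,n)$ inherits $H$-linearity from $g$. These are the main (though still routine) obstacles, and they reduce cleanly to the pointwise definition of the module structure on $Heymod_H(N,P)$ given in \eqref{6ins}; no use of the finiteness of $H$ is required beyond what is already invoked to guarantee that $\_\_\otimes_HN$ and $Heymod_H(N,\_\_)$ are endofunctors of $Heymod_H$.
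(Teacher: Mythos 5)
Your proposal is correct and follows essentially the same route as the paper: both arguments identify $Heymod_H(M\otimes_HN,P)$ with bimorphisms $M\times N\longrightarrow P$ via Proposition \ref{6xP6.2} and then curry, using the pointwise module structure \eqref{6ins} on $Heymod_H(N,P)$ to check that the curried maps are genuine morphisms of Heyting modules. The paper is terser (it verifies only the direction $f\mapsto g(m,n)=f(m)(n)$ explicitly and asserts that $f$ and $g$ determine each other), whereas you spell out both directions and naturality, but there is no substantive difference.
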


\begin{proof}
We consider $M$, $N$, $P\in Heymod_H$ and a morphism $f:M\longrightarrow Heymod_H(N,P)$ in $Heymod_H$. We define $g:M\times N\longrightarrow
P$ by setting $g(m,n):=f(m)(n)$. Then, for each fixed $m\in M$, $g(m,\_\_)=f(m)$ is a morphism of Heyting modules from $N$ to $P$. If we fix $n\in N$, it follows from the definitions in \eqref{6ins} and the fact that $f$ is a morphism of Heyting modules that
\begin{equation*}
\begin{array}{c}
g(m',n)\vee g(m'',n)=f(m')(n)\vee f(m'')(n)=(f(m')\vee f(m''))(n)=f(m'\vee m'')(n)=g(m'\vee m'',n)\\ g(c\wedge m',n)=f(c\wedge m')(n)=(c\wedge f(m'))(n)=c\wedge f(m')(n)=c\wedge g(m',n)
\end{array}
\end{equation*} for any $m'$, $m''\in M$ and $c\in H$. It follows that $g:M\times N\longrightarrow P$ is a bimorphism. Since $f$ and $g$ completely determine each other, it now follows from Proposition \ref{6xP6.2} that we have an isomorphism
$Heymod_H(M\otimes_HN,P)\cong Heymod_H(M,Heymod_H(N,P))$.
\end{proof}

We are now ready to consider base extensions of Heyting modules.

\begin{thm}\label{6xP6.5}
Let $f:H\longrightarrow H'$ be a morphism between finite Heyting algebras, i.e., $f$ is a morphism of the underlying distributive lattices. 
Then, there is an `extension of scalars' along $f$, i.e., $f$ induces a functor $\_\_\otimes_HH':Heymod_H\longrightarrow Heymod_{H'}$.
\end{thm}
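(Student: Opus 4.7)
The plan is to construct the extension-of-scalars functor in the familiar ring-theoretic way, adapted to the Heyting setting. First I would endow $H'$ with the structure of a Heyting module over $H$ via $f$. For $c\in H$ and $k\in H'$, set $c\wedge k:=f(c)\wedge k\in H'$. Since $f$ preserves $0$, $1$, $\vee$ and $\wedge$, this gives a distributive module over the lattice underlying $H$, with $0_H$ acting as $0_{H'}$, $1_H$ acting as identity, and the two distributivity laws following from distributivity in $H'$. Because $H$ is a finite Heyting algebra, Proposition \ref{P3.5} then upgrades $H'$ canonically to an object of $Heymod_H$.

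Second, for any $M\in Heymod_H$ I would form the tensor product $M\otimes_H H'$ as in Section 6. This is automatically an object of $Heymod_H$. The key additional step is to construct an $H'$-action on $M\otimes_H H'$ by letting $H'$ act on the right tensor factor: for $h'\in H'$, define the map
\begin{equation*}
\mu_{h'}:M\times H'\longrightarrow M\otimes_H H',\qquad \mu_{h'}(m,k):=m\otimes (h'\wedge k).
\end{equation*}
I would check that $\mu_{h'}$ is a bimorphism in the sense of Definition \ref{6xD6.1}: for fixed $m$ it is $H$-linear in $k$ because $h'\wedge(\_\_)$ commutes with the $H$-action $f(c)\wedge(\_\_)$ on $H'$ (meets in $H'$ are commutative and associative), and for fixed $k$ it is $H$-linear in $m$ by construction. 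Proposition \ref{6xP6.2} then yields a unique $H$-linear map $M\otimes_H H'\to M\otimes_H H'$ sending $m\otimes k$ to $m\otimes (h'\wedge k)$, which I take as multiplication by $h'$. The axioms for $H'$ acting on $M\otimes_H H'$ (preservation of $\vee$, $\wedge$, $0$, $1$, and the two distributivity laws) reduce to the same identities in $H'$ itself and follow on generators $m\otimes k$, hence on all of $M\otimes_H H'$ by induction on the length of the formal sum. Applying Proposition \ref{P3.5} once more (since $H'$ is finite), the resulting distributive $H'$-module $M\otimes_H H'$ is canonically a Heyting module over $H'$.

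For a morphism $g:M\to N$ in $Heymod_H$, I would define $g\otimes_H H':M\otimes_H H'\to N\otimes_H H'$ via the bimorphism $(m,k)\mapsto g(m)\otimes k$, which is a bimorphism because $g$ is $H$-linear; Proposition \ref{6xP6.2} supplies the induced $H$-linear map $m\otimes k\mapsto g(m)\otimes k$. To see that it is $H'$-linear, it suffices to check on generators: $(g\otimes_H H')(h'\wedge (m\otimes k))=g(m)\otimes (h'\wedge k)=h'\wedge((g\otimes_H H')(m\otimes k))$. Functoriality (identities to identities, composition to composition) is checked on the generating elements $m\otimes k$ and then extended.

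The bulk of the work is routine once the $H$-module structure on $H'$ is in place; the only point that needs a moment of care is verifying that the prospective $H'$-action on $M\otimes_H H'$ is well-defined, i.e.\ respects the equivalence relation \eqref{6xeq6.2}. This is precisely what the bimorphism viewpoint handles cleanly, and it is also where the compatibility between the $H$-action $f(c)\wedge(\_\_)$ and the new $H'$-action $h'\wedge(\_\_)$ on the second factor must be used. No other step poses a genuine obstacle.
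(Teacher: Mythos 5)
Your proposal is correct and follows essentially the same route as the paper: view $H'$ as an object of $Heymod_H$ via $f$, form $M\otimes_H H'$, and let $H'$ act on the second tensor factor. The paper simply defines the action on representatives $\sum c_i\wedge(m_i,h_i')$ and asserts well-definedness, whereas you justify it via the bimorphism universal property of Proposition \ref{6xP6.2}; this is a more careful write-up of the same argument, not a different one.
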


\begin{proof}
If $f:H\longrightarrow H'$ is a morphism of Heyting algebras, then $H'\in Heymod_H$. Accordingly, for any $M\in Heymod_H$, we can form
the tensor product $M\otimes_HH'$. For any element in $M\otimes_HH'$ represented by $\sum c_i\wedge (m_i,h'_i)$ and any $h'\in H'$, we set
$(\sum c_i\wedge (m_i,h'_i))\wedge h':=\sum c_i\wedge (m_i,h'_i\wedge h')$. It may be verified easily that this operation gives $M\otimes_HH'\in Heymod_{H'}$. 
\end{proof}

On the other hand, given a morphism $f:H\longrightarrow H'$ between finite Heyting algebras, there is an obvious
restriction functor $Res^{H'}_H:Heymod_{H'}\longrightarrow Heymod_H$. We record the following observation.

\begin{thm}\label{6xP6.6}
Let $f:H\longrightarrow H'$ be a morphism between finite Heyting algebras. Let $N'\in Heymod_{H'}$ and set $N:=Res^{H'}_H(N')$. Then, we have
$
f((n_1\rightarrow n_2)_{N})\leq (n_1\rightarrow n_2)_{N'}$ for all $n_1,n_2\in N'
$.
\end{thm}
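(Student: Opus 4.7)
The plan is to unpack both internal homs via Proposition \ref{P3.5} and compare them through the fact that the $H$-module structure on $N$ is obtained by pulling back the $H'$-module structure on $N'$ along $f$.

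First I would observe that by definition of $Res^{H'}_H$, for $x\in H$ and $n\in N=N'$, the $H$-action on $N$ coincides with the $H'$-action on $N'$ through $f$, namely $x\wedge n=f(x)\wedge n$, where the meet on the right is taken in $N'$ over $H'$. Since $H$ is a finite Heyting algebra, Proposition \ref{P3.5} (see \eqref{e3.7pg}) gives the explicit formula
\begin{equation*}
(n_1\rightarrow n_2)_{N}=\bigvee\{\,x\in H\,\mid\, x\wedge n_1\leq n_2\,\}=\bigvee\{\,x\in H\,\mid\, f(x)\wedge n_1\leq n_2\,\}.
\end{equation*}

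Next I would invoke the adjunction defining $(n_1\rightarrow n_2)_{N'}$ over $H'$: for any $y\in H'$ one has $y\wedge n_1\leq n_2$ iff $y\leq (n_1\rightarrow n_2)_{N'}$. Applied to $y=f(x)$, this shows that every $x\in H$ appearing in the join above satisfies $f(x)\leq (n_1\rightarrow n_2)_{N'}$ in $H'$.

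Finally, since $f$ is a morphism of the underlying distributive lattices it preserves finite joins, so
\begin{equation*}
f\bigl((n_1\rightarrow n_2)_{N}\bigr)=\bigvee\{\,f(x)\,\mid\, x\in H,\ f(x)\wedge n_1\leq n_2\,\}\leq (n_1\rightarrow n_2)_{N'},
\end{equation*}
which is the desired inequality. There is no serious obstacle here; the main point to keep in mind is to cleanly separate the two module structures, which the identity $x\wedge n=f(x)\wedge n$ does. The inequality, rather than equality, reflects the fact that there may be elements $y\in H'$ with $y\wedge n_1\leq n_2$ that do not lie in the image of $f$.
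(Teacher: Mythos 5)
Your proposal is correct and follows essentially the same route as the paper's proof: both unwind $(n_1\rightarrow n_2)_N$ as the join over all $x\in H$ with $f(x)\wedge n_1\leq n_2$ via Proposition \ref{P3.5}, note that each such $f(x)$ is bounded by $(n_1\rightarrow n_2)_{N'}$, and conclude using the fact that $f$ preserves finite joins. No gaps.
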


\begin{proof} The $H$-module structure on $N=Res^{H'}_H(N')$ is given by $(c,n)\mapsto f(c)\wedge n$ for every $c\in H$ and $n\in N$. From the construction in Proposition \ref{P3.5}, we now see  that $(n_1\rightarrow n_2)_N\in H$ is the supremum of all elements $c\in H$ such that
$f(c)\wedge n_1\leq n_2$. Also, $(n_1\rightarrow n_2)_{N'}\in H'$ is the supremum of all elements $c'\in H'$ such that $c'\wedge n_1\leq n_2$. In particular, this means that if $c\in H$ is such that $f(c)\wedge n_1\leq n_2$, we will have $f(c)\leq (n_1\rightarrow n_2)_{N'}$. Since $H$ is finite and $f$ preserves finite joins,
it is now clear that $
f((n_1\rightarrow n_2)_{N})\leq (n_1\rightarrow n_2)_{N'}$ for all $n_1,n_2\in N'
$.

\end{proof}

\begin{thm}\label{6xP6.7}
Let $f:H\longrightarrow H'$ be a morphism between finite Heyting algebras. Then, the functor $\_\_\otimes_HH':Heymod_H
\longrightarrow Heymod_{H'}$ is left adjoint to the restriction $Res^{H'}_H:Heymod_{H'}\longrightarrow Heymod_H$. 
\end{thm}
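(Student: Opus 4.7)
The plan is to exhibit a natural bijection
\[
\Phi_{M,N'}:Heymod_{H'}(M\otimes_HH',\,N')\;\longrightarrow\;Heymod_H(M,\,Res^{H'}_H(N'))
\]
for $M\in Heymod_H$ and $N'\in Heymod_{H'}$, and then verify naturality in both variables. Along the way, everything will be set up so as to reduce to the universal property of the tensor product proved in Proposition \ref{6xP6.2}.

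First I would define the unit and counit. Given $\phi:M\otimes_HH'\longrightarrow N'$ in $Heymod_{H'}$, set
\[
\Phi(\phi)(m):=\phi(m\otimes 1_{H'}),\qquad m\in M.
\]
I need to check that $\Phi(\phi)$ is a morphism in $Heymod_H$, i.e., that it preserves joins and commutes with the $H$-action on $Res^{H'}_H(N')=N'$ given by $(c,n)\mapsto f(c)\wedge n$. Preservation of joins is immediate. For $c\in H$, $m\in M$, using the relations \eqref{6xeq6.2} in the tensor product together with the $H'$-linearity of $\phi$, we have
\[
\phi((c\wedge m)\otimes 1_{H'})=\phi(m\otimes f(c))=\phi(f(c)\wedge (m\otimes 1_{H'}))=f(c)\wedge\phi(m\otimes 1_{H'}),
\]
which is exactly the $H$-linearity condition for $\Phi(\phi)$.

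Next I would define the inverse. Given $\psi:M\longrightarrow Res^{H'}_H(N')$ in $Heymod_H$, consider the map $g_\psi:M\times H'\longrightarrow N'$ defined by $g_\psi(m,h'):=h'\wedge\psi(m)$. The point to check is that $g_\psi$ is an $H$-bimorphism in the sense of Definition \ref{6xD6.1}: for fixed $h'\in H'$ the map $m\mapsto h'\wedge \psi(m)$ is a morphism in $Heymod_H$ because $\psi$ is (and left multiplication by $h'$ on $N'$, restricted along $f$, is $H$-linear since $f(c)\wedge h'\wedge\psi(m)=h'\wedge f(c)\wedge\psi(m)=h'\wedge\psi(c\wedge m)$); for fixed $m\in M$ the map $h'\mapsto h'\wedge\psi(m)$ is a morphism $H'\longrightarrow Res^{H'}_H(N')$ in $Heymod_H$, using that the $H$-action on $H'$ is via $f$. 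By Proposition \ref{6xP6.2} this bimorphism factors through a unique morphism $g'_\psi:M\otimes_HH'\longrightarrow N'$ in $Heymod_H$ with $g'_\psi(m\otimes h')=h'\wedge\psi(m)$. I then need to upgrade $g'_\psi$ to a morphism in $Heymod_{H'}$, i.e., to check $H'$-linearity; this follows because for any $k'\in H'$ we have $g'_\psi(k'\wedge(m\otimes h'))=g'_\psi(m\otimes(k'\wedge h'))=(k'\wedge h')\wedge\psi(m)=k'\wedge g'_\psi(m\otimes h')$, and both sides extend by joins to all of $M\otimes_HH'$. Set $\Psi(\psi):=g'_\psi$.

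Finally, I would check $\Phi\circ\Psi=\mathrm{id}$ and $\Psi\circ\Phi=\mathrm{id}$. For $\psi\in Heymod_H(M,Res^{H'}_H(N'))$, $\Phi(\Psi(\psi))(m)=g'_\psi(m\otimes 1_{H'})=1_{H'}\wedge\psi(m)=\psi(m)$. In the other direction, for $\phi\in Heymod_{H'}(M\otimes_HH',N')$, the morphism $\Psi(\Phi(\phi))$ sends $m\otimes h'$ to $h'\wedge\phi(m\otimes 1_{H'})=\phi(h'\wedge(m\otimes 1_{H'}))=\phi(m\otimes h')$, and since both are $H'$-linear and join-preserving, they agree on all of $M\otimes_HH'$. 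Naturality in $M$ and $N'$ is then a routine diagram chase from these explicit formulas. The main obstacle I anticipate is making sure the $H$-linearity identities pass correctly through the restriction $f$ (especially in establishing that $g_\psi$ is a bimorphism and that $\Phi(\phi)$ is $H$-linear); these are the places where the interplay between the $H$- and $H'$-structures must be tracked carefully, but once written out they follow from the defining relations of the tensor product in \eqref{6xeq6.2}.
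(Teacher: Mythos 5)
Your proposal is correct and follows essentially the same route as the paper: your $\Phi(\phi)(m)=\phi(m\otimes 1_{H'})$ is exactly the paper's composite of $Res^{H'}_H(\phi)$ with $1_M\otimes_H f:M\otimes_H H\to M\otimes_H H'$, and your $\Psi(\psi)(m\otimes h')=h'\wedge\psi(m)$ is the paper's composite of $\psi\otimes_H H'$ with the canonical action map $Res^{H'}_H(N')\otimes_H H'\to N'$. You simply carry out at the level of elements (via the bimorphism criterion of Proposition \ref{6xP6.2}) the verifications that the paper leaves implicit.
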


\begin{proof}
We consider $M\in Heymod_H$, $N\in Heymod_{H'}$ and a morphism $g_1:M\otimes_HH'\longrightarrow N$ in $Heymod_{H'}$. Then, $Res^{H'}_H(g_1)$
is  a morphism in $Heymod_H$ and we compose it with  $1_M\otimes_Hf:M\otimes_HH\longrightarrow M\otimes_HH'$ to obtain a morphism
$M=M\otimes_HH\longrightarrow Res_H^{H'}(N)$ in $Heymod_H$. 

\smallskip
Conversely, suppose that we are given a morphism $g_2:M\longrightarrow Res^{H'}_H(N)$ in $Heymod_H$. Then, $g_2$ induces a morphism
$g_2\otimes_HH': M\otimes_HH'\longrightarrow Res^{H'}_H(N)\otimes_HH'$ in $Heymod_{H'}$. Since $N\in Heymod_{H'}$, we have a canonical morphism $Res^{H'}_H(N)\otimes_HH'\longrightarrow N$ in $Heymod_{H'}$. Composing this with $g_2\otimes_HH'$, we obtain a morphism
$M\otimes_HH'\longrightarrow N$ in $Heymod_{H'}$. It is clear that these two associations are inverse to each other and this proves the result. 
\end{proof}

\section{The Kleisli category $\mathbf{Heymod_H^2}$}

We continue with $H$ being a finite Heyting algebra. From Section 5,  we see that  the sequence 
\begin{equation}\label{hexact}
\xymatrix{
 Ker_p(f)\ar@<-.5ex>[r] \ar@<.5ex>[r]  & M
}\overset{f}{\longrightarrow} \xymatrix{
 N\ar@<-.5ex>[r] \ar@<.5ex>[r] & Cok_p(f)
}
\end{equation}
replaces the usual exact sequence involving the kernel and the cokernel in an abelian category. In a manner similar to \cite[$\S$ 3,4]{one}, the sequence in 
\eqref{hexact} suggests that we study the category $Heymod_H^2$ which has the same objects as $Heymod_H$, but  each morphism $M\longrightarrow N$ in $Heymod_H^2$ is a pair $(f,g):M\longrightarrow N$ of morphisms in $Heymod_H$. The composition of morphisms in $Heymod_H^2$ follows from the intuition that the pair $(f,g)$ should play the role of ``$f-g$.''  As such, the composition law for morphisms in $Heymod_H^2$ is given by:
\begin{equation}\label{composition}
(f',g')\circ (f,g)=(f'\circ f+g'\circ g,f'\circ g+g'\circ f)
\end{equation}  The category $Heymod_H$ is canonically embedded in $Heymod_H^2$ by taking any morphism $f$ to the pair $(f,0)$, giving 
a functor $\kappa_H:Heymod_H\hookrightarrow Heymod_H^2$.

\smallskip

The construction of $Heymod_H^2$ may be understood more categorically as follows: we recall the notion of a comonad on a category $\mathcal C$. 

\begin{defn}\label{comonad} (see, for instance, \cite[p 219]{Borceux} Given a category $\mathcal C$, the composition of functors determines a monoidal (but not necessarily symmetric monoidal) structure on the category $Fun(\mathcal C,\mathcal C)$ of endofunctors $\mathcal C\longrightarrow\mathcal C$. A comonad  on $\mathcal C$ is a comonoid in the monoidal category $Fun(\mathcal C,\mathcal C)$.

\smallskip
More explicitly, a comonad on $\mathcal C$ is a triple $(\bigperp,\delta,\epsilon)$ consisting of a functor $\bigperp:\mathcal C
\longrightarrow\mathcal C$ and natural transformations
\begin{equation}\label{comonad2}
\delta: \bigperp\longrightarrow \bigperp\circ\bigperp \qquad \epsilon : \bigperp\longrightarrow id
\end{equation} satisfying the conditions for coassociativity and counity respectively. 

\end{defn}

\begin{thm}\label{P6.2} Let $H$ be a finite Heyting algebra. Then, the endofunctor $\bigperp :Heymod_H\longrightarrow Heymod_H$ defined by taking any object $M\in Heymod_H$ to $M^2$ and any morphism $f:M\longrightarrow N$ to $(f,f):M^2
\longrightarrow N^2$ determines a comonad on $Heymod_H$.
\end{thm}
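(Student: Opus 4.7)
My approach is to read off, from the prescribed composition law \eqref{composition}, what the counit $\epsilon$ and comultiplication $\delta$ must be in order that $Heymod_H^2$ be realized as the Kleisli category of $\bigperp$, and then to verify the comonad axioms by direct componentwise computation. A Kleisli morphism $(f,g):M\longrightarrow N$ should correspond to the $Heymod_H$-morphism $h:M^2\longrightarrow N$ given by $h(m_1,m_2)=f(m_1)\vee g(m_2)$. Since $\kappa_H$ embeds $f$ as $(f,0)$, the identity on $M$ in $Heymod_H^2$ is forced to be $(id_M,0)$, which pins down $\epsilon_M:M^2\longrightarrow M$ as the first projection $(m_1,m_2)\mapsto m_1$. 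A short calculation requiring that Kleisli composition reproduce \eqref{composition} then forces $\delta_M:M^2\longrightarrow (M^2)^2$ to be $(m_1,m_2)\mapsto ((m_1,m_2),(m_2,m_1))$.

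The execution proceeds in three steps. First, I would verify that $\bigperp$ is a well-defined endofunctor: Lemma \ref{uL5.1} shows $M^2$ is a Heyting module, $(f,f):M^2\longrightarrow N^2$ is a morphism in $Heymod_H$ since it acts componentwise, and functoriality is immediate. Second, I would check that the proposed $\epsilon_M$ and $\delta_M$ are morphisms of Heyting modules (each component is $H$-linear and preserves finite joins) and natural in $M$; for $f:M\longrightarrow N$ this reduces to componentwise verification using $\bigperp(f)=(f,f)$.

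Third, the comonad axioms collapse to routine calculations. For counity, one has
\[
\epsilon_{M^2}\circ \delta_M(m_1,m_2) = \epsilon_{M^2}\bigl((m_1,m_2),(m_2,m_1)\bigr) = (m_1,m_2),
\]
\[
\bigperp(\epsilon_M)\circ \delta_M(m_1,m_2) = \bigl(\epsilon_M(m_1,m_2),\epsilon_M(m_2,m_1)\bigr) = (m_1,m_2),
\]
and for coassociativity both $\delta_{M^2}\circ \delta_M$ and $\bigperp(\delta_M)\circ \delta_M$ send $(m_1,m_2)$ to the same element
\[
\bigl(((m_1,m_2),(m_2,m_1)),((m_2,m_1),(m_1,m_2))\bigr).
\]
The main obstacle is conceptual rather than technical: correctly identifying the formulas for $\epsilon$ and $\delta$ so that the induced Kleisli category matches the definition of $Heymod_H^2$ given by \eqref{composition}. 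Once those are in hand, the verifications of naturality and the comonad axioms reduce to mechanical componentwise checks.
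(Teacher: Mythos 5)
Your proposal is correct and coincides with the paper's own proof: the paper takes exactly the same counit $\epsilon(M)(m,n)=m$ and comultiplication $\delta(M)(m,n)=(m,n,n,m)=((m,n),(n,m))$, and verifies counity and coassociativity by the same componentwise computations. Your preliminary derivation of these formulas from the requirement that the Kleisli composition reproduce \eqref{composition} is a sound piece of motivation that the paper defers to the subsequent proposition identifying $Heymod_H^2$ with the Kleisli category, but it does not change the substance of the argument.
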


\begin{proof}
The natural transformations $\epsilon : \bigperp\longrightarrow id$ and $\delta: \bigperp\longrightarrow \bigperp\circ\bigperp$
are defined by setting for each $M\in Heymod_H$:
\begin{equation}\label{eq6.4}
\begin{array}{c}
\epsilon(M):\bigperp M=M^2\longrightarrow M \qquad (m,n)\mapsto m\\
\delta(M): \bigperp M=M^2 \longrightarrow \bigperp\circ\bigperp M=(M^2)^2 \qquad (m,n)\mapsto (m,n,n,m)\\
\end{array}
\end{equation} It is clear that the morphisms in \eqref{eq6.4} lie in $Heymod_H$. The counit property of $\epsilon$ follows from the commutativity of the following diagram
\begin{equation*}
\begin{CD}
(m,n)\in  \bigperp M @>\delta(M)>> \bigperp (\bigperp M)\ni (m,n,n,m) \\
@V\delta(M)VV @VV\epsilon(\bigperp M)V\\
(m,n,n,m)\in \bigperp (\bigperp M) @>\bigperp(\epsilon(M))>> \bigperp M \ni (m,n)\\
\end{CD}
\end{equation*} The coassociativity property of $\delta$ follows from the commutativity of the following diagram
\begin{equation*}
\begin{CD}
(m,n)\in \bigperp M @>\delta(M)>> \bigperp (\bigperp M)\ni ((m,n),(n,m)) \\
@V\delta(M)VV @VV\delta(\bigperp M)V\\
((m,n),(n,m))\in \bigperp (\bigperp M)@>\bigperp(\delta(M))>> \bigperp\bigperp\bigperp M\ni ((m,n),(n,m),(n,m),(m,n))=((m,n,n,m),(n,m,m,n)))\\
\end{CD}
\end{equation*}
\end{proof}

By definition, the Kleisli category $Kl_{\bigperp}(\mathcal C)$ of a comonad $\bigperp$  (see \cite[p 192]{Borceux})  on a category $\mathcal C$ is constructed
as follows: the objects of $Kl_{\bigperp}(\mathcal C)$ are the same as those of $\mathcal C$ and the morphism sets are defined by setting:
\begin{equation}
Kl_{\bigperp}(C_1,C_2):=\mathcal C(\bigperp C_1,C_2)\qquad\forall\textrm{ }C_1,C_2\in Ob(\mathcal C)
\end{equation}  Given morphisms $f\in Kl_{\bigperp}(C_1,C_2)=\mathcal C(\bigperp C_1,C_2)$ and $g\in Kl_{\bigperp}(C_2,C_3)
=\mathcal C(\bigperp C_2,C_3)$, the composition is given by
\begin{equation}\label{eq6.6}
\left(
\begin{CD}\bigperp C_1 @>\delta(C_1)>>\bigperp(\bigperp C_1)
@>{\bigperp f}>> \bigperp C_2 @>g>> C_3\end{CD}\right)\in \mathcal C(\bigperp C_1,C_3)=Kl_{\bigperp}(C_1,C_3)
\end{equation}

\begin{thm}\label{P6.3}
For a finite Heyting algebra $H$, the  Kleisli category of the comonad $\bigperp$ is given by $Heymod_H^2$. 
\end{thm}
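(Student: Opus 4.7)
The plan is to exhibit an explicit isomorphism of categories $Kl_{\bigperp}(Heymod_H) \cong Heymod_H^2$ that is the identity on objects and, on morphisms, encodes a map $M^2\to N$ by its pair of restrictions to the two factors.

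First I would use the semiadditivity of $Heymod_H$ established in Proposition \ref{uP5.2}. Because $M^2 = M\times M$ is simultaneously a product and a coproduct, with inclusions $e_1,e_2:M\to M^2$ and projections $p_1,p_2:M^2\to M$ satisfying $p_i\circ e_j = \delta_{ij}$ and $e_1\circ p_1 + e_2\circ p_2 = id_{M^2}$, every morphism $h:M^2\to N$ in $Heymod_H$ is uniquely determined by the pair $(h\circ e_1, h\circ e_2)$, and conversely every pair $(f,g)\in Heymod_H(M,N)\times Heymod_H(M,N)$ assembles to a unique $h:M^2\to N$ satisfying $h(m,n)=f(m)\vee g(n)=f(m)+g(n)$. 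This gives a natural bijection
\begin{equation*}
\Phi_{M,N}:Kl_{\bigperp}(M,N)=Heymod_H(M^2,N)\;\xrightarrow{\;\sim\;}\;Heymod_H(M,N)^2
\end{equation*}
matching the morphism sets of $Heymod_H^2$ as described in Section 7.

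Next I would verify that $\Phi$ intertwines Kleisli composition with the composition law \eqref{composition}. Given $(f,g):M\to N$ and $(f',g'):N\to P$, let $h_1:M^2\to N$ and $h_2:N^2\to P$ be their images, so $h_1(m,n)=f(m)+g(n)$ and $h_2(a,b)=f'(a)+g'(b)$. By \eqref{eq6.6} the Kleisli composite is $h_2\circ \bigperp h_1\circ \delta(M)$. From \eqref{eq6.4}, $\delta(M)(m,n)=(m,n,n,m)\in (M^2)^2$, and $\bigperp h_1 = (h_1,h_1)$ sends this to $(h_1(m,n),h_1(n,m))=(f(m)+g(n),\,f(n)+g(m))$. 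Applying $h_2$ and using $H$-linearity of $f',g'$ together with associativity and commutativity of $+=\vee$, the result equals
\begin{equation*}
\bigl(f'\!\circ\! f+g'\!\circ\! g\bigr)(m)\;+\;\bigl(f'\!\circ\! g+g'\!\circ\! f\bigr)(n),
\end{equation*}
which is exactly the image under $\Phi$ of the pair $(f'f+g'g,\,f'g+g'f)$ prescribed by \eqref{composition}.

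Finally, I would check that identities correspond. The identity of an object $M$ in $Kl_\bigperp$ is the counit $\epsilon(M):M^2\to M$, $(m,n)\mapsto m$, whose image under $\Phi$ is $(id_M,0)$, which is indeed the identity on $M$ in $Heymod_H^2$ with respect to \eqref{composition}. Together these three steps give a functor $\Phi:Kl_\bigperp(Heymod_H)\to Heymod_H^2$ that is bijective on objects and fully faithful, hence an isomorphism of categories. The only mild obstacle is the combinatorial bookkeeping in the composition check — in particular tracking that $\delta(M)$ produces the mixed pattern $(m,n,n,m)$ so that the ``cross terms'' $f'g$ and $g'f$ land on the second component — but once the biproduct decomposition is in place this is mechanical.
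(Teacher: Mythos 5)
Your proposal is correct and follows essentially the same route as the paper: identify a Kleisli morphism $M^2\to N$ with the pair $(f,g)$ of its restrictions along $e_1,e_2$, then unwind $h_2\circ\bigperp h_1\circ\delta(M)$ on an element $(m,n)$ using $\delta(M)(m,n)=(m,n,n,m)$ to recover the composition law \eqref{composition}. The only differences are cosmetic — you make the biproduct bijection and the identity check explicit, whereas the paper treats the former as immediate and omits the latter.
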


\begin{proof}
By definition, a morphism from $M$ to $N$ in $Kl_{\bigperp}(Heymod_H)$ consists of a morphism $M^2\longrightarrow N$
in $Heymod_H$, i.e., a pair $(f,g)$ of morphisms from $M$ to $N$ in $Heymod_H$. We consider a pair $(f',g')$ of morphisms from
$N$ to $P$ in $Heymod_H$. We calculate  $(f',g')\circ (f,g)$ as per the composition law for the Kleisli category in \eqref{eq6.6}. 

\smallskip
For this, we choose $(m,n)\in M^2=\bigperp M$. Then, we know that $\delta(M)(m,n)=(m,n,n,m)\in (M^2)^2$. From the morphism 
$(f,g):M^2\longrightarrow N$ which takes $(m,n)\mapsto f(m)\vee g(n)$, we obtain 
\begin{equation}
\bigperp (f,g)((m,n),(n,m))=((f,g),(f,g))((m,n),(n,m))=(f(m)\vee g(n),f(n)\vee g(m))\in N^2=\bigperp N
\end{equation} Finally, the pair $(f',g')$ takes $(f(m)\vee g(n),f(n)\vee g(m))\in N^2=\bigperp N$ to 
$(f'\circ f)(m)\vee (f'\circ g)(n)\vee (g'\circ f)(n)\vee (g'\circ g)(m)\in P$.  It is now clear that the composition in the Kleisli category
$Kl_{\bigperp}(Heymod_H)$ is identical to the composition in the category $Heymod_H^2$ described in \eqref{composition}. 
\end{proof}

The kernel pair of a morphism$  \xymatrix{
 M \ar@<-.5ex>[r]_{g} \ar@<.5ex>[r]^{f} & N
}$ in $Heymod_H^2$ is now defined by setting
\begin{equation}\label{kerp1}
Ker_p(f,g):=\{\mbox{$(m_1,m_2)\in M\times M$ $\vert$ $f(m_1)\vee g(m_2)=f(m_2)\vee g(m_1)$ }\}
\end{equation} If $g=0$, it is clear that \eqref{kerp1} recovers the notion of the kernel pair in \eqref{kerp}. 

\begin{lem}\label{L6.4}
Given a morphism $\xymatrix{
 M \ar@<-.5ex>[r]_{g} \ar@<.5ex>[r]^{f} & N
}$ in $Heymod_H^2$, $Ker_p(f,g)\subseteq M\times M$ is a Heyting submodule. 
\end{lem}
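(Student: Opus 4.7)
The plan is to verify directly that $Ker_p(f,g)$ contains the zero element, is closed under the join operation of $M\times M$, and is closed under the action of $H$. Since a Heyting submodule (as defined in Section 4) is simply a distributive submodule over the underlying lattice $H$, these checks suffice.

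First I would note that $(0,0)\in Ker_p(f,g)$, because $f$ and $g$ are morphisms of Heyting modules, so $f(0)=g(0)=0$, and the defining equation $f(0)\vee g(0)=f(0)\vee g(0)$ is trivially satisfied. Next, for closure under joins, suppose $(m_1,m_2),(m_1',m_2')\in Ker_p(f,g)$, so that
\begin{equation*}
f(m_1)\vee g(m_2)=f(m_2)\vee g(m_1),\qquad f(m_1')\vee g(m_2')=f(m_2')\vee g(m_1').
\end{equation*}
Since $f$ and $g$ preserve joins, I would compute
\begin{equation*}
f(m_1\vee m_1')\vee g(m_2\vee m_2')=f(m_1)\vee f(m_1')\vee g(m_2)\vee g(m_2'),
\end{equation*}
regroup the right-hand side as $(f(m_1)\vee g(m_2))\vee (f(m_1')\vee g(m_2'))$, apply the two hypotheses, and then regroup again using commutativity and associativity of $\vee$ to obtain $f(m_2\vee m_2')\vee g(m_1\vee m_1')$. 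This shows $(m_1\vee m_1',m_2\vee m_2')\in Ker_p(f,g)$.

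Finally, for closure under the $H$-action, let $c\in H$ and $(m_1,m_2)\in Ker_p(f,g)$. The action on $M\times M$ is given componentwise by $c\wedge(m_1,m_2)=(c\wedge m_1,c\wedge m_2)$. Since $f$ and $g$ are morphisms of Heyting modules, $f(c\wedge m_i)=c\wedge f(m_i)$ and $g(c\wedge m_i)=c\wedge g(m_i)$. Using distributivity of $\wedge$ over $\vee$ in $N$, I would derive
\begin{equation*}
f(c\wedge m_1)\vee g(c\wedge m_2)=c\wedge\bigl(f(m_1)\vee g(m_2)\bigr)=c\wedge\bigl(f(m_2)\vee g(m_1)\bigr)=f(c\wedge m_2)\vee g(c\wedge m_1),
\end{equation*}
so $(c\wedge m_1,c\wedge m_2)\in Ker_p(f,g)$. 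There is no real obstacle here; the argument is a routine verification, and the only mild point to watch is the use of the bimorphism-style identity $c\wedge f(m)=f(c\wedge m)$, which holds because $f$ and $g$ are morphisms in $Heymod_H$.
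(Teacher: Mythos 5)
Your proposal is correct and follows essentially the same route as the paper's proof: the same join computation via regrouping, and the same distributivity argument for the $H$-action. The extra remark that $(0,0)\in Ker_p(f,g)$ is harmless and fine to include.
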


\begin{proof} Given $(m_1,m_2)$, $(m_1',m_2')\in Ker_p(f,g)$ we see that
\begin{equation}
\begin{array}{ll}
f(m_1\vee m_1')\vee g(m_2\vee m_2')=&(f(m_1)\vee g(m_2))\vee (f(m_1')\vee g(m_2'))\\
&=(f(m_2)\vee g(m_1))\vee (f(m_2')\vee g(m_1'))\\
&=f(m_2\vee m_2')
\vee g(m_1\vee m_1')\\
\end{array}
\end{equation} It follows that $(m_1,m_2)\vee (m_1',m_2')\in Ker_p(f,g)$. Also, for any $c\in  H$, we see that
\begin{equation}
f(c\wedge m_1)\vee g(c\wedge m_2)=c\wedge (f(m_1)\vee g(m_2))=c\wedge (f(m_2)\vee g(m_1))=f(c\wedge m_2)\vee g(c\wedge m_1)
\end{equation} and hence $(c\wedge m_1,c\wedge m_2)\in Ker_p(f,g)$. 

\end{proof}

While Lemma \ref{L6.4} shows that $Ker_p(f,g)$ is a Heyting submodule of $M\times M$, it should be pointed out that unlike the case of $Ker_p(f)$ in Proposition \ref{usP5.6}, $Ker_p(f,g)
\subseteq M\times M$ does not define a congruence relation on $M$. Also, $Ker_p(f,g)$ defined in \eqref{kerp1} is not an equalizer unlike 
\eqref{kerp}.  

\smallskip Being a submodule of $M\times M$, $Ker_p(f,g)$ is equipped with two canonical morphisms to $M$, which determine
a morphism $\xymatrix{
 Ker_p(f,g) \ar@<-.5ex>[r]_(0.6){k_2} \ar@<.5ex>[r]^(0.6){k_1} & M
}$ in $Heymod_H^2$. 
The next result  gives us something resembling a universal property for $Ker_p(f,g)$. For this, we note that the intuition of a morphism 
$(f,g)$
in $Heymod_H^2$ corresponding to ``$f-g$'' suggests that a composition $(f',g')\circ (f,g)$ in $Heymod_H^2$  ``corresponds to zero'' if and only
if $f'\circ f+g'\circ g=f'\circ g+g'\circ f$.

\begin{thm}\label{P6.5}
Let $\xymatrix{
 M \ar@<-.5ex>[r]_{g} \ar@<.5ex>[r]^{f} & N
}$ be a morphism in $Heymod_H^2$. Then the morphisms 
\begin{equation}
\xymatrix{
Ker_p(f,g) \ar@<-.5ex>[r]_(0.6){k_2} \ar@<.5ex>[r]^(0.6){k_1} & M \ar@<-.5ex>[r]_{g} \ar@<.5ex>[r]^{f} & N
}
\end{equation} 
in $Heymod_H^2$ satisfy $f\circ k_1+g\circ k_2=g\circ k_1+f\circ k_2$.  Further, if there exists a morphism $\xymatrix{L\ar@<-.5ex>[r]_{h_2} \ar@<.5ex>[r]^{h_1}&M}$ in $Heymod_H^2$ satisfying $f\circ h_1+g\circ h_2=g\circ h_1+f\circ h_2$, then $(h_1,h_2)$ factors through $(k_1,k_2)$.
\end{thm}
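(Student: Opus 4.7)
The plan is to split the proof into the two assertions of the proposition. The first assertion is essentially a tautology built into the very definition of $Ker_p(f,g)$ in \eqref{kerp1}, and the second is a direct translation of that defining relation into a factorization statement in the Kleisli category $Heymod_H^2$.

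For the first part, I would evaluate both compositions at an arbitrary element $(m_1,m_2)\in Ker_p(f,g)\subseteq M\times M$. Since $k_1(m_1,m_2)=m_1$ and $k_2(m_1,m_2)=m_2$, using the pointwise definition $+\,=\,\vee$ from \eqref{6ins} we obtain
\begin{equation*}
(f\circ k_1+g\circ k_2)(m_1,m_2)=f(m_1)\vee g(m_2)\quad\text{and}\quad (g\circ k_1+f\circ k_2)(m_1,m_2)=g(m_1)\vee f(m_2),
\end{equation*}
and these coincide by the very definition of $Ker_p(f,g)$. So the identity $f\circ k_1+g\circ k_2=g\circ k_1+f\circ k_2$ is immediate.

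For the universal property, suppose we are given $\xymatrix{L\ar@<-.5ex>[r]_{h_2}\ar@<.5ex>[r]^{h_1}&M}$ in $Heymod_H^2$ with $f\circ h_1+g\circ h_2=g\circ h_1+f\circ h_2$. Evaluating at $\ell\in L$ gives $f(h_1(\ell))\vee g(h_2(\ell))=g(h_1(\ell))\vee f(h_2(\ell))$, which is exactly the condition for $(h_1(\ell),h_2(\ell))$ to lie in $Ker_p(f,g)$. Thus I would define $h:L\longrightarrow Ker_p(f,g)\subseteq M\times M$ by $h(\ell):=(h_1(\ell),h_2(\ell))$. Since $h_1$ and $h_2$ preserve finite joins and the $H$-action, so does $h$, and it is a morphism in $Heymod_H$ by Lemma \ref{L6.4} (indeed $Ker_p(f,g)$ is a Heyting submodule of $M\times M$).

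Finally I need to turn $h$ into a factorization in the Kleisli category. Using the composition law \eqref{composition}, the morphism $(h,0):L\longrightarrow Ker_p(f,g)$ in $Heymod_H^2$ satisfies
\begin{equation*}
(k_1,k_2)\circ (h,0)=(k_1\circ h+k_2\circ 0,\ k_1\circ 0+k_2\circ h)=(k_1\circ h,k_2\circ h)=(h_1,h_2),
\end{equation*}
which gives the required factorization. The main (minor) obstacle is keeping track of which category one is computing in: the factorization has to be produced via the Kleisli composition rule \eqref{composition}, not the plain composition in $Heymod_H$. Once one observes that choosing the second component of the factorizing morphism to be $0$ kills the cross-terms in \eqref{composition}, the universal property follows with essentially no computation.
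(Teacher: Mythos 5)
Your proof is correct and follows essentially the same route as the paper's: verify the defining relation of $Ker_p(f,g)$ pointwise for the first identity, then package $\ell\mapsto(h_1(\ell),h_2(\ell))$ as a morphism $L\longrightarrow Ker_p(f,g)$ in $Heymod_H$ and pair it with $0$ so that the Kleisli composition \eqref{composition} collapses to $(h_1,h_2)$. The only difference is cosmetic: you spell out explicitly that the cross-terms vanish because the second component is $0$, which the paper leaves implicit.
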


\begin{proof}
If $(m,m')\in Ker_p(f,g)$, then $k_1(m,m')=m$ and $k_2(m,m')=m'$. It follows from the definition in \eqref{kerp1} that
$f(m)\vee g(m')=f(m')\vee g(m)$ and hence $f\circ k_1+g\circ k_2=g\circ k_1+f\circ k_2$. 

\smallskip
From the definition in \eqref{kerp1}, it is also clear that for any $l\in L$, the element $(h_1(l),h_2(l))\in M\times M$ actually lies
in $Ker_p(f,g)$. This gives us a morphism $L\longrightarrow Ker_p(f,g)$ in $Heymod_H$ and hence a morphism
$\xymatrixcolsep{3pc}\xymatrix{L\ar@<-.5ex>[r]_(0.3){0} \ar@<.5ex>[r]^(0.3){(h_1,h_2)}& Ker_p(f,g)}$ in $Heymod_H^2$. We now have
the composition
\begin{equation}
(k_1,k_2)\circ ((h_1,h_2),0)=(k_1\circ (h_1,h_2),k_2\circ (h_1,h_2))=(h_1,h_2)
\end{equation} in $Heymod_H^2$, which proves the result. 
\end{proof}

Given a morphism  $\xymatrix{L\ar@<-.5ex>[r]_{f_2} \ar@<.5ex>[r]^{f_1}&M}$ in $Heymod_H^2$, we now set
\begin{equation}\label{im2}
I(f_1,f_2):=\{\mbox{$(f_1(x)\vee f_2(y),f_1(y)\vee f_2(x))$ $\vert$ $x$, $y\in L$}\}\subseteq M\times M
\end{equation} It is evident that $I(f_1,f_2)$ is a Heyting submodule of $M\times M$.  

\begin{lem}\label{L6.6}
Consider a composition of morphisms in $Heymod_H^2$ as follows:
\begin{equation}\label{eq6.14}
\xymatrix{L\ar@<-.5ex>[r]_{f_2} \ar@<.5ex>[r]^{f_1}&M\ar@<-.5ex>[r]_{g_2} \ar@<.5ex>[r]^{g_1}&N}
\end{equation} Then, we have:
\begin{equation}
I(f_1,f_2)\subseteq Ker_p(g_1,g_2)\quad\Leftrightarrow \quad g_1\circ f_1+g_2\circ f_2=g_1\circ f_2+g_2\circ f_1
\end{equation}
\end{lem}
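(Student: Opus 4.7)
The plan is to unpack both sides of the biconditional into explicit join-identities in $N$, and then to observe that they are linked by the trivial substitution $y=0$.

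First, I would note that by the definitions in \eqref{im2} and \eqref{kerp1}, and using that the morphisms $g_1,g_2$ preserve finite joins, the inclusion $I(f_1,f_2)\subseteq Ker_p(g_1,g_2)$ is equivalent to the assertion that for all $x,y\in L$,
\begin{equation*}
(g_1\circ f_1)(x)\vee (g_1\circ f_2)(y)\vee (g_2\circ f_1)(y)\vee (g_2\circ f_2)(x) = (g_1\circ f_1)(y)\vee (g_1\circ f_2)(x)\vee (g_2\circ f_1)(x)\vee (g_2\circ f_2)(y).
\end{equation*}
Let me refer to this displayed identity as $(\ast)$.

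For the ``$\Leftarrow$'' direction, I would assume $g_1\circ f_1+g_2\circ f_2=g_1\circ f_2+g_2\circ f_1$, i.e.\ that for every $z\in L$ one has $(g_1\circ f_1)(z)\vee (g_2\circ f_2)(z)=(g_1\circ f_2)(z)\vee (g_2\circ f_1)(z)$. Applying this identity at $z=x$ and at $z=y$ and taking the join of the two resulting equalities, each side of $(\ast)$ reduces to the symmetric expression $(g_1\circ f_1)(x)\vee (g_1\circ f_1)(y)\vee (g_2\circ f_2)(x)\vee (g_2\circ f_2)(y)$, and $(\ast)$ follows.

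For the ``$\Rightarrow$'' direction, I would specialize $(\ast)$ to $y=0_L$. Every morphism of Heyting modules preserves the least element, since $g(0_M)=g(0_H\wedge m)=0_H\wedge g(m)=0$, so $f_1(0)=f_2(0)=0$ and hence $(g_1\circ f_2)(0)=(g_2\circ f_1)(0)=(g_1\circ f_1)(0)=(g_2\circ f_2)(0)=0$. Four of the eight terms in $(\ast)$ therefore vanish, leaving exactly $(g_1\circ f_1)(x)\vee (g_2\circ f_2)(x)=(g_1\circ f_2)(x)\vee (g_2\circ f_1)(x)$ for every $x\in L$, which is the desired identity $g_1\circ f_1+g_2\circ f_2=g_1\circ f_2+g_2\circ f_1$.

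The main obstacle is purely notational: one must keep careful track of which of the eight terms in $(\ast)$ depends on $x$ and which on $y$, since the left- and right-hand sides of $(\ast)$ are not related by a simple inequality but by an equality obtained by swapping the roles of $x$ and $y$ within two of the four compositions. Once $(\ast)$ is written down, both implications are immediate, the forward direction using only that morphisms in $Heymod_H$ preserve $0$.
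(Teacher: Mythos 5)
Your proof is correct and follows essentially the same route as the paper: both unpack the inclusion $I(f_1,f_2)\subseteq Ker_p(g_1,g_2)$ into the identity $(g_1\circ f_1+g_2\circ f_2)(x)\vee(g_1\circ f_2+g_2\circ f_1)(y)=(g_1\circ f_2+g_2\circ f_1)(x)\vee(g_1\circ f_1+g_2\circ f_2)(y)$ for all $x,y\in L$, obtain the forward implication by setting $y=0$, and read off the converse directly. Your write-up merely makes explicit the two details the paper leaves implicit, namely that morphisms preserve $0$ and that under the hypothesis both sides collapse to a common symmetric expression.
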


\begin{proof}
We see that 
\begin{equation}\label{eq6.16}
\begin{array}{l}
I(f_1,f_2)\subseteq Ker_p(g_1,g_2)\\
\Leftrightarrow (f_1(x)\vee f_2(y),f_1(y)\vee f_2(x)) \in Ker_p(g_1,g_2)\\
\Leftrightarrow g_1(f_1(x)\vee f_2(y))\vee g_2(f_1(y)\vee f_2(x))=g_1(f_1(y)\vee f_2(x))\vee g_2(f_1(x)\vee f_2(y))\\
\Leftrightarrow  (g_1\circ f_1+g_2\circ f_2)(x)\vee (g_1\circ f_2+g_2\circ f_1)(y)=(g_1\circ f_2+g_2\circ f_1)(x)\vee (g_1\circ f_1+g_2\circ f_2)(y)\\
\end{array}
\end{equation} for all $x$, $y\in L$. Then, $I(f_1,f_2)\subseteq Ker_p(g_1,g_2)\Rightarrow g_1\circ f_1+g_2\circ f_2=g_1\circ f_2+g_2\circ f_1$ follows by setting
$y=0$ in \eqref{eq6.16}. The other implication is also clear from \eqref{eq6.16}. 
\end{proof}

We notice here that with a composition as in \eqref{eq6.14}
both $Ker_p(g_1,g_2)\subseteq M\times M$ and  $I(f_1,f_2)\subseteq M\times M$ are symmetric submodules of $M\times M$, i.e., they contain
an ordered pair $(m,m')$ if and only if they also contain $(m',m)$. We are now ready to define strict exactness in $Heymod_H^2$ in a manner parallel to \cite[Definition 4.4]{one}. 

\begin{defn}\label{D6.7}
A sequence of morphisms $\xymatrix{L\ar@<-.5ex>[r]_{f_2} \ar@<.5ex>[r]^{f_1}&M\ar@<-.5ex>[r]_{g_2} \ar@<.5ex>[r]^{g_1}&N}$ in $Heymod_H^2$ is strict exact at $M$ if $I(f_1,f_2)+\Delta_M=Ker_p(g_1,g_2)$. Here, $\Delta_M=\{\mbox{$(m,m)$ $\vert$ $m\in M$}\}\subseteq M\times M$ is the diagonal submodule of $M\times M$.
\end{defn}

\begin{thm}\label{P6.8} (a) A morphism $\xymatrix{L\ar@<-.5ex>[r]_{f_2} \ar@<.5ex>[r]^{f_1}&M}$ in $Heymod_H^2$ is a monomorphism
if and only if the induced map $L^2\longrightarrow M^2$ given by $(x,y)\mapsto (f_1(x)\vee f_2(y),f_1(y)\vee f_2(x))$ is injective. 

\smallskip
(b) A monomorphism $\xymatrix{L\ar@<-.5ex>[r]_{f_2} \ar@<.5ex>[r]^{f_1}&M}$ in $Heymod_H^2$ induces a strict exact sequence
$\xymatrix{0\ar@<-.5ex>[r]_{0} \ar@<.5ex>[r]^{0}&L\ar@<-.5ex>[r]_{f_2} \ar@<.5ex>[r]^{f_1}&M}$ in $Heymod_H^2$. 

\smallskip
(c) A sequence $\xymatrix{0\ar@<-.5ex>[r]_{0} \ar@<.5ex>[r]^{0}&L\ar@<-.5ex>[r]_{f_2} \ar@<.5ex>[r]^{f_1}&M}$ in $Heymod_H^2$ is strict
exact at $L$ if and only if 
\begin{equation}
f_1(x)\vee f_2(y)=f_1(y)\vee f_2(x)\qquad\Leftrightarrow \qquad x=y
\end{equation}

\end{thm}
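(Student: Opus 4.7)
My plan is to prove part (a) directly from the categorical definition of monomorphism, derive (c) as an unpacking of Definition \ref{D6.7}, and then deduce (b) as an immediate consequence of (a) and (c).

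For (a), let $\Phi:L\times L\longrightarrow M\times M$ denote the map $(x,y)\mapsto (f_1(x)\vee f_2(y),f_1(y)\vee f_2(x))$. In the easy direction, suppose $\Phi$ is injective and let $(h_1,h_2),(h_1',h_2'):X\longrightarrow L$ be morphisms in $Heymod_H^2$ with $(f_1,f_2)\circ (h_1,h_2)=(f_1,f_2)\circ (h_1',h_2')$. The composition law \eqref{composition} gives $\Phi(h_1(u),h_2(u))=\Phi(h_1'(u),h_2'(u))$ for every $u\in X$, and injectivity of $\Phi$ forces $h_1=h_1'$, $h_2=h_2'$. For the converse, given $(x,y),(x',y')\in L\times L$ with $\Phi(x,y)=\Phi(x',y')$, I construct test morphisms $h^a:H\longrightarrow L$, $c\mapsto c\wedge a$ (for $a=x,y,x',y'$); each is a morphism of Heyting modules. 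Using $H$-linearity of $f_1,f_2$, one checks pointwise that the composites $(f_1,f_2)\circ (h^x,h^y)$ and $(f_1,f_2)\circ (h^{x'},h^{y'})$ send $c$ respectively to the $c$-meets of the coordinates of $\Phi(x,y)$ and $\Phi(x',y')$, hence they coincide under the given hypothesis. The monomorphism property then gives $h^x=h^{x'}$ and $h^y=h^{y'}$, and evaluation at $c=1_H$ yields $x=x'$ and $y=y'$.

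For (c), observe that when $f_1=f_2=0$ the submodule $I(0,0)\subseteq L\times L$ of \eqref{im2} collapses to $\{(0,0)\}$, so $I(0,0)+\Delta_L=\Delta_L$. Strict exactness at $L$ in the sense of Definition \ref{D6.7} therefore reduces to the equality $\Delta_L=Ker_p(f_1,f_2)$, and unpacking \eqref{kerp1} translates this exactly into the asserted biconditional. For (b), combining (a) and (c) is now automatic: if $(f_1,f_2)$ is a monomorphism and $f_1(x)\vee f_2(y)=f_1(y)\vee f_2(x)$, then $\Phi(x,y)=\Phi(y,x)$ (both coordinates agree with the common value), whence $(x,y)=(y,x)$ by the injectivity established in (a), and so $x=y$. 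This is precisely the characterization of strict exactness from (c).

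The main obstacle is the forward direction of (a): one must produce enough test morphisms in $Heymod_H^2$ to separate arbitrary pairs in $L\times L$. The correct choice is the one-parameter family $c\mapsto c\wedge a$ from $H$ to $L$, which already appeared in Proposition \ref{P4.5} and which recovers the underlying point at $c=1_H$; this plays the role of a ``free generator on one element'' in $Heymod_H$. Once those test morphisms are in hand, everything reduces to routine bookkeeping with the composition law \eqref{composition} and the explicit descriptions of $Ker_p(f_1,f_2)$ and $I(f_1,f_2)$ from \eqref{kerp1} and \eqref{im2}.
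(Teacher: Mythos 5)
Your proposal is correct and follows essentially the same route as the paper: the paper's proof consists precisely of the observation that each $l\in L$ yields a test morphism $H\longrightarrow L$, $c\mapsto c\wedge l$, and then defers the remaining bookkeeping to the analogous arguments in Connes--Consani; your write-up supplies exactly those details (the $H$-linearity computation reducing everything to injectivity of $\Phi$, the collapse of $I(0,0)$ to $\{(0,0)\}$ for part (c), and the symmetry trick $\Phi(x,y)=\Phi(y,x)$ for part (b)). No gaps.
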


\begin{proof}
If $L$ is any Heyting module, each element $l\in L$ determines a morphism $H\longrightarrow L$, $x\mapsto x\wedge l$  in $Heymod_H$. The rest of the proof is analogous to that of \cite[Proposition 4.6]{one} and \cite[Proposition 4.10]{one}. 
\end{proof}

We now come to the epimorphisms in $Heymod_H^2$ and the corresponding strict exact sequences. 

\begin{thm}\label{P7.9}
Let $\begin{CD} M@>\phi=(f,g)>> N@>>> 0\end{CD}$ be a sequence of morphisms in $Heymod_H^2$.  

\smallskip
(a) The following are equivalent:

\smallskip
$\hspace{0.2in}$(1) The sequence $\begin{CD} M@>\phi=(f,g)>> N@>>> 0\end{CD}$  is strictly exact at $N$.

\smallskip
$\hspace{0.2in}$(2) $\{\mbox{$f(x)\vee g(y)$ $\vert$ $f(y)\vee g(x)=0$, $x$, $y\in M$}\}=N$.

\smallskip
$\hspace{0.2in}$(3) $I(f,g)=N\times N$. 

\smallskip
(b) If  the sequence $\begin{CD} M@>\phi=(f,g)>> N@>>> 0\end{CD}$  is strictly exact at $N$, then the morphism 
$\phi=(f,g)$ is an epimorphism in $Heymod_H^2$.

\end{thm}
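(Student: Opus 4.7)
The plan is to first simplify strict exactness: since the kernel pair $Ker_p(0,0)\subseteq N\times N$ is cut out by the vacuous condition $0=0$, it equals all of $N\times N$, and Definition \ref{D6.7} reduces (1) to the single equation $I(f,g)+\Delta_N=N\times N$. This strips away the notational scaffolding and shows that all three conditions in (a) are really statements about the size of the submodule $I(f,g)\subseteq N\times N$.

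For part (a), I would prove the implications (3) $\Rightarrow$ (1) $\Rightarrow$ (2) $\Rightarrow$ (3). The first is immediate, since adding the diagonal to an already maximal submodule of $N\times N$ changes nothing. For (1) $\Rightarrow$ (2), I test the element $(n,0)\in N\times N=I(f,g)+\Delta_N$, writing
\begin{equation*}
(n,0)=(f(x)\vee g(y),\, f(y)\vee g(x))\vee (c,c)
\end{equation*}
for some $x,y\in M$ and $c\in N$; the second coordinate forces $f(y)\vee g(x)\vee c=0$, whence each of $f(y)$, $g(x)$, $c$ must be $0$ (a join in a semilattice with least element $0$ vanishes only when every summand vanishes), and the first coordinate then reads $n=f(x)\vee g(y)$, giving (2). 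For (2) $\Rightarrow$ (3), note that (2) says exactly that $(n,0)\in I(f,g)$ for every $n\in N$; the submodule $I(f,g)$ is symmetric under swapping the two coordinates (swap $x$ and $y$), so $(0,n)\in I(f,g)$ as well, and joining inside the submodule produces every $(n_1,n_2)\in N\times N$.

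For part (b), I would unpack the epimorphism condition directly. Suppose $(f',g')\circ(f,g)=(f'',g'')\circ(f,g)$ in $Heymod_H^2$; by \eqref{composition} this amounts to the two identities $f'\circ f+g'\circ g=f''\circ f+g''\circ g$ and $f'\circ g+g'\circ f=f''\circ g+g''\circ f$ in $Heymod_H$. Given $n\in N$, use (2) to write $n=f(x)\vee g(y)$ with $f(y)=g(x)=0$. Evaluating the two identities at $x$ and at $y$ and using these vanishings (together with the fact that morphisms of Heyting modules preserve $0$) yields the four pointwise equalities $f'(f(x))=f''(f(x))$, $g'(g(y))=g''(g(y))$, $g'(f(x))=g''(f(x))$ and $f'(g(y))=f''(g(y))$; joining them recovers $f'(n)=f''(n)$ and $g'(n)=g''(n)$, and hence $(f',g')=(f'',g'')$ as morphisms in $Heymod_H^2$.

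The main obstacle is conceptual rather than computational: in the implication (1) $\Rightarrow$ (2) one has to recognise the right test element, namely $(n,0)$, and exploit the fact that having $0$ as one coordinate of an element of $I(f,g)+\Delta_N$ annihilates every possible contribution on that side. Once this observation is made, the rest of (a) and all of (b) are routine bookkeeping with the definitions of $I(f,g)$ and of composition in the Kleisli category.
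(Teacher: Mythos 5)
Your proof is correct, but it takes a genuinely different route from the paper's in both halves. For part (a) the paper gives no argument at all, deferring to the analogous statement in Connes--Consani; your cycle $(3)\Rightarrow(1)\Rightarrow(2)\Rightarrow(3)$ supplies the missing details, and the key steps check out: $Ker_p(0,0)=N\times N$ so strict exactness reduces to $I(f,g)+\Delta_N=N\times N$; testing $(n,0)$ and using that a join equals $0$ only if every joinand does gives $(2)$; and the symmetry of $I(f,g)$ under swapping $x$ and $y$ together with closure under $\vee$ recovers $(3)$. For part (b) the paper works from condition $(3)$: it packages $\psi=(\psi_1,\psi_2)$ and $\psi'=(\psi'_1,\psi'_2)$ into single morphisms $\tilde\psi,\tilde\psi':N^2\longrightarrow P$, shows the composition identities force $\tilde\psi$ and $\tilde\psi'$ to agree on $I(f,g)=N\times N$, and then evaluates at $(z,0)$ and $(0,w)$. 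You instead work from condition $(2)$: you decompose each $n=f(x)\vee g(y)$ with $f(y)=g(x)=0$ and evaluate the two composition identities at the witnesses $x$ and $y$, extracting the four pointwise equalities directly. Both arguments are valid; yours is more elementary and avoids introducing the auxiliary maps $\tilde\psi$, while the paper's formulation via $\tilde\psi|_{I(f,g)}$ is the one that gets reused verbatim in the converse direction (Proposition \ref{P7.10}), so the paper's choice pays off one result later.
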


\begin{proof}
The proof of (a) is analogous to that of \cite[Proposition 4.5]{one}.  For (b), we proceed as follows: if  the sequence $\begin{CD} M@>\phi=(f,g)>> N@>>> 0\end{CD}$  is strictly exact at $N$, we have $I(f,g)=N\times N$. Explicitly speaking, this means that
\begin{equation}\label{eq7.17}
I(f,g):=\{\mbox{$(f(x)\vee g(y),f(y)\vee g(x))$ $\vert$ $x$, $y\in M$}\}= N\times N
\end{equation} Let $\psi=(\psi_1,\psi_2):N\longrightarrow P$ and 
$\psi'=(\psi'_1,\psi'_2):N\longrightarrow P$ be morphisms in $Heymod_H^2$ such that $\psi\circ \phi=\psi'\circ \phi$. Writing this out
explicitly, we get
\begin{equation}\label{eq7.18}
\begin{array}{c}
\psi_1\circ f+\psi_2\circ g=\psi'_1\circ f+\psi'_2\circ g \qquad 
\psi_1\circ g+\psi_2\circ f=\psi'_1\circ g+\psi'_2\circ f\\
\end{array}
\end{equation} The morphisms
$\psi$ and $\psi'$ induce morphisms $\tilde\psi=(\psi_1,\psi_2):N^2\longrightarrow P$ and $\tilde\psi'=(\psi'_1,\psi'_2):N^2\longrightarrow P$ in $Heymod_H$ given by
\begin{equation}\label{eq7.19}
\tilde\psi(z,w)=\psi_1(z)\vee \psi_2(w)\qquad \tilde\psi'(z,w)=\psi'_1(z)\vee \psi'_2(w)\qquad\forall\textrm{ }z,w\in N
\end{equation} For elements $x$, $y\in M$, \eqref{eq7.19} now gives
\begin{equation}\label{eq7.20}
\begin{array}{c}
\tilde\psi(f(x)\vee g(y),f(y)\vee g(x))=\psi_1(f(x)\vee g(y))\vee \psi_2(f(y)\vee g(x)) \\
\tilde\psi'(f(x)\vee g(y),f(y)\vee g(x))=\psi'_1(f(x)\vee g(y))\vee \psi'_2(f(y)\vee g(x)) \\
\end{array}
\end{equation} Applying \eqref{eq7.18}, we obtain $\tilde\psi|I(f,g)=\tilde\psi'|I(f,g)$. Since $I(f,g)=N\times N$, this gives
$\tilde\psi=\tilde\psi':N\times N\longrightarrow P$. In particular, $\psi_1(z)=\tilde\psi(z,0)=\tilde\psi'(z,0)=\psi'_1(z)$ and
$\psi_2(w)=\tilde\psi(0,w)=\tilde\psi'(0,w)=\psi'_2(w)$ for $z$, $w\in N$, i.e., $\psi=\psi'$. Hence, $\phi$ is an epimorphism
in $Heymod_H^2$.  
\end{proof}

\begin{thm}\label{P7.10} Let $H$ be a finite Boolean algebra. Then, the sequence $\begin{CD} M@>\phi=(f,g)>> N@>>> 0\end{CD}$ in
$Heymod_H^2$ is strictly exact at $N$ if and only if the morphism 
$\phi=(f,g)$ is an epimorphism in $Heymod_H^2$. 
\end{thm}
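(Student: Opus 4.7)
The implication ``strict exactness at $N$ implies $\phi=(f,g)$ is an epimorphism'' is already given by Proposition \ref{P7.9}(b), with no Boolean hypothesis on $H$. So I only need to prove the converse: assuming $\phi$ is an epimorphism in $Heymod_H^2$ and $H$ is a finite Boolean algebra, one must deduce strict exactness at $N$. By Proposition \ref{P7.9}(a), this is equivalent to the single condition $I(f,g) = N \times N$, and this is what I would target directly.

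The strategy is to apply the Hahn--Banach type separation property of Proposition \ref{P4.4} to the Heyting submodule $I(f,g) \hookrightarrow N \times N$. By \ref{P4.4}, it suffices to verify that any two morphisms $\phi_1, \phi_2 \in (N \times N)^\bigstar$ with $\phi_1|_{I(f,g)} = \phi_2|_{I(f,g)}$ must coincide on all of $N \times N$.

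Fix such a pair. Using $(z,w) = (z,0) \vee (0,w)$ together with the fact that $\phi_i$ preserves joins and the $H$-action, I would decompose $\phi_i(z,w) = \psi_1^{(i)}(z) \vee \psi_2^{(i)}(w)$ by setting $\psi_1^{(i)}(n) := \phi_i(n,0)$ and $\psi_2^{(i)}(n) := \phi_i(0,n)$, where each $\psi_j^{(i)}:N \to H$ is a morphism in $Heymod_H$. Thus $\psi^{(i)} := (\psi_1^{(i)}, \psi_2^{(i)})$ is a morphism $N \to H$ in $Heymod_H^2$. A short calculation, essentially the one already appearing in the proof of Proposition \ref{P7.9}(b), shows that evaluating $\phi_i$ on the generic element $(f(x) \vee g(y), f(y) \vee g(x))$ of $I(f,g)$ and specializing $y = 0$ and $x = 0$ separately translates $\phi_1|_{I(f,g)} = \phi_2|_{I(f,g)}$ into the pair of equalities
\[
\psi_1^{(1)} \circ f + \psi_2^{(1)} \circ g = \psi_1^{(2)} \circ f + \psi_2^{(2)} \circ g, \qquad \psi_1^{(1)} \circ g + \psi_2^{(1)} \circ f = \psi_1^{(2)} \circ g + \psi_2^{(2)} \circ f,
\]
which by the composition law \eqref{composition} is exactly the assertion $\psi^{(1)} \circ \phi = \psi^{(2)} \circ \phi$ in $Heymod_H^2$.

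Since $\phi$ is an epimorphism in $Heymod_H^2$, this forces $\psi^{(1)} = \psi^{(2)}$, i.e.\ $\psi_j^{(1)} = \psi_j^{(2)}$ for $j = 1, 2$, and therefore $\phi_1 = \phi_2$ on all of $N \times N$. Proposition \ref{P4.4} then yields $I(f,g) = N \times N$, and the result follows via Proposition \ref{P7.9}(a). The only delicate step is correctly matching the restriction condition $\phi_1|_{I(f,g)} = \phi_2|_{I(f,g)}$ with the two-component equality $\psi^{(1)} \circ \phi = \psi^{(2)} \circ \phi$ dictated by \eqref{composition}; once that accounting is done, the rest reduces to Hahn--Banach together with the epimorphism hypothesis.
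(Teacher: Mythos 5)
Your proof is correct and follows essentially the same route as the paper: both arguments decompose a pair of functionals on $N\times N$ agreeing on $I(f,g)$ into a pair of morphisms precomposing equally with $\phi$ in $Heymod_H^2$, invoke the epimorphism hypothesis to force equality, and then apply Proposition \ref{P4.4} to conclude $I(f,g)=N\times N$. The only (harmless) difference is that you specialize the codomain to $H$ from the outset, which is exactly the case needed for Proposition \ref{P4.4}, whereas the paper phrases the first part of the argument for a general codomain $P$.
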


\begin{proof}
The ``only if part'' of this result already follows from Proposition \ref{P7.9}. For the ``if part,'' we maintain the notation from the proof 
of Proposition \ref{P7.9}. Let  $\tilde\psi=(\psi_1,\psi_2):N^2\longrightarrow P$ and $\tilde\psi'=(\psi'_1,\psi'_2):N^2\longrightarrow P$
be morphisms in $Heymod_H$  satisfying $\tilde\psi|I(f,g)=\tilde\psi'|I(f,g)$. Putting $y=0$ in \eqref{eq7.20}, we get 
\begin{equation}\label{eq7.21}
(\psi_1\circ f+\psi_2\circ g)(x)=\tilde\psi(f(x),g(x))=\tilde\psi'(f(x),g(x))=(\psi'_1\circ f+\psi'_2\circ g)(x)
\end{equation} Similarly, putting $x=0$ in \eqref{eq7.20} gives $\psi_1\circ g+\psi_2\circ f=\psi'_1\circ g+\psi'_2\circ f$. This means that
$\psi\circ \phi=\psi'\circ \phi$ in $Heymod_H^2$, where $\psi$ is given by the pair $(\psi_1,\psi_2)$ and $\psi'$ by the pair
$(\psi'_1,\psi'_2)$. Since $\phi$ is an epimorphism in $Heymod_H^2$, we obtain $\psi=\psi'$. Then, $\psi_1=\psi'_1$ and 
$\psi_2=\psi'_2$ and hence $\tilde\psi=\tilde\psi'$. Since $H$ is a finite Boolean algebra, we may now apply Proposition \ref{P4.4}
to prove that $I(f,g)=N\times N$.
\end{proof}

After monomorphisms and epimorphisms, we have to treat the isomorphisms in $Heymod_H^2$. 

\begin{thm}\label{P7.11}
A sequence $\xymatrix{0\ar@<-.5ex>[r]_{0} \ar@<.5ex>[r]^{0}& M\ar@<-.5ex>[r]_{g} \ar@<.5ex>[r]^{f}&N
\ar@<-.5ex>[r]_{0} \ar@<.5ex>[r]^{0}&0}$ is strict exact in $Heymod_H^2$ if and only if $\xymatrix{M\ar@<-.5ex>[r]_{g} \ar@<.5ex>[r]^{f}&N}$ is an isomorphism
in $Heymod_H^2$. Further, such a strict exact sequence corresponds to an isomorphism $h:M\overset{\cong}{\longrightarrow}N$
in $Heymod_H$ and a unique decomposition $N=N_1\times N_2$ such that $f$ and $g$ are induced respectively by the canonical projections 
$N_1\times N_2\longrightarrow N_1$ and $N_1\times N_2\longrightarrow N_1$ as follows
\begin{equation}
f:M\overset{h}{\longrightarrow}N=N_1\times N_2\longrightarrow N_1 \hookrightarrow N \qquad g:M\overset{h}{\longrightarrow}N=N_1\times N_2\longrightarrow N_2 \hookrightarrow N 
\end{equation}
\end{thm}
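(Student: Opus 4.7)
My plan is to prove both implications via the criteria of Propositions \ref{P6.8} and \ref{P7.9}. For the forward direction the iso will be $h(m) := f(m) \vee g(m)$, and the product structure on $N$ will come from the submodules of $M$ on which one of $f,g$ vanishes.

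The easy direction is iso implies strict exact. If $(f,g)$ has inverse $(f',g')$ in $Heymod_H^2$, then $(f,g)\circ(f',g')=(id_N,0)$ unwinds (via the composition law \eqref{composition}) to $f\circ f'+g\circ g'=id_N$ and $f\circ g'+g\circ f'=0$. Given $(n_1,n_2)\in N\times N$, the choice $x:=f'(n_1)\vee g'(n_2)$, $y:=f'(n_2)\vee g'(n_1)$ then yields $f(x)\vee g(y)=n_1$ and $f(y)\vee g(x)=n_2$ by direct expansion, so $I(f,g)=N\times N$ and Proposition \ref{P7.9} gives strict exactness at $N$. Strict exactness at $M$ is immediate from Proposition \ref{P6.8}(b), since any iso is a monomorphism.

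For the forward direction, set $h(m):=f(m)\vee g(m)$, a morphism in $Heymod_H$. Surjectivity is easy: given $n\in N$, Proposition \ref{P7.9} provides $a,b\in M$ with $f(a)\vee g(b)=n$ and $f(b)\vee g(a)=0$, whence $f(b)=g(a)=0$ and $h(a\vee b)=n$. The crux is injectivity, obtained by a careful application of the monomorphism criterion in Proposition \ref{P6.8}(c). Assume $h(m)=h(m')$; since $f(m'),g(m')\leq h(m')=h(m)$, the four-term chain
\begin{equation*}
f(m\vee m')\vee g(m)=f(m)\vee f(m')\vee g(m)=h(m)=f(m)\vee g(m)\vee g(m')=f(m)\vee g(m\vee m')
\end{equation*}
holds, and the mono criterion applied to the pair $(m\vee m',m)$ forces $m\vee m'=m$. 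By symmetry $m=m'$. Since $h$ also reflects order (from $h(m)\leq h(m')$ one has $h(m\vee m')=h(m')$ and hence $m\vee m'=m'$), the set-theoretic inverse $h^{-1}$ preserves joins; compatibility of $h$ with the $H$-action is straightforward, so $h:M\cong N$ in $Heymod_H$. I expect this injectivity argument, where the mono property must be invoked on a cleverly chosen pair, to be the main technical step.

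It remains to produce the decomposition $N=N_1\times N_2$. Let $M_1:=\{m\in M:g(m)=0\}$ and $M_2:=\{m\in M:f(m)=0\}$, which are Heyting submodules of $M$, and put $N_i:=h(M_i)\subseteq N$. The pair $(a,b)$ above witnesses $N=N_1\vee N_2$; if $n\in N_1\cap N_2$ is written $n=f(x)=g(y)$ with $g(x)=f(y)=0$, then $h(x)=n=h(y)$ forces $x=y$ and hence $n=f(x)=f(y)=0$. Uniqueness of the decomposition $n=n_1\vee n_2$ is then obtained by lifting through $h^{-1}$ and reapplying injectivity, yielding $N\cong N_1\times N_2$ in $Heymod_H$. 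Finally, for any $m\in M$, lifting the decomposition of $h(m)$ produces $m_i\in M_i$ with $m=m_1\vee m_2$, and then $f(m)=f(m_1)\in N_1$ and $g(m)=g(m_2)\in N_2$; this is exactly the claimed factorisation of $f,g$ through the canonical projections. Uniqueness of the decomposition follows because the above forces $N_1=f(M)$ and $N_2=g(M)$, which are canonically determined by $f$ and $g$.
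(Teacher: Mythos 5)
Your argument is correct, and it is worth noting that the paper itself does not actually prove this statement: it simply defers to Propositions 4.11 and 4.12 of Connes--Consani \cite{one} "in a similar manner." Your proposal therefore supplies the details that the paper omits, and the key steps all check out. The reverse direction via $(f,g)\circ(f',g')=(id_N,0)$ and the explicit preimages $x=f'(n_1)\vee g'(n_2)$, $y=f'(n_2)\vee g'(n_1)$ is a clean way to get $I(f,g)=N\times N$, and the injectivity argument is the genuinely clever point: from $h(m)=h(m')$ you deduce $f(m\vee m')\vee g(m)=f(m)\vee g(m\vee m')$ and then feed the pair $(m\vee m',m)$ into the criterion of Proposition \ref{P6.8}(c) to get $m'\leq m$. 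This is exactly the kind of argument Connes--Consani run for $\mathbb B$-modules, so you are not on a different route so much as executing the intended one over a general finite $H$. Two small points deserve attention. First, in the uniqueness of the decomposition $n=n_1\vee n_2$, "lifting through $h^{-1}$ and reapplying injectivity" is slightly compressed: from $m_1\vee m_2=m_1'\vee m_2'$ you must first apply $f$ (resp.\ $g$) and use $f|_{M_2}=0$ (resp.\ $g|_{M_1}=0$) to isolate $f(m_1)=f(m_1')$ before injectivity of $h=f|_{M_1}$ on $M_1$ finishes the job. Second, the forward direction of the stated equivalence asks for an isomorphism in $Heymod_H^2$, not merely in $Heymod_H$; you should close the loop by exhibiting the inverse pair, e.g.\ $f'=h^{-1}\circ j_1\circ p_1$ and $g'=h^{-1}\circ j_2\circ p_2$ where $p_i:N\longrightarrow N_i$ and $j_i:N_i\hookrightarrow N$ are the projections and inclusions of your decomposition; a direct computation with the composition law \eqref{composition} then gives $(f',g')\circ(f,g)=(id_M,0)$ and $(f,g)\circ(f',g')=(id_N,0)$. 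With these two touches the proof is complete.
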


\begin{proof}
This may be proved in a manner similar to \cite[Proposition 4.11 \& Proposition 4.12]{one}.
\end{proof}

From the definition in \eqref{6ins}, we know that for any $M$, $M'\in Heymod_H$, the morphisms in $Heymod_H$ from $M$
to $M'$ form a Heyting module. Fix $M\in Heymod_H$. Considering products in $Heymod_H$, it  follows that the functor given by the association
\begin{equation}\label{eq7.24}
 N\mapsto Heymod_H^2(M,N)=Heymod_H(M,N)\times Heymod_H(M,N)
\end{equation} for each $N\in Heymod_H^2$ determines a covariant functor $Heymod_H^2(M,\_\_):Heymod_H^2\longrightarrow
Heymod_H$.

\smallskip
Given $N\subseteq M$ in $Heymod_H$, we now define
\begin{equation}\label{7quo}
(M/N): Heymod_H^2\longrightarrow Heymod_H\qquad P\mapsto \{\mbox{$(f,g)\in Heymod_H^2(M,P)$ $\vert$ $f(x)=g(x)$ for all $x\in N$}\}
\end{equation}

\begin{lem}\label{L7.12} (a) For $N\subseteq M$ in $Heymod_H$, the association in \eqref{7quo} defines a functor
$(M/N): Heymod_H^2\longrightarrow Heymod_H$. 

\smallskip
(b) For each $P\in Heymod_H^2$, the involution $\sigma(P):(M/N)(P)\longrightarrow (M/N)(P)$ given by 
$(f,g)\mapsto (g,f)$ determines an involutive natural transformation of functors $\sigma:(M/N)\longrightarrow (M/N)$. 

\end{lem}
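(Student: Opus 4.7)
My plan is to handle (a) in three stages (showing that $(M/N)(P)$ is actually a Heyting module, that a morphism in $Heymod_H^2$ induces a well-defined map between these, and that the assignment is functorial) and then deduce (b) by a short naturality computation.

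For (a), I would first observe that $(M/N)(P)$ is a subset of the Heyting module $Heymod_H^2(M,P)=Heymod_H(M,P)\times Heymod_H(M,P)$, whose join semilattice structure and $H$-action are given componentwise by \eqref{6ins} and \eqref{eq7.24}. Given $(f,g)$, $(f',g')\in(M/N)(P)$ and $c\in H$, for each $x\in N$ I compute
\begin{equation*}
(f\vee f')(x)=f(x)\vee f'(x)=g(x)\vee g'(x)=(g\vee g')(x),\qquad (c\wedge f)(x)=c\wedge f(x)=c\wedge g(x)=(c\wedge g)(x),
\end{equation*}
so $(M/N)(P)$ is a Heyting submodule of $Heymod_H^2(M,P)$. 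Next, given $\phi=(\phi_1,\phi_2):P\to Q$ in $Heymod_H^2$, I define $(M/N)(\phi)$ by post-composition in $Heymod_H^2$, i.e.\ by the formula \eqref{composition}: $(f,g)\mapsto (\phi_1\circ f+\phi_2\circ g,\ \phi_1\circ g+\phi_2\circ f)$. On $x\in N$ one has $f(x)=g(x)$, so both components evaluate to $\phi_1(f(x))\vee\phi_2(f(x))$, hence the image lies in $(M/N)(Q)$. That $(M/N)(\phi)$ is $H$-linear and preserves joins is inherited from the operations in $Heymod_H^2(M,Q)$. Functoriality (preservation of identities and composition) then follows immediately from the associativity and unitality of composition in $Heymod_H^2$, which was already verified in Proposition \ref{P6.3} as the Kleisli composition of the comonad $\bigperp$.

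For (b), the map $\sigma(P)(f,g):=(g,f)$ manifestly preserves componentwise joins and the componentwise $H$-action, so it is a morphism in $Heymod_H$; it is moreover well-defined on $(M/N)(P)$ since the condition $f|_N=g|_N$ is symmetric in $f$ and $g$, and obviously $\sigma(P)\circ\sigma(P)=id$. For naturality in $P$, given $\phi=(\phi_1,\phi_2):P\to Q$, a direct expansion gives
\begin{equation*}
(M/N)(\phi)(\sigma(P)(f,g))=(\phi_1\circ g+\phi_2\circ f,\ \phi_1\circ f+\phi_2\circ g)=\sigma(Q)((M/N)(\phi)(f,g)),
\end{equation*}
so the square commutes. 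There is no real obstacle here; the only place where one has to be a little careful is in part (a), where one must remember that composition in $Heymod_H^2$ is the twisted composition of \eqref{composition} rather than ordinary pair-wise composition, and verify that the twisting still preserves the defining condition $f|_N=g|_N$ using only the equalities $f(x)=g(x)$ for $x\in N$ and the commutativity of $\vee$.
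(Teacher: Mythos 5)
Your proposal is correct and follows essentially the same route as the paper: the heart of the argument in both is the observation that for $(f',g')\in Heymod_H^2(P,P')$ and $(f,g)$ with $f|_N=g|_N$, the twisted composition $(f'\circ f+g'\circ g,\,f'\circ g+g'\circ f)$ again has equal restrictions to $N$, and part (b) is a routine check. The extra verifications you include (that $(M/N)(P)$ is a Heyting submodule of $Heymod_H^2(M,P)$, functoriality via the Kleisli composition, and the explicit naturality square for $\sigma$) are exactly the details the paper's proof declares ``clear from the definitions.''
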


\begin{proof}
We consider a morphism $(f',g')\in Heymod_H^2(P,P')$ and some $(f,g)\in (M/N)(P)$.   By definition, the composition $(f',g')\circ (f,g)$ is given by $(f'\circ f+ g'\circ g,f'\circ g+ g'\circ f )$. Since $f|N=g|N$, it is clear that $(f'\circ f+ g'\circ g)|N=(f'\circ g+ g'\circ f )|N$. This proves (a). The result of (b) is also clear from the definitions. 
\end{proof}

\begin{thm}\label{P7.13}
Let $N\subseteq M$ in $Heymod_H$. 

\smallskip
(a) The cokernel pair of the inclusion $i:N\hookrightarrow M$ is given by the quotient 
of $M\times M$ over the equivalence relation
\begin{equation}\label{eq7.26}
(x,y)\sim (x',y')\quad \Leftrightarrow\quad f(x)\vee g(y)=f(x')\vee g(y'), \textrm{ }\forall\textrm{ }P\in Heymod_H^2,
\textrm{ }(f,g)\in (M/N)(P)
\end{equation}

\smallskip
(b) Set $Q:=Coker_p(i)$. Then, there is a canonical isomorphism of functors $(M/N)\circ \kappa_H \overset{\cong}{\longrightarrow}Heymod_H(Q,\_\_)$, where $\kappa_H$ is the canonical embedding $\kappa_H: Heymod_H\hookrightarrow Heymod_H^2$. 
\end{thm}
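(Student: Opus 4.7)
The plan is to derive both parts essentially as direct consequences of the universal property of the coequalizer (Proposition \ref{vP5.5}(b)) combined with the semiadditive structure on $Heymod_H$ (Proposition \ref{uP5.2}), which identifies morphisms $t:M^2\longrightarrow P$ in $Heymod_H$ with pairs of morphisms $(f,g):M\longrightarrow P$ via the formula $t(x,y)=f(x)\vee g(y)$, where $f=t\circ e_1$ and $g=t\circ e_2$.

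For part (a), I would unfold the definition of $Coker_p(i)$ from \eqref{cokerp}: it is the coequalizer of the two morphisms $e_1\circ i, e_2\circ i:N\longrightarrow M^2$. By Proposition \ref{vP5.5}(b), this coequalizer is the quotient of $M^2$ by the congruence relation
\begin{equation*}
(x,y)\sim (x',y')\quad\Leftrightarrow\quad t(x,y)=t(x',y')\text{ for every }t:M^2\longrightarrow P\text{ in }Heymod_H\text{ with }t\circ e_1\circ i=t\circ e_2\circ i.
\end{equation*}
Writing $t$ as the pair $(f,g)$, the equalizing condition $t\circ e_1\circ i=t\circ e_2\circ i$ becomes $f(n)=g(n)$ for all $n\in N$, which is exactly the condition $(f,g)\in (M/N)(P)$ (with $P$ regarded as an object of $Heymod_H^2$ via $\kappa_H$). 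Substituting $t(x,y)=f(x)\vee g(y)$ then recovers the equivalence relation displayed in \eqref{eq7.26}.

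For part (b), I would invoke the universal property of $Q=Coker_p(i)$ directly. A morphism $\varphi:Q\longrightarrow P$ in $Heymod_H$ is the same as a morphism $t:M^2\longrightarrow P$ in $Heymod_H$ satisfying $t\circ e_1\circ i=t\circ e_2\circ i$, which by the argument above is the same as a pair $(f,g)\in Heymod_H(M,P)\times Heymod_H(M,P)$ with $f|_N=g|_N$, i.e.\ an element of $((M/N)\circ \kappa_H)(P)$. Explicitly, the correspondence sends $(f,g)\in (M/N)(P)$ to the unique morphism $\varphi:Q\longrightarrow P$ such that $\varphi([x,y])=f(x)\vee g(y)$; the inverse takes $\varphi:Q\longrightarrow P$ to $(f,g):=(\varphi\circ e_1',\varphi\circ e_2')$, where $e_i':M\longrightarrow Q$ is the composition of $e_i:M\longrightarrow M^2$ with the canonical projection $M^2\longrightarrow Q$. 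Naturality of this bijection in $P$ is immediate, since precomposition with a morphism $P\longrightarrow P'$ in $Heymod_H$ corresponds on pairs to postcomposition componentwise.

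The whole argument is essentially bookkeeping, and the only subtlety is the translation between morphisms out of the biproduct $M^2$ and pairs of morphisms out of $M$; this is legitimate because $Heymod_H$ is semiadditive, so $M^2=M\coprod M$ and morphisms $M^2\longrightarrow P$ are determined by their restrictions along $e_1$ and $e_2$ via the join. No step appears to present a serious obstacle beyond this bookkeeping, but one should double-check that the equivalence relation obtained from the coequalizer construction is literally the one defined in \eqref{eq7.26} (and not merely the congruence it generates), which follows because the relation in \eqref{eq7.26} is itself already a congruence stable under $\vee$ and under $c\wedge(\_\_)$ for $c\in H$, since the functions $(x,y)\mapsto f(x)\vee g(y)$ are morphisms of Heyting modules.
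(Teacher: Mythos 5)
Your proposal is correct and follows essentially the same route as the paper: both parts reduce to Proposition \ref{vP5.5} together with the identification of morphisms $t:M^2\longrightarrow P$ with pairs $(f,g)$ via $t(x,y)=f(x)\vee g(y)$, under which the coequalizing condition $t\circ e_1\circ i=t\circ e_2\circ i$ becomes $f|_N=g|_N$. The only cosmetic difference is that for (b) you invoke the universal property of the coequalizer directly, whereas the paper passes through the explicit equivalence relation (using $(n,0)\sim(0,n)$); the two formulations are interchangeable given part (a).
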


\begin{proof} Let $e_1$, $e_2:M\hookrightarrow M^2$ be the canonical inclusions.  From Proposition \ref{vP5.5} and from the definition in \eqref{cokerp}, we know that the cokernel pair of $i:N\hookrightarrow M$
is given by taking the quotient of $M\times M$ over the equivalence relation $(x,y)\sim (x',y')$ if $t(x,y)=t(x',y')$
for every $t:M\times M\longrightarrow P$ in $Heymod_H$ such that $t\circ (e_1\circ i)=t\circ (e_2\circ i)$. 

\smallskip
Each $t:M\times M\longrightarrow P$ corresponds to an ordered pair $(f,g)$ of morphisms from $M$ to $P$ such that
$t(x,y)=f(x)\vee g(y)$ for each $(x,y)\in M\times M$. Further $t:M\times M\longrightarrow P$
satisfies  $t\circ (e_1\circ i)=t\circ (e_2\circ i)$ if and only if $f(n)=g(n)$ for each $n\in N$. The result of (a) is now clear. 

\smallskip
As such, any morphism in $Heymod_H(Q,P)$ corresponds to a morphism $t:M\times M\longrightarrow P$ in $Heymod_H$
such that $t(x,y)=t(x',y')$ whenever $(x,y)\sim (x',y')$. Suppose that $t$ is given by the ordered pair 
$(f_1,f_2)$ of morphisms $M\longrightarrow P$. If $n\in N$, we notice that $(n,0)\sim (0,n)$ for the equivalence relation in \eqref{eq7.26}. 
Then, $f_1(n)=t(n,0)=t(0,n)=f_2(n)$, i.e., $(f_1,f_2)\in ((M/N)\circ \kappa_H)(P)$. 

\smallskip
Conversely, we consider $(g_1,g_2)\in  ((M/N)\circ \kappa_H)(P)$, i.e., morphisms $g_1,g_2:M\longrightarrow P$
such that $g_1|N=g_2|N$. Then, if $(x,y)\sim (x',y')$ as in \eqref{eq7.26}, we must have $g_1(x)\vee g_2(y)=
g_1(x')\vee g_2(y')$. Then, the morphism $t:M\times M\longrightarrow P$ given by the ordered pair $(g_1,g_2)$
satisfies $t(x,y)=t(x',y')$ whenever $(x,y)\sim (x',y')$. Hence, $(g_1,g_2)$ induces a morphism $Q\longrightarrow P$
in $Heymod_H$. This proves (b). 

\end{proof}

We conclude this section by explaining when the involution $\sigma$ in Lemma \ref{L7.12} is an identity. This will require an application of Proposition \ref{P4.4}, which is our analogue of Hahn-Banach theorem. As such, we will have to assume that
$H$ is a finite Boolean algebra.

\begin{thm}\label{P7.14} Let $H$ be a finite Boolean algebra and consider $N\subseteq M$ in $Heymod_H$. Then, 
the involutive natural transformation of functors  $\sigma : (M/N)\longrightarrow (M/N)$ described in Lemma \ref{L7.12}
is the identity if and only if $N=M$. 

\end{thm}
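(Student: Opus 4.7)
The plan is to observe that unpacking the definitions reduces the claim to a direct application of Proposition~\ref{P4.4} (the Hahn–Banach analogue for modules over a finite Boolean algebra).

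First, I would record the ``only if'' direction, which is immediate: if $N=M$, then the defining condition $f|_N = g|_N$ for $(f,g) \in (M/N)(P)$ forces $f=g$, so every such pair is symmetric and $\sigma(P)(f,g) = (g,f) = (f,g)$ for all $P$.

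For the ``if'' direction, assume $\sigma$ is the identity natural transformation. Unwrapping, this means that for every $P \in Heymod_H$ (viewed in $Heymod_H^2$ via $\kappa_H$) and every pair of morphisms $f,g : M \longrightarrow P$ in $Heymod_H$ with $f|_N = g|_N$, one has $f=g$. I would specialize this to $P = H$: any pair $(\phi_1,\phi_2) \in M^\bigstar \times M^\bigstar$ satisfying $\phi_1 \circ i = \phi_2 \circ i$ (where $i: N \hookrightarrow M$ is the inclusion) is an element of $(M/N)(H)$, so by hypothesis $\phi_1 = \phi_2$, and in particular $\phi_1(m) = \phi_2(m)$ for every $m \in M$.

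Now fix an arbitrary $m \in M$. The previous paragraph shows that $\phi_1(m) = \phi_2(m)$ for every pair $(\phi_1,\phi_2) \in M^\bigstar \times M^\bigstar$ with $\phi_1 \circ i = \phi_2 \circ i$. Since $H$ is a finite Boolean algebra, Proposition~\ref{P4.4} applies and yields $m \in N$. As $m$ was arbitrary, $N = M$, which completes the proof.

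The only real content here is choosing the right test object $P$; the rest is formal. The natural worry would be whether $(M/N)(P)$ really captures all relevant pairs when $P$ ranges only over the essential image of $\kappa_H$, but this is harmless because the hypothesis that $\sigma$ is an identity natural transformation must hold on \emph{all} $P$, so in particular on $P = H \in Heymod_H \subseteq Heymod_H^2$, which is precisely what is needed to invoke Proposition~\ref{P4.4}. The finiteness and Boolean hypotheses on $H$ enter exactly and only through this appeal to Proposition~\ref{P4.4}.
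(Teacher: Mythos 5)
Your proof is correct and follows essentially the same route as the paper: specialize the identity natural transformation to the test object $P=H$, observe that this forces $\phi_1=\phi_2$ for all pairs in $M^\bigstar$ agreeing on $N$, and conclude $N=M$ by Proposition \ref{P4.4}. The only difference is that you also spell out the trivial ``only if'' direction, which the paper leaves implicit.
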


\begin{proof} If $\sigma : (M/N)\longrightarrow (M/N)$ is the identity, it follows in particular that $\sigma(H):
(M/N)(H)\longrightarrow (M/N)(H)$ is an identity. This means that $\phi_1=\phi_2$ for any maps 
$\phi_1$, $\phi_2\in M^\bigstar$ such that $\phi_1|N=\phi_2|N$. Applying Proposition \ref{P4.4}, we get $N=M$. 

\end{proof}

\section{The Eilenberg-Moore category $\mathbf{Heymod_H^{\mathfrak s}}$}

In Proposition \ref{P6.2}, we have observed that the endofunctor $\bigperp :Heymod_H\longrightarrow Heymod_H$ defined by taking any object $M\in Heymod_H$ to $M^2$ and any morphism $f:M\longrightarrow N$ to $(f,f):M^2
\longrightarrow N^2$ determines a comonad on $Heymod_H$.

\begin{defn}\label{D8.1} (see, for instance, \cite[p 189]{Borceux})
Let $\mathcal C$ be a category along with a triple $(\bigperp,\delta,\epsilon)$ determining a comonad on $\mathcal C$. A coalgebra
over the comonad $(\bigperp,\delta,\epsilon)$ is a pair $(A,\xi)$ consisting of an object $A\in\mathcal C$ and a morphism
$\xi:A\longrightarrow \bigperp A$ such that
\begin{equation}\label{eq8.1}
\epsilon(A)\circ \xi=id_A\qquad \bigperp(\xi)\circ \xi=\delta(A)\circ \xi : A\longrightarrow \bigperp\bigperp A
\end{equation}
The category of such coalgebras   is said to be the Eilenberg-Moore category of the comonad  $(\bigperp,\delta,\epsilon)$. 
\end{defn}

For the category $Heymod_H$ and the comonad $\bigperp$, a coalgebra consists of some $M\in Heymod_H$ and $\xi:
M\longrightarrow \bigperp M$ satisfying the conditions in \eqref{eq8.1}. In particular, $\epsilon(M)\circ \xi=id$ and hence
$\xi$ is of the form $x\mapsto (x,\sigma(x))\in M\times M$ for each $x\in M$. It may be verified in a manner similar to \cite[Proposition 3.12]{one}
that $\sigma:M\longrightarrow M$ is actually an involution and that the Eilenberg-Moore category of the comonad $\bigperp$
on $Heymod_H$ is equivalent to the category of Heyting modules equipped with an ($H$-linear) involution.

\smallskip In this section, we will study
this Eilenberg-Moore category, which we call $Heymod_H^{\mathfrak s}$ by extending the terminology from \cite[$\S$ 5]{one}.  A morphism $f:(M,\sigma)\longrightarrow (M',\sigma')$ in $Heymod_H^{\mathfrak s}$ is a morphism $f:M\longrightarrow M'$ in 
$Heymod_H$ that commutes with the involutions, i.e., $f\circ \sigma=\sigma'\circ f$. 

\smallskip
For any Heyting module $N$, we notice that $Heymod_H(H,N)=N$, i.e., each $n\in N$ corresponds to the morphism $f_n:H\longrightarrow N$ which takes $c\in H$ to $c\wedge n$. We now consider the functor:
\begin{equation}\label{yon}
y_H:=Heymod_H^2(H,\_\_):Heymod_H^2\longrightarrow Heymod_H\qquad N\mapsto Heymod_H^2(H,N)=N\times N
\end{equation} Further, the Heyting module $N\times N$ is equipped with an obvious involution $\tau_N$ that takes $(n_1,n_2)\in N\times N$
to $(n_2,n_1)$. This involution may also be obtained by considering the morphism $(0,id):N\longrightarrow N$ in
$Heymod_H^2$ and applying the functor $y_H$ in \eqref{yon}. Hence, $y_H$ may be rewritten as a functor
$y_H:=Heymod_H^2(H,\_\_):Heymod_H^2\longrightarrow Heymod_H^{\mathfrak s}$.

\smallskip
In general, if $\tilde g=(g_1,g_2):N\longrightarrow N'$ is a morphism in $Heymod_H^2$, the corresponding morphism
$y_H(\tilde g):N\times N\longrightarrow N'\times N'$ is given by
\begin{equation}\label{eq8.3}
\begin{array}{c}
(n_1,n_2)\in N\times N\mapsto (f_{n_1},f_{n_2})\in Heymod_H^2(H,N)\mapsto\hspace{2.5in}\\ 
\hspace{0.1in}(g_1,g_2)\circ  (f_{n_1},f_{n_2})\in Heymod_H^2(H,N')\mapsto (g_1(n_1)\vee g_2(n_2),g_1(n_2)\vee g_2(n_1))\in N'\times N' \\
\end{array}
\end{equation} We also notice that the morphism in \eqref{eq8.3} is compatible with the respective involutions
on $N\times N$ and $N'\times N'$.   Comparing \eqref{eq8.3} and the definition in \eqref{kerp1}, it  follows that the kernel pair of a morphism  $\tilde g=(g_1,g_2):N\longrightarrow N'$  in $Heymod_H^2$ is given by
\begin{equation}\label{kerp8}
Ker_p(\tilde{g}):=\{\mbox{$(n_1,n_2)\in N\times N$ $\vert$ $g_1(n_1)\vee g_2(n_2)=g_1(n_2)\vee g_2(n_1)$ }\}=y_H(\tilde g)^{-1}(\Delta_{N'})
\end{equation} where $\Delta_{N'}\subseteq N'\times N'$ is the diagonal. Using \eqref{eq8.3}, the definition in \eqref{im2}
may also be recast as 
\begin{equation}\label{im8}
I(\tilde g):=\{\mbox{$(g_1(n_1)\vee g_2(n_2),g_1(n_2)\vee g_2(n_1))$ $\vert$ $n_1$, $n_2\in N$}\}=Range(y_H(\tilde g))
\end{equation} It may be verified in a way analogous to \cite[Lemma 5.1]{one} that $y_H:Heymod_H^2\longrightarrow Heymod_H^{\mathfrak s}$ embeds $Heymod_H^2$ as a full subcategory of $Heymod_H^{\mathfrak s}$. The result of Lemma \ref{L6.6} may now be adapted to the category $Heymod_H^{\mathfrak s}$ as follows.

\begin{thm}\label{P8.2} Let $\begin{CD} L@>\tilde f=(f_1,f_2)>> M@>\tilde g=(g_1,g_2)>> N\end{CD}$ be morphisms
in $Heymod_H^2$. Then, the following are equivalent

\smallskip
(a) $
I(\tilde f)\subseteq Ker_p(\tilde g)$ 

\smallskip
(b) $g_1\circ f_1+g_2\circ f_2=g_1\circ f_2+g_2\circ f_1$

\smallskip
(c) $Range(y_H(\tilde f))\subseteq y_H(\tilde g)^{-1}(\Delta_{N})$.

\smallskip
(d) $Range(y_H(\tilde g\circ \tilde f))\subseteq \Delta_{N}$.

\end{thm}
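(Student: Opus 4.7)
The plan is to observe that this proposition is essentially a reformulation of Lemma \ref{L6.6} combined with the definitions \eqref{kerp8} and \eqref{im8} of the kernel pair and the image in terms of the functor $y_H$, so that the bulk of the work has already been done. I would structure the proof as a cycle of implications, or more efficiently, as three separate biconditionals.

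First I would handle (a) $\Leftrightarrow$ (b): this is exactly the content of Lemma \ref{L6.6}, which we may quote directly. Next, for (a) $\Leftrightarrow$ (c), I would simply invoke the identities $I(\tilde f)=Range(y_H(\tilde f))$ from \eqref{im8} and $Ker_p(\tilde g)=y_H(\tilde g)^{-1}(\Delta_N)$ from \eqref{kerp8}; substituting these into (a) yields (c) verbatim.

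The only step with any real content is (c) $\Leftrightarrow$ (d). Here I would use the functoriality of $y_H:Heymod_H^2\longrightarrow Heymod_H^{\mathfrak s}$, which gives $y_H(\tilde g\circ\tilde f)=y_H(\tilde g)\circ y_H(\tilde f)$. Consequently
\begin{equation*}
Range(y_H(\tilde g\circ\tilde f))=y_H(\tilde g)\bigl(Range(y_H(\tilde f))\bigr),
\end{equation*}
and this set is contained in $\Delta_N$ if and only if $Range(y_H(\tilde f))\subseteq y_H(\tilde g)^{-1}(\Delta_N)$, which is (c).

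I do not expect any real obstacle: the main point is the bookkeeping and pointing out that the reformulations in terms of $y_H$ turn the algebraic identity (b) into a set-theoretic condition about the preimage of the diagonal, which behaves well under composition. If desired one could also give a direct verification of (c) $\Leftrightarrow$ (d) at the level of elements $(\ell_1,\ell_2)\in L\times L$ using \eqref{eq8.3}, but invoking functoriality is cleaner.
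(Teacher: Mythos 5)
Your proposal is correct. For (a) $\Leftrightarrow$ (b) and (a) $\Leftrightarrow$ (c) you do exactly what the paper does: quote Lemma \ref{L6.6} and substitute the identities \eqref{im8} and \eqref{kerp8}. The only divergence is in how (d) is tied in. The paper links (d) back to (b): it observes via \eqref{kerp8} that $Range(y_H(\tilde g\circ\tilde f))\subseteq\Delta_N$ is equivalent to $Ker_p(\tilde g\circ\tilde f)=L\times L$, writes out $\tilde g\circ\tilde f=(g_1f_1+g_2f_2,\,g_1f_2+g_2f_1)$, and checks the resulting element-wise identity in both directions, using the specialization $l_2=0$ to get (d) $\Rightarrow$ (b). You instead prove (c) $\Leftrightarrow$ (d) directly from the functoriality of $y_H$ (which holds since $y_H=Heymod_H^2(H,\_\_)$ is a covariant hom-functor, and is implicit in the paper's assertion that $y_H$ is a full embedding) together with the standard identities $Range(g\circ f)=g(Range(f))$ and $g(S)\subseteq T\Leftrightarrow S\subseteq g^{-1}(T)$. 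Both arguments are valid; yours is cleaner and avoids the explicit computation with the composite pair, while the paper's is more self-contained in that it reverifies the composition formula at the level of elements and makes the role of the zero substitution visible. No gap either way.
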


\begin{proof}
The fact that (a) $\Leftrightarrow$ (b) already follows from Lemma \ref{L6.6}. Using \eqref{kerp8} and \eqref{im8}, it is clear that
(c) $\Leftrightarrow$ (a). Using \eqref{kerp8}, we also see that $Range(y_H(\tilde g\circ \tilde f))\subseteq \Delta_{N}$ is equivalent
to the saying that $Ker_p(\tilde g\circ \tilde f)=L\times L$. Since $\tilde g\circ \tilde f=(g_1f_1+g_2f_2,g_1f_2+g_2f_1)$, it follows that  $Ker_p(\tilde g\circ \tilde f)=L\times L$ is further equivalent to the condition that 
\begin{equation}\label{eq8.6}
(g_1f_1+g_2f_2)(l_1)\vee (g_1f_2+g_2f_1)(l_2)=(g_1f_1+g_2f_2)(l_2)\vee (g_1f_2+g_2f_1)(l_1)
\end{equation} for every $l_1$, $l_2\in L$. Hence, (b) $\Rightarrow$ (d). Putting $l_2=0$ in \eqref{eq8.6}, we get
(d) $\Rightarrow$ (b). 
\end{proof}

\begin{thm}\label{P8.3} Let $\tilde g=(g_1,g_2):M\longrightarrow N$ be a morphism in $Heymod_H^2$. Then:

\smallskip
(a) $\tilde g$ is a monomorphism in $Heymod_H^2$ if and only if $y_H(\tilde g)$ is injective.

 \smallskip
(b) If $y_H(\tilde g)$ is surjective, then $\tilde g$ is an epimorphism in $Heymod_H^2$. 

\smallskip
(c) Let $H$ be a finite Boolean algebra. Then, $y_H(\tilde g)$ is surjective if and only if $\tilde g$ is an epimorphism in $Heymod_H^2$. 

\end{thm}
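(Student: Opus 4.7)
The proof of this proposition is essentially a repackaging of earlier results using the Yoneda-style functor $y_H=Heymod_H^2(H,\_\_)$, so the plan is to translate the conditions appearing in Propositions \ref{P6.8}, \ref{P7.9} and \ref{P7.10} into statements about injectivity or surjectivity of $y_H(\tilde g)$.

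For part (a), the key observation is that the map described in Proposition \ref{P6.8}(a), namely $M^2\longrightarrow N^2$ sending $(m_1,m_2)\mapsto (g_1(m_1)\vee g_2(m_2),g_1(m_2)\vee g_2(m_1))$, is by formula \eqref{eq8.3} precisely the map $y_H(\tilde g)$. Hence $\tilde g$ is a monomorphism in $Heymod_H^2$ if and only if $y_H(\tilde g)$ is injective.

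For part (b), I would invoke equation \eqref{im8}, which identifies $I(\tilde g)=Range(y_H(\tilde g))\subseteq N\times N$. If $y_H(\tilde g)$ is surjective, then $I(\tilde g)=N\times N$, so the sequence $M\overset{\tilde g}{\longrightarrow}N\longrightarrow 0$ is strict exact at $N$ by condition (3) of Proposition \ref{P7.9}(a). Proposition \ref{P7.9}(b) then yields that $\tilde g$ is an epimorphism in $Heymod_H^2$.

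For part (c), the ``only if'' direction is just part (b). For the ``if'' direction, assuming $H$ is a finite Boolean algebra, we can apply Proposition \ref{P7.10}, which upgrades the implication in Proposition \ref{P7.9}(b) to an equivalence: $\tilde g$ being an epimorphism forces the sequence $M\overset{\tilde g}{\longrightarrow}N\longrightarrow 0$ to be strict exact at $N$, whence $I(\tilde g)=N\times N$ by Proposition \ref{P7.9}(a)(3), and therefore $Range(y_H(\tilde g))=N\times N$ by \eqref{im8}. Since there is no genuine technical obstacle here beyond unwinding the definitions, the only point requiring some care is to keep track of the distinction between parts (b) and (c): the converse in part (c) depends essentially on the Hahn--Banach type separation result (Proposition \ref{P4.4}) that underlies Proposition \ref{P7.10}, which is why the Boolean hypothesis on $H$ is needed there but not in part (b).
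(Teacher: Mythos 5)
Your proposal is correct and follows essentially the same route as the paper: part (a) via Proposition \ref{P6.8}(a) and the identification \eqref{eq8.3}, and parts (b) and (c) by translating surjectivity of $y_H(\tilde g)$ into $I(\tilde g)=N\times N$ via \eqref{im8} and invoking Propositions \ref{P7.9} and \ref{P7.10}. Your closing remark correctly locates the role of the Boolean hypothesis in the Hahn--Banach type result behind Proposition \ref{P7.10}.
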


\begin{proof} From Proposition \ref{P6.8}(a), we know that $\tilde g$ is a monomorphism
if and only if the induced map $M^2\longrightarrow N^2$ given by $(m_1,m_2)\mapsto (g_1(m_1)\vee g_2(m_2),g_1(m_2)\vee g_2(m_1))$ is injective. From \eqref{eq8.3}, this is equivalent to $y_H(\tilde g)$ being injective. This proves (a). 

\smallskip
From Proposition \ref{P7.9}, we know that $y_H(\tilde g)$ being surjective, i.e.,  $I(\tilde g)=Range(y_H(\tilde g))=N\times N$,  is equivalent
to the sequence $M\overset{\tilde g}{\longrightarrow}N\longrightarrow 0$ being strictly exact at $N$. In particular, Proposition
\ref{P7.9} also says that this makes $\tilde g$ an epimorphism in $Heymod_H^2$. This proves (b). Additionally, if
$H$ is a finite Boolean algebra, it follows from Proposition \ref{P7.10} that the sequence $M\overset{\tilde g}{\longrightarrow}N\longrightarrow 0$ being strictly exact at $N$ is equivalent to $\tilde g$ being an epimorphism in $Heymod_H^2$. This proves (c).

\end{proof}

Let us denote by $\mathfrak s$ the squaring functor
\begin{equation}
\begin{CD}\mathfrak s:Heymod_H@>\kappa_H>>  Heymod_H^2@>y_H>> Heymod_H^{\mathfrak s}\end{CD}
\end{equation} Then, $\mathfrak s(N)=(N\times N,\tau_N)$ for any $N\in Heymod_H$, where $\tau_N(n_1,n_2)=(n_2,n_1)$  for each $(n_1,n_2)\in N\times N$.  For a morphism $f:N\longrightarrow N'$ in $Heymod_H$,
the induced morphism $\mathfrak s(f)$ is given by $(f,f):(N\times N,\tau_N)\longrightarrow (N'\times N',\tau_{N'})$. We consider a morphism
$\phi:(M,\sigma)\longrightarrow \mathfrak s(N)=(N\times N,\tau_N)$ in $Heymod_H^{\mathfrak s}$.  If $\phi: M\longrightarrow N\times N$ is given by $(f,g)$, the following diagram must be commutative
\begin{equation}\label{eq8.8}
\begin{CD}
M @>\phi=(f,g)>> N^2 \\
@V\sigma VV @V\tau_NVV \\
M @>\phi=(f,g)>> N^2 \\
\end{CD}
\end{equation} From \eqref{eq8.8}, we obtain $(g(m),f(m))=(f(\sigma(m)),g(\sigma(m)))$ for each $m\in M$ and hence
$g(m)=f(\sigma(m))$. Hence, given the object $(M,\sigma)\in Heymod_H^{\mathfrak s}$, the morphism  $\phi=(f,g):(M,\sigma)\longrightarrow \mathfrak s(N)=(N\times N,\tau_N)$ is determined completely by the morphism $f:M\longrightarrow N$ in $Heymod_H$. This gives us an adjunction
of functors
\begin{equation}\label{adj8}
Heymod_H^{\mathfrak s}((M,\sigma),\mathfrak s(N))\cong Heymod_H(\mathfrak f(M,\sigma),N)
\end{equation}  Here $\mathfrak f:Heymod_H^{\mathfrak s}\longrightarrow Heymod_H$ is the forgetful functor. We observe
that \eqref{adj8} is actually an isomorphism of Heyting modules. 

\smallskip
Suppose now that $N$ is a Heyting module and $M\subseteq N$ a Heyting submodule. From Proposition \ref{P7.13}, we know that the cokernel pair $Q=Coker_p(i)$ of the inclusion $i:M\hookrightarrow N$ is given by the quotient 
of $N\times N$ over the equivalence relation
\begin{equation}\label{eq8.10}
(x,y)\sim (x',y')\quad \Leftrightarrow\quad f(x)\vee g(y)=f(x')\vee g(y'), \textrm{ }\forall\textrm{ }P\in Heymod_H^2,
\textrm{ }(f,g)\in (N/M)(P)
\end{equation} From the definition in \eqref{7quo}, it is clear that $(f,g)\in (N/M)(P)$ if and only if $(g,f)\in (N/M)(P)$. Then, if $(x,y)\sim (x',y')$
according to the eqivalence relation in \ref{eq8.10},  we must also have $g(x)\vee f(y)=g(x')\vee f(y'), \textrm{ }\forall\textrm{ }P\in Heymod_H^2,
\textrm{ }(f,g)\in (N/M)(P)$. In other words, if $(x,y)\sim (x',y')$ in $N\times N$, we must also have $(y,x)\sim (y',x')$. This means that the cokernel pair $Q$ is equipped with a canonical  involution $\sigma$,
making it an object of $Heymod_H^{\mathfrak s}$. 

\smallskip
The adjunction in \eqref{adj8} now gives us an isomorphism
\begin{equation}\label{eq8.11}
Heymod_H^{\mathfrak s}((Q,\sigma),\mathfrak s(P))\cong Heymod_H(\mathfrak f(Q,\sigma),P)
\end{equation} for any object $P\in Heymod_H$. Since $\mathfrak f(Q,\sigma)$ is simply $Q$ considered again as an object of 
$Heymod_H$, it follows from Proposition \ref{P7.13}(b) that $(N/M)(\kappa_H(P)) \overset{\cong}{\longrightarrow}Heymod_H(\mathfrak f(Q,\sigma),P)$. Combining with \eqref{eq8.11}, we get
\begin{equation}\label{eq8.12}
Heymod_H^{\mathfrak s}((Q,\sigma),y_H(\kappa_H(P)))=Heymod_H^{\mathfrak s}((Q,\sigma),\mathfrak s(P))\cong Heymod_H(\mathfrak f(Q,\sigma),P)=(N/M)(\kappa_H(P))
\end{equation} It follows from \eqref{eq8.12} that the functors $Heymod_H^{\mathfrak s}((Q,\sigma),y_H(\_\_)),
(N/M)(\_\_):Heymod_H^2\longrightarrow Heymod_H$ coincide when restricted to $\kappa_H:Heymod_H
\hookrightarrow Heymod_H^2$. It may be easily verified that the isomorphisms in \eqref{eq8.12} are well-behaved
with respect to morphisms in $Heymod_H^2$ and we obtain
\begin{equation}\label{q8.13}
Heymod_H^{\mathfrak s}((Q,\sigma),y_H(P))\cong 
(N/M)(P)
\end{equation} for every $P\in Heymod_H^2$. 

\smallskip

For an object $(M,\sigma)$ in $Heymod_H^{\mathfrak s}$, we will always denote by $M^\sigma$ the collection
of fixed points of the involution $\sigma$. Since $\sigma$ is $H$-linear, it is clear that $M^\sigma\in Heymod_H$. 

\smallskip We now consider  a sequence $\begin{CD} L@>\tilde f=(f_1,f_2)>> M@>\tilde g=(g_1,g_2)>> N
\end{CD} $ of morphisms in $Heymod_H^2$ and the corresponding sequence $\begin{CD} L^2@>y_H(\tilde f)>> M^2@>
y_H(\tilde g)>> N^2
\end{CD} $ in 
$Heymod_H^{\mathfrak s}$.  We know from Proposition \ref{P8.2} that $I(\tilde f)\subseteq Ker_p(\tilde g)$
if and only if $Range(y_H(\tilde f))\subseteq y_H(\tilde g)^{-1}(\Delta_N)$. Expressing the diagonal $\Delta_N\subseteq 
N\times N$ as the collection of fixed points of the involution $\tau_N:N\times N\longrightarrow N\times N$ on $y_H(N)$, we 
can rewrite this condition as $Range(y_H(\tilde f))\subseteq y_H(\tilde g)^{-1}(y_H(N)^{\tau_N})$.

\smallskip
This motivates the idea that a composition of morphisms $\begin{CD}(L,\sigma_L)@>f>> (M,\sigma_M)
@>g>> (N,\sigma_N)\end{CD}$ in $Heymod_H^{\mathfrak s}$ should be treated as ``zero'' if $Range(f)\subseteq
g^{-1}(N^{\sigma_N})$.

\begin{defn}\label{D8.4} A sequence $\begin{CD} (L,\sigma_L)@>f>> (M,\sigma_M)
@>g>> (N,\sigma_N)
\end{CD} $ in $Heymod_H^{\mathfrak s}$ is strictly exact at $M$ if $Range(f)+M^{\sigma_M}=g^{-1}(N^{\sigma_N})$. 

\end{defn}

\begin{thm}\label{P8.5} Let $\begin{CD} L@>\tilde f=(f_1,f_2)>> M@>\tilde g=(g_1,g_2)>> N
\end{CD} $ be a sequence of morphisms in  $Heymod_H^2$. Then, the following are equivalent:

\smallskip
(a) In $Heymod_H^2$, the sequence $\begin{CD} L@>\tilde f>> M@>\tilde g>> N
\end{CD} $ is strictly exact at $M$. 

\smallskip
(b) In $Heymod_H^{\mathfrak s}$, the sequence $\begin{CD}y_H(L)@>y_H(\tilde f)>>y_H(M)@>y_H(\tilde g)>>y_H(N)\end{CD}$ is strictly exact at $y_H(M)$. 

\end{thm}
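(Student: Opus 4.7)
The plan is to show that both notions of strict exactness unpack to the same equality of submodules of $M \times M$, using the dictionary already established for the functor $y_H$. There is essentially no content beyond aligning the definitions.

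First, I would identify the three objects that appear in Definition \ref{D8.4} for the sequence $y_H(L) \to y_H(M) \to y_H(N)$. The fixed-point submodule of $y_H(M) = M \times M$ under the swap involution $\tau_M$ is exactly the diagonal $\Delta_M$, since $\tau_M(m_1,m_2) = (m_1,m_2)$ forces $m_1 = m_2$; similarly $y_H(N)^{\tau_N} = \Delta_N$. The range of the morphism $y_H(\tilde f) : M \times M \to M \times M$ has already been identified in \eqref{im8} with the submodule $I(\tilde f)$. And by \eqref{kerp8}, the preimage $y_H(\tilde g)^{-1}(\Delta_N)$ is precisely $Ker_p(\tilde g)$.

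Substituting these identifications into the condition of Definition \ref{D8.4} applied to the sequence in (b) gives
\begin{equation*}
Range(y_H(\tilde f)) + y_H(M)^{\tau_M} = y_H(\tilde g)^{-1}(y_H(N)^{\tau_N})\qquad \Longleftrightarrow\qquad I(\tilde f) + \Delta_M = Ker_p(\tilde g),
\end{equation*}
which is exactly the condition in Definition \ref{D6.7} for strict exactness at $M$ of the sequence in (a). Hence (a) $\Leftrightarrow$ (b).

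Since every step is a direct substitution using formulas \eqref{kerp8} and \eqref{im8} that have already been proved in the excerpt, there is no genuine obstacle; the only thing to double-check is that $Range(y_H(\tilde f))$ is already a Heyting submodule of $M \times M$, so that adding $\Delta_M$ inside $y_H(M)$ matches the join operation used in Definition \ref{D6.7}. This is clear from the bimorphism-like formula in \eqref{eq8.3}, which shows $Range(y_H(\tilde f))$ is closed under joins and under the $H$-action, and the claim of the proposition follows.
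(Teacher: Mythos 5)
Your proof is correct and follows essentially the same route as the paper: both arguments identify $y_H(M)^{\tau_M}$ with $\Delta_M$, invoke \eqref{kerp8} and \eqref{im8} to translate $Ker_p(\tilde g)$ and $I(\tilde f)$ into $y_H(\tilde g)^{-1}(y_H(N)^{\tau_N})$ and $Range(y_H(\tilde f))$, and then match Definition \ref{D6.7} against Definition \ref{D8.4}. The extra remark verifying that $Range(y_H(\tilde f))$ is a submodule is a harmless addition the paper leaves implicit.
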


\begin{proof}  From \eqref{yon}, we know that $y_H(L)$, $y_H(M)$ and $y_H(N)$ are given respectively by
$(L\times L,\tau_L)$, $(M\times M,\tau_M)$, $(N\times N,\tau_N)$, each equipped with the canonical involution that swaps the two components. 
From Definition \ref{D6.7},  $\begin{CD} L@>\tilde f=(f_1,f_2)>> M@>\tilde g=(g_1,g_2)>> N
\end{CD} $ is strictly exact at $M$ if and only if $I(\tilde f)+\Delta_M=Ker_p(\tilde g)$. It is clear  that the diagonal $\Delta_M=(M\times M)^{\tau_M}=y_H(M)^{\tau_M}$. 

\smallskip From \eqref{kerp8}, we see that $Ker_p(\tilde g)=y_H(\tilde g)^{-1}(\Delta_N)=y_H(\tilde g)^{-1}((N\times N)^{\tau_N})=y_H(\tilde g)^{-1}(y_H(N)^{\tau_N})$. On the other hand, \eqref{im8} gives us $I(\tilde f)=Range(y_H(\tilde f))$. The result is now clear from Definition \ref{D8.4}.
\end{proof}

We now return to the adjunction
\begin{equation}\label{adj8'}
Heymod_H(\mathfrak f(M,\sigma_M),N)=Heymod_H^{\mathfrak s}((M,\sigma_M),\mathfrak s(N))
\end{equation} explained in \eqref{adj8}, where $(M,\sigma_M)\in Heymod_H^{\mathfrak s}$ and $N\in Heymod_H$. By definition,
$(\mathfrak s\circ \mathfrak f)(M,\sigma_M)=(M^2,\tau_M)$, where $\tau_M:M\times M\longrightarrow M\times M$ is the involution
that interchanges the components. The unit map of this adjunction $1_{Heymod_H^{\mathfrak s}}\longrightarrow (\mathfrak s\circ \mathfrak f)$ corresponds to the identity map $\mathfrak f(M,\sigma_M)=M\longrightarrow M=\mathfrak f(M,\sigma_M)$ in $Heymod_H$. 

\smallskip
Using the explicit description of this adjunction in \eqref{eq8.8}, we conclude that the morphism $\eta{(M,\sigma_M)}:(M,\sigma_M)\longrightarrow 
(\mathfrak s\circ \mathfrak f)(M,\sigma_M)=(M^2,\tau_M)$ in $Heymod_H^{\mathfrak s}$ given by the unit $1_{Heymod_H^{\mathfrak s}}\longrightarrow (\mathfrak s\circ \mathfrak f)$ corresponds to $(1_M,\sigma_M):M\longrightarrow M^2$. Further, if we put $\mathfrak T=\mathfrak s\circ \mathfrak f$, there is a retraction
\begin{equation}\label{retract}
\xi(M,\sigma_M) : \mathfrak T(M,\sigma_M)\longrightarrow (M,\sigma_M) \qquad (x,y)\mapsto x\vee \sigma_M(y)
\end{equation} satisfying $\xi(M,\sigma_M)\circ \eta(M,\sigma_M)=id$.

\begin{thm}\label{P8.6}
For a morphism $f:(L,\sigma_L)\longrightarrow (M,\sigma_M)$ in $Heymod_H^{\mathfrak s}$, the following are equivalent:

\smallskip
(a) $f$ is a monomorphism in $Heymod_H^{\mathfrak s}$.

\smallskip
(b) $f:L\longrightarrow M$ is injective. 

\smallskip
(c) The sequence $\begin{CD} 0@>>>\mathfrak T(L)@>\mathfrak T(f)>> \mathfrak T(M)\end{CD}$ is strictly exact at $\mathfrak T(L)$. 
\end{thm}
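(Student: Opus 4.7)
The plan is to prove the cycle $(b) \Rightarrow (a) \Rightarrow (b)$ together with $(b) \Leftrightarrow (c)$.

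The implication $(b) \Rightarrow (a)$ is essentially immediate. A morphism in $Heymod_H^{\mathfrak s}$ is just a morphism in $Heymod_H$ that commutes with involutions, so if two morphisms $g_1, g_2:(K,\sigma_K)\to (L,\sigma_L)$ satisfy $f\circ g_1=f\circ g_2$ and $f$ is injective on underlying sets, then $g_1(k)=g_2(k)$ for every $k\in K$, giving $g_1=g_2$.

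For $(a)\Rightarrow(b)$, I will follow the same strategy as in Proposition \ref{P4.5}(a), but I need the test object to live in $Heymod_H^{\mathfrak s}$. The natural candidate, suggested by the adjunction \eqref{adj8'} between $\mathfrak f$ and $\mathfrak s$, is $\mathfrak s(H)=(H\times H,\tau_H)$. Given any $\ell\in L$, I would define
\begin{equation*}
g_\ell:(H\times H,\tau_H)\longrightarrow (L,\sigma_L),\qquad g_\ell(c,d):=(c\wedge \ell)\vee (d\wedge \sigma_L(\ell)).
\end{equation*}
A short check confirms $g_\ell$ is a morphism in $Heymod_H^{\mathfrak s}$: on one hand $g_\ell(\tau_H(c,d))=(d\wedge \ell)\vee (c\wedge \sigma_L(\ell))$, and on the other $\sigma_L(g_\ell(c,d))=(c\wedge \sigma_L(\ell))\vee (d\wedge \ell)$, which agree because $\sigma_L$ is an $H$-linear involution. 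Now $g_\ell(1_H,0_H)=\ell$, so $\ell\mapsto g_\ell$ is injective on elements. Suppose $f(l)=f(l')$ with $l,l'\in L$. Then for all $c,d\in H$,
\begin{equation*}
(f\circ g_l)(c,d)=(c\wedge f(l))\vee (d\wedge \sigma_M(f(l)))=(c\wedge f(l'))\vee (d\wedge \sigma_M(f(l')))=(f\circ g_{l'})(c,d),
\end{equation*}
using $\sigma_M\circ f=f\circ\sigma_L$. If $f$ is a monomorphism, this forces $g_l=g_{l'}$, hence $l=l'$.

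Finally $(b)\Leftrightarrow(c)$ is a direct unpacking of Definition \ref{D8.4}. For the sequence $0\to\mathfrak T(L)\xrightarrow{\mathfrak T(f)}\mathfrak T(M)$, strict exactness at $\mathfrak T(L)=(L\times L,\tau_L)$ reads
\begin{equation*}
\{0\}+\mathfrak T(L)^{\tau_L}=\mathfrak T(f)^{-1}(\mathfrak T(M)^{\tau_M}).
\end{equation*}
But $\mathfrak T(L)^{\tau_L}=\Delta_L$, $\mathfrak T(M)^{\tau_M}=\Delta_M$, and $\mathfrak T(f)=(f,f)$, so the right hand side equals $\{(l_1,l_2)\in L\times L:f(l_1)=f(l_2)\}$. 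Equality with $\Delta_L$ is precisely injectivity of $f$. The principal obstacle is the construction of the test morphism $g_\ell$ in step $(a)\Rightarrow(b)$; once one sees that $\mathfrak s(H)$ plays the role that $H$ plays for ordinary Heyting modules, everything else is routine verification.
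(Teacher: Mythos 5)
Your proof is correct and takes essentially the same approach as the paper: the test morphism $g_\ell(c,d)=(c\wedge\ell)\vee(d\wedge\sigma_L(\ell))$ is exactly the $\xi_x$ used in the paper's proof of (a) $\Rightarrow$ (b). For (b) $\Leftrightarrow$ (c) the paper routes through Proposition \ref{P8.5} and Proposition \ref{P6.8}(c) rather than unpacking Definition \ref{D8.4} directly, but your direct computation of $\mathfrak T(f)^{-1}(\Delta_M)=\Delta_L$ is the same content and is, if anything, more self-contained.
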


\begin{proof}
It is clear that (b) $\Rightarrow$ (a). For any element $x\in L$, there exists a canonical morphism $\xi_x:(H\times H,\tau_H)\longrightarrow (L,\sigma_L)$ given by $\xi_x(a,b)=(a\wedge x)\vee (b\wedge \sigma_L(x))$. Then, $f(x)=f(y)$ for $x$, $y\in L$ gives
$f\circ \xi_x=f\circ \xi_y$.  If $f$ is a monomorphism, then $\xi_x=\xi_y$ and hence $x=\xi_x(1,0)=\xi_y(1,0)=y$. Hence, (a) $\Rightarrow$ (b). 

\smallskip
By definition, $\mathfrak T=y_H\circ \kappa_H\circ \mathfrak f$. Hence, using Proposition \ref{P8.5}, we see that the sequence $\begin{CD} 0@>>>\mathfrak T(L)@>\mathfrak T(f)>> \mathfrak T(M)\end{CD}$ being strictly exact at $\mathfrak T(L)$ is equivalent to the sequence $\begin{CD} 0@>>>\kappa_H(\mathfrak f(L))@>\kappa_H(\mathfrak f(f))>> \kappa_H(\mathfrak f(M))\end{CD}$ being strictly exact at $\kappa_H(\mathfrak f(L))$. Applying Proposition \ref{P6.8}(c), the latter is equivalent to the statement that
\begin{equation}
f(x)=f(y) \qquad \Leftrightarrow \qquad x=y
\end{equation} In other words, $f$ is injective. This proves the result. 

\end{proof}

\begin{thm}\label{P8.7}
For a morphism $f:(L,\sigma_L)\longrightarrow (M,\sigma_M)$ in $Heymod_H^{\mathfrak s}$, the following are equivalent:

\smallskip
(a) $f:L\longrightarrow M$ is surjective. 

\smallskip
(b) The sequence $\begin{CD}\mathfrak T(L)@>\mathfrak T(f)>> \mathfrak T(M)@>>>0\end{CD}$ is strictly exact at $\mathfrak T(M)$. 

\smallskip
In particular, either of these conditions implies that $f$ is an epimorphism in $Heymod_H^{\mathfrak s}$. 
\end{thm}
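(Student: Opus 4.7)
The plan is to reduce the strict exactness condition from $Heymod_H^{\mathfrak s}$ down to $Heymod_H^2$ via Proposition \ref{P8.5}, and then apply the explicit criterion for strict exactness at the rightmost term given by Proposition \ref{P7.9}.

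First I would unpack $\mathfrak{T} = y_H \circ \kappa_H \circ \mathfrak{f}$. The functor $\kappa_H$ embeds a morphism $f$ of $Heymod_H$ as the pair $(f,0)$ in $Heymod_H^2$. Applying Proposition \ref{P8.5} to the three-term sequence $\mathfrak{T}(L) \to \mathfrak{T}(M) \to 0$ in $Heymod_H^{\mathfrak s}$, strict exactness at $\mathfrak{T}(M)$ is equivalent to the strict exactness at $\mathfrak{f}(M)=M$ of the sequence $L \overset{(f,0)}{\longrightarrow} M \longrightarrow 0$ in $Heymod_H^2$. Since the zero object is sent to $0$ under $y_H$, the trailing $0$ in $Heymod_H^{\mathfrak s}$ matches the trailing $0$ in $Heymod_H^2$.

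Next I would apply Proposition \ref{P7.9} to the sequence $L \overset{(f,0)}{\longrightarrow} M \longrightarrow 0$. This gives strict exactness at $M$ if and only if $I(f,0) = M \times M$. Computing directly from \eqref{im2},
\begin{equation*}
I(f,0) = \{(f(x) \vee 0, f(y) \vee 0) : x,y \in L\} = \{(f(x),f(y)) : x,y \in L\},
\end{equation*}
and this set equals $M \times M$ precisely when for every $(m_1,m_2) \in M \times M$ there exist $x,y \in L$ with $f(x)=m_1$ and $f(y)=m_2$; since $x$ and $y$ are chosen independently, this is equivalent to $f:L \to M$ being surjective. This establishes (a)~$\Leftrightarrow$~(b).

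For the concluding ``in particular'' statement, I would simply note that if $f$ is surjective on underlying sets and $g_1,g_2:(M,\sigma_M) \to (N,\sigma_N)$ are morphisms in $Heymod_H^{\mathfrak s}$ satisfying $g_1 \circ f = g_2 \circ f$, then for any $m \in M$ one picks $l \in L$ with $f(l)=m$ and obtains $g_1(m) = g_1(f(l)) = g_2(f(l)) = g_2(m)$, so $g_1 = g_2$ and $f$ is an epimorphism. There is no serious obstacle here; the only subtle point is to verify carefully that $\kappa_H$ of the terminal map is again terminal in $Heymod_H^2$ (which is clear since $\kappa_H$ is a full embedding and $Heymod_H^2(X,0) = Heymod_H(X,0) \times Heymod_H(X,0) = 0$), so the trailing zeros match under all three functors involved.
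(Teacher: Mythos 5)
Your proposal is correct and follows essentially the same route as the paper: reduce via Proposition \ref{P8.5} to strict exactness of the sequence $L\overset{(f,0)}{\longrightarrow} M\longrightarrow 0$ in $Heymod_H^2$, then invoke Proposition \ref{P7.9}(a). The only cosmetic differences are that you use the equivalent condition $I(f,0)=M\times M$ where the paper uses the set-theoretic condition (2) of that proposition, and that you spell out the easy ``in particular'' clause that the paper leaves implicit.
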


\begin{proof}
Again since $\mathfrak T=y_H\circ \kappa_H\circ \mathfrak f$, it follows from Proposition \ref{P8.5} that (b) is equivalent
to the sequence $\begin{CD} \kappa_H(\mathfrak f(L))@>\kappa_H(\mathfrak f(f))>> \kappa_H(\mathfrak f(M))@>>>0\end{CD}$ being strictly exact at $\kappa_H(\mathfrak f(M))$. Applying Proposition \ref{P7.9}(a), the latter is equivalent to the statement that $\{\mbox{$f(x)$ $\vert$ $f(y)=0$, 
$x$, $y\in L$}\}=M$. Hence, (a) $\Leftrightarrow$ (b).
\end{proof}

As in other sections, the best results for epimorphisms are obtained when $H$ is a finite Boolean algebra.

\begin{thm}\label{P8.8}
Let $H$ be a finite Boolean algebra. For a morphism $f:(L,\sigma_L)\longrightarrow (M,\sigma_M)$ in $Heymod_H^{\mathfrak s}$, the following are equivalent:

\smallskip
(a) $f$ is an epimorphism in $Heymod_H^{\mathfrak s}$.

\smallskip
(b) $f:L\longrightarrow M$ is surjective. 

\smallskip
(c) The sequence $\begin{CD}\mathfrak T(L)@>\mathfrak T(f)>> \mathfrak T(M)@>>>0\end{CD}$ is strictly exact at $\mathfrak T(M)$. 
\end{thm}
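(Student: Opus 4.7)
The equivalence (b)$\Leftrightarrow$(c) together with the implication (b)$\Rightarrow$(a) is already furnished by Proposition \ref{P8.7} and needs no Boolean hypothesis. So the only new content is (a)$\Rightarrow$(b), and the plan is to derive it by contradiction using the Hahn-Banach theorem for finite Boolean algebras (Proposition \ref{P4.4}) transported across the adjunction \eqref{adj8'}.

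Suppose $f:(L,\sigma_L)\to(M,\sigma_M)$ is an epimorphism in $Heymod_H^{\mathfrak s}$ and set $E:=f(L)\subseteq M$. The intertwining relation $f\circ\sigma_L=\sigma_M\circ f$ shows that $\sigma_M(E)=f(\sigma_L(L))=E$, so $E$ is a $\sigma_M$-stable Heyting submodule of $M$ and $(E,\sigma_M|_E)$ is a subobject of $(M,\sigma_M)$ in $Heymod_H^{\mathfrak s}$ with inclusion $i:E\hookrightarrow M$. I would then assume, for contradiction, that $E\subsetneq M$ and pick $m\in M\setminus E$. Since $H$ is a finite Boolean algebra, Proposition \ref{P4.4} supplies a pair $\phi_1,\phi_2\in M^\bigstar=Heymod_H(M,H)$ with $\phi_1\circ i=\phi_2\circ i$ but $\phi_1(m)\neq\phi_2(m)$.

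To exploit these maps in $Heymod_H^{\mathfrak s}$, I would pass them through the adjunction $\mathfrak f\dashv\mathfrak s$ of \eqref{adj8'},
\begin{equation*}
Heymod_H^{\mathfrak s}((M,\sigma_M),\mathfrak s(H))\cong Heymod_H(\mathfrak f(M,\sigma_M),H)=M^\bigstar,
\end{equation*}
to produce $\tilde\phi_j:(M,\sigma_M)\to\mathfrak s(H)$ corresponding to $\phi_j$. Naturality of this bijection in the first argument (applied to the morphism $f:(L,\sigma_L)\to(M,\sigma_M)$) says that $\tilde\phi_j\circ f$ is the adjoint of $\phi_j\circ f\in L^\bigstar$. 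Since $f$ factors as $L\twoheadrightarrow E\xhookrightarrow{i}M$ and $\phi_1\circ i=\phi_2\circ i$, we get $\phi_1\circ f=\phi_2\circ f$, hence $\tilde\phi_1\circ f=\tilde\phi_2\circ f$ in $Heymod_H^{\mathfrak s}$. As $f$ is an epimorphism this forces $\tilde\phi_1=\tilde\phi_2$, and the adjunction bijection then gives $\phi_1=\phi_2$, contradicting $\phi_1(m)\neq\phi_2(m)$. Therefore $E=M$ and (b) holds.

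The only point that requires some care, and which I expect to be the main obstacle to writing the proof cleanly, is the naturality statement used above: one must know that precomposition with $f$ in $Heymod_H^{\mathfrak s}$ corresponds under \eqref{adj8'} to precomposition with $\mathfrak f(f)=f$ in $Heymod_H$. This is a formal consequence of the explicit description around \eqref{eq8.8}, where a morphism $(M,\sigma_M)\to\mathfrak s(N)$ is identified with its first component $M\to N$ (the second being determined as $f\circ\sigma_M$), and it can be verified directly from that formula. Once this compatibility is recorded, the Hahn-Banach step closes the argument.
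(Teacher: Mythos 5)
Your proposal is correct and follows essentially the same route as the paper: (b)$\Leftrightarrow$(c)$\Rightarrow$(a) is quoted from Proposition \ref{P8.7}, and (a)$\Rightarrow$(b) is proved by contradiction, applying Proposition \ref{P4.4} to the range of $f$ and transporting the resulting pair $\phi_1,\phi_2$ across the adjunction \eqref{adj8} to $\tilde\phi_1,\tilde\phi_2:(M,\sigma_M)\longrightarrow (H\times H,\tau_H)$. The naturality point you flag as the main obstacle is exactly what the paper settles by the direct computation $(\tilde\phi_j\circ f)(x)=(\phi_j(f(x)),(\phi_j\circ f)(\sigma_L(x)))$, so your argument closes in the same way.
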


\begin{proof}
From Proposition \ref{P8.7}, we know that (b) $\Leftrightarrow$ (c) $\Rightarrow$ (a) for any finite Heyting algebra $H$. We now suppose (a), i.e., 
 $f$ is an epimorphism in $Heymod_H^{\mathfrak s}$.  The range $N:=Range(f)$ of $f$ is a Heyting submodule of $M$. If $N\ne M$, it follows from 
 Proposition \ref{P4.4} that we can choose morphisms $\phi_1\ne \phi_2:M=\mathfrak f(M,\sigma_M)\longrightarrow H$ in $Heymod_H$ such that $\phi_1\circ f=\phi_2\circ f$. The adjunction in \eqref{adj8} then gives $\tilde\phi_1\ne \tilde\phi_2\in Heymod_H^{\mathfrak s}(
 (M,\sigma_M),(H\times H,\tau_H))$ corresponding respectively to $\phi_1$
 and $\phi_2$.  For any $x\in L$, we now have
 \begin{equation*}
 (\tilde\phi_1\circ f)(x)=(\phi_1(f(x)),\phi_1\sigma_M(f(x)))=(\phi_1(f(x)),(\phi_1\circ f)(\sigma_L(x)))=(\phi_2(f(x)),(\phi_2\circ f)(\sigma_L(x)))= (\tilde\phi_2\circ f)(x)
 \end{equation*} Since $f$ is an epimorphism in $Heymod_H^{\mathfrak s}$, this gives $\tilde\phi_1=\tilde\phi_2$ and hence $\phi_1=\phi_2$,
 which is a contradiction. 
\end{proof}

\section{$Heymod_H^{\mathfrak s}$ as a semiexact category}

For a finite Heyting algebra $H$, we will show in this section that $Heymod_H^{\mathfrak s}$ is a semiexact category
in the sense of Grandis \cite{Grand0}, \cite[$\S$ 1.3.3]{Grandis}. For this, we will first recall several notions from \cite[Chapter 1]{Grandis}.

\smallskip
Let $\mathcal C$ be a category. A collection $\mathscr N$ of morphisms of $\mathcal C$ is said to be an ideal
if $f\in \mathscr N$ implies that $g\circ f\circ h\in \mathscr N$ for all morphisms  $g,h$ in $\mathcal C$ such that the composition
$g\circ f\circ h$ is legitimate. Further, $\mathscr N$ is said to be a closed ideal if every morphism in $\mathscr N$
factorizes through some identity morphism also in $\mathscr N$. 

\smallskip
An $N$-category is a pair $(\mathcal C,\mathscr N)$ consisting of a category $\mathcal C$ and an ideal $\mathscr N$ of morphisms
of $\mathcal C$. The ideal $\mathscr N$ is referred to as the ideal of null morphisms of $\mathcal C$. An object $X$ of $\mathcal C$
is said to be null if the identity morphism $id_X\in \mathscr N$. A functor $F:(\mathcal C,\mathscr N)\longrightarrow 
(\mathcal C',\mathscr N')$ of $N$-categories is a functor $F:\mathcal C\longrightarrow \mathcal C'$ that preserves null morphisms. 

\begin{defn}\label{Dt9.1} Let $(\mathcal C,\mathscr N)$ be an $N$-category and let $f:A\longrightarrow B$ be a morphism
in $\mathcal C$. 

\smallskip A morphism $k:K\longrightarrow A$ is said to be a kernel for $f$ if it satisfies the following two conditions:

\smallskip
(1) $f\circ k\in \mathscr N$.

(2) If $h$ is a morphism in $\mathcal C$ such that $f\circ h\in \mathscr N$, then $h$ factorizes uniquely through $k$. 

\smallskip
A morphism $c:B\longrightarrow C$ is said to be a cokernel for $f$ if it satisfies the following two conditions:

\smallskip
(1) $c\circ f\in \mathscr N$.

(2) If $h$ is a morphism in $\mathcal C$ such that $h\circ f\in \mathscr N$, then $h$ factorizes uniquely through $c$.

\end{defn}

\begin{defn}\label{Dt9.2} A semiexact category  is an $N$-category $(\mathcal C,\mathscr N)$ such that

\smallskip
(a) $\mathscr N$ is a closed ideal.

\smallskip
(b) Every morphism in $\mathcal C$ has a kernel and a cokernel with respect to $\mathscr N$. 

\end{defn}

We now consider the category $Heymod_H^{\mathfrak s}$. We will say that a morphism $f:(L,\sigma_L)\longrightarrow (M,\sigma_M)$
in $Heymod_H^{\mathfrak s}$ is null if $f(x)=\sigma_M(f(x))$ for each $x\in L$. The collection of null morphisms
of $Heymod_H^{\mathfrak s}$ will be denoted by $\mathscr N$.

\begin{lem}\label{Lt9.3} The collection of null morphisms is a closed ideal in $Heymod_H^{\mathfrak s}$. 

\end{lem}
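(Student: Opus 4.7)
The plan is to unpack the two definitions---ideal and closed---and verify each directly.

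For the ideal property, suppose $f:(L,\sigma_L)\longrightarrow (M,\sigma_M)$ is null and consider arbitrary morphisms $h:(K,\sigma_K)\longrightarrow (L,\sigma_L)$ and $g:(M,\sigma_M)\longrightarrow (N,\sigma_N)$ in $Heymod_H^{\mathfrak s}$. For any $x\in K$, the fact that $f$ is null gives $f(h(x))=\sigma_M(f(h(x)))$, and since $g$ commutes with the involutions we obtain
\begin{equation*}
(g\circ f\circ h)(x)=g(f(h(x)))=g(\sigma_M(f(h(x))))=\sigma_N(g(f(h(x))))=\sigma_N((g\circ f\circ h)(x)),
\end{equation*}
so $g\circ f\circ h\in\mathscr N$.

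For closedness, I will factor every null morphism through a null object. Given $(M,\sigma_M)\in Heymod_H^{\mathfrak s}$, consider the fixed-point set $M^{\sigma_M}$. Since $\sigma_M$ is a morphism in $Heymod_H$, it preserves joins and the $H$-action; consequently $M^{\sigma_M}$ is closed under joins and under $c\wedge\_\_$ for any $c\in H$, hence is a distributive submodule of $M$. By Proposition \ref{Pp3.65}, it inherits the structure of a Heyting submodule. Equipping $M^{\sigma_M}$ with the trivial involution $id$, we get an object of $Heymod_H^{\mathfrak s}$ whose identity is null; thus $(M^{\sigma_M},id)$ is a null object.

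Now let $f:(L,\sigma_L)\longrightarrow (M,\sigma_M)$ be null. Since $f(x)=\sigma_M(f(x))$ for every $x\in L$, the image of $f$ lies inside $M^{\sigma_M}$, so $f$ corestricts to a map $f':L\longrightarrow M^{\sigma_M}$. The inclusion $\iota:M^{\sigma_M}\hookrightarrow M$ is a morphism in $Heymod_H^{\mathfrak s}$ because $\sigma_M$ acts as the identity on $M^{\sigma_M}$, and $f'$ is a morphism in $Heymod_H^{\mathfrak s}$ because
\begin{equation*}
f'(\sigma_L(x))=f(\sigma_L(x))=\sigma_M(f(x))=f(x)=f'(x)=id(f'(x))
\end{equation*}
for every $x\in L$, using both that $f$ intertwines the involutions and that $f$ is null. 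Thus $f=\iota\circ f'=\iota\circ id_{M^{\sigma_M}}\circ f'$ factors through the null identity $id_{M^{\sigma_M}}$, proving that $\mathscr N$ is closed. I do not anticipate a genuine obstacle here; the only mild subtlety is recognising that $M^{\sigma_M}$ is automatically a Heyting submodule, which is handled by Proposition \ref{Pp3.65}.
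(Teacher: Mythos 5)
Your proof is correct and follows essentially the same route as the paper: the identical computation for the ideal property, and factoring a null morphism through the null object $(M^{\sigma_M},id)$ for closedness. The extra details you supply (that $M^{\sigma_M}$ is a distributive, hence Heyting, submodule and that the corestriction and inclusion are morphisms in $Heymod_H^{\mathfrak s}$) are accurate and simply make explicit what the paper leaves implicit.
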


\begin{proof}
We consider $f:(L,\sigma_L)\longrightarrow (M,\sigma_M)$ in $\mathscr N$ and morphisms  $h:(L',\sigma_{L'})\longrightarrow (L,\sigma_L)$ and $g:(M,\sigma_M)\longrightarrow (M',\sigma_{M'})$
in $Heymod_H^{\mathfrak s}$. For $x'\in L'$, we have
\begin{equation*}gfh(x')=g(\sigma_M(fh(x')))=\sigma_{M'}(gfh(x'))
\end{equation*} This shows thatb $gfh\in \mathscr N$, i.e., $\mathscr 
N$ is an ideal.

\smallskip We consider the object $(M^{\sigma_M},id)\in Heymod_H^{\mathfrak s}$. It is clear that the identity on  $(M^{\sigma_M},id)$ is a null morphism in $Heymod_H^{\mathfrak s}$. Given 
 $f:(L,\sigma_L)\longrightarrow (M,\sigma_M)$ in $\mathscr N$, the condition $f=\sigma_M\circ f$ ensures that $f$ factorizes
 through $(M^{\sigma_M},id)$. This proves that $\mathscr N$ is a closed ideal. 
\end{proof}

\begin{thm}\label{Pt9.4}
Let $f:(L,\sigma_L)\longrightarrow (M,\sigma_M)$ be a morphism 
in $Heymod_H^{\mathfrak s}$. Then, the canonical morphism $(f^{-1}(M^{\sigma_M}),\sigma_L|f^{-1}(M^{\sigma_M}))
\longrightarrow (L,\sigma_L)$ is the kernel for $f$ with respect to $\mathscr N$. 
\end{thm}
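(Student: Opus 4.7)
The plan is to verify the two defining properties of a kernel directly from the definition of the ideal $\mathscr{N}$ of null morphisms, after first checking that $K := f^{-1}(M^{\sigma_M})$ really is an object of $Heymod_H^{\mathfrak{s}}$. The verification divides naturally into three steps.

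First, I would check that $K \subseteq L$ is a Heyting submodule stable under $\sigma_L$, so that $(K, \sigma_L|_K)$ is a well-defined object of $Heymod_H^{\mathfrak{s}}$ and the inclusion $k : (K, \sigma_L|_K) \hookrightarrow (L, \sigma_L)$ is a morphism. For closure under joins and under the $H$-action, one uses that $\sigma_M$ is $H$-linear and preserves joins: if $x, y \in K$ and $c \in H$, then $\sigma_M(f(x) \vee f(y)) = f(x) \vee f(y)$ and $\sigma_M(c \wedge f(x)) = c \wedge f(x)$, so $f(x \vee y)$ and $f(c \wedge x)$ both lie in $M^{\sigma_M}$. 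For stability under $\sigma_L$, if $x \in K$, then the fact that $f$ commutes with the involutions gives $f(\sigma_L(x)) = \sigma_M(f(x)) = f(x) \in M^{\sigma_M}$, hence $\sigma_L(x) \in K$.

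Second, I would verify that $f \circ k \in \mathscr{N}$. For any $x \in K$, by definition $f(k(x)) = f(x) \in M^{\sigma_M}$, so $\sigma_M(f(k(x))) = f(k(x))$, which is exactly the condition defining null morphisms.

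Third, for the universal property, suppose $h : (P, \sigma_P) \longrightarrow (L, \sigma_L)$ is a morphism in $Heymod_H^{\mathfrak{s}}$ with $f \circ h \in \mathscr{N}$. By definition of $\mathscr{N}$, this means $\sigma_M(f(h(p))) = f(h(p))$ for every $p \in P$, i.e.\ $f(h(p)) \in M^{\sigma_M}$, so $h(p) \in f^{-1}(M^{\sigma_M}) = K$. Thus $h$ factors set-theoretically through $k$; the factorization is automatically a morphism in $Heymod_H^{\mathfrak{s}}$ because $K$ carries the restricted structure and $h$ already commuted with $\sigma_L$ in the ambient object. Uniqueness is immediate from the injectivity of $k$ (which is a monomorphism in $Heymod_H^{\mathfrak{s}}$ by Proposition \ref{P8.6}).

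There is no real obstacle here; the argument is bookkeeping, and the only point that requires a moment's thought is the stability of $K$ under $\sigma_L$, which follows from $f \circ \sigma_L = \sigma_M \circ f$. Everything else is a direct unwinding of the definitions of $\mathscr{N}$ and of morphisms in $Heymod_H^{\mathfrak{s}}$.
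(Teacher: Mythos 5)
Your proposal is correct and follows essentially the same route as the paper: check that $\sigma_L$ restricts to $f^{-1}(M^{\sigma_M})$ via $f\circ\sigma_L=\sigma_M\circ f$, observe that $f$ restricted to this submodule is null, and note that a composition $f\circ h$ is null precisely when $h$ lands in $f^{-1}(M^{\sigma_M})$, giving the unique factorization. Your version is slightly more careful than the paper's in spelling out closure of $K$ under joins and the $H$-action, but the argument is the same.
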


\begin{proof}
First, we notice that $\sigma_L:L\longrightarrow L$ does restrict to an involution on $f^{-1}(M^{\sigma_M})$. Indeed, if $x\in f^{-1}(M^{\sigma_M})$, then $\sigma_Mf(\sigma_L(x))=\sigma_M^2f(x)=f(x)=\sigma_Mf(x)=f(\sigma_L(x))$, i.e., 
$\sigma_L(x)\in f^{-1}(M^{\sigma_M})$. Also, the composition $(f^{-1}(M^{\sigma_M}),\sigma_L|f^{-1}(M^{\sigma_M}))
\longrightarrow (L,\sigma_L)\overset{f}{\longrightarrow} (M,\sigma_M)$ factors through the null object $(M^{\sigma_M},id)$. 

\smallskip
By definition, a composition $(L',\sigma_{L'})\overset{h}{\longrightarrow} (L,\sigma_L)\overset{f}{\longrightarrow} (M,\sigma_M)$
is null if and only if $h(y)\in f^{-1}(M^{\sigma_M})$ for every $y\in L'$. Hence, any such null composition in $Heymod_H^{\mathfrak s}$  factors uniquely
through the  canonical morphism $(f^{-1}(M^{\sigma_M}),\sigma_L|f^{-1}(M^{\sigma_M}))
\longrightarrow (L,\sigma_L)$. This proves the result.
\end{proof}

Accordingly, the canonical morphism described in Proposition \ref{Pt9.4} will be written as the kernel $Ker(f)$ of 
 $f:(L,\sigma_L)\longrightarrow (M,\sigma_M)$. We now define an equivalence relation $\sim$ on $M$ as follows: 
\begin{equation}\label{eqf9.1}
x_1{\sim}x_2 \quad\Leftrightarrow\quad g(x_1)=g(x_2)\textrm{ }\forall \mbox{$g\in Heymod_H^{\mathfrak s}((M,\sigma_M),(N,\sigma_N))$
such that $g\circ f\in\mathscr N$}
\end{equation} It is easily seen that $M/\sim$ is a Heyting module. Further, if $x_1\sim x_2$ and $g\in Heymod_H^{\mathfrak s}((M,\sigma_M),(N,\sigma_N))$ is such that $g\circ f\in\mathscr N$, we see that $g(\sigma_M(x_1))=\sigma_Ng(x_1)=\sigma_Ng(x_2)
= g(\sigma_M(x_2))$. It follows from \eqref{eqf9.1} that $\sigma_M(x_1)\sim \sigma_M(x_2)$, i.e., the involution $\sigma_M$
descends to an involution on $M/\sim$ that we continue to denote by $\sigma_M$. 

\begin{thm}\label{Pt9.5}
Let $f:(L,\sigma_L)\longrightarrow (M,\sigma_M)$ be a morphism 
in $Heymod_H^{\mathfrak s}$. Then, the canonical morphism $(M,\sigma_M)\longrightarrow (M/\sim,\sigma_M)$ is the cokernel for $f$ with respect to $\mathscr N$. 
\end{thm}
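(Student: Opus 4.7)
The plan is to check the two defining conditions of a cokernel with respect to the null ideal $\mathscr N$ (Definition \ref{Dt9.1}) for the canonical projection $\pi:(M,\sigma_M)\longrightarrow (M/{\sim},\sigma_M)$. The fact that $\pi$ is a well-defined morphism in $Heymod_H^{\mathfrak s}$ is essentially built into the construction preceding the statement: the equivalence relation $\sim$ on $M$ is defined via maps out of $(M,\sigma_M)$, so the quotient $M/{\sim}$ inherits a Heyting module structure, and the discussion just before the proposition verifies that $\sigma_M$ descends to $M/{\sim}$, making $\pi$ a morphism in $Heymod_H^{\mathfrak s}$.

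First I would verify that $\pi\circ f\in\mathscr N$, i.e.\ that $\pi(f(x))=\sigma_M(\pi(f(x)))$ in $M/{\sim}$ for every $x\in L$. Unwrapping the quotient, this amounts to showing $f(x)\sim \sigma_M(f(x))$. By the definition \eqref{eqf9.1}, I must check that $g(f(x))=g(\sigma_M(f(x)))$ for every morphism $g:(M,\sigma_M)\longrightarrow (N,\sigma_N)$ in $Heymod_H^{\mathfrak s}$ with $g\circ f\in\mathscr N$. Since $g$ commutes with the involutions, $g(\sigma_M(f(x)))=\sigma_N(g(f(x)))$, and the hypothesis $g\circ f\in\mathscr N$ says precisely that $g(f(x))=\sigma_N(g(f(x)))$. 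Combining these gives $g(f(x))=g(\sigma_M(f(x)))$, as required.

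Next I would establish the universal property. Let $h:(M,\sigma_M)\longrightarrow (N,\sigma_N)$ be a morphism in $Heymod_H^{\mathfrak s}$ with $h\circ f\in\mathscr N$. Then $h$ is one of the morphisms appearing in the definition \eqref{eqf9.1}, so $x_1\sim x_2$ forces $h(x_1)=h(x_2)$. Hence $h$ descends set-theoretically to a map $\bar h:M/{\sim}\longrightarrow N$ satisfying $\bar h\circ \pi=h$. Because the join and $H$-action on $M/{\sim}$ are defined representative-wise and $h$ respects both, $\bar h$ is a morphism of Heyting modules; and because $\bar h\circ \pi \circ \sigma_M=h\circ \sigma_M=\sigma_N\circ h=\sigma_N\circ \bar h\circ \pi$ with $\pi$ surjective, $\bar h$ commutes with the involutions, giving a morphism in $Heymod_H^{\mathfrak s}$. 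Uniqueness of the factorization is immediate from the surjectivity of $\pi$.

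The only place requiring any thought is the first step, since it is the part where the symmetry of the definition \eqref{eqf9.1} under $\sigma_M$ interacts with the definition of $\mathscr N$; the rest is formal, as the equivalence relation was tailor-made to make $h$ factor. I do not expect any genuine obstacle beyond being careful that $\sigma_M$ indeed descends (already noted in the text) so that $\pi$ lies in $Heymod_H^{\mathfrak s}$ in the first place.
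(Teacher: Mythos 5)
Your proposal is correct and follows essentially the same route as the paper: the key computation $g(\sigma_M(f(x)))=\sigma_N(g(f(x)))=g(f(x))$ showing $f(x)\sim\sigma_M(f(x))$, and the observation that any $h$ with $h\circ f\in\mathscr N$ appears in the definition of $\sim$ and hence factors uniquely through the quotient, are exactly the paper's argument. Your write-up is somewhat more explicit about why $\bar h$ is a morphism in $Heymod_H^{\mathfrak s}$, but there is no substantive difference.
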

\begin{proof}
For any $g\in Heymod_H^{\mathfrak s}((M,\sigma_M),(N,\sigma_N))$ such that $g\circ f\in\mathscr N$, we note that
$g\sigma_Mf(x)=\sigma_Ngf(x)=gf(x)$. It follows from \eqref{eqf9.1} that $f(x)\sim \sigma_Mf(x)$ and hence the composition 
$(L,\sigma_L)\overset{f}{\longrightarrow} (M,\sigma_M)\longrightarrow (M/\sim,\sigma_M)$ is null. The definition in \eqref{eqf9.1}
also shows that any $g\in Heymod_H^{\mathfrak s}((M,\sigma_M),(N,\sigma_N))$ such that $g\circ f\in\mathscr N$ must
factor through $M/\sim$. Since the involution on $M/\sim$ is induced by $\sigma_M$, it is clear that any such $g$ factors uniquely
through $(M,\sigma_M)\longrightarrow (M/\sim,\sigma_M)$  in $Heymod_H^{\mathfrak s}$. This proves the result. 
\end{proof}
 
Accordingly, the canonical morphism described in Proposition \ref{Pt9.5} will be written as the kernel $Coker(f)$ of 
 $f:(L,\sigma_L)\longrightarrow (M,\sigma_M)$. 

\begin{Thm}\label{Tt9.6} Let $H$ be a finite Heyting algebra. Then, $Heymod_H^{\mathfrak s}$ is a semiexact category.

\end{Thm}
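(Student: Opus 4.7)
The plan is to simply assemble the results established just above the statement, since they correspond exactly to the three items required by Definition \ref{Dt9.2}. First I would recall that $(Heymod_H^{\mathfrak s},\mathscr N)$ is an $N$-category by construction: the class $\mathscr N$ of morphisms $f:(L,\sigma_L)\longrightarrow (M,\sigma_M)$ satisfying $f=\sigma_M\circ f$ has been defined in the paragraph preceding Lemma \ref{Lt9.3}.

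Next I would invoke Lemma \ref{Lt9.3}, which already shows that $\mathscr N$ is a closed ideal of morphisms in $Heymod_H^{\mathfrak s}$, taking care of axiom (a) of Definition \ref{Dt9.2}. This is the only place where one must check the factorization-through-a-null-identity property, and it was handled by observing that each $f\in\mathscr N$ factors through the null object $(M^{\sigma_M},id)$.

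For axiom (b) I would cite Propositions \ref{Pt9.4} and \ref{Pt9.5}: given any morphism $f:(L,\sigma_L)\longrightarrow (M,\sigma_M)$ in $Heymod_H^{\mathfrak s}$, Proposition \ref{Pt9.4} provides the kernel as the canonical inclusion $(f^{-1}(M^{\sigma_M}),\sigma_L|_{f^{-1}(M^{\sigma_M})})\longrightarrow (L,\sigma_L)$, and Proposition \ref{Pt9.5} provides the cokernel as the canonical projection $(M,\sigma_M)\longrightarrow (M/\mathord{\sim},\sigma_M)$, where $\sim$ is the equivalence relation defined in \eqref{eqf9.1}. Since both satisfy the respective universal properties of Definition \ref{Dt9.1}, every morphism of $Heymod_H^{\mathfrak s}$ admits both a kernel and a cokernel with respect to $\mathscr N$.

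There is no genuine obstacle here: the proof is a one-line assembly, essentially of the form ``Combine Lemma \ref{Lt9.3} with Propositions \ref{Pt9.4} and \ref{Pt9.5} and apply Definition \ref{Dt9.2}.'' The only thing worth noting explicitly in the write-up is the mild verification that finiteness of $H$ is used only indirectly (via Proposition \ref{P3.5}) to guarantee that the submodule $f^{-1}(M^{\sigma_M})$ and the quotient $M/\mathord{\sim}$ are themselves Heyting modules, so that the constructions of Propositions \ref{Pt9.4} and \ref{Pt9.5} indeed land in $Heymod_H^{\mathfrak s}$.
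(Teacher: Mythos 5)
Your proposal is correct and matches the paper's own proof, which likewise just cites Lemma \ref{Lt9.3} for the closed ideal condition and Propositions \ref{Pt9.4} and \ref{Pt9.5} for the existence of kernels and cokernels, then invokes Definition \ref{Dt9.2}. Your added remark on where finiteness of $H$ enters is a sensible clarification but not a departure from the paper's argument.
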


\begin{proof}
This follows from the definition of a semiexact category and by applying Lemma \ref{Lt9.3}, Proposition \ref{Pt9.4}
and Proposition \ref{Pt9.5}. 
\end{proof}

\section{Finite Boolean algebras and semiexact homological categories}

We  begin this section by recalling some more general facts for semiexact categories. In a semiexact category $(\mathcal C,\mathscr N)$, a morphism of the form $Ker(f)\longrightarrow L$ corresponding to some morphism $f\in \mathcal C(L,M)$ is  referred to as a normal monomorphism (see \cite[$\S$ 1.3.3]{Grandis}). Similarly,  a  morphism of the form $M\longrightarrow Coker(f)$ corresponding to some $f\in\mathcal C(L,M)$ is referred to as a normal epimorphism. 

\smallskip
Every morphism $f:L\longrightarrow M$ in a semiexact category $(\mathcal C,\mathscr N)$ admits a unique and natural factorization of the form (see \cite[$\S$ 1.5.5]{Grandis})
\begin{equation}\label{fnormal1}
\begin{CD}
Ker(f) @>>> L@>f>> M@>>> Coker(f) \\
@. @VpVV @AmAA @. \\
@. Coker(Ker(f)) @>\tilde{f}>> Ker(Coker(f)) @.\\
\end{CD}
\end{equation} It is clear that $p:L\longrightarrow Coker(Ker(f))$ is a normal epimorphism 
and $m:Ker(Coker(f))\longrightarrow M$ is a normal monomorphism. 

\smallskip The morphism $f$ is said to be exact if $\tilde{f}$ as defined in \eqref{fnormal1}  is an isomorphism. A morphism $f$ in a semiexact
category $(\mathcal C,\mathscr N)$ is exact if and only if it can be factored as $k\circ h$, where $h$ is a normal epimorphism
and $k$ is a normal monomorphism. 

\smallskip
In this section, we will always assume that $H$ is a finite Boolean algebra. In particular, this assumption will allow us to use Proposition \ref{P4.3} and Proposition \ref{P4.31}. We recall from Section 4 that if $M$ is a Heyting module, then the dual $M^\bigstar$ denotes the collection
$Heymod_H(M,H)$ of all Heyting module morphisms from $M$ to $H$. 
Our purpose in this section is to show that $Heymod_H^{\mathfrak s}$ is actually a ``semiexact homological category'' in the sense
of \cite[$\S$ 1.3]{Grandis}, which we will do in a manner analogous to \cite[$\S$ 6]{one}. 

\begin{lem}\label{Lq10.1}  Let $H$ be a finite Boolean algebra.

\smallskip
(a) For any indexing set $I$, the product $H^I$
of copies of $H$ is an injective object in $Heymod_H$. 

\smallskip
(b) Every Heyting module $M$ can be embedded as a submodule of a product of copies of $H$.  
\end{lem}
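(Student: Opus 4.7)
The plan is to reduce part (a) to the case $I$ a singleton and invoke Proposition \ref{P4.3}, and to prove part (b) via the canonical evaluation map into the product indexed by $M^\bigstar$, using Proposition \ref{P4.31} to verify injectivity. Both statements amount to recognizing that the technical work of Section 4 is tailored exactly to these injectivity and separation properties.

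For part (a), I first observe that injectivity of a product is tested componentwise. Let $i\colon N\hookrightarrow M$ be a monomorphism in $Heymod_H$ and let $f\colon N\longrightarrow H^I$ be a morphism of Heyting modules. By Proposition \ref{P4.5}(a), $i$ is an injection, so we may regard $N$ as a Heyting submodule of $M$. Composing $f$ with the canonical projections $\pi_j\colon H^I\longrightarrow H$ produces a family $f_j:=\pi_j\circ f\in N^\bigstar$. Proposition \ref{P4.3} guarantees an extension $\tilde f_j\in M^\bigstar$ satisfying $\tilde f_j\circ i=f_j$ for every $j\in I$. The universal property of the product in $Heymod_H$ (the pointwise analogue of Lemma \ref{uL5.1}, valid for arbitrary indexing sets since $H$ is finite and pointwise operations make $H^I$ a distributive module over $H$, hence canonically a Heyting module by Proposition \ref{P3.5}) then assembles these extensions into a single morphism $\tilde f\colon M\longrightarrow H^I$ with $\pi_j\circ \tilde f=\tilde f_j$ for all $j$, whence $\tilde f\circ i=f$.

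For part (b), I consider the canonical evaluation morphism
\[
\iota_M\colon M\longrightarrow H^{M^\bigstar}, \qquad \iota_M(m):=(\phi(m))_{\phi\in M^\bigstar}.
\]
The $\phi$-th projection of $\iota_M$ is just $\phi$ itself, so by the universal property of products $\iota_M$ is a morphism in $Heymod_H$. Proposition \ref{P4.31} asserts that if $\phi(m_1)=\phi(m_2)$ for every $\phi\in M^\bigstar$ then $m_1=m_2$, which is precisely the injectivity of $\iota_M$ on underlying sets. By Proposition \ref{P4.5}(a), $\iota_M$ is then a monomorphism in $Heymod_H$, embedding $M$ as a Heyting submodule of $H^{M^\bigstar}$.

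The main obstacle is essentially bookkeeping rather than anything substantive, since Proposition \ref{P4.3} is already an injectivity statement for $H$ phrased as surjectivity of a dual restriction map, and Proposition \ref{P4.31} is the Hahn--Banach style separation theorem providing injectivity of the evaluation. The only point requiring a moment's care is justifying that $H^I$ is an object of $Heymod_H$ and realizes the categorical product for arbitrary (possibly infinite) $I$; this is handled by noting that pointwise joins and $H$-action make $H^I$ a distributive module, which is automatically Heyting via Proposition \ref{P3.5} since $H$ is finite.
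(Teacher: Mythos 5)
Your proof is correct and takes essentially the same route as the paper: part (a) by extending each component $\pi_j\circ f$ via the surjectivity of $M^\bigstar\longrightarrow N^\bigstar$ from Proposition \ref{P4.3} and reassembling, and part (b) by the evaluation map into $H^{M^\bigstar}$ whose injectivity is exactly Proposition \ref{P4.31}. Your explicit remark that $H^I$ is a Heyting module for arbitrary $I$ (pointwise operations give a distributive module, hence Heyting by Proposition \ref{P3.5}) is a small point the paper leaves implicit, but otherwise the two arguments coincide.
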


\begin{proof}(a)  Let $M\in Heymod_H$ and let $N\subseteq M$ be a Heyting submodule. 
By definition, any morphism $\phi:N\longrightarrow H^I$ in $Heymod_H$ corresponds to a family of morphisms $\{\phi_i:N\longrightarrow
H\}_{i\in I}$. Since $H$ is a finite Boolean algebra, it follows from Proposition \ref{P4.3}
that the induced morphism $M^\bigstar\longrightarrow N^\bigstar$ is surjective, i.e., we can choose for each $\phi_i:N\longrightarrow
H$ a morphism $\psi_i:M\longrightarrow H$ extending $\phi_i$. The $\{\psi_i\}_{i\in I}$ combine to yield a morphism
$\psi:M\longrightarrow H^I$ extending $\phi$. 

\smallskip
(b) Given $M\in Heymod_H$, we define a morphism $i_M:M\longrightarrow H^{M^\bigstar}$ by setting 
$i_M(m)=\{\phi(m)\}_{\phi\in M^\bigstar}$. Using Proposition \ref{P4.31}, we see that $i_M$ is an injective map.  It now follows from Proposition \ref{P4.5} that 
$M$ is a Heyting submodule of $H^{M^\bigstar}$. 
\end{proof}

\begin{lem}\label{Lq10.2} Let $H$ be a finite Boolean algebra. Then, $(E,\sigma_E)$ is an injective object
in $Heymod_H^{\mathfrak s}$ if and only if $\mathfrak f(E,\sigma_E)=E$ is injective in $Heymod_H$. 
\end{lem}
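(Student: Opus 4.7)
The plan is to exploit Propositions~\ref{P4.5} and~\ref{P8.6}, which identify monomorphisms in both $Heymod_H$ and $Heymod_H^{\mathfrak s}$ with injective maps of underlying sets, and to move between the two categories via the functors $\mathfrak s$ and $\mathfrak f$. The symmetrisation trick $m\mapsto m\vee\sigma(m)$ already used implicitly in \eqref{retract} will provide the glue in one direction, and an adjoint-style repackaging via $\mathfrak s$ will handle the other.

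For the ``only if'' direction, suppose $(E,\sigma_E)$ is injective in $Heymod_H^{\mathfrak s}$. Given an injection $i:N\hookrightarrow M$ in $Heymod_H$ and a morphism $\phi:N\longrightarrow E$, I would package $\phi$ as a morphism $\alpha:\mathfrak s(N)=(N\times N,\tau_N)\longrightarrow(E,\sigma_E)$ in $Heymod_H^{\mathfrak s}$ via $\alpha(n_1,n_2):=\phi(n_1)\vee\sigma_E(\phi(n_2))$. The equivariance $\alpha\circ\tau_N=\sigma_E\circ\alpha$ is an immediate two-line check using $\sigma_E^2=\mathrm{id}$, and $H$-linearity/join-preservation are inherited from $\phi$ and $\sigma_E$. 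Since $\mathfrak s(i)=(i,i)$ is injective on underlying sets, it is a monomorphism in $Heymod_H^{\mathfrak s}$ by Proposition~\ref{P8.6}; so injectivity of $(E,\sigma_E)$ produces an extension $\tilde\alpha:\mathfrak s(M)\longrightarrow(E,\sigma_E)$. The morphism $\tilde\phi:M\longrightarrow E$ defined by $\tilde\phi(m):=\tilde\alpha(m,0)$ then satisfies $\tilde\phi(i(n))=\alpha(n,0)=\phi(n)$, extending $\phi$ as required.

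For the ``if'' direction, suppose $E$ is injective in $Heymod_H$ and consider an injection $i:(L,\sigma_L)\hookrightarrow(M,\sigma_M)$ in $Heymod_H^{\mathfrak s}$ together with a morphism $\phi:(L,\sigma_L)\longrightarrow(E,\sigma_E)$. Injectivity of $E$ in $Heymod_H$ yields a, possibly non-equivariant, extension $\psi:M\longrightarrow E$ with $\psi\circ i=\phi$. I would symmetrise by setting $\psi'(m):=\psi(m)\vee\sigma_E(\psi(\sigma_M(m)))$. Since $\sigma_M$ and $\sigma_E$ are $H$-linear and join-preserving, $\psi'$ is again a morphism in $Heymod_H$; that $\psi'\circ\sigma_M=\sigma_E\circ\psi'$ is a direct computation using $\sigma_M^2=\mathrm{id}$ and $\sigma_E^2=\mathrm{id}$, so $\psi'$ lives in $Heymod_H^{\mathfrak s}$.

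The only genuinely delicate point is that $\psi'$ still restricts to $\phi$ along $i$, and this is precisely where the equivariance of $\phi$ (together with that of $i$) is used: for $n\in L$ one gets $\psi'(i(n))=\phi(n)\vee\sigma_E(\psi(\sigma_M(i(n))))=\phi(n)\vee\sigma_E(\psi(i(\sigma_L(n))))=\phi(n)\vee\sigma_E(\phi(\sigma_L(n)))=\phi(n)\vee\sigma_E^2(\phi(n))=\phi(n)$. Modulo these short verifications, both implications are formal and the lemma follows. I do not expect to need the Boolean hypothesis anywhere in this argument per se; it is presumably in force because the lemma will be applied below alongside Proposition~\ref{P4.3} and Lemma~\ref{Lq10.1}.
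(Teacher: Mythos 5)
Your proof is correct, and one half of it diverges from the paper's in an interesting way. The ``if'' direction (injectivity of $E$ in $Heymod_H$ implies injectivity of $(E,\sigma_E)$ in $Heymod_H^{\mathfrak s}$) is essentially identical to the paper's: both reduce the monomorphism to an injection via Propositions \ref{P8.6} and \ref{P4.5}, extend non-equivariantly, and symmetrise by $g\mapsto g+\sigma_E\circ g\circ\sigma_M$, with the same computation showing the symmetrised map still restricts correctly. For the converse, however, the paper embeds $E$ into some $H^I$ via Lemma \ref{Lq10.1}, uses injectivity of $(E,\sigma_E)$ to produce a retraction of $(E,\sigma_E)\hookrightarrow\mathfrak s(H^I)$, and concludes that $E$ is a retract of the injective object $H^I\times H^I$; this route genuinely needs the Boolean hypothesis, since Lemma \ref{Lq10.1} rests on Propositions \ref{P4.3} and \ref{P4.31}. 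You instead observe, in effect, that $\mathfrak s$ is also \emph{left} adjoint to $\mathfrak f$ (the bijection $\phi\leftrightarrow\alpha$, $\alpha(n_1,n_2)=\phi(n_1)\vee\sigma_E(\phi(n_2))$, is exactly the unit/counit calculus of that adjunction) and that $\mathfrak s$ sends injections to monomorphisms, so $\mathfrak f$ preserves injectives by the standard adjunction argument. This is cleaner, avoids Lemma \ref{Lq10.1} entirely, and --- as you correctly suspect --- shows the lemma holds for any finite Heyting algebra, not just a finite Boolean one; the Boolean hypothesis in the paper's statement is only needed for the surrounding results that feed into Lemma \ref{Lq10.1}. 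Your verifications (equivariance of $\alpha$, $\phi(0)=0$ so that $\alpha(n,0)=\phi(n)$, and the restriction computation in the symmetrisation step) are all sound.
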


\begin{proof}
We suppose that $E\in Heymod_H$ is injective. Let $i:(L,\sigma_L)\longrightarrow (M,\sigma_M)$ be a monomorphism
in $Heymod_H^{\mathfrak s}$ and let $f:(L,\sigma_L)\longrightarrow (E,\sigma_E)$ be a morphism in $Heymod_H^{\mathfrak s}$. By Proposition \ref{P8.6}, we know the underlying map $i:L\longrightarrow M$ is injective and hence a monomorphism in $Heymod_H$ (by Proposition \ref{P4.5}). Hence, there is a morphism $g:M\longrightarrow E$ in $Heymod_H$
such that $g\circ i=f$. Setting $h=g+\sigma_E\circ g\circ \sigma_M$, it may be verified easily that $h\circ \sigma_M=\sigma_E
\circ h$ and that $h\circ i=f$. This shows that $(E,\sigma_E)\in Heymod_H^{\mathfrak s}$ is injective. 

\smallskip
Conversely, suppose that  $(E,\sigma_E)$ is an injective object
in $Heymod_H^{\mathfrak s}$. Using Lemma \ref{Lq10.1}, we can find an embedding $i:E\longrightarrow H^I$ in $Heymod_H$
for some indexing set $I$. Then, the map $E\longrightarrow H^I\times H^I$ given by $x\mapsto (i(x),i(\sigma_E(x)))$ is injective
and it is easily verified that this gives  a morphism $u:(E,\sigma_E)\longrightarrow \mathfrak s(H^I)$  in $Heymod_H^{\mathfrak s}$.  Since $(E,\sigma_E)\in Heymod_H^{\mathfrak s}$ is injective, we have a retraction $v:\mathfrak s(H^I)\longrightarrow (E,\sigma_E)$
such that $v\circ u=id_{(E,\sigma_E)}$. It follows that $\mathfrak f(v):H^I\times H^I\longrightarrow E$ is a retraction
of the map $\mathfrak f(u):E\longrightarrow H^I\times H^I$ in $Heymod_H$. By Lemma \ref{Lq10.1}, we know that
$H^I\times H^I$ is injective and hence it follows that $E$ is injective in $Heymod_H$.
\end{proof}

\begin{lem}\label{Lq10.3} Let $f:(L,\sigma_L)\longrightarrow (M,\sigma_M)$ be a normal monomorphism 
in $Heymod_H^{\mathfrak s}$, corresponding to the kernel of $g:(M,\sigma_M)\longrightarrow (N,\sigma_N)$. Then, there 
is a morphism $h:(N,\sigma_N)\longrightarrow (E,\sigma_E)$ such that:

\smallskip 
(1)  $(E,\sigma_E)$ is injective in $Heymod_H^{\mathfrak s}$, 

(2) $h:N\longrightarrow E$ an injective map and

(3) $Ker(g)=((L,\sigma_L)\overset{f}{\longrightarrow}(M,\sigma_M))=Ker(h\circ g)$.
\end{lem}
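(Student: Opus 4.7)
The plan is to produce $(E,\sigma_E)$ by taking an injective embedding of $N$ into a power of $H$ and then ``squaring'' to accommodate the involution. By Lemma \ref{Lq10.1}(b), we have an injective morphism $i\colon N\hookrightarrow H^I$ in $Heymod_H$ (for instance, with $I=N^\bigstar$). Take $(E,\sigma_E):=\mathfrak{s}(H^I)=(H^I\times H^I,\tau)$, where $\tau$ swaps components, and define the candidate morphism
\begin{equation*}
h\colon (N,\sigma_N)\longrightarrow (H^I\times H^I,\tau),\qquad h(n):=(i(n),\,i(\sigma_N(n))).
\end{equation*}
Since $i$ is a morphism in $Heymod_H$ and $\sigma_N$ is $H$-linear, $h$ preserves joins and the $H$-action; the identity $h\circ\sigma_N=\tau\circ h$ follows from $\sigma_N^2=\mathrm{id}$, so $h$ is a morphism in $Heymod_H^{\mathfrak{s}}$.

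For property (1), note that $H^I\times H^I\cong H^{I\sqcup I}$ is injective in $Heymod_H$ by Lemma \ref{Lq10.1}(a), and therefore $(E,\sigma_E)=\mathfrak{s}(H^I)$ is injective in $Heymod_H^{\mathfrak{s}}$ by Lemma \ref{Lq10.2}; this is precisely the reason we pass to the squared object rather than using $H^I$ directly. For property (2), the composition of $h$ with the first projection $H^I\times H^I\twoheadrightarrow H^I$ recovers $i$, which is injective, so $h$ is injective.

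The core step is property (3). By Proposition \ref{Pt9.4}, $Ker(g)$ is (the canonical inclusion of) $g^{-1}(N^{\sigma_N})$, and $Ker(h\circ g)$ is $(h\circ g)^{-1}(E^{\sigma_E})$. The crucial observation is that $E^{\sigma_E}=E^{\tau}$ equals the diagonal $\Delta_{H^I}\subseteq H^I\times H^I$. Hence for $n\in N$,
\begin{equation*}
h(n)\in E^{\sigma_E}\quad\Longleftrightarrow\quad i(n)=i(\sigma_N(n))\quad\Longleftrightarrow\quad n=\sigma_N(n)\quad\Longleftrightarrow\quad n\in N^{\sigma_N},
\end{equation*}
where the middle equivalence uses injectivity of $i$. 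Thus $h^{-1}(E^{\sigma_E})=N^{\sigma_N}$, which gives $(h\circ g)^{-1}(E^{\sigma_E})=g^{-1}(N^{\sigma_N})=Ker(g)$, i.e.\ $Ker(h\circ g)=Ker(g)=((L,\sigma_L)\xrightarrow{f}(M,\sigma_M))$.

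There is no genuine obstacle here once the squaring trick is in place: the whole point of forming $\mathfrak{s}(H^I)$ rather than an arbitrary injective hull is that its fixed-point submodule is the diagonal, so the condition ``$h$ is injective'' translates verbatim into ``$h$ pulls the $\tau$-fixed points back to the $\sigma_N$-fixed points,'' which is exactly what equality of kernels demands. The only subtlety to keep track of is verifying that $(E,\sigma_E)$ is injective in the involutive category $Heymod_H^{\mathfrak{s}}$ and not merely in $Heymod_H$, and this is precisely the content of Lemma \ref{Lq10.2}.
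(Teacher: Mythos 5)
Your proof is correct and follows essentially the same route as the paper: the morphism $h(n)=(i(n),i(\sigma_N(n)))$ is exactly the paper's composite $\mathfrak s(i)\circ\eta(N,\sigma_N)$, with the injective target realized concretely as $\mathfrak s(H^I)$, and the kernel computation via $h^{-1}(E^{\sigma_E})=N^{\sigma_N}$ matches the paper's two-step pullback of fixed points. No gaps.
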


\begin{proof}
We recall from Section 8 the morphism $\eta(N,\sigma_N):(N,\sigma_N)\longrightarrow (N^2,\tau_N)$ corresponding to the unit
of the adjunction between the functors $\mathfrak f:Heymod_H^{\mathfrak s}\longrightarrow Heymod_H$ and $\mathfrak s:Heymod_H\longrightarrow Heymod_H^{\mathfrak s}$. The map underlying $\eta(N,\sigma_N)$ is given by $(1_N,\sigma_N):
N\longrightarrow N^2$. Using the expression for the kernel of a morphism in $Heymod_H^{\mathfrak s}$ obtained
in Proposition \ref{Pt9.4}, we notice that 
\begin{equation}
Ker(\eta(N,\sigma_N)\circ g)=(\eta(N,\sigma_N)\circ g)^{-1}((N^2)^{\tau_N})=g^{-1}(N^{\sigma_N})=Ker(g)
\end{equation}  Applying Lemma \ref{Lq10.1}, we can find an embedding $i:N\longrightarrow E'$ into an injective $E'$ in $Heymod_H$. It is clear that $\mathfrak s(i):\mathfrak s(N)\longrightarrow \mathfrak s(E')$ satisfies $(\mathfrak s(i))^{-1}((E'^2)^{\tau_{E'}})
=(N^2)^{\tau_N}$. It follows that
\begin{equation}
Ker(\mathfrak s(i)\circ \eta(N,\sigma_N)\circ g)=(\mathfrak s(i)\circ \eta(N,\sigma_N)\circ g)^{-1}((E'^2)^{\tau_{E'}})=Ker(\eta(N,\sigma_N)\circ g)=Ker(g)
\end{equation} Finally, since $\mathfrak f (\mathfrak s(E'))=E'^2$ is injective in $Heymod_H$, it follows from 
Lemma \ref{Lq10.2} that $(E,\sigma_E):=\mathfrak s(E')$ is injective in $Heymod_H^{\mathfrak s}$. It is clear from the constructions that the map $N\longrightarrow E$ underlying $\mathfrak s(i)\circ \eta(N,\sigma_N)$ is injective. This proves the result.  
\end{proof}

\begin{thm}\label{Pq10.4}
Let $H$ be a finite Boolean algebra. Then, the normal monomorphisms in $Heymod_H^{\mathfrak s}$ are stable under composition.
\end{thm}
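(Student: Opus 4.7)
My plan is to exhibit $f_2 \circ f_1$ explicitly as the kernel of a single morphism. Let $f_1: (L, \sigma_L) \to (M, \sigma_M)$ and $f_2: (M, \sigma_M) \to (N, \sigma_N)$ be normal monomorphisms; by definition each $f_i$ is of the form $Ker(g_i)$ for some morphisms $g_1$ out of $(M, \sigma_M)$ and $g_2$ out of $(N, \sigma_N)$. My first step is to apply Lemma \ref{Lq10.3} to the pair $(f_1, g_1)$ in order to replace $g_1$ by an enlargement with the same kernel whose codomain $(E_1, \sigma_{E_1})$ is injective in $Heymod_H^{\mathfrak s}$; the target of $g_2$ I leave alone, calling it $(P_2, \sigma_{P_2})$.

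Next, since $f_2$ is a monomorphism in $Heymod_H^{\mathfrak s}$ and $(E_1, \sigma_{E_1})$ is injective, I would extend $g_1$ along $f_2$ to obtain $\tilde g_1: (N, \sigma_N) \to (E_1, \sigma_{E_1})$ with $\tilde g_1 \circ f_2 = g_1$. Then I would form the componentwise product in $Heymod_H^{\mathfrak s}$ and define
\begin{equation*}
h := (\tilde g_1, g_2): (N, \sigma_N) \longrightarrow (E_1 \times P_2, \sigma_{E_1} \times \sigma_{P_2}).
\end{equation*}
The goal is to show that $f_2 \circ f_1$ is, up to canonical isomorphism, the same as $Ker(h)$, which will exhibit it as a normal monomorphism in $Heymod_H^{\mathfrak s}$.

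For the key computation, I would apply Proposition \ref{Pt9.4} to identify $Ker(h)$ with the inclusion into $(N, \sigma_N)$ of the subset $\tilde g_1^{-1}(E_1^{\sigma_{E_1}}) \cap g_2^{-1}(P_2^{\sigma_{P_2}})$. For $y$ in this intersection, the condition $g_2(y) \in P_2^{\sigma_{P_2}}$ together with $Ker(g_2) = f_2$ forces $y = f_2(x)$ for a unique $x \in M$ (by injectivity of $f_2$, which is Proposition \ref{P8.6}); then $\tilde g_1(y) = \tilde g_1(f_2(x)) = g_1(x) \in E_1^{\sigma_{E_1}}$ forces $x \in Ker(g_1) = f_1(L)$, so $y \in (f_2 \circ f_1)(L)$. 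The reverse containment is immediate from the fact that both $g_1 \circ f_1$ and $g_2 \circ f_2$ lie in $\mathscr N$. Combined with the injectivity of $f_2 \circ f_1$ as a composition of injective underlying maps, this yields the desired identification $(L, \sigma_L) \cong Ker(h)$, with the isomorphism intertwining $f_2 \circ f_1$ and the kernel inclusion.

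The main obstacle I anticipate is the extension step producing $\tilde g_1$: it requires an injective codomain in $Heymod_H^{\mathfrak s}$, which is precisely why the preparatory Lemma \ref{Lq10.3} (resting in turn on Lemma \ref{Lq10.2} and the Hahn--Banach-type Proposition \ref{P4.3}, both of which require $H$ to be a finite Boolean algebra) was established earlier. Once the injective target is in place, the remainder is a routine set-theoretic unpacking of the kernel description in Proposition \ref{Pt9.4}.
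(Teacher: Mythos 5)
Your proposal is correct and follows essentially the same route as the paper's proof: replace the morphism defining the first kernel by one with injective codomain via Lemma \ref{Lq10.3}, extend it along the second normal monomorphism using injectivity, form the product morphism into $(E_1,\sigma_{E_1})\times (P_2,\sigma_{P_2})$, and identify the composite with the kernel of that product via the pointwise description in Proposition \ref{Pt9.4}. The only cosmetic difference is notational, and your explicit remark on the reverse containment and the role of injectivity of $f_2$ fills in details the paper leaves implicit.
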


\begin{proof}
We consider normal monomorphisms $i:(L,\sigma_L)\longrightarrow (M,\sigma_M)$ and $j:(M,\sigma_M)\longrightarrow
(N,\sigma_N)$. Using Lemma \ref{Lq10.3}, we can find a morphism $f:(M,\sigma_M)\longrightarrow (E,\sigma_E)$
with $(E,\sigma_E)$ injective in $Heymod_H^{\mathfrak s}$ such that $(L,\sigma_L)=Ker(f)=(f^{-1}(E^{\sigma_E}),\sigma_L)$.  Then, there exists a morphism
$g:(N,\sigma_N)\longrightarrow (E,\sigma_E)$ such that $g\circ j=f$. Since $j$ is a normal monomorphism, we can write
$(M,\sigma_M)=Ker(h)$ for some morphism $h:(N,\sigma_N)\longrightarrow (P,\sigma_P)$. Then, we have
an induced morphism $(g,h):(N,\sigma_N)
\longrightarrow (E,\sigma_E)\times (P,\sigma_P)$ and we claim that $j\circ i=Ker(g,h)$.

\smallskip
By definition, we know that $Ker(g,h)=(g,h)^{-1}(E^{\sigma_E}\times P^{\sigma_P})$. As such, an element $n\in N$ lies
in $Ker(g,h)$ if and only if $g(n)\in E^{\sigma_E}$ and $h(n)\in P^{\sigma_P}$. Since $j=Ker(h)$, the fact that
$h(n)\in P^{\sigma_P}$ shows that $n=j(m)$ for some $m\in M$. Then, $f(m)=g(j(m))=g(n)\in E^{\sigma_E}$ and since
$i=Ker(f)$, we obtain $m=i(l)$ for some $l\in L$. This gives us $(j\circ i:(L,\sigma_L)\longrightarrow (N,\sigma_N))=Ker(g,h)$ and the result follows. 
\end{proof}

The next aim is to show that the normal epimorphisms in $Heymod_H^{\mathfrak s}$ are stable under composition.

\begin{lem}\label{Lq10.5} Let $(\mathcal C,\mathscr N)$ be a semiexact category. Then,   
$e:M\longrightarrow N$ is a normal epimorphism in $(\mathcal C,\mathscr N)$ if and only if $e$  is equivalent to the morphism $M\longrightarrow Coker(Ker(e))$. 
\end{lem}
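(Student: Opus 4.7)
The ``if'' direction will be immediate: the canonical map $M\longrightarrow Coker(Ker(e))$ is by construction a normal epimorphism, namely the cokernel of the morphism $Ker(e)\longrightarrow M$. Since being a normal epimorphism is clearly preserved under equivalence (post-composition with an isomorphism of the target), if $e$ is equivalent to this canonical map, then $e$ itself is a normal epimorphism.

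For the ``only if'' direction, I will begin by writing $e=Coker(f)$ for some $f:L\longrightarrow M$, which is legitimate by the definition of a normal epimorphism. Let $k:Ker(e)\longrightarrow M$ denote the kernel of $e$ and let $\pi:M\longrightarrow Coker(Ker(e))$ be the cokernel of $k$. The strategy is to construct mutually inverse morphisms $u:Coker(Ker(e))\longrightarrow N$ and $v:N\longrightarrow Coker(Ker(e))$ satisfying $u\circ \pi=e$ and $v\circ e=\pi$. Since $e\circ k\in\mathscr N$ by the very definition of the kernel in Definition \ref{Dt9.1}, the universal property of $\pi=Coker(k)$ produces a unique $u$ with $u\circ \pi=e$. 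Conversely, because $e\circ f\in\mathscr N$, the universal property of $k=Ker(e)$ allows me to factor $f=k\circ f'$ for a unique $f':L\longrightarrow Ker(e)$; then $\pi\circ f=(\pi\circ k)\circ f'\in\mathscr N$ since $\pi\circ k\in\mathscr N$ and $\mathscr N$ is an ideal. The universal property of $e=Coker(f)$ then yields the required $v$ with $v\circ e=\pi$.

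To conclude that $u$ and $v$ are mutually inverse, I will invoke the fact that every $\mathscr N$-cokernel is an epimorphism in the ordinary categorical sense: if $c=Coker(g)$ and $a\circ c=b\circ c$, then $a\circ c$ satisfies $a\circ c\circ g\in\mathscr N$, so both $a$ and $b$ realize the unique factorization of $a\circ c$ through $c$, forcing $a=b$. Applied to $e$ and to $\pi$, the identities $(u\circ v)\circ e=u\circ \pi=e=id_N\circ e$ and $(v\circ u)\circ \pi=v\circ e=\pi=id\circ \pi$ force $u\circ v=id_N$ and $v\circ u=id_{Coker(Ker(e))}$. Hence $u$ is an isomorphism exhibiting the required equivalence between $e$ and the canonical map $M\longrightarrow Coker(Ker(e))$.

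The chief subtlety is the ``cokernels are epic'' observation, which is not explicitly recorded earlier in the excerpt but is an immediate consequence of Definition \ref{Dt9.1}; once this is in hand, the remainder is a direct chase of the universal properties of $Ker(e)$, $Coker(Ker(e))$, and $e=Coker(f)$.
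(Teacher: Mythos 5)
Your proof is correct, but it takes a genuinely different route from the paper's. The paper deduces the forward direction from the canonical factorization $e = m\circ\tilde e\circ p$ of \eqref{fnormal1}: it cites Grandis for the facts that a normal epimorphism is exact (so $\tilde e$ is an isomorphism) and that $q\circ e\in\mathscr N$ with $e$ a normal epimorphism forces $q\in\mathscr N$ (so $m=1_N$), whence $e$ is equivalent to $p$. You instead build the equivalence by hand: writing $e=Coker(f)$, you produce $u$ with $u\circ\pi=e$ from $e\circ k\in\mathscr N$ and the universal property of $\pi=Coker(k)$, produce $v$ with $v\circ e=\pi$ by first factoring $f=k\circ f'$ through $Ker(e)$ so that $\pi\circ f=(\pi\circ k)\circ f'\in\mathscr N$, and then cancel using the observation that every $\mathscr N$-cokernel is an ordinary epimorphism (which does follow immediately from the uniqueness clause in Definition \ref{Dt9.1}, exactly as you argue). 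What your approach buys is self-containedness: it uses only Definition \ref{Dt9.1} and the ideal property of $\mathscr N$, with no appeal to the factorization \eqref{fnormal1} or to Grandis's Lemma 1.5.3(g); what the paper's approach buys is brevity and the placement of the lemma inside the general factorization framework it is about to use for exactness of morphisms. Both handle the converse the same way.
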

\begin{proof}
Suppose that $e$ is a normal epimorphism.  Then, $e$ automatically factorizes as the composition of a normal epimorphism followed
by a normal monomorphism, i.e., $e$ is exact (see \cite[$\S$ 1.5.5]{Grandis}). Then, the morphism $\tilde e$ appearing
in the factorization of $e$ as in \eqref{fnormal1}:
\begin{equation}\label{fnormal11}
\begin{CD}
Ker(e) @>j>> M@>e>> N@>q>> Coker(e) \\
@. @VpVV @AmAA @. \\
@. Coker(j)= Coker(Ker(e)) @>\tilde{e}>> Ker(Coker(e))=Ker(q) @.\\
\end{CD}
\end{equation} must be an isomorphism. Further since  $q\circ e\in\mathscr N$ and $e$ is a normal epimorphism,
it follows from \cite[Lemma 1.5.3(g)]{Grandis} that $q\in \mathscr N$. From the definitions, it is evident that
the kernel of a null morphism must be the identity and hence $m=1_N:Ker(q)\longrightarrow N$. It is now clear
from \eqref{fnormal11} that $e:M\longrightarrow N$ is equivalent to $p:M\longrightarrow Coker(Ker(e)\overset{j}{\longrightarrow}M)$. The converse is obvious.
\end{proof}

From Theorem \ref{Tt9.6}, we know that $Heymod_H^{\mathfrak s}$ is a semiexact category. Applying Lemma \ref{Lq10.5} and  the definition of
the cokernel given in Proposition \ref{Pt9.5}, we see that a morphism $f:(M,\sigma_M)\longrightarrow (N,\sigma_N)$
in $Heymod_H^{\mathfrak s}$ is a normal epimorphism if and only if $f:M\longrightarrow N$ is surjective and 
\begin{equation}\label{cond62}
f(m)=f(m')\quad\Leftrightarrow \quad \mbox{$g(m)=g(m')$ for all $g\in Heymod_H^{\mathfrak s}((M,\sigma_M),(P,\sigma_P))$ s.t. $Ker(f)\subseteq Ker(g)$}
\end{equation} for any $m$, $m'\in M$. 

\begin{thm}
\label{Pq10.6}
Let $H$ be a finite Boolean algebra. Then, the normal epimorphisms in $Heymod_H^{\mathfrak s}$ are stable under composition.
\end{thm}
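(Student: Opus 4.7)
The plan is to directly verify the characterization of normal epimorphisms established just before the proposition (equation \eqref{cond62}): a morphism $f:(M,\sigma_M)\longrightarrow (N,\sigma_N)$ is a normal epimorphism precisely when $f$ is surjective and $f(m)=f(m')$ holds if and only if $h(m)=h(m')$ for every $h\in Heymod_H^{\mathfrak s}((M,\sigma_M),(P,\sigma_P))$ with $Ker(f)\subseteq Ker(h)$. Given normal epimorphisms $f:(L,\sigma_L)\longrightarrow (M,\sigma_M)$ and $g:(M,\sigma_M)\longrightarrow (N,\sigma_N)$, surjectivity of $gf$ is immediate. The $(\Leftarrow)$ direction of the equivalence is also trivial: taking $h=gf$, which obviously satisfies $Ker(gf)\subseteq Ker(gf)$, already forces $gf(l)=gf(l')$. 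The entire content of the proposition therefore lies in the forward direction.

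For that direction, fix $h:(L,\sigma_L)\longrightarrow (P,\sigma_P)$ with $Ker(gf)\subseteq Ker(h)$. I would first note $Ker(f)\subseteq Ker(gf)$, since $g$ intertwines involutions and hence sends $M^{\sigma_M}$ into $N^{\sigma_N}$. Applying \eqref{cond62} to the normal epimorphism $f$, the chain $Ker(f)\subseteq Ker(h)$ forces $h$ to factor as $\bar h\circ f$ for a unique $\bar h:M\longrightarrow P$ in $Heymod_H$. The involution-compatibility $\bar h\circ\sigma_M=\sigma_P\circ\bar h$ then follows by writing every $m\in M$ as $f(l)$ (surjectivity of $f$) and using that both $f$ and $h$ intertwine the involutions; hence $\bar h$ is a morphism in $Heymod_H^{\mathfrak s}$. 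Next I would check $Ker(g)\subseteq Ker(\bar h)$: for $m\in Ker(g)$ pick $l\in L$ with $f(l)=m$; then $gf(l)=g(m)\in N^{\sigma_N}$ places $l$ in $Ker(gf)\subseteq Ker(h)$, whence $\bar h(m)=h(l)\in P^{\sigma_P}$.

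With $\bar h$ in hand and the containment $Ker(g)\subseteq Ker(\bar h)$ verified, applying \eqref{cond62} to the normal epimorphism $g$ yields $\bar h(m)=\bar h(m')$ whenever $g(m)=g(m')$. Specializing to $m=f(l)$, $m'=f(l')$ with $gf(l)=gf(l')$ gives $h(l)=\bar h(f(l))=\bar h(f(l'))=h(l')$, completing the forward direction. The main obstacle I anticipate is the involution-bookkeeping in the middle step: one must confirm that the factorization $\bar h$ actually lives in $Heymod_H^{\mathfrak s}$, not merely in $Heymod_H$, so that \eqref{cond62} may be legitimately invoked for $g$ against $\bar h$. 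The surjectivity of $f$, which is part of the normal-epimorphism characterization, is precisely what makes this possible, and the Boolean hypothesis on $H$ is used only insofar as it guarantees that the characterization \eqref{cond62} itself is available and sufficiently sharp.
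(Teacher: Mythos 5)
Your proof is correct and follows essentially the same route as the paper: both verify the characterization \eqref{cond62} for the composite by factoring each test morphism $h$ with $Ker(h)\supseteq Ker(g\circ f)$ through the first normal epimorphism and then invoking \eqref{cond62} for the second. The only cosmetic difference is that the paper obtains the factorization $\bar h$ from the cokernel universal property via Lemma \ref{Lq10.5}, whereas you build it explicitly from surjectivity and check involution-compatibility by hand; both are valid.
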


\begin{proof}
We consider normal epimorphisms $p:(L,\sigma_L)\longrightarrow (M,\sigma_M)$ and $q:(M,\sigma_M)\longrightarrow 
(N,\sigma_N)$.  Since $q$ is a normal epimorphism, it follows from the 
criterion in \eqref{cond62} that for $l$, $l'\in L$, we have $qp(l)=qp(l')$ if and only if $g(p(l))=g(p(l'))$ for each
$g\in  Heymod_H^{\mathfrak s}((M,\sigma_M),(Y,\sigma_Y))$ such that $Ker(g)\supseteq Ker(q)$. 

\smallskip
Since $p$ is a normal epimorphism, it follows from the explicit description of cokernels in Proposition \ref{Pt9.5} that
$p:L\longrightarrow M$ is surjective. As such, for a morphism $g\in  Heymod_H^{\mathfrak s}((M,\sigma_M),(Y,\sigma_Y))$, we have
$Ker(g)\supseteq Ker(q)$ $\Leftrightarrow$ $Ker(g\circ p)\supseteq Ker(q\circ p)$.  Setting $f:=g\circ p$, any such $g$
gives us a morphism $f\in Heymod_H^{\mathfrak s}((L,\sigma_L),(Y,\sigma_Y))$ such that $Ker(f)\supseteq Ker(q\circ p)$.

\smallskip
Conversely, suppose that we have a morphism  $f\in Heymod_H^{\mathfrak s}((L,\sigma_L),(Y,\sigma_Y))$ such that $Ker(f)\supseteq Ker(q\circ p)\supseteq Ker(p)$. Applying Lemma \ref{Lq10.5} to the normal epimorphism $p$, we know that $(M,\sigma_M)=Coker(Ker(p)\longrightarrow L)$ and hence
$f$ factors uniquely through some $g:(M,\sigma_M)\longrightarrow (Y,\sigma_Y)$ as $f=g\circ p$. 

\smallskip
Combining these facts, we have shown that $qp(l)=qp(l')$ for $l$, $l'\in L$ if and only if $f(l)=f(l')$ for any 
 $f\in Heymod_H^{\mathfrak s}((L,\sigma_L),(Y,\sigma_Y))$ such that $Ker(f)\supseteq Ker(q\circ p)$. Since $q$ and $p$ are both surjective,
 so is $q\circ p$. The result now follows from the criterion in \eqref{cond62}.
\end{proof}

\begin{defn}\label{Dq10.625} (see \cite[$\S$ 1.3.6]{Grandis}) Let $(\mathcal C,\mathscr N)$ be a semiexact category. Then, $(\mathcal C,\mathscr N)$ is said to be a homological category if it satisfies the following conditions:

\smallskip
(1) The normal monomorphisms in $(\mathcal C,\mathscr N)$  are stable under composition. 

\smallskip
(2) The normal epimorphisms in $(\mathcal C,\mathscr N)$ are stable under composition.

\smallskip
(3) Given a normal monomorphism $i:M\longrightarrow N$ and a normal epimorphism $q:N\longrightarrow Q$ in $(\mathcal C,\mathscr N)$ such that $Ker(q)\leq M$ in the lattice of subobjects of $M$, the composition $q\circ i$ is exact.

\end{defn}

\begin{lem}\label{Lq10.65}
Let $i:(M,\sigma_M)\longrightarrow (N,\sigma_N)$ be a normal monomorphism in $Heymod_H^{\mathfrak s}$. Then, for any
$f:(L,\sigma_L)\longrightarrow (M,\sigma_M)$ in $Heymod_H^{\mathfrak s}$, we have $Ker(i\circ f)=Ker(f)$. 
\end{lem}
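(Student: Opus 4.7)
The plan is to reduce the claim to a statement about fixed-point sets by using the explicit description of kernels in $Heymod_H^{\mathfrak s}$ from Proposition \ref{Pt9.4}. Applied to $f$, that proposition identifies $Ker(f)$ with the inclusion of $f^{-1}(M^{\sigma_M})$ into $L$, while applied to $i\circ f$ it gives $Ker(i\circ f)=(i\circ f)^{-1}(N^{\sigma_N})=f^{-1}(i^{-1}(N^{\sigma_N}))$. So the lemma will follow once I show the equality of subsets
\begin{equation*}
i^{-1}(N^{\sigma_N})=M^{\sigma_M}
\end{equation*}
inside $M$, since applying $f^{-1}$ to both sides then gives the desired equality of kernels as subobjects of $(L,\sigma_L)$.

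For the inclusion $M^{\sigma_M}\subseteq i^{-1}(N^{\sigma_N})$, I use only that $i$ is a morphism in $Heymod_H^{\mathfrak s}$, hence intertwines the involutions. If $m\in M^{\sigma_M}$, then $\sigma_N(i(m))=i(\sigma_M(m))=i(m)$, so $i(m)\in N^{\sigma_N}$.

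For the reverse inclusion I invoke normality of $i$. A normal monomorphism is in particular a monomorphism in $Heymod_H^{\mathfrak s}$, and by Proposition \ref{P8.6} this means that the underlying map $i:M\longrightarrow N$ is injective. If $i(m)\in N^{\sigma_N}$, then $i(\sigma_M(m))=\sigma_N(i(m))=i(m)$, so injectivity of $i$ forces $\sigma_M(m)=m$, i.e., $m\in M^{\sigma_M}$.

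This is essentially routine; the only point that needs to be flagged is that the proof uses injectivity of $i$ rather than any deeper property of normal monomorphisms, and injectivity is exactly what Proposition \ref{P8.6} provides from the monomorphism hypothesis implicit in normality. I do not anticipate any serious obstacle beyond assembling these two containments.
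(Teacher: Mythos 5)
Your proof is correct, and it follows the same skeleton as the paper's: both reduce the lemma to the single equality $i^{-1}(N^{\sigma_N})=M^{\sigma_M}$ and then apply $f^{-1}$ using the kernel description of Proposition \ref{Pt9.4}. The only difference is how that equality is obtained. The paper uses normality directly: it writes $(M,\sigma_M)=Ker(g)$ for some $g:(N,\sigma_N)\longrightarrow (P,\sigma_P)$ and reads off $i^{-1}(N^{\sigma_N})=M^{\sigma_M}$ from the explicit form of kernels, under which $\sigma_M$ is the restriction of $\sigma_N$. You instead derive it from injectivity of $i$ (normal monomorphism $\Rightarrow$ monomorphism $\Rightarrow$ injective, via Proposition \ref{P8.6}) together with equivariance $i\circ\sigma_M=\sigma_N\circ i$. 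Your route is marginally more general, since it shows the conclusion holds for an arbitrary monomorphism in $Heymod_H^{\mathfrak s}$, not only a normal one; the paper's route avoids invoking Proposition \ref{P8.6} but uses no less machinery in substance. Either way the argument is complete.
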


\begin{proof}
Since $i:(M,\sigma_M)\longrightarrow (N,\sigma_N)$ is a normal monomorphism, we can choose some $g:(N,\sigma_N)
\longrightarrow (P,\sigma_P)$ such that $(M,\sigma_M)=Ker(g)$. From the definition of the kernel in Proposition \ref{Pt9.4}, it is clear that $i^{-1}(N^{\sigma_N})=M^{\sigma_M}$. Then, $Ker(i\circ f)=f^{-1}(i^{-1}(N^{\sigma_N}))=f^{-1}(M^{\sigma_M})=Ker(f)$. 
\end{proof}

\begin{thm}\label{Pq10.7}
Let $H$ be a finite Boolean algebra. Let $i:(M,\sigma_M)\longrightarrow (N,\sigma_N)$ (resp. $q:(N,\sigma_N)
\longrightarrow (Q,\sigma_Q)$) be a normal monomorphism (resp. a normal epimorphism) in $Heymod_H^{\mathfrak s}$. Suppose that 
$Ker(q)\subseteq (M,\sigma_M)$. Then, the composition $q\circ i$ is an exact morphism in $Heymod_H^{\mathfrak s}$.

\end{thm}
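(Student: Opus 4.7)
The plan is to factor $f:=q\circ i$ explicitly as a normal epimorphism followed by a normal monomorphism; since any morphism admitting such a factorization in a semiexact category is exact, this will suffice. First, since $i$ is a normal monomorphism, write $i=Ker(g)$ for some $g:(N,\sigma_N)\to(P,\sigma_P)$; by Proposition \ref{Pt9.4} this means $M=g^{-1}(P^{\sigma_P})$ inside $N$. Set $K:=Ker(q)$. The hypothesis $K\subseteq M$ gives $g(K)\subseteq P^{\sigma_P}$, i.e., $g\circ(K\hookrightarrow N)$ is null. Since $q$ is a normal epimorphism, Lemma \ref{Lq10.5} identifies $q$ with the canonical map $N\to Coker(K\hookrightarrow N)$, so by the universal property of this cokernel (Proposition \ref{Pt9.5}) the morphism $g$ factors uniquely as $g=\bar g\circ q$ for some $\bar g:(Q,\sigma_Q)\to(P,\sigma_P)$.

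Using Proposition \ref{Pt9.4}, I next compute $Ker(\bar g)=\bar g^{-1}(P^{\sigma_P})$: for $y\in Q$, pick any $n\in N$ with $q(n)=y$; then $\bar g(y)=g(n)\in P^{\sigma_P}$ precisely when $n\in M$, so $Ker(\bar g)=q(M)$ as a subobject of $Q$, and the inclusion $q(M)\hookrightarrow Q$ is a normal monomorphism. On the other hand, the composite $K\hookrightarrow M\xrightarrow{f}Q$ is null (its image lies in $q(K)\subseteq Q^{\sigma_Q}$), so the universal property of $Coker(K\hookrightarrow M)$ produces a unique factorization $f=\bar f\circ p$, where $p:M\to Coker(K\hookrightarrow M)$ is the canonical normal epimorphism and $\bar f:Coker(K\hookrightarrow M)\to Q$ has image inside $q(M)$. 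It therefore remains to show that $\bar f$ is an isomorphism onto $q(M)$; once this is established, $f=(q(M)\hookrightarrow Q)\circ\bar f\circ p$ presents $f$ as a normal epimorphism followed by a normal monomorphism, and is thus exact.

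Surjectivity of $\bar f$ onto $q(M)$ is immediate, so the entire content is injectivity: given $m,m'\in M$ with $q(m)=q(m')$, I must show $m\sim m'$ in $Coker(K\hookrightarrow M)$, i.e., $h(m)=h(m')$ for every $h:(M,\sigma_M)\to(Y,\sigma_Y)$ in $Heymod_H^{\mathfrak s}$ with $h|_K$ null (Proposition \ref{Pt9.5}). The strategy is to extend such an $h$ to all of $N$ and then invoke $q(m)=q(m')$. Using Lemma \ref{Lq10.1} and Lemma \ref{Lq10.2}, embed $(Y,\sigma_Y)$ into an injective object $(E,\sigma_E)$ of $Heymod_H^{\mathfrak s}$; since $i$ is a monomorphism, $h$ then extends along $i$ to a morphism $h':(N,\sigma_N)\to(E,\sigma_E)$. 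The restriction $h'|_K=h|_K$ is still null, so by the same universal property used above $h'$ factors through $q$ as $h'=\tilde h\circ q$, whence $h(m)=h'(m)=\tilde h(q(m))=\tilde h(q(m'))=h'(m')=h(m')$.

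The main obstacle is precisely this extension step in the final paragraph: it is the only place where the hypothesis that $H$ is a finite Boolean algebra is actually used, via the supply of enough injectives in $Heymod_H^{\mathfrak s}$ furnished by Lemmas \ref{Lq10.1} and \ref{Lq10.2}. Without enough injectives the canonical comparison $Coker(K\hookrightarrow M)\to q(M)$ could fail to be injective, and the proposed exact factorization of $f$ would collapse at its middle map.
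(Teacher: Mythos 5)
Your proof is correct and follows essentially the same route as the paper's: both factor $q\circ i$ through the kernel of the morphism induced on $Q$ (which is exactly $q(M)$, giving the normal monomorphism) and reduce everything to showing the first factor is a normal epimorphism, with the finite Boolean hypothesis entering only through the supply of injective objects used to extend test morphisms from $M$ to $N$ before factoring them through $q$. Your packaging is marginally cleaner in that, by taking the first factor to be the canonical map $M\longrightarrow Coker(K\hookrightarrow M)$ from the outset, you only need one direction of the equivalence $p(m)=p(m')\Leftrightarrow q(m)=q(m')$ that the paper establishes in both directions.
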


\begin{proof}
Since $i:(M,\sigma_M)\longrightarrow (N,\sigma_N)$ is a normal monomorphism, we may choose $f:(N,\sigma_N)\longrightarrow
(T,\sigma_T)$ such that $Ker(f)=(M,\sigma_M)$. By assumption, $Ker(q)\subseteq Ker(f)$. Since $q$ is a normal epimorphism, it follows from \eqref{cond62} that $q(n)=q(n')$ for $n$, $n'\in N$ implies that $f(n)=f(n')$. Accordingly, there is a morphism
$g:(Q,\sigma_Q)\longrightarrow (T,\sigma_T)$ such that $f=g\circ q$. We now set $(P,\sigma_P):=Ker(g)$. Since $g\circ q\circ i=
f\circ i\in \mathscr N$, there is a unique morphism $p:(M,\sigma_M)\longrightarrow Ker(g)=(P,\sigma_P)$ which makes the following diagram commutative
\begin{equation}
\begin{CD}
(M,\sigma_M) @>i>> (N,\sigma_N) \\
@VpVV @VqVV \\
(P,\sigma_P)@>j>> (Q,\sigma_Q)\\
\end{CD}
\end{equation}  Since  $(P,\sigma_P)=Ker(g)$, the morphism $j$ is a normal monomorphism. In order to show that $q\circ i=j\circ p$
is exact, it suffices therefore to show that $p$ is a normal epimorphism. In other words, we need to show that
$p$ coincides with the canonical morphism $p':(M,\sigma_M)\longrightarrow Coker(Ker(p)\longrightarrow (M,\sigma_M))$. Since $q$
is a normal epimorphism in $Heymod_H^{\mathfrak s}$, it follows from Proposition \ref{Pt9.5} that $q$ is surjective. We notice
that this implies that $p$ is surjective.

\smallskip
We now consider $m_1$, $m_2\in M$ such that $p'(m_1)=p'(m_2)$. Let $x:(N,\sigma_N)\longrightarrow (X,\sigma_X)$
be a morphism in $Heymod_H^{\mathfrak s}$ such that $Ker(q)\subseteq Ker(x)$. Then, $Ker(j\circ p)=Ker(q\circ i)\subseteq Ker(x\circ i)$ and it follows from Lemma \ref{Lq10.65} that $Ker(p)=Ker(j\circ p)\subseteq Ker(x\circ i)$. Since $p'$ is the canonical
morphism to the cokernel of $Ker(p)\longrightarrow M$, the fact that   $p'(m_1)=p'(m_2)$ now implies that $x( i(m_1))
=x(i(m_2))$.  Since $q$ is a normal epimorphism, it follows from \eqref{cond62} that $q(i(m_1))=q(i(m_2))$. 

\smallskip
Conversely, suppose that $m_1$, $m_2\in M$ are such that $p'(m_1)\ne p'(m_2)$. Then, there exists some $y:(M,\sigma_M)
\longrightarrow (Y,\sigma_Y)$ with $Ker(p)\subseteq Ker(y)$ such that $y(m_1)\ne y(m_2)$. From the construction in Lemma \ref{Lq10.3}, we see that $(Y,\sigma_Y)$ may be assumed to be injective in $Heymod_H^{\mathfrak s}$. From Proposition \ref{P8.6}
and Proposition \ref{Pt9.4}, it is clear that a normal monomorphism in $Heymod_H^{\mathfrak s}$ is also a monomorphism in 
$Heymod_H^{\mathfrak s}$. Hence, $y:(M,\sigma_M)
\longrightarrow (Y,\sigma_Y)$ extends to some $z:(N,\sigma_N)\longrightarrow (Y,\sigma_Y)$ such that $z\circ i=y$. It follows that
$z(i(m_1))\ne z(i(m_2))$. 

\smallskip
We now claim that $Ker(z)\supseteq Ker(q)$. Indeed, if $n\in N$ is such that $n\in Ker(q)$, we know from the assumption $Ker(q)\subseteq (M,\sigma_M)$ 
that $n=i(m)$ for some $m\in M$. Then, $m\in Ker(q\circ i)$. But $Ker(q\circ i)=Ker(j\circ p)=Ker(p)\subseteq Ker(y)$. Hence, 
$m\in Ker(y)$. Since $y=z\circ i$, this shows that $n=i(m)\in Ker(z)$. 

\smallskip
Since $q$ is a normal epimorphism and $Ker(z)\supseteq Ker(q)$, the fact that $z(i(m_1))\ne z(i(m_2))$ implies that
$q(i(m_1))\ne q(i(m_2))$. Since $j$ is an injective map, we now have an equivalence
\begin{equation}\label{leqbeq}
p'(m_1)=p'(m_2)\textrm{ }\Leftrightarrow\textrm{ }q(i(m_1))=q(i(m_2))\textrm{ }\Leftrightarrow\textrm{ }p(m_1)=p(m_2)
\end{equation} for all $m_1$, $m_2\in M$. Since $p$ and $p'$ are both surjective, it is clear from \eqref{leqbeq} that 
$p=p'$. 
\end{proof}

\begin{Thm}\label{Tq10.10} Let $H$ be a finite Boolean algebra. Then, $Heymod_H^{\mathfrak s}$ is a semiexact homological category. 

\end{Thm}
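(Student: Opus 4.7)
The plan is to simply combine the work of this section with the semiexactness already established in Section 9. By Theorem \ref{Tt9.6}, we know that $Heymod_H^{\mathfrak s}$ is a semiexact category for any finite Heyting algebra $H$, so in particular when $H$ is a finite Boolean algebra. It remains to verify the three additional conditions listed in Definition \ref{Dq10.625}.

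First I would invoke Proposition \ref{Pq10.4}, which shows that normal monomorphisms in $Heymod_H^{\mathfrak s}$ are stable under composition when $H$ is finite Boolean; this gives condition (1). Next I would invoke Proposition \ref{Pq10.6}, which shows that normal epimorphisms in $Heymod_H^{\mathfrak s}$ are stable under composition when $H$ is finite Boolean; this gives condition (2). Finally, condition (3) requires that whenever $i:(M,\sigma_M)\longrightarrow (N,\sigma_N)$ is a normal monomorphism and $q:(N,\sigma_N)\longrightarrow (Q,\sigma_Q)$ is a normal epimorphism such that $Ker(q)\subseteq (M,\sigma_M)$, the composition $q\circ i$ is exact; this is precisely the content of Proposition \ref{Pq10.7}.

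Putting these four ingredients together, namely Theorem \ref{Tt9.6} together with Propositions \ref{Pq10.4}, \ref{Pq10.6} and \ref{Pq10.7}, we conclude that $Heymod_H^{\mathfrak s}$ satisfies all the axioms of Definition \ref{Dq10.625}, and hence is a semiexact homological category. There is no real obstacle here, since the heavy lifting has already been distributed among the preceding propositions; this final theorem is a straightforward assembly statement. The main point to emphasize in the write-up is that the finiteness and Boolean hypothesis on $H$ is used crucially in each of Propositions \ref{Pq10.4}, \ref{Pq10.6}, and \ref{Pq10.7} (in particular, through the Hahn--Banach type result of Proposition \ref{P4.4} and the injective embedding provided by Lemma \ref{Lq10.1}), whereas the semiexactness from Theorem \ref{Tt9.6} only requires $H$ to be finite.
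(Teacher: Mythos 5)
Your proposal is correct and follows exactly the paper's own argument: cite Theorem \ref{Tt9.6} for semiexactness and then verify the three conditions of Definition \ref{Dq10.625} via Propositions \ref{Pq10.4}, \ref{Pq10.6} and \ref{Pq10.7}. Your added remarks on where the finite Boolean hypothesis enters are accurate but not needed for the assembly.
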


\begin{proof}
We know from Theorem \ref{Tt9.6} that $Heymod_H^{\mathfrak s}$ is a semiexact category. The result now follows from Definition \ref{Dq10.625} along
with Propositions \ref{Pq10.4}, \ref{Pq10.6} and \ref{Pq10.7}
\end{proof}

\section{Spectral spaces and Heyting submodules}

\smallskip
In this final section, we let $H$ be an arbitrary (not necessarily finite) Heyting algebra. We will show that the collection of Heyting submodules
of a given Heyting module $M$ can be given the structure of a spectral space. We will do this by using a criterion of Finocchiaro \cite{Fino}  in a manner
similar to \cite{FFS}, where it was shown that the collection of submodules of a given module over a commutative ring forms a spectral space. In \cite{AB},
it was shown that these techniques apply more generally to abelian categories that satisfy the (AB5) axiom (see also \cite{AB1}, \cite{Ray}).

\smallskip
We recall that a topological space is said to be spectral if it is homeomorphic to the Zariski spectrum of a commutative ring. A famous result of Hochster \cite{Hoch} shows
that a topological space $X$ is spectral if and only if satisfies: (a) $X$ is quasi-compact (b) the quasi-compact opens in $X$ are closed under
intersection and form a basis (c) every non-empty irreducible closed subset has a unique generic point. In other words, the property of being
spectral can be characterized in purely topological terms, without any reference to commutative rings. 

\smallskip
For a Heyting module $M$ over the given Heyting algebra $H$, we denote by $Sub(M)$ the collection of Heyting submodules of $M$. From Proposition 
\ref{P4.5} and
Proposition \ref{Pp3.65}, we know that a Heyting submodule $N\in Sub(M)$ is simply a distributive submodule of $M$ over the lattice $H$. For any finite collection of elements $\{m_1,...,m_n\}\in M$, we set
\begin{equation}\label{closed}
V(m_1,...,m_n):=\{\mbox{$N\in Sub(M)$ $\vert$ $m_1,...,m_n\in N$}\}
\end{equation} We let the $V(m_1,...,m_n)$ be a subbasis of closed sets for the topology on $Sub(M)$. In other words, a subbasis
of open sets for the topology on $Sub(M)$ is given by subsets of the form
\begin{equation}\label{open}
D(m_1,...,m_n):=Sub(M)\backslash V(m_1,...,m_n)
\end{equation} We will now show that this topology makes $Sub(M)$ into a spectral space. We recall here that a filter $\mathfrak F$ on a set $S$
is a collection of subsets of $S$ such that (a) $\phi\notin S$, (b) $Y$, $Z\in \mathfrak F$ $\Rightarrow$ $Y\cap Z\in \mathfrak F$ and (c) $Y\subseteq
Z\subseteq S$ and $Y\in \mathfrak F$ implies $Z\in \mathfrak F$. An ultrafilter on $S$ (see, for instance, \cite[$\S$ 1]{Fino}) is a maximal element in the collection 
of filters on $S$ ordered by inclusion. In particular, if $\mathfrak F$ is an ultrafilter, then for any subset $T\subseteq S$, exactly one
of $T$ and $(S\backslash T)$ lies in $\mathfrak F$. 

\begin{thm}\label{P10.1}
Let $H$ be a Heyting algebra and $M$ be a Heyting module over $H$. Then, the collection $Sub(M)$ of Heyting submodules of $M$
is a spectral space having the collection $\mathcal S$ of subsets of the form $D(m_1,...,m_n)$ as a subbasis of quasi-compact open sets. 
\end{thm}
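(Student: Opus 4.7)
The plan is to apply Finocchiaro's ultrafilter criterion \cite{Fino}: if $X$ is a set equipped with a collection $\mathcal{S}$ of subsets, and if for every ultrafilter $\mathscr{U}$ on $X$ there exists a point $x_{\mathscr{U}} \in X$ such that for every $S \in \mathcal{S}$ one has $x_{\mathscr{U}} \in S \Leftrightarrow S \in \mathscr{U}$, then the topology generated by $\mathcal{S}$ as a subbasis of open sets makes $X$ a spectral space with the members of $\mathcal{S}$ as quasi-compact open sets. Applying this with $X = Sub(M)$ and $\mathcal{S}$ the collection of sets $D(m_1,\ldots,m_n)$ from \eqref{open} would give the result.

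So I would fix an ultrafilter $\mathscr{U}$ on $Sub(M)$ and define the candidate Heyting submodule
\begin{equation*}
N_{\mathscr{U}} := \{\, m \in M \mid V(m) \in \mathscr{U}\,\} = \{\,m \in M \mid D(m) \notin \mathscr{U}\,\}.
\end{equation*}
First I would verify that $N_{\mathscr{U}} \in Sub(M)$. The element $0_M$ lies in every Heyting submodule, so $V(0_M) = Sub(M) \in \mathscr{U}$, giving $0_M \in N_{\mathscr{U}}$. If $m,m' \in N_{\mathscr{U}}$, then $V(m), V(m') \in \mathscr{U}$ and hence $V(m) \cap V(m') \in \mathscr{U}$; since any Heyting submodule containing both $m$ and $m'$ also contains $m \vee m'$, we have the inclusion $V(m) \cap V(m') \subseteq V(m \vee m')$, so by upward closure $V(m \vee m') \in \mathscr{U}$. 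Similarly, for $c \in H$ and $m \in N_{\mathscr{U}}$, the inclusion $V(m) \subseteq V(c \wedge m)$ (every submodule containing $m$ contains $c \wedge m$) forces $V(c \wedge m) \in \mathscr{U}$.

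Next I would verify the ultrafilter condition for the subbasis $\mathcal{S}$. The key observation is that $V(m_1,\ldots,m_n) = V(m_1) \cap \cdots \cap V(m_n)$, and therefore $D(m_1,\ldots,m_n) = D(m_1) \cup \cdots \cup D(m_n)$. If $D(m_1,\ldots,m_n) \in \mathscr{U}$, then by the ultrafilter property some $D(m_i) \in \mathscr{U}$, i.e.\ $m_i \notin N_{\mathscr{U}}$, so $N_{\mathscr{U}} \notin V(m_1,\ldots,m_n)$, i.e.\ $N_{\mathscr{U}} \in D(m_1,\ldots,m_n)$. Conversely, if $N_{\mathscr{U}} \in D(m_1,\ldots,m_n)$, then some $m_i \notin N_{\mathscr{U}}$, which gives $D(m_i) \in \mathscr{U}$, and hence $D(m_1,\ldots,m_n) \in \mathscr{U}$.

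There is essentially no serious obstacle: the only content lies in choosing the right $N_{\mathscr{U}}$, and the proof reduces to the elementary facts that joins and the $H$-action on $M$ are monotone with respect to Heyting submodule containment (so the sets $V(\cdot)$ behave well under these operations), together with the ultrafilter property distributing over finite unions. Invoking Finocchiaro's criterion \cite{Fino} then completes the proof.
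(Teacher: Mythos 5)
Your proposal follows essentially the same route as the paper: the same candidate submodule $N_{\mathscr{U}}=\{m\in M\mid V(m)\in\mathscr{U}\}$, the same verification that it is a Heyting submodule via the inclusions $V(m)\cap V(m')\subseteq V(m\vee m')$ and $V(m)\subseteq V(c\wedge m)$, and the same equivalence $D(m_1,\ldots,m_n)\in\mathscr{U}\Leftrightarrow N_{\mathscr{U}}\in D(m_1,\ldots,m_n)$ obtained from $D(m_1,\ldots,m_n)=D(m_1)\cup\cdots\cup D(m_n)$ and the ultrafilter property. The one omission is that Finocchiaro's criterion \cite[Corollary 3.3]{Fino} also requires the space to be $T_0$; the paper checks this first (for $N\ne N'$ choose $m$ lying in exactly one of them, so the closed set $V(m)$ separates the two points), and you should add that observation before invoking the criterion.
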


\begin{proof}
We consider  $N$, $N'\in Sub(M)$ with $N\ne N'$. Then, we can pick some $m\in M$ such that $m$ lies in exactly
one of the two submodules $N$, $N'$. Then, $V(m)$ is a closed subset of $Sub(M)$ containing exactly one of the two points
$N$, $N'\in Sub(M)$. Hence, $Sub(M)$ is a $T_0$-space.

\smallskip
We now consider an ultrafilter $\mathfrak F$ on $Sub(M)$ and set 
\begin{equation}\label{ultraf}
N(\mathfrak F):=\{\mbox{$m\in M$ $\vert $ $V(m)\in \mathfrak F$}\}
\end{equation} We claim that $N(\mathfrak F)$ is a Heyting submodule of $M$. By Proposition \ref{Pp3.65}, we need to check that 
it is a distributive submodule of $M$. We consider therefore $m_1$, $m_2\in N(\mathfrak F)$ and some $c\in H$. Then, 
$V(m_1)$, $V(m_2)\in \mathfrak F$. From the definition 
of a filter and from \eqref{closed},  we obtain
\begin{equation}\label{eq10.4c}
V(m_1\vee m_2)\supseteq V(m_1)\cap V(m_2) \textrm{ }\Rightarrow\textrm{ }V(m_1\vee m_2)\in \mathfrak F \qquad
V(c\wedge m_1)\supseteq V(m_1) \textrm{ }\Rightarrow\textrm{ }V(c\wedge m_1)\in \mathfrak F
\end{equation} Hence, $N(\mathfrak F)\in Sub(M)$. We now claim that
\begin{equation}\label{fineq}
D(m_1,...,m_n)\in \mathfrak F \qquad\Leftrightarrow\qquad N(\mathfrak F)\in D(m_1,...,m_n)
\end{equation} First, we suppose that $N(\mathfrak F)\in D(m_1,...,m_n)$, i.e., there is some $m_k\in \{m_1,...,m_n\}$ such that $m_k\notin
N(\mathfrak F)$. Applying \eqref{ultraf},  we get $V(m_k)\notin \mathfrak F$. Since $\mathfrak F$ is an ultrafilter, this means that
the complement $D(m_k)\in \mathfrak F$. It is clear that $D(m_k)\subseteq D(m_1,...,m_n)$ and $\mathfrak F$ being a filter,
this means that $D(m_1,...,m_n)\in \mathfrak F$. 

\smallskip
On the other hand, if $N(\mathfrak F)\notin D(m_1,...,m_n)$, then $N(\mathfrak F)\in V(m_1,...,m_n)$. Hence,
$V(m_i)\in \mathfrak F$ for each $1\leq i\leq n$. Since $\mathfrak F$ is a filter, we get $V(m_1,...,m_n)=\underset{i=1}{\overset{n}{\bigcap}}V(m_i)\in \mathfrak F$. Hence, $D(m_1,...,m_n)\notin \mathfrak F$. This proves the equivalence in \eqref{fineq}. 

\smallskip
Since $Sub(M)$ is a $T_0$-space, it now follows from the criterion in \cite[Corollary 3.3]{Fino} that $Sub(M)$ is spectral with subsets of the form
$D(m_1,...,m_n)$ being a basis of quasi-compact opens. 
\end{proof}

We can refine the result of Proposition \ref{P10.1} by considering closure operators on Heyting submodules in a manner similar 
to \cite[$\S$ 3]{AB} and \cite[$\S$ 3]{FFS}. These are inspired by closure operations on ideals in commutative algebra, such as taking the radical
closure, the integral closure, plus closure or Frobenius closure in certain special classes of rings (see \cite{Epstein} for a detailed survey).

\smallskip
We will say that a  Heyting submodule $N\in Sub(M)$ is finitely generated if there is a finite collection $\{m_1,...,m_n\}$ of elements
of $M$ such that $N$ is the smallest Heyting submodule containing all of them. For $N\in Sub(M)$, we will denote by $fg(N)$ the collection
of finitely generated Heyting submodules of $N$. 

\begin{defn}\label{D10.2}
Let $H$ be a Heyting algebra and let $M$ be a Heyting module over $H$. A closure operation $\mathbf c$ on Heyting submodules
of $M$ is an operator $\mathbf c:Sub(M)\longrightarrow Sub(M)$ such that:

\smallskip
(a) $\mathbf c$ is extensive, i.e., $N\subseteq \mathbf c(N)$ for each $N\in Sub(M)$.

\smallskip
(b) $\mathbf c$ is order preserving, i.e.,  $N\subseteq N'$ for $N$, $N'\in Sub(M)$ implies $\mathbf c(N)\subseteq \mathbf c(N')$. 

\smallskip
(c) $\mathbf c$ is idempotent, i.e., $\mathbf c(\mathbf c(N))=\mathbf c(N)$ for each $N\in Sub(M)$. 

\smallskip
We will say that a closure operator $\mathbf c:Sub(M)\longrightarrow Sub(M)$  is of finite type if it satisfies $\mathbf c(N)=\underset{N'\in fg(N)}{\bigcup}
\mathbf c(N')$ for every $N\in Sub(M)$. 
\end{defn}

\begin{lem}\label{L10.2}
Let  $\mathbf c:Sub(M)\longrightarrow Sub(M)$ be a closure operator on Heyting submodules of $M$. Then, the operator defined by setting
\begin{equation}\label{clf}
\mathbf c_f(N):=\underset{N'\in fg(N)}{\bigcup}
\mathbf c(N')\qquad\forall\textrm{ }N\in Sub(M)
\end{equation} is a closure operator of finite type.
\end{lem}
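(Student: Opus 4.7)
The plan is to verify in turn that $\mathbf{c}_f$ is well-defined as a map $Sub(M) \longrightarrow Sub(M)$, then check the three axioms of a closure operator from Definition \ref{D10.2}, and finally the finite-type condition. The key preliminary observation is that the family $\{\mathbf{c}(N')\}_{N' \in fg(N)}$ is directed under inclusion: given $N_1', N_2' \in fg(N)$ generated respectively by finite sets $S_1, S_2 \subseteq N$, the Heyting submodule $N_3'$ generated by $S_1 \cup S_2$ is again finitely generated and contained in $N$, and order preservation of $\mathbf{c}$ forces $\mathbf{c}(N_1'), \mathbf{c}(N_2') \subseteq \mathbf{c}(N_3')$. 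Since a directed union of Heyting submodules is again a Heyting submodule (the join and $H$-action of any two elements takes place in some common member of the family), $\mathbf{c}_f(N) \in Sub(M)$.

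Extensivity follows because for each $m \in N$ the cyclic Heyting submodule $\langle m\rangle$ belongs to $fg(N)$, so $m \in \langle m\rangle \subseteq \mathbf{c}(\langle m\rangle) \subseteq \mathbf{c}_f(N)$. For order preservation, $N \subseteq N'$ forces $fg(N) \subseteq fg(N')$, and the union defining $\mathbf{c}_f$ only grows. For the finite-type property, if $N' \in fg(N)$ then $N' \in fg(N')$, so $\mathbf{c}(N') \subseteq \mathbf{c}_f(N')$, giving $\mathbf{c}_f(N) = \bigcup_{N' \in fg(N)} \mathbf{c}(N') \subseteq \bigcup_{N' \in fg(N)} \mathbf{c}_f(N')$; the reverse inclusion comes from applying order preservation of $\mathbf{c}_f$ (already proved) to $N' \subseteq N$.

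The main work, and the only place where a genuine argument is needed, lies in establishing idempotency: $\mathbf{c}_f(\mathbf{c}_f(N)) = \mathbf{c}_f(N)$. The nontrivial inclusion is $\supseteq$ reversed, namely $\mathbf{c}_f(\mathbf{c}_f(N)) \subseteq \mathbf{c}_f(N)$. Here I would take an arbitrary $N'' \in fg(\mathbf{c}_f(N))$, say generated by a finite set $\{m_1, \dots, m_n\}$. Each $m_i$ lies in $\mathbf{c}_f(N)$, so by definition there is some $N_i' \in fg(N)$ with $m_i \in \mathbf{c}(N_i')$. Set $N^* := N_1' \vee \dots \vee N_n'$, the Heyting submodule generated by the union of the finite generating sets of the $N_i'$; then $N^* \in fg(N)$. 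Order preservation of $\mathbf{c}$ gives $m_i \in \mathbf{c}(N_i') \subseteq \mathbf{c}(N^*)$ for each $i$, so $\{m_1, \dots, m_n\} \subseteq \mathbf{c}(N^*)$ and hence the submodule they generate satisfies $N'' \subseteq \mathbf{c}(N^*)$. Applying $\mathbf{c}$ and using idempotency and order preservation of the original $\mathbf{c}$ yields $\mathbf{c}(N'') \subseteq \mathbf{c}(\mathbf{c}(N^*)) = \mathbf{c}(N^*) \subseteq \mathbf{c}_f(N)$. Taking the union over all $N'' \in fg(\mathbf{c}_f(N))$ completes the inclusion.

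The main obstacle is conceptual rather than computational: one must carefully exploit the finite generation of $N''$ to replace a potentially infinite collection of $\mathbf{c}$-closures by a single $\mathbf{c}(N^*)$, which is where the properties of the original closure $\mathbf{c}$ are actually invoked. Once this step is done, everything else reduces to bookkeeping about directed unions and the order structure of $Sub(M)$.
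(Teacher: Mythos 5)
Your proposal is correct and follows essentially the same route as the paper: the heart of both arguments is the idempotency step, where finite generation of $N''\in fg(\mathbf c_f(N))$ lets you collect the witnesses $N_i'\in fg(N)$ into a single finitely generated $N^*=N_1'\vee\dots\vee N_n'$ with $N''\subseteq \mathbf c(N^*)$, and then idempotency and monotonicity of $\mathbf c$ give $\mathbf c(N'')\subseteq \mathbf c(N^*)\subseteq \mathbf c_f(N)$. The only difference is that you additionally spell out why the directed union $\mathbf c_f(N)$ is itself a Heyting submodule, a well-definedness point the paper leaves implicit.
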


\begin{proof}
It is evident that $\mathbf c_f$ is extensive and order-preserving. If $N\in Sub(M)$ is finitely generated, it is clear from \eqref{clf} that
$\mathbf c_f(N)=\mathbf c(N)$. Hence, $\mathbf c_f(N)=\underset{N'\in fg(N)}{\bigcup}
\mathbf c(N')=\underset{N'\in fg(N)}{\bigcup}
\mathbf c_f(N')$, i.e., $\mathbf c_f$ is of finite type. It remains to show that $\mathbf c_f$ is idempotent. For this, we consider some
$N''\in fg(\mathbf c_f(N))$ having a generating set $\{n_1,...,n_k\}$. For each $n_i$, we can choose $N'_i\in fg(N)$ such that $n_i\in \mathbf c(N_i')$. Then,
$N_1'\vee ... \vee N_k'$ is a finitely generated Heyting submodule of $N$ and $N''\subseteq \mathbf c(N_1'\vee ... \vee N_k')$.  Since $\mathbf c$
is idempotent, we get $\mathbf c(N'')\subseteq \mathbf c(N_1'\vee ... \vee N_k')$ for each $N''\in fg(\mathbf c_f(N))$. We now have
\begin{equation}\label{eq10.7}
\mathbf c_f(\mathbf c_f(N))=\underset{N''\in fg(\mathbf c_f(N))}{\bigcup}
\mathbf c(N'')\subseteq \underset{N'\in fg(N)}{\bigcup}
\mathbf c(N')=\mathbf c_f(N)
\end{equation} Since $\mathbf c_f$ is extensive, \eqref{eq10.7} implies that $\mathbf c_f(\mathbf c_f(N))=\mathbf c_f(N)$. This proves the result. 
\end{proof}

\begin{thm}\label{P10.4}
Let $H$ be a Heyting algebra and $M$ be a Heyting module over $H$. Let $\mathbf c:Sub(M)\longrightarrow Sub(M)$ be a closure
operator of finite type.Then, the collection $Sub^{\mathbf c}(M)$ of Heyting submodules of $M$ fixed by $\mathbf c$ 
is a spectral space having the collection $\mathcal S^{\mathbf c}$ of subsets of the form $D(m_1,...,m_n)\cap Sub^{\mathbf c}(M)$ as a subbasis of quasi-compact open sets. 
\end{thm}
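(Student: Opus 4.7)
The plan is to mirror the proof of Proposition \ref{P10.1}, invoking Finocchiaro's ultrafilter criterion \cite[Corollary 3.3]{Fino} applied to the subspace $Sub^{\mathbf c}(M) \subseteq Sub(M)$ with the induced topology. The $T_0$ property is immediate: two distinct $\mathbf c$-closed submodules are already separated in $Sub(M)$ by some subbasic closed set $V(m)$, whose trace on $Sub^{\mathbf c}(M)$ separates them there. What remains is to produce, for each ultrafilter $\mathfrak F$ on $Sub^{\mathbf c}(M)$, a point $N(\mathfrak F) \in Sub^{\mathbf c}(M)$ such that the subbasic open sets of $\mathcal S^{\mathbf c}$ containing $N(\mathfrak F)$ are precisely the elements of $\mathfrak F$.

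Following the blueprint of \eqref{ultraf}, I would set
\[
N(\mathfrak F) := \{m \in M : V(m) \cap Sub^{\mathbf c}(M) \in \mathfrak F\}.
\]
The verification that $N(\mathfrak F)$ is a Heyting submodule of $M$ is identical to \eqref{eq10.4c}, using $V(m_1 \vee m_2) \supseteq V(m_1) \cap V(m_2)$ and $V(c \wedge m) \supseteq V(m)$ intersected with $Sub^{\mathbf c}(M)$. Once we know $N(\mathfrak F) \in Sub^{\mathbf c}(M)$, the equivalence
\[
D(m_1,\ldots,m_n) \cap Sub^{\mathbf c}(M) \in \mathfrak F \quad\Leftrightarrow\quad N(\mathfrak F) \in D(m_1,\ldots,m_n) \cap Sub^{\mathbf c}(M)
\]
is proved exactly as \eqref{fineq}, by splitting according to whether some $m_k$ lies outside $N(\mathfrak F)$ and exploiting the maximality of $\mathfrak F$; at that point \cite[Corollary 3.3]{Fino} delivers the spectral structure with $\mathcal S^{\mathbf c}$ as a subbasis of quasi-compact opens.

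The main obstacle, and the sole place where the finite type hypothesis enters, is showing that $N(\mathfrak F)$ is actually fixed by $\mathbf c$. Extensivity gives $N(\mathfrak F) \subseteq \mathbf c(N(\mathfrak F))$, so I need the reverse inclusion. Let $m \in \mathbf c(N(\mathfrak F))$; since $\mathbf c$ is of finite type, there is a finitely generated Heyting submodule $N' \in fg(N(\mathfrak F))$, say with generators $m_1,\ldots,m_k \in N(\mathfrak F)$, such that $m \in \mathbf c(N')$. Each of the sets $V(m_i) \cap Sub^{\mathbf c}(M)$ lies in $\mathfrak F$, so their finite intersection $V(m_1,\ldots,m_k) \cap Sub^{\mathbf c}(M)$ does as well. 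For any $P$ in this intersection, $m_1,\ldots,m_k \in P$ implies $N' \subseteq P$, and since $\mathbf c$ is order preserving and $P$ is $\mathbf c$-closed, $m \in \mathbf c(N') \subseteq \mathbf c(P) = P$. Hence $V(m_1,\ldots,m_k) \cap Sub^{\mathbf c}(M) \subseteq V(m) \cap Sub^{\mathbf c}(M)$, forcing $V(m) \cap Sub^{\mathbf c}(M) \in \mathfrak F$, i.e. $m \in N(\mathfrak F)$. This closes the argument and the Finocchiaro criterion finishes the proof.
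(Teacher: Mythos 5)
Your proposal is correct and follows essentially the same route as the paper: the same candidate $N(\mathfrak F)$, the same use of the finite type hypothesis (reducing to a finitely generated $N'$ with generators $m_1,\ldots,m_k$ and observing $V(m_1,\ldots,m_k)\cap Sub^{\mathbf c}(M)\subseteq V(m)\cap Sub^{\mathbf c}(M)$), and the same appeal to Finocchiaro's ultrafilter criterion. No gaps.
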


\begin{proof}
Since $Sub^{\mathbf c}(M)$ is equipped with the subspace topology induced by $Sub(M)$, it must be a $T_0$-space. Given an ultrafilter $\mathfrak F$
on $Sub^{\mathbf c}(M)$, we now set
\begin{equation}\label{ultrafc}
N(\mathfrak F):=\{\mbox{$m\in M$ $\vert $ $V(m)\cap Sub^{\mathbf c}(M)\in \mathfrak F$}\}
\end{equation} The order relations between subsets in \eqref{eq10.4c} continue to hold when intersected with $Sub^{\mathbf c}(M)$ and hence
it follows that $N(\mathfrak F)\in Sub(M)$. We claim that $N(\mathfrak F)\in Sub^{\mathbf c}(M)$, i.e., it is fixed by $\mathbf c$. 

\smallskip
We consider an element $m\in \mathbf c(N(\mathfrak F))$. Since $\mathbf c$ is of finite type, there is a finitely generated submodule 
$N'\subseteq N(\mathfrak F)$ such that $m\in \mathbf c(N')$. Suppose that $N'$ is generated by $\{m_1,...,m_k\}$. Then, if $N''\in Sub^{\mathbf c}(M)$
is such that $N''\supseteq N'$, we obtain $N''=\mathbf c(N'')\supseteq \mathbf c(N')$, i.e., $m\in N''$. In other words, we have
$V(m_1,...,m_k)\cap Sub^{\mathbf c}(M)\subseteq V(m)\cap Sub^{\mathbf c}(M)$. Since $m_1$, $m_2$, ..., $m_k\in N'\subseteq N(\mathfrak F)$, we get
$V(m_i)\cap Sub^{\mathbf c}(M)\in \mathfrak F$ for each $1\leq i\leq k$. Then, $V(m)\cap Sub^{\mathbf c}(M)\supseteq V(m_1,...,m_k)\cap Sub^{\mathbf c}(M)=\underset{i=1}{\overset{k}{\bigcap}}(V(m_i)\cap Sub^{\mathbf c}(M))\in\mathfrak F$ and hence
$V(m)\cap Sub^{\mathbf c}(M)\in \mathfrak F$. By \eqref{ultrafc}, it follows that $m\in N(\mathfrak F)$, i.e.,  $N(\mathfrak F)\in Sub^{\mathbf c}(M)$.

\smallskip
In a manner similar to the proof of Proposition \ref{P10.1}, it may be verified that
\begin{equation}\label{finceq}
D(m_1,...,m_n)\cap Sub^{\mathbf c}(M)\in \mathfrak F \qquad\Leftrightarrow\qquad N(\mathfrak F)\in D(m_1,...,m_n)\cap Sub^{\mathbf c}(M)
\end{equation} Since $Sub^{\mathbf c}(M)$ is a $T_0$-space, it now follows from the criterion in \cite[Corollary 3.3]{Fino} that $Sub^{\mathbf c}(M)$
is a spectral space with  the collection $\mathcal S^{\mathbf c}$ as a subbasis of quasi-compact open sets. 

\end{proof}

For each Heyting submodule $N\subseteq M$, we now define
\begin{equation}\label{clsr}
\overline{N}:=\{\mbox{$m\in M$ $\vert$ $m\leq n$ for some $n\in N$}\}
\end{equation} It is clear that $\overline{N}$ is a hereditary submodule and in fact the smallest hereditary submodule of $M$ containing
$N$. 

\begin{cor}\label{C10.5} Let $H$ be a Heyting algebra and $M$ be a Heyting module over $H$. Then, the collection of hereditary submodules
of $M$, equipped with the subspace topology induced by $Sub(M)$, forms a spectral space. 

\end{cor}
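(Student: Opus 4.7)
My plan is to apply Proposition \ref{P10.4} directly, by exhibiting a closure operator of finite type on $Sub(M)$ whose collection of fixed points is precisely the collection of hereditary submodules of $M$. The natural candidate is the operator $\mathbf{c}: Sub(M) \longrightarrow Sub(M)$ defined by $\mathbf{c}(N) := \overline{N}$, where $\overline{N}$ is as in \eqref{clsr}. Since $\overline{N}$ is hereditary, the remark following Definition \ref{D4.1} guarantees that $\overline{N}$ is a Heyting submodule of $M$, so $\mathbf{c}$ does indeed land in $Sub(M)$.

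First I would check that $\mathbf{c}$ is a closure operator in the sense of Definition \ref{D10.2}. Extensivity $N \subseteq \overline{N}$ is immediate from the definition, and order-preservation follows since $N \subseteq N'$ forces $\overline{N} \subseteq \overline{N'}$. For idempotency, since $\overline{N}$ is hereditary, any element $m \in M$ satisfying $m \leq n'$ for some $n' \in \overline{N}$ (so $n' \leq n$ for some $n \in N$) satisfies $m \leq n \in N$, hence $m \in \overline{N}$; thus $\overline{\overline{N}} = \overline{N}$.

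Next I would verify that $\mathbf{c}$ is of finite type, i.e., $\overline{N} = \bigcup_{N' \in fg(N)} \overline{N'}$. The inclusion $\supseteq$ is clear from order-preservation. For $\subseteq$, given $m \in \overline{N}$, pick $n \in N$ with $m \leq n$; the Heyting submodule $\langle n \rangle$ generated by $\{n\}$ is a finitely generated Heyting submodule of $N$, and $m \in \overline{\langle n \rangle}$, which gives the required containment.

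Finally, I would identify the fixed point set $Sub^{\mathbf c}(M)$. By definition, $N = \mathbf{c}(N) = \overline{N}$ means that whenever $m \leq n$ for some $n \in N$, we have $m \in N$, which is exactly the hereditary condition of Definition \ref{D4.1}. Hence $Sub^{\mathbf c}(M)$ coincides with the collection of hereditary submodules of $M$ (equipped with the subspace topology from $Sub(M)$), and Proposition \ref{P10.4} then yields that it is a spectral space. I do not anticipate any serious obstacle here; each step is a short verification, and the only mild subtlety is making sure that $\overline{N}$ truly lies in $Sub(M)$, which is handled by the observation following Definition \ref{D4.1}.
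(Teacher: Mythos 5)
Your proposal is correct and follows exactly the route of the paper's own proof: the paper simply asserts that $N\mapsto\overline{N}$ is a closure operator of finite type and invokes Proposition \ref{P10.4}, while you spell out the verifications (including the useful observation that $\overline{N}$ lands in $Sub(M)$ because hereditary subjoin semilattices are automatically Heyting submodules). No gaps.
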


\begin{proof} It is clear that the operation $N\mapsto \overline{N}$ in \eqref{clsr} is a closure operator of finite type. The result now follows
from Proposition \ref{P10.4}.

\end{proof}

\small

\end{document}